\theoremstyle{plain}
\newtheorem{theorem}{Theorem}[subsection]
\newtheorem{lemma}[theorem]{Lemma}
\newtheorem{proposition}[theorem]{Proposition}
\def\thefootnote{\ifnum\c@footnote>\z@\leavevmode\lower.5ex%
      \hbox{$^{\@arabic\c@footnote)}$}\fi}
\theoremstyle{definition}
\theoremstyle{remark}
\theoremstyle{definition} 
\newtheorem*{defi}{\indent Definition}
\newtheorem*{assertion}{\indent Assertion}
\DeclareFontShape{JY1}{mc}{m}{it}{<5> <6> <7> <8> <9> <10> sgen*min
    <10.95><12><14.4><17.28><20.74><24.88> min10 <-> min10}{}
\DeclareFontShape{JT1}{mc}{m}{it}{<5> <6> <7> <8> <9> <10> sgen*tmin
    <10.95><12><14.4><17.28><20.74><24.88> tmin10 <-> tmin10}{}
\DeclareFontShape{JY1}{mc}{m}{sc}{<5> <6> <7> <8> <9> <10> sgen*min
    <10.95><12><14.4><17.28><20.74><24.88> min10 <-> min10}{}    
\DeclareFontShape{JT1}{mc}{m}{sc}{<5> <6> <7> <8> <9> <10> sgen*tmin
    <10.95><12><14.4><17.28><20.74><24.88> tmin10 <-> tmin10}{}
\DeclareFontShape{JY2}{mc}{m}{it}{<5> <6> <7> <8> <9> <10> sgen*min
	<10.95><12><14.4><17.28><20.74><24.88> min10 <-> min10}{}
\DeclareFontShape{JT2}{mc}{m}{it}{<5> <6> <7> <8> <9> <10> sgen*tmin
	<10.95><12><14.4><17.28><20.74><24.88> tmin10 <-> tmin10}{}
\DeclareFontShape{JY2}{mc}{m}{sc}{<5> <6> <7> <8> <9> <10> sgen*min
	<10.95><12><14.4><17.28><20.74><24.88> min10 <-> min10}{}    
\DeclareFontShape{JT2}{mc}{m}{sc}{<5> <6> <7> <8> <9> <10> sgen*tmin
	<10.95><12><14.4><17.28><20.74><24.88> tmin10 <-> tmin10}{}
\def\dfrac#1#2{\displaystyle \frac{#1}{#2}}
\def\Aut{\mbox{\rm {Aut}}}
\def\det{\mbox{\rm {det}}}
\def\diag{\mbox{\rm {diag}}}
\def\Iso{\mbox{\rm {Iso}}}
\def\Ker{\mbox{\rm {Ker}}}
\def\tr{\mbox{\rm {tr}}}
\def\ov{\overline}
\def\dfrac#1#2{\displaystyle \frac{#1}{#2}}
\def\C{\mbox{\boldmath $C$}}
\def\H{\mbox{\boldmath $H$}}
\def\R{\mbox{\boldmath $R$}}
\def\Z{\mbox{\boldmath $Z$}}
\def\sR{\mbox{\boldmath $\scriptstyle{R}$}}
\def\0{\mbox{\boldmath {0}}}    
\def\1{\mbox{\boldmath {1}}}      
\def\2{\mbox{\boldmath {2}}}      
\def\3{\mbox{\boldmath {3}}}      
\def\4{\mbox{\boldmath {4}}}      
\def\5{\mbox{\boldmath {5}}}      
\def\6{\mbox{\boldmath {6}}}      
\def\7{\mbox{\boldmath {7}}}      
\def\8{\mbox{\boldmath {8}}}      
\def\9{\mbox{\boldmath {9}}}      
\def\a{\mbox{\boldmath $a$}}
\def\b{\mbox{\boldmath $b$}}
\def\m{\mbox{\boldmath $m$}}
\def\n{\mbox{\boldmath $n$}}
\def\v{\mbox{\boldmath $v$}}
\begin{document}
	
\title[Realizations of globally exceptional $\varmathbb{Z}_3 \times \varmathbb{Z}_3 $-symmetric spaces  Part I]
{On some realizations of globally exceptional $\varmathbb{Z}_3 \times \varmathbb{Z}_3 $-symmetric spaces $G/K$, $G=G_2, F_4, E_6$, Part I }

\author[Toshikazu Miyashita]{Toshikazu Miyashita}

\subjclass[2010]{ 53C30, 53C35, 17B40.}

\keywords{$\varGamma$-symmetric spaces, exceptional Lie groups.}

\address{1365-3 Bessho onsen \endgraf
	Ueda City                \endgraf
	Nagano Prefecture 386-1431    \endgraf
	Japan}
\email{anarchybin@gmail.com}

\begin{abstract}
	R. Lutz introduced the notion of $\varGamma$-symmetric space as a generalization of the classical notion of symmetric space in 1981, where $\varGamma$ is a finite abelian group. In the present article, as $\varGamma=\varmathbb{Z}_3 \times \varmathbb{Z}_3$, we give the automorphisms $\tilde{\sigma}_3, \tilde{\tau}_3$ of order $3$ on the connected compact exceptional Lie groups $G=G_2, F_4,E_6$ 
	explicitly and determine the structure of the group $G^{\sigma_3} \cap G^{\tau_3}$ using homomorphism theorem elementary. These amount to some global realizations of exceptional $\varmathbb{Z}_3 \times \varmathbb{Z}_3$-symmetric spaces $G/K$, where $(G^{\sigma_3} \cap G^{\tau_3})_0 \subseteq K \subseteq G^{\sigma_3} \cap G^{\tau_3}$. 
\end{abstract}

\maketitle

\section {Introduction}
	In \cite{Lutz}, R. Lutz introduced the notion of $\varGamma$-symmetric space. Until now, as far as the author know, the case where $\varGamma=\varmathbb{Z}_2$ corresponds to the ordinary symmetric spaces, and Y. Bahturin and M. Goze classified the $\varmathbb{Z}_2 \times \varmathbb{Z}_2$-symmetric spaces  of classical type (\cite{Bah}) and A. Kollross classified the $\varmathbb{Z}_2 \times \varmathbb{Z}_2$-symmetric spaces of exceptional type (\cite{And}). These classifications were the results as Lie algebras, so using the results of Kollross's classification,  the author realized globally $\varmathbb{Z}_2 \times \varmathbb{Z}_2$-symmetric spaces of exceptional type as the coset space of Lie groups (\cite{miya1}). Besides, J.A.Wolf and A.Gray classified automorphism of order $3$ and its fixed points subgroups of connected compact Lie groups of centerfree. This amounts to the classification of the $\varmathbb{Z}_3$-symmetric spaces in the connected compact Lie groups of centerfree (\cite{wolf}). In \cite{iy1}, I.Yokota realized the inner automorphisms of order $3$ on the connected compact exceptional Lie groups $G=G_2, F_4, E_6$ explicitly and determined the structure of the fixed points subgroups of $G$ by them, and moreover in \cite{miya3}, the author and I. Yokota did similar realizations and determinations for the connected compact exceptional Lie group $E_7$.
	However, the author does not know the classification of $\varmathbb{Z}_3 \times \varmathbb{Z}_3$-symmetric spaces, so in the present article, using the known inner automorphisms of order $3$ on $G_2, F_4, E_6$
	 (\cite{iy1}) we realize some globally $\varmathbb{Z}_3 \times \varmathbb{Z}_3$-symmetric spaces. Unfortunately, we do not obtain their geometric interpretation.
	 
	Now, we describe the definition of $\varGamma$-symmetric space below (\cite{Bah}). 
	\begin{defi}\label{definition}
		Let $\varGamma$ be a finite abelian group and $G$ a connected Lie group. A homogeneous space $G/K$ is called $\varGamma$-symmetric if $G$ acts almost effectively on $G/K$ and there exists an injective homomorphism $\rho: \varGamma \to \Aut(G)$ such that $(G^\varGamma)_0 \subseteq K \subseteq 
		G^\varGamma$, where $G^\varGamma$ is the fixed points subgroup by all $\rho(\varGamma) \in \Aut(G)$ and $(G^\varGamma)_0$ is its connected component.
	\end{defi}

  In the case where $\varGamma=\varmathbb{Z}_3 \times \varmathbb{Z}_3$, since it follows from $G^{\sigma_3}=G^{{\sigma_3}^{-1}}, G^{\tau_3}=G^{{\tau_3}^{-1}}, G^{\sigma_3} \cap G^{\tau_3} \subset G^{\sigma_3\tau_3}=G^{{\sigma_3}^{-1}{\tau_3}^{-1}}$ and so on that  
  \begin{align*}
  G^\varGamma=G \cap G^{\sigma_3} \cap G^{\tau_3} \cap G^{{\sigma_3}^{-1}} \cap \cdots \cap G^{{\sigma_3}^{-1}{\tau_3}^{-1}}=G^{\sigma_3} \cap G^{\tau_3},
  \end{align*}
  we can rephrase the definition above in this case as follows.
	
  A homogeneous space $G/K$ is $\varmathbb{Z}_3 \times \varmathbb{Z}_3$-symmetric space if $G$ acts almost effectively on $G/K$ and there exist $\tilde{\sigma}_3,\tilde{\tau}_3 \in \Aut(G)\backslash\{1\}$ such that $(\tilde{\sigma}_3)^3=(\tilde{\tau}_3)^3=1,\tilde{\sigma}_3 \not= \tilde{\tau}_3$ and 
	$\tilde{\sigma}_3\tilde{\tau}_3=\tilde{\tau}_3\tilde{\sigma}_3$,  
	and moreover the subgroup $K$ of $G$ satisfies the condition $(G^{\sigma_3} \cap G^{\tau_3})_0 \subseteq K \subseteq G^{\sigma_3} \cap G^{\tau_3}$. 
	
  In particular, in the case where $G$ is a connected compact exceptional Lie group, we say the globally $\varmathbb{Z}_3 \times \varmathbb{Z}_3$-symmetric spaces $G/K$ as the globally {\it exceptional} $\varmathbb{Z}_3 \times \varmathbb{Z}_3$-symmetric spaces. 
	
	Here, for a globally exceptional $\varmathbb{Z}_3 \times \varmathbb{Z}_3$-symmetric space $G/K$, the normal subgroup $N \subset K$ of $G$ is discrete, and we explain concretely its result as follows:  
	\begin{align*}
	&N=\{1\}\,\, {\text {in the case where}}\,\, G=G_2,F_4, E_8,
	\\
	&N=\Z_3 \,\, {\text {in the case where}} \,\, G=E_6,\,\,
	N=\Z_2 \,\, {\text {in the case where}} \,\,G=E_7.
	\end{align*}
	Hence, when
	we define the action to $G/K$ of $G$ as $f:G \times G/K \to G/K, f(g,g'K)=g(g'K)$, we see that $G$ acts almost effectively on $G/K$ from the result above. 
	Besides, note that it follows from $\sigma_3\tau_3=\tau_3\sigma_3$ that $\tilde{\sigma}_3 \in \Aut((G)^{\tau_3})$ and $\tilde{\tau}_3 \in \Aut((G)^{\sigma_3})$.
	
	Now, in order to construct the globally exceptional $\varmathbb{Z}_3 \times \varmathbb{Z}_3$-symmetric spaces, we give inner automorphisms $\tilde{\sigma}_3, \tilde{\tau}_3$ of order $3$ on $G=G_2, F_4,E_6$ explicitly and determine the structure of the group $G^{\sigma_3} \cap G^{\tau_3}$.
Our results are as follows.
\vspace{3mm}
	
		\begin{center}
		
      {Table}
      \vspace{3mm}
      
			\begin{tabular}{ccll}
			\noalign{\hrule height 0.8pt}
			&&&
			\\[-4mm]
			Case & $G$ & $\Aut(G)$ & \hspace*{40mm}$G^{\sigma_3} \cap G^{\tau_3}$
			\\
			&&&
			\\[-5mm]
			\hline
			&&&
			\\[-4mm]
			1 &$G_2$ 
			& $\tilde{\gamma}_3,\tilde{w}_3$	
			& $(U(1) \times U(1))/\Z_2$
			\\
			&&&
			\\[-5mm]
			\hline
			&&&
			\\[-4mm]
			2 & 
			& $\tilde{\gamma}_3,\tilde{\sigma}_3$
			& $(U(1) \times Sp(1) \times U(2))/\Z_2$
			\\
			&&&
			\\[-5mm]
			&&&
			\\[-4mm]
			3 &$F_4$ & $\tilde{\gamma}_3,\tilde{w}_3$ 
			& $(U(1) \times U(1)\times SU(3))/\Z_3$
			\\
			&&&
			\\[-5mm]
			&&&
			\\[-4mm]
			4 & 
			& $\tilde{\sigma}_3,\tilde{w}_3$
			& $(SU(3) \times U(1) \times U(1))/\Z_3$
			\\
			&&&
			\\[-5mm]
			\hline
			&&&
			\\[-4mm]
			5 & 
			&$\tilde{\gamma}_3,\tilde{\sigma}_3$ 
			& $(U(1)\times U(1)\times U(1)
			\times SU(2)\times SU(2)\times SU(2))/(\Z_2^{\,\times 4})$
			\\
			&&&
			\\[-5mm]
			&&&
			\\[-4mm]
			6 & 
			& $\tilde{\gamma}_3,\tilde{\nu}_3$ 
			& $(U(1) \times U(1)\times SU(5))/(\Z_2\times \Z_5)$
			\\
			&&&
			\\[-5mm]
			&&&
			\\[-4mm]
			7 & 
			& $\tilde{\gamma}_3,\tilde{\mu}_3$ 
			& $(U(1)\times U(1) \times U(1)\allowbreak \times U(1) \times SU(2)\times SU(2))/(\Z_2\times\Z_2\times\Z_4)$
			\\
			&&&
			\\[-5mm]
			&&&
			\\[-4mm]
			8 & 
			& $\tilde{\gamma}_3,\tilde{w}_3$ 
			& $(U(1) \times U(1) \times SU(3)\times SU(3))/(\Z_2 \times \Z_3)$
			\\
			&&&
			\\[-5mm]
			&&&
			\\[-4mm]
			9 & $E_6$ 
			& $\tilde{\sigma}_3,\tilde{\nu}_3$ 
			& $(Sp(1)\times U(1) \times U(1)\times U(1) \allowbreak \times SU(2)\times SU(2))/(\Z_2\times\Z_2\times\Z_4)$
			\\
			&&&
			\\[-5mm]
			&&&
			\\[-4mm]
			10 & 
			& $\tilde{\sigma}_3,\tilde{\mu}_3$ 
			& $(U(1)\times Spin(2)\times Spin(8))/(\Z_2\times \Z_4)$
			\\
			&&&
			\\[-5mm]
			&&&
			\\[-4mm]
			11 & 
			& $\tilde{\sigma}_3,\tilde{w}_3$ 
			& $(SU(3)\times U(1)\times U(1)\times U(1)\times U(1))/\Z_3$
			\\
			&&&
			\\[-5mm]
			&&&
			\\[-4mm]
			12 &  
			& $\tilde{\nu}_3,\tilde{\mu}_3$ 
			& $(Sp(1)\times U(1) \times U(1)\times U(1) \allowbreak \times SU(2)\times SU(2))/(\Z_2\times\Z_2\times\Z_4)$
			\\
			&&&
			\\[-5mm]
			&&&
			\\[-4mm]
			13 & 
			& $\tilde{\nu}_3,\tilde{w}_3$ 
			& $(Sp(1) \times U(1)\times U(1) \times SU(2)\times SU(3))/(\Z_2\times \Z_2\times \Z_3)$
			\\
			&&&
			\\[-5mm]
			&&&
			\\[-4mm]
			14 & 
			& $\tilde{\mu}_3,\tilde{w}_3$  
			& $(SU(3)\times U(1)\times U(1)\times U(1)\times U(1))/\Z_3$
			\\
			&&&
			\\[-5mm]
			\noalign{\hrule height 0.9pt}
			\end{tabular}
		\end{center}
\vspace{4mm}

We use the same notations as in \cite{miya1}, \cite{iy1}, \cite{realization G_2} and \cite{iy0}.
Finally, the author would like to say that the features of this article are to give elementary proofs of the isomorphism of groups using homomorphism theorem. In the near future, for the case where $G=E_7$ and $E_8$, we will provide some realizations of globally exceptional $\varmathbb{Z}_3 \times \varmathbb{Z}_3$-symmetric spaces as Part II and Part III.
	
\section{Preliminaries}

\subsection{Cayley algebra and compact Lie group of type ${\rm G}_2$} 

Let $\mathfrak{C}=\{e_0 =1, e_1, e_2, e_3, e_4, e_5, e_6, e_7 \}_{\sR}$ be the division Cayley algebra. In $\mathfrak{C}$, since the multiplication and the inner product are well known, these are omitted.
\vspace{1mm}

The connected compact Lie group of type ${\rm G_2}$ is given by
$$
G_2 =\{\alpha \in \Iso_{\sR}(\mathfrak{C})\,|\, \alpha(xy)=(\alpha x) (\alpha y) \}\vspace{1mm}.
$$ 
\subsection{Exceptional Jordan algebra and compact Lie group of type ${\rm F}_4$} 

Let  
$\mathfrak{J}(3,\mathfrak{C} ) = \{ X \in M(3, \mathfrak{C}) \, | \, X^* = X \}$ be the 
exceptional Jordan algebra. 
In $\mathfrak{J}(3,\mathfrak{C} )$, the Jordan multiplication $X \circ Y$, the 
inner product $(X,Y)$ and a cross multiplication $X \times Y$, called the Freudenthal multiplication, are defined by
$$
\begin{array}{c}
X \circ Y = \dfrac{1}{2}(XY + YX), \quad (X,Y) = \tr(X \circ Y),
\vspace{1mm}\\
X \times Y = \dfrac{1}{2}(2X \circ Y-\tr(X)Y - \tr(Y)X + (\tr(X)\tr(Y) 
- (X, Y))E), 
\end{array}$$
respectively, where $E$ is the $3 \times 3$ unit matrix. Moreover, we define the trilinear form $(X, Y, Z)$, the determinant $\det \,X$ by
$$
(X, Y, Z)=(X, Y \times Z),\quad \det \,X=\dfrac{1}{3}(X, X, X),
$$
respectively, and briefly denote $\mathfrak{J}(3, \mathfrak{C})$
by $\mathfrak{J}$.
\vspace{1mm}

The connected compact Lie group of type ${\rm F_4}$ is given by
\begin{align*}
	F_4 &= \{\alpha \in \Iso_{\sR}(\mathfrak{J}) \, | \, \alpha(X \circ Y) = \alpha X \circ \alpha Y \}
	\\[1mm]
	&=  \{\alpha \in \Iso_{\sR}(\mathfrak{J}) \, | \, \alpha(X \times Y) = \alpha X \times \alpha Y \}. 
\end{align*}
Then we have naturally the inclusion $G_2 \subset F_4$ as follows:
\begin{align*}
\varphi:G_2 \to F_4,\,\,\varphi(\alpha)X=\begin{pmatrix}
\xi_1 & \alpha x_3 & \ov{\alpha x_2} \\
\ov{\alpha x_3} & \xi_2 & \alpha x_1 \\ 
\alpha x_2 & \ov{\alpha x_1} & \xi_3
\end{pmatrix},\,\, X \in \mathfrak{J}.
\end{align*} 
\subsection{Complex exceptional Jordan algebra and Compact Lie group of type ${\rm E}_6$} 
Let $\mathfrak{J}(3,\mathfrak{C})^C = \{ X \in M(3, \mathfrak{C})^C \, | \, X^* = X \}$ be the complexification of the exceptional Jordan algebra $\mathfrak{J}$. In $\mathfrak{J}(3,\mathfrak{C})^C$, as in $\mathfrak{J}$, we can also define the multiplication $X \circ Y, X \times Y$, the inner product $(X, Y)$, the trilinear forms $(X, Y, Z)$ and the determinant $\det \, X$ in the same manner, and those have the same properties. The  $\mathfrak{J}(3,\mathfrak{C} )^C$ is called the complex exceptional Jordan algebra, and briefly denote $\mathfrak{J}(3, \mathfrak{C})^C$ by $\mathfrak{J}^C$. 
\vspace{1mm}

The connected compact Lie group of type ${\rm E_6}$ is given by
\begin{align*}
		E_6 &= \{\alpha \in \Iso_C(\mathfrak{J}^C) \, | \, \det\, \alpha X = \det\, X, \langle \alpha X, \alpha Y \rangle = \langle X, Y \rangle \}
		\\ 
		 &=\{\alpha \in \Iso_C(\mathfrak{J}^C) \, | \,\alpha X \times \alpha Y=\tau\alpha\tau(X \times Y) , \langle \alpha X, \alpha Y \rangle = \langle X, Y \rangle \}
\end{align*}
where $\tau$ is a complex conjugation in $\mathfrak{J}^C$: $\tau(X+iY)=X-iY, \,X, Y \in \mathfrak{J}$ and the Hermite inner product $\langle X, Y \rangle$ is defined by $(\tau X, Y)$.

\noindent Then we have naturally the inclusion $F_4 \subset E_6$ as follows:
\begin{align*}
   \varphi:F_4 \to E_6,\,\,\varphi(\alpha)(X_1+iX_2)=(\alpha X_1)+i(\alpha X_2),\,\,X_1+iX_2 \in \mathfrak{J}^C, X_i \in \mathfrak{J}.
\end{align*}

\if0
In the last of this section, we state useful lemma. 

\begin{lemma}\label{lemma 2.3.}
	For Lie groups $G, G' $,  let a mapping $\varphi : G \to G'$ be a homomorphism of Lie groups. When $G'$ is connected, if $\Ker\,\varphi$ is discrete and $\dim(\mathfrak{g})=\dim(\mathfrak{g}')$, $\varphi$ is surjective.
\end{lemma}
\begin{proof}
	The proof is omitted (see \cite[Proposition 8.2 (1)]{iy4} in detail).
\end{proof}

\begin{lemma}[E. Cartan-Ra\v{s}evskii]\label{lemma 2.3.1}
	Let $G$ be a simply connected Lie group with a finite order automorphism $\sigma$
	of $G$. Then $G^\sigma$ is connected.
\end{lemma}
\begin{proof}
	The proof is omitted (cf. \cite[Lemma 0.7]{realization G_2}).
\end{proof}
\noindent Hereafter, using these lemmas without permission each times, we often prove lemma, proposition or theorem.

We almost use the same notation as \cite{iy0}, in particular the complex fields $\C, C$ are as follows.
\begin{align*}
   \C=\{x+ye_1 \,|\, x,y \in \R \},\quad C=\{x+yi \,|\, x,y \in \R \}(=\R^C).
\end{align*}
\fi

\section{The inner automorphisms of order $3$ and the fixed points subgroups by them}\label{section 3}

In this section, we will rewrite the inner automorphisms of order $3$ on $G=G_2, F_4,E_6$ and the fixed points subgroups of $G$ by them which were realized and determined in \cite{iy1}, in association with the involutive inner automorphisms. However, the detailed proofs are omitted.

\subsection{In $G_2$}\label{subsection 3.1}

Let $\mathfrak{C}=\H \oplus \H e_4$ be Cayley devision algebra, where $\H$ is the field of quaternion number. Since a multiplication, a conjugation and inner product in $\mathfrak{C}=\H \oplus \H e_4$ are well known, these are ommited. If necessary, refer to \cite{miya1},\cite{realization G_2} and \cite{iy0}.

We define an $\R$-linear transformation $\gamma$ of $\mathfrak{C}$ by 
\begin{align*}
		\gamma(m+ne_4)=m-ne_4, \,\, m+ne_4 \in \H \oplus \H e_4 = \mathfrak{C}.
\end{align*}
Then we have that $\gamma \in G_2$ and $\gamma^2 =1$. Hence $\gamma$ induces the involutive inner automorphism $\tilde{\gamma}$ on $G_2: \tilde{\gamma}(\alpha)=\gamma\alpha\gamma, \alpha \in G_2$, so we have the following well-known result.

\begin{proposition}\label{proposition 3.1.1}
	The group $(G_2)^\gamma$ is isomorphism to the group $(Sp(1) \times Sp(1))/\Z_2${\rm:}  $(G_2)^\gamma \cong  (Sp(1) \times Sp(1))/\Z_2, $ $ \Z_2=\{ (1,1), (-1,-1) \}$.
\end{proposition}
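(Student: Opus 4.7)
The plan is to realize every element of $(G_2)^\gamma$ as coming from a pair of unit quaternions acting on the decomposition $\mathfrak{C}=\H\oplus\H e_4$, and then apply the homomorphism theorem. First I would define a map
\begin{align*}
\varphi\colon Sp(1)\times Sp(1)\longrightarrow G_2,\qquad \varphi(p,q)(m+ne_4)=p\,m\,\bar{p}+(q\,n\,\bar{p})e_4,
\end{align*}
where $p,q\in Sp(1)\subset\H$ (the exact placement of $p,\bar p,q$ depends on the sign/ordering convention for Cayley multiplication used in \cite{iy0}; I would calibrate it against the doubling rule $(a+be_4)(c+de_4)=(ac-\bar db)+(da+b\bar c)e_4$). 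The first task is to verify $\varphi(p,q)\in G_2$, i.e. that it preserves the Cayley multiplication; this is a short direct computation on the four kinds of products in $(\H\oplus\H e_4)\times(\H\oplus\H e_4)$ using that $a\mapsto pap^{-1}$ is an $\R$-algebra automorphism of $\H$ and that $|q|=1$.

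Next I would check $\varphi(p,q)\gamma=\gamma\varphi(p,q)$, which is immediate from the fact that the two summands $\H$ and $\H e_4$ are preserved. A routine check shows $\varphi$ is a group homomorphism, with image in $(G_2)^\gamma$. For the kernel: $\varphi(p,q)=1$ forces $pmp^{-1}=m$ for all $m\in\H$, whence $p\in\{\pm 1\}$, and then $qn\bar{p}=n$ for all $n\in\H$ forces $q=p$; hence $\Ker\varphi=\{(1,1),(-1,-1)\}\cong\Z_2$.

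The main step is surjectivity of $\varphi$ onto $(G_2)^\gamma$. I would argue it as follows. Any $\alpha\in(G_2)^\gamma$ preserves the $\pm 1$-eigenspaces of $\gamma$, i.e. $\alpha(\H)=\H$ and $\alpha(\H e_4)=\H e_4$; since $\alpha$ is a Cayley automorphism, its restriction to $\H$ is an $\R$-algebra automorphism of $\H$ and therefore equals some $\phi_p\colon m\mapsto pm\bar p$ with $p\in Sp(1)$. Writing $\alpha(e_4)=qe_4$ with $|q|=1$ and using multiplicativity, $\alpha(ne_4)=(qn\bar p)e_4$ follows, so $\alpha=\varphi(p,q)$. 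This is the cleanest path; alternatively I can invoke the Cartan--Ra\v{s}evskii principle that $(G_2)^\gamma$ is connected (since $G_2$ is simply connected and $\gamma$ has finite order) together with the dimension count $\dim(G_2)^\gamma=6=\dim(Sp(1)\times Sp(1))$ and the discreteness of $\Ker\varphi$ to get surjectivity for free.

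The main obstacle will be pinning down the correct placement of conjugations in the formula for $\varphi(p,q)$ so that multiplicativity in $\mathfrak{C}$ holds with the conventions of \cite{iy0}; once that is in place, the homomorphism theorem yields $(Sp(1)\times Sp(1))/\Z_2\cong(G_2)^\gamma$ with $\Z_2=\{(1,1),(-1,-1)\}$.
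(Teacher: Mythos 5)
Your proposal follows essentially the same route as the paper, which defines $\varphi_{{}_{G_2,\gamma}}(p,q)(m+ne_4)=qm\ov{q}+(pn\ov{q})e_4$ (your map is the same up to swapping the roles of the two $Sp(1)$ factors) and cites Yokota for the verification; your surjectivity argument via the $\gamma$-eigenspace decomposition and the fact that every $\R$-algebra automorphism of $\H$ is inner is the standard one, and the kernel computation matches. The only point to watch is that if $\alpha(e_4)=q_0e_4$ then multiplicativity gives $\alpha(ne_4)=(q_0pn\bar{p})e_4$, so the second parameter is $q=q_0p$ rather than $q_0$ itself --- precisely the calibration issue you already flagged, and harmless for the argument.
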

\begin{proof}
	We define a mapping $\varphi_{{}_{311}}: Sp(1) \times Sp(1) \to (G_2)^\gamma$ by 
	\begin{align*}
	\varphi_{{}_{G_2,\gamma}}(p, q)(m+n e_4)=qm \ov{q}+(pn \ov{q}) e_4, \,\,\,m+n e_4 \in \H \oplus \H e_4 =\mathfrak{C}.
	\end{align*}
	This mapping induces the required isomorphism (see \cite [Theorem 1.10.1]{iy0} in detail).
\end{proof}

Let $\bm{\omega}=-(1/2)+(\sqrt{3}/2)e_1 \in U(1) \subset \C \subset \H \subset \mathfrak{C}$. We define an $\R$-linear transformation $\gamma_3$ of $\mathfrak{C}$ by
\begin{align*}
		\gamma_3(m+ne_4)=m+(\bm{\omega} n)e_4, \,\,m+ne_4 \in \H \oplus \H e_4=\mathfrak{C}.
\end{align*}
Then, using the mapping $\varphi_{{}_{G_2, \gamma}}$ above, since $\gamma_3$ is expressed by $\varphi_{{}_{G_2,\gamma}}(\bm{\omega},1)$: $\gamma_3=\varphi_{{}_{G_2,\gamma}}(\bm{\omega},1)$, it is clear that $\gamma_3 \in G_2$ and $(\gamma_3)^3=1$. Hence $\gamma_3$ induces the inner automorphism $\tilde{\gamma}_3$ of order $3$ on $G_2: \tilde{\gamma}_3(\alpha)={\gamma_3}^{-1}\alpha\gamma_3, \alpha \in G_2$. 
\vspace{1mm}

Now, we have the following theorem.

\begin{theorem}\label{theorem 3.1.2}
	The group $(G_2)^{\gamma_3}$ is isomorphism to the group $(U(1) \times Sp(1))/\Z_2${\rm:}  $(G_2)^{\gamma_3} \cong  (U(1) \times Sp(1))/\Z_2,  \Z_2=\{ (1,1), (-1,-1) \}$.
\end{theorem}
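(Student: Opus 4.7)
The plan is to mimic the strategy of Proposition 3.1.1 by restricting the homomorphism $\varphi_{{}_{G_2,\gamma}}$ to an appropriate subgroup of $Sp(1)\times Sp(1)$, checking that the resulting map lands in $(G_2)^{\gamma_3}$, computing its kernel, and then closing with a dimension--plus--connectedness argument.

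First I would define
\begin{align*}
\varphi_{{}_{G_2,\gamma_3}} : U(1)\times Sp(1) \to (G_2)^{\gamma_3},\qquad
\varphi_{{}_{G_2,\gamma_3}}(p,q)(m+n e_4) = q m \bar{q} + (p n \bar{q})e_4,
\end{align*}
where $U(1)=\{p\in \C \mid |p|=1\}\subset \H$. Since $\varphi_{{}_{G_2,\gamma_3}}(p,q) = \varphi_{{}_{G_2,\gamma}}(p,q) \in G_2$ for $p\in U(1)\subset Sp(1)$, membership in $G_2$ is free. The key point is to check that $\varphi_{{}_{G_2,\gamma_3}}(p,q)$ commutes with $\gamma_3$. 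A direct calculation gives $\gamma_3 \varphi_{{}_{G_2,\gamma_3}}(p,q)(m+n e_4) = q m \bar q + (\bm{\omega} p n \bar q)e_4$ while $\varphi_{{}_{G_2,\gamma_3}}(p,q)\gamma_3(m+n e_4) = q m \bar q + (p \bm{\omega} n \bar q)e_4$, and these agree precisely because $p\in U(1)\subset \C$ commutes with $\bm{\omega}\in \C$. Hence the image actually lies in $(G_2)^{\gamma_3}$.

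Next I would verify the homomorphism property (an immediate consequence of the corresponding property of $\varphi_{{}_{G_2,\gamma}}$) and compute the kernel. If $\varphi_{{}_{G_2,\gamma_3}}(p,q)=1$, then $qm\bar{q}=m$ for all $m\in \H$ forces $q\in\{1,-1\}$, and then $pn\bar{q}=n$ for all $n\in \H$ forces $p=q$. Thus $\Ker\,\varphi_{{}_{G_2,\gamma_3}} = \{(1,1),(-1,-1)\}\cong \Z_2$, so we obtain an injection $(U(1)\times Sp(1))/\Z_2 \hookrightarrow (G_2)^{\gamma_3}$.

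For surjectivity I would invoke the two standard lemmas the author uses throughout: $G_2$ is simply connected, so by E.~Cartan--Ra\v{s}evskii $(G_2)^{\gamma_3}$ is connected; then since $\Ker\,\varphi_{{}_{G_2,\gamma_3}}$ is discrete it suffices to verify $\dim (G_2)^{\gamma_3}=\dim(U(1)\times Sp(1))=4$. The main (and only non-routine) obstacle is this dimension count: I would extract it from the eigenspace decomposition of $\Ad(\gamma_3)$ on $\mathfrak{g}_2$, noting that the $+1$-eigenspace is exactly the centralizer of $\bm{\omega}$ inside $\mathfrak{sp}(1)\oplus \mathfrak{sp}(1)\oplus(\H\otimes\H)^{-}$, which collapses to $\mathfrak{u}(1)\oplus\mathfrak{sp}(1)$ of dimension $4$. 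Once the dimensions match, the homomorphism theorem yields $(G_2)^{\gamma_3}\cong (U(1)\times Sp(1))/\Z_2$, as desired.
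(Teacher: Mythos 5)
Your construction is exactly the paper's: both define $\varphi_{{}_{G_2,\gamma_3}}$ as the restriction of $\varphi_{{}_{G_2,\gamma}}$ to $U(1)\times Sp(1)$, and your well-definedness check ($p$ commutes with $\bm{\omega}$ precisely when $p\in U(1)$) and kernel computation are correct. The one place you genuinely diverge is surjectivity. The paper (following Yokota's Theorem 1.2, and the pattern it uses explicitly in, e.g., Theorem \ref{theorem 4.1.1}) argues directly: any $\alpha\in(G_2)^{\gamma_3}$ preserves the eigenspaces of $\gamma_3$ on $\mathfrak{C}$, hence preserves $\H$ and $\H e_4$, hence lies in $(G_2)^{\gamma}$ and can be written as $\varphi_{{}_{G_2,\gamma}}(p,q)$; commutation with $\gamma_3=\varphi_{{}_{G_2,\gamma}}(\bm{\omega},1)$ then forces $\bm{\omega}p\ov{\bm{\omega}}=p$ (the sign $-$ being excluded), i.e.\ $p\in U(1)$. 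You instead invoke Cartan--Ra\v{s}evskii for connectedness of $(G_2)^{\gamma_3}$ and close with a dimension count of the $\Ad(\gamma_3)$-fixed subalgebra of $\mathfrak{g}_2$. Both are sound; the direct argument is more elementary and self-contained (no Lie-algebra eigenspace computation needed, and it is the style the author favours throughout Section 4), while your route is shorter once the $4$-dimensionality of $\mathfrak{g}_2^{\gamma_3}$ is verified --- but that verification is the one step you should actually write out, since the vanishing of the fixed space of $\Ad(\gamma_3)$ on the $8$-dimensional $(-1)$-eigenspace of $\Ad(\gamma)$ is asserted rather than computed in your sketch.
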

\begin{proof}
	Let $U(1)=\{a \in \C \,|\,\ov{a}a=1 \} \subset Sp(1)$, where $\C=\{x+ye_1\,|\, x,y \in \R \}$. Then we define a mapping $\varphi_{{}_{G_2,\gamma_3}}:U(1) \times Sp(1) \to (G_2)^{\gamma_3}$ by the restriction of the mapping $\varphi_{{}_{G_2,\gamma}}$ (Proposition \ref{proposition 3.1.1}). This mapping induces the required isomorphism (see \cite [Theorem 1.2]{iy1} in detail).
\end{proof}

Thus, since the group $(G_2)^{\gamma_3}$ is connected, together with the result of Theorem \ref{theorem 3.1.2}, we have an exceptional $\varmathbb{Z}_3$-symmetric space $G_2/(U(1) \times Sp(1))/\Z_2)$.
\vspace{2mm}

Let $x = m_0 + m_1e_2 + m_2e_4 + m_3e_6 \in \mathfrak{C}, m_i \in \C$. Then we associate such elements $x$ of $\mathfrak{C}$ with the elements 
\begin{align*}
			m_0 + \begin{pmatrix}
			m_1 \\
			m_2 \\
			m_3
			\end{pmatrix}(=:m_0+\m)
\end{align*}
of $\C \oplus \C^3$ and we can define a multiplication, a conjugation and an inner product in $\C \oplus \C^3$ corresponding to the same ones in $\mathfrak{C}$ (see \cite[Subsection 1.5]{iy0} in detail). Hence we have that $\C \oplus \C^3$ is isomorphic to $\mathfrak{C}$ as algebra. Hereafter, if necessary, we identify $\mathfrak{C}$ with $\C \oplus \C^3$: $\mathfrak{C}=\C \oplus \C^3$. 


\if0
We will rewrite alternative definition of Cayley algebra $\mathfrak{C}$ according to \cite[Subsection 1.5]{iy0}.

Any element $x \in \mathfrak{C}$ is expressed by
\begin{align*}
	x &= x_0 + x_1e_1 + x_2e_2 + x_3e_3 + x_4e_4 + x_5e_5 + 
	x_6e_6 + x_7e_7 \quad (x_i \in \R) 
	\\
	&= (x_0 + x _1e_1) + (x_2 + x_3e_1)e_2 + (x_4 + x_5e_1)e_4
	+ (x_6 + x_7e_1)e_6,
\end{align*}
that is,
$$
x = m_0 + m_1e_2 + m_2e_4 + m_3e_6, \quad m_i \in \C.
$$

We associate such element $x$ of $\mathfrak{C}$ with the element $m_0 + \begin{pmatrix}
m_1 \\
m_2 \\
m_3
\end{pmatrix}$ of $\C \oplus \C^3$. 

\noindent In $\C \oplus \C^3$, we define a multiplication, an inner product $(\;\;,\;\,)$ and a conjugation $\overline{{\;}^{\;}\;\;}$ respectively by
\begin{align*}
	(m_0 + \m)(n_0 + \n) &= (m_0 n_0 - \langle \m, \n \rangle ) + 
	(m_0\n + \ov{n_0}\m - \ov{\m \times \n}), 
	\\
	(m_0 + \m, n_0 + \n) &= (m_0, n_0) + (\m, \n), 
	\\
	\ov{m_0 + \m} &= \ov{m_0} - \m, 
\end{align*}
where the real valued symmetric inner product $(\m, \n)$, the Hermitian inner 
product $\langle \m, \n \rangle$  and the exterior product $\m \times \n$ are 
usually defined respectively by
\begin{align*}
(\m, \n) = \frac{1}{2}(\m^{*}\n + \n^{*}\m) = \sum_{i=1}^3(m_i,n_i), \,\, \langle \m, \n \rangle = \sum_{i=1}^3m_i\ov{n}_i, \,\,\m \times \n = 
\begin{pmatrix} 
m_2n_3 - n_2m_3 \\
m_3n_1 - n_3m_1 \\ 
m_1n_2 - n_1m_2
\end{pmatrix}
\end{align*}
\noindent for $\m = \begin{pmatrix}m_1 \\ m_2 \\ m_3\end{pmatrix}$, $\n = \begin{pmatrix}n_1 \\ n_2 \\ n_3\end{pmatrix} 
\in \C^3$. Since these operations correspond to their respective operations in 
$\mathfrak{C}$. From now on, we also identify $\C \oplus \C^3$ with $\mathfrak{C}$ 
: $\mathfrak{C}=\C \oplus \C^3$.
\vspace{1mm}
\fi

Again let $\bm{\omega}=-(1/2)+(\sqrt{3}/2)e_1 \in U(1) \subset \C \subset \H \subset \mathfrak{C}$. We define an $\R$-linear transformation $w_3$ of $\mathfrak{C}=\C \oplus \C^3$ by
\begin{align*}
		w_3(m_0+\m)=m_0+\bm{\omega} \m, \,\,m_0+\m \in \C \oplus \C^3=\mathfrak{C}.
\end{align*}
Then we have that $w_3 \in G_2$ (\cite[Proposition 1.4]{iy1}) and $(w_3)^3=1$. Hence $w_3$ induces the inner automorphism $\tilde{w}_3$ of order $3$ on $G_2$: $\tilde{w}_3(\alpha)={w_3}^{-1}\alpha w_3, \alpha \in G_2$.
\vspace{1mm}

Now, we have the following theorem.

\begin{theorem}\label{theorem 3.1.3}
	The group $(G_2)^{w_3}$ is isomorphic to the group $SU(3)${\rm :} $(G_2)^{w_3} \cong SU(3)$.
\end{theorem}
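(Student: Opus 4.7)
The plan is to mirror the construction in Theorem~\ref{theorem 3.1.2}: I would define an explicit homomorphism $\varphi_{{}_{G_2,w_3}} : SU(3) \to (G_2)^{w_3}$ arising from the canonical action of $SU(3)$ on the $\C^3$ factor of $\mathfrak{C} = \C \oplus \C^3$, and conclude via the standard kernel-plus-dimension argument used throughout the paper. Explicitly, set
\begin{align*}
\varphi_{{}_{G_2,w_3}}(A)(m_0 + \m) = m_0 + A\m, \quad m_0 + \m \in \C \oplus \C^3 = \mathfrak{C}, \;\; A \in SU(3).
\end{align*}

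I would then verify, in order: (i) $\varphi_{{}_{G_2,w_3}}(A) \in G_2$, because the Cayley multiplication on $\C \oplus \C^3$ is built from the Hermitian inner product $\langle \m,\n\rangle$ and the cross product $\m \times \n$, both of which are preserved by $A \in SU(3)$ (the latter using $\det A = 1$), so $\varphi_{{}_{G_2,w_3}}(A)$ is an algebra automorphism of $\mathfrak{C}$; (ii) the commutation $w_3 \varphi_{{}_{G_2,w_3}}(A) = \varphi_{{}_{G_2,w_3}}(A) w_3$ is immediate, since $w_3$ is the scalar $\bm{\omega}$ on $\C^3$ and trivial on $\C$, so the image sits in $(G_2)^{w_3}$; (iii) the map is a group homomorphism with trivial kernel, because $A\m = \m$ for every $\m \in \C^3$ forces $A = I$; (iv) since $G_2$ is simply connected and $w_3$ has finite order, the group $(G_2)^{w_3}$ is connected by Cartan--Ra\v{s}evskii, and together with injectivity and $\dim SU(3) = 8 = \dim (G_2)^{w_3}$, surjectivity follows from the standard Lie-homomorphism lemma used throughout the paper.

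The main obstacle is justifying the dimension equality $\dim (G_2)^{w_3} = 8$. I would establish it by analyzing the $3$-grading induced by $\mathrm{Ad}(w_3)$ on $\mathfrak{g}_2$: the eigenvalues of $\mathrm{Ad}(w_3)$ are $1, \bm{\omega}, \bar{\bm{\omega}}$, with the $+1$-eigenspace isomorphic to $\mathfrak{su}(3)$ and each of the other two eigenspaces isomorphic to the defining $\mathfrak{su}(3)$-module $\C^3$, yielding multiplicities $8 + 3 + 3 = 14 = \dim \mathfrak{g}_2$. If the grading calculation proves awkward, an alternative is to argue surjectivity directly: any $\alpha \in (G_2)^{w_3}$ must preserve the $w_3$-eigenspaces $\C$ and $\C^3$ of $\mathfrak{C}$, hence restricts to a unitary transformation of $\C^3$ whose determinant must equal $1$ by the $G_2$-automorphism condition, exhibiting $\alpha$ as $\varphi_{{}_{G_2,w_3}}(A)$ for some $A \in SU(3)$.
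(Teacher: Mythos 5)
Your proposal is correct and follows essentially the same route as the paper: the paper defines exactly the same homomorphism $\varphi_{{}_{G_2,w_3}}(A)(m_0+\m)=m_0+A\m$ on $\mathfrak{C}=\C\oplus\C^3$ and defers the verification to \cite[Theorem 1.6]{iy1}, which is the standard argument (algebra-automorphism check using unitarity and $\det A=1$, commutation with $w_3$, trivial kernel, connectedness plus the dimension count $14=8+3+3$) that you spell out. No gaps.
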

\begin{proof}
We define a mapping $\varphi_{{}_{G_2,w_3}}: SU(3) \to (G_2)^{w_3}$ by
	\begin{align*}
			\varphi_{{}_{G_2,w_3}}(A)(m_0+\m)=m_0+A\m, \,\,m_0+\m \in \C \oplus \C^3=\mathfrak{C}.
	\end{align*}
	This mapping induces the required isomorphism (see \cite[Theorem 1.6]{iy1} in detail).
\end{proof}

Thus, since the group $(G_2)^{w_3}$ is connected, together with the result of Theorem \ref{theorem 3.1.3}, we have an exceptional $\varmathbb{Z}_3$-symmetric space $G_2/SU(3)$. As is well known, this space is homeomorphic to a $6$-dimensional sphere $S^6$: $G_2/SU(3) \simeq S^6$.   
\vspace{2mm}

The following lemma are useful to determine the structure of groups $G^{\sigma_3} \cap G^{\tau_3}$ in $G_2$.

\begin{lemma}\label{lemma 3.1.4}
	{\rm (1)} The mapping $\varphi_{{}_{G_2,\gamma_3}}:U(1) \times Sp(1) \to (G_2)^{\gamma_3}$ of \,Theorem {\rm \ref{theorem 3.1.2}} satisfies the relational formulas 
	\begin{align*}
 	\gamma_3&=\varphi_{{}_{G_2,\gamma_3}}(\bm{\omega},1),
 	\\
 	w_3&=\varphi_{{}_{G_2,\gamma_3}}(1, \ov{\bm{\omega}}),
	\end{align*}
 	where $\bm{\omega}=-(1/2)+(\sqrt{3}/2)e_1 \in U(1)$.
\vspace{1mm}

	{\rm (2)} The mapping $\varphi_{{}_{G_2,w_3}}:SU(3) \to (G_2)^{w_3}$ of \,Theorem {\rm \ref{theorem 3.1.3}} satisfies the relational formulas
	\begin{align*}
	\gamma_3&=\varphi_{{}_{G_2,w_3}}(\diag(1,\bm{\omega},\ov{\bm{\omega}})),
	\\
	w_3&=\varphi_{{}_{G_2,w_3}}(\bm{\omega}E),
	\end{align*}
	where $\bm{\omega}=-(1/2)+(\sqrt{3}/2)e_1 \in U(1)$. 
\end{lemma}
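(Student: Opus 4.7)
The plan is to verify each of the four relational formulas by direct substitution into the defining expressions for $\varphi_{G_2,\gamma_3}$ and $\varphi_{G_2,w_3}$, followed by a short computation in the Cayley algebra $\mathfrak{C}$ using its two presentations $\mathfrak{C} = \H \oplus \H e_4 = \C \oplus \C^3$.

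Two of the four formulas are essentially immediate. For $\gamma_3 = \varphi_{G_2,\gamma_3}(\bm{\omega}, 1)$, substituting $p = \bm{\omega}$ and $q = 1$ into $\varphi_{G_2,\gamma}(p,q)(m+ne_4) = qm\ov{q} + (pn\ov{q})e_4$ produces $m + (\bm{\omega}n)e_4$, which is exactly the definition of $\gamma_3(m+ne_4)$. For $w_3 = \varphi_{G_2,w_3}(\bm{\omega}E)$, the scalar matrix $\bm{\omega}E$ sends $\m \mapsto \bm{\omega}\m$, so $\varphi_{G_2,w_3}(\bm{\omega}E)(m_0 + \m) = m_0 + \bm{\omega}\m = w_3(m_0 + \m)$.

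The remaining two formulas require passing between the two presentations of $\mathfrak{C}$. For $w_3 = \varphi_{G_2,\gamma_3}(1, \ov{\bm{\omega}})$, substitution gives the map $m + ne_4 \mapsto \ov{\bm{\omega}}m\bm{\omega} + (n\bm{\omega})e_4$. I would split $m = m_0 + m_1 e_2$ and $n = m_2 + m_3 e_2$ with $m_i \in \C$, and use $\bm{\omega}\ov{\bm{\omega}} = 1$, $\ov{\bm{\omega}}^2 = \bm{\omega}$ (from $\bm{\omega}^3 = 1$), together with the commutation behaviour of $\bm{\omega}$ with $e_2$ in $\H$, to reduce the expressions within the associative subalgebra $\H$; rewritten in the $\C \oplus \C^3$ presentation, the result should agree with $w_3(m_0 + \m) = m_0 + \bm{\omega}\m$. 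The formula $\gamma_3 = \varphi_{G_2,w_3}(\diag(1, \bm{\omega}, \ov{\bm{\omega}}))$ is handled by the symmetric argument: compute the right-hand side directly in $\C^3$-coordinates, convert back to the $\H \oplus \H e_4$ presentation, and compare with $\gamma_3(m + ne_4) = m + (\bm{\omega}n)e_4$.

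The main obstacle is nothing conceptually deep: it is the bookkeeping of the various conjugations and of the identification $\H \oplus \H e_4 \cong \C \oplus \C^3$, together with the need to keep every re-association inside a subalgebra generated by at most two elements, where the alternativity of $\mathfrak{C}$ (Artin's theorem) supplies the required associativity. No new ideas beyond this routine computation are needed.
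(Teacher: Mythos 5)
Your proposal is correct and matches the paper's treatment: the paper's entire proof is ``by doing straightforward computation,'' and your outline is precisely that computation spelled out, with the right ingredients ($e_2 a=\ov{a}e_2$ for $a\in\C$, $\ov{\bm{\omega}}^2=\bm{\omega}$, and working inside the associative subalgebra $\H$). The only delicate point, which you correctly flag under ``bookkeeping of conjugations,'' is that the identification $\mathfrak{C}=\C\oplus\C^3$ carries a conjugation in the last slot (namely $m_3e_6=(\ov{m_3}e_2)e_4$), and once that is tracked the components $m_1,m_2,m_3$ all transform as claimed.
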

\begin{proof}
	(1), (2) By doing straightforward computation we obtain the results above. 
\end{proof}

\subsection{In $F_4$}\label{subsection 3.2}

Let $\mathfrak{J}$ be the exceptional Jordan algebra. As is well known, the elements $X$ of $\mathfrak{J}$ take the form 
$$
X = \begin{pmatrix}
\xi_1 & x_3 & \ov{x_2} \\
\ov{x_3} & \xi_2 & x_1 \\ 
x_2 & \ov{x_1} & \xi_3
\end{pmatrix},\,\, \xi_i \in \R,\, x_i \in \mathfrak{C},\, i=1, 2, 3.
$$
Hereafter, in $\mathfrak{J}$, we use the following nations:
\begin{align*}
E_1 &= \left(\begin{array}{ccc}
1 & 0 & 0 \\
0 & 0 & 0 \\
0 & 0 & 0
\end{array}
\right),  \,\,\,\,\,\,\,\,
E_2 = \left(\begin{array}{ccc}
0 & 0 & 0 \\
0 & 1 & 0 \\
0 & 0 & 0
\end{array}
\right),  \,\,\,\,\,\,\,\,\,\,
E_3 = \left(\begin{array}{ccc}
0 & 0 & 0 \\
0 & 0 & 0 \\
0 & 0 & 1
\end{array}
\right),    
\\[2mm]
F_1 (x) &= \left(\begin{array}{ccc}
0 &      0 & 0 \\
0 &      0 & x \\
0 & \ov{x} & 0
\end{array}
\right),  \,\,
F_2(x) = \left(\begin{array}{ccc}
0 & 0 & \ov{x} \\
0 & 0 & 0 \\
x & 0 & 0
\end{array}
\right),  \,\,
F_3 (x) = \left(\begin{array}{ccc}
0 & x & 0 \\
\ov{x} & 0 & 0 \\
0 & 0 & 0
\end{array}
\right).
\end{align*}


\if0
Then $\mathfrak{J}(3, \H) \oplus \H^3$ has the Freudenthal multiplication and the inner product 
\begin{align*}
(M + \a) \times (N + \b) &= \Big(M \times N - \dfrac{1}{2} (\a^*\b + \b^*\a)\Big) - \dfrac{1}{2}(\a N + \b M), 
\\[1mm]
(M + \a, N + \b) &= (M, N) + 2(\a, \b) 
\end{align*}
corresponding those of $\mathfrak{J}$, where $(\a, \b) = (1/2)(\a\b^* + \b\a^*)$. Hence $\mathfrak{J}$ is isomorphic to $\mathfrak{J}(3, \H) $ $\oplus \,\H^3$ as algebra. From now on, we identify $\mathfrak{J}$ with $\mathfrak{J}(3, \H) \oplus \H^3$: $\mathfrak{J}=\mathfrak{J}(3, \H) \oplus \H^3$.
\fi
\vspace{1mm}

We define an $\R$-linear transformation $\gamma$ of $\mathfrak{J}$ by
$$
\gamma X= \begin{pmatrix} \xi_1 & \gamma x_3 & \ov{\gamma x_2} \\
\ov{\gamma x_3} & \xi_2 & \gamma x_1 \\
\gamma x_2 & \ov{\gamma x_1} & \xi_3   \end{pmatrix}
,\,\,X \in \mathfrak{J},
$$
where $\gamma$ on right hand side is the same one as $\gamma \in G_2$. Then we have that $\gamma \in F_4$ and $\gamma^2 =1$. Hence $\gamma$ induce involutive inner automorphism $\tilde{\gamma}$ of $F_4{\rm :}\,\tilde{\gamma}(\alpha)=\gamma\alpha\gamma, \alpha \in F_4$.
\vspace{1mm}

Here, we associate the elements $X$ of $\mathfrak{J}$ with the elements 
\begin{align*}
\begin{pmatrix}
\xi_1 & m_3 & \ov{m_2} \\
\ov{m_3} & \xi_2 & m_1 \\ 
m_2 & \ov{m_1} & \xi_3
\end{pmatrix}
+ (\a_1, \a_2, \a_3)(=:M + \a) 
\end{align*}
of $\mathfrak{J}(3, \H) \oplus \H^3$ and we can define a multiplication, a conjugation and an inner product in $\mathfrak{J}(3, \H) \oplus \H^3$ corresponding to the same ones in $\mathfrak{J}$ (see \cite[Subsection 2.11]{iy0} in detail). 
Hence we have that $\mathfrak{J}(3, \H) \oplus \H^3$ is isomorphic to the exceptional Jordan algebra $\mathfrak{J}$ as algebra. From now on, if necessary we identify $\mathfrak{J}$ with $\mathfrak{J}(3, \H) \oplus \H^3$: $\mathfrak{J}=\mathfrak{J}(3, \H) \oplus \H^3$.
Note that the action to $\mathfrak{J}(3, \H) \oplus \H^3$ of $\gamma$ is as follows.
\begin{align*}
		\gamma(M+\a)=M-\a,\,\,M+\a \in \mathfrak{J}(3, \H) \oplus \H^3=\mathfrak{J}.
\end{align*}

Then we have the following well-known result.

\begin{proposition}\label{proposition 3.2.1}
	The group $(F_4)^\gamma$ is isomorphic to the group $(Sp(1) \times Sp(3))/\Z_2${\rm:}  $(F_4)^\gamma \cong (Sp(1) \times Sp(3))/\Z_2, \,$ $\Z_2 =\{(1, E), (-1, -E) \}$.
\end{proposition}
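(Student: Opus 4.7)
The plan is to construct an explicit surjective homomorphism $\varphi_{{}_{F_4,\gamma}} \colon Sp(1) \times Sp(3) \to (F_4)^\gamma$ whose kernel is exactly $\Z_2 = \{(1,E),(-1,-E)\}$, and to conclude by a dimension-plus-connectedness argument. This parallels the proof pattern of Proposition \ref{proposition 3.1.1}, transported to $\mathfrak{J} = \mathfrak{J}(3,\H) \oplus \H^3$ using the identification given in the preliminaries.

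First, I would write down the candidate map. For $(p,Q) \in Sp(1) \times Sp(3)$, define
\begin{align*}
\varphi_{{}_{F_4,\gamma}}(p,Q)(M+\a) = QMQ^{*} + p\,\a\, Q^{*}, \quad M+\a \in \mathfrak{J}(3,\H) \oplus \H^3 = \mathfrak{J}.
\end{align*}
Commutation with $\gamma$ is automatic from $\gamma(M+\a) = M-\a$, so the map lands in the $\gamma$-fixed locus once we show $\varphi_{{}_{F_4,\gamma}}(p,Q) \in F_4$. The latter amounts to verifying compatibility with the Freudenthal product, i.e.
\begin{align*}
\varphi_{{}_{F_4,\gamma}}(p,Q)\bigl((M+\a) \times (N+\b)\bigr) = \varphi_{{}_{F_4,\gamma}}(p,Q)(M+\a) \times \varphi_{{}_{F_4,\gamma}}(p,Q)(N+\b);
\end{align*}
this breaks into three pieces according to the formulas recalled from \cite{iy0}: the $\mathfrak{J}(3,\H)$-part $M \times N - \frac12(\a^{*}\b + \b^{*}\a)$ uses that $Q(\cdot)Q^{*}$ is a Jordan automorphism of $\mathfrak{J}(3,\H)$ together with $(\a Q^{*})^{*}(\b Q^{*}) = Q \a^{*}\b Q^{*}$, and the $\H^3$-part $\a N + \b M$ is handled by the same conjugation identity combined with $|p|=1$.

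Next, $\varphi_{{}_{F_4,\gamma}}$ is clearly a group homomorphism from the formula. For the kernel, if $\varphi_{{}_{F_4,\gamma}}(p,Q) = 1$, then $QMQ^{*} = M$ for every $M \in \mathfrak{J}(3,\H)$; testing on $E_1, E_2, E_3$ and $F_i(1)$ forces $Q$ to be a real scalar matrix, hence $Q = \pm E$. The condition $p\,\a\,Q^{*} = \a$ for all $\a \in \H^3$ then gives $pQ^{*} = 1$, so $(p,Q) \in \{(1,E),(-1,-E)\}$; conversely both elements are in the kernel.

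Finally, I would use dimension and connectedness. We have $\dim(Sp(1) \times Sp(3)) = 3 + 21 = 24$, which equals $\dim \mathfrak{f}_4^\gamma$ for the involution $\gamma$; since $F_4$ is simply connected and $\gamma$ has finite order, $(F_4)^\gamma$ is connected (Cartan--Ra\v{s}evskii), so $\varphi_{{}_{F_4,\gamma}}$ is surjective. The induced homomorphism
\begin{align*}
(Sp(1) \times Sp(3))/\Z_2 \to (F_4)^\gamma
\end{align*}
is then the required isomorphism. The main obstacle is the Freudenthal-product verification in step one: the noncommutativity of $\H$ makes it essential to place $Q$ and $Q^{*}$ on the correct sides in both the $\mathfrak{J}(3,\H)$ and $\H^3$ factors, and to check that the cross-terms $-\frac12(\a^{*}\b + \b^{*}\a)$ and $-\frac12(\a N + \b M)$ transform consistently; once those identities are verified, everything else is formal.
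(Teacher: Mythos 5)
Your proposal is correct and follows essentially the same route as the paper, which defines the identical map $\varphi_{{}_{F_4,\gamma}}(p,A)(M+\a)=AMA^*+p\a A^*$ on $\mathfrak{J}(3,\H)\oplus\H^3$ and delegates the verification to \cite[Theorem 2.11.2]{iy0}; your kernel computation and the dimension-plus-connectedness surjectivity argument are exactly the standard details being cited there. Nothing to add.
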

\begin{proof}
	We define a mapping $\varphi_{{}_{F_4,\gamma}}: Sp(1) \times Sp(3) \to (F_4)^\gamma$ by
	$$
	\varphi_{{}_{F_4,\gamma}}(p, A)(M+\a)=AMA^* +p\a A^*,\,\,\, M+\a \in \mathfrak{J}(3, \H) \oplus \H^3=\mathfrak{J}.
	$$
	This mapping induces the required isomorphism (see \cite[Theorem 2.11.2]{iy0} in detail).
\end{proof}
\vspace{1mm}

Let $\gamma_3 \in G_2$ be the $\R$-linear transformation of $\mathfrak{C}$. Using the inclusion $G_2 \subset F_4$, $\gamma_3$ is naturally extended to the $\R$-linear transformation of $\mathfrak{J}$. The explicit form of 
$\gamma_3$ as action to $\mathfrak{J}$ is as follows.
\begin{align*}
		\gamma_3 X=
		\begin{pmatrix} \xi_1 & \gamma_3 x_3 & \ov{\gamma_3 x_2} \\
		\ov{\gamma_3 x_3} & \xi_2 & \gamma_3 x_1 \\
		\gamma_3 x_2 & \ov{\gamma_3 x_1} & \xi_3   
		\end{pmatrix},\,\,X \in \mathfrak{J},
\end{align*}
where $\gamma_3$ on the right hand side is the same one as $\gamma_3 \in G_2$. Needless to say, $\gamma_3 \in F_4$ and $(\gamma_3)^3=1$. Hence $\gamma_3$ induces the automorphism $\tilde{\gamma}_3$ of order $3$ on $F_4$: $\tilde{\gamma}_3(\alpha)={\gamma_3}^{-1}\alpha\gamma_3, \alpha \in F_4$. Note that the action to $\mathfrak{J}(3, \H) \oplus \H^3$ of $\gamma_3$ is as follows.
\begin{align*}
\gamma_3(M+\a)=M+\bm{\omega}\a,\,\,M+\a \in \mathfrak{J}(3, \H) \oplus \H^3=\mathfrak{J}.
\end{align*}

Now, we have the following theorem.

\begin{theorem}\label{theorem 3.2.2}
	The group $(F_4)^{\gamma_3}$ is isomorphic to the group $(U(1) \times Sp(3))/\Z_2$ {\rm :} $(F_4)^{\gamma_3} \cong (U(1) \times Sp(3))/\Z_2, \Z_2=\{(1,E), 
	(-1,-E) \}$.
\end{theorem}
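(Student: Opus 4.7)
The plan is to mirror the argument used for Theorem \ref{theorem 3.1.2} in the $G_2$ setting: restrict the surjective homomorphism $\varphi_{{}_{F_4,\gamma}}\colon Sp(1)\times Sp(3)\to(F_4)^{\gamma}$ of Proposition \ref{proposition 3.2.1} to the subgroup $U(1)\times Sp(3)$, where $U(1)=\{a\in\C\mid \bar{a}a=1\}\subset Sp(1)$, and show that this restriction $\varphi_{{}_{F_4,\gamma_3}}$ lands in $(F_4)^{\gamma_3}$, has kernel $\Z_2=\{(1,E),(-1,-E)\}$, and is surjective.

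First, I would verify that the image of $U(1)\times Sp(3)$ lies inside $(F_4)^{\gamma_3}$. From the explicit action $\gamma_3(M+\a)=M+\bm{\omega}\a$ on $\mathfrak{J}(3,\H)\oplus\H^3$, one reads off the key identity $\gamma_3=\varphi_{{}_{F_4,\gamma}}(\bm{\omega},E)$ (an $F_4$-analogue of Lemma \ref{lemma 3.1.4}(1)). Conjugation by $\gamma_3$ then sends $\varphi_{{}_{F_4,\gamma}}(p,A)$ to $\varphi_{{}_{F_4,\gamma}}(\bm{\omega}p\bm{\omega}^{-1},A)$, so the fixed elements are exactly those with $p$ centralizing $\bm{\omega}$ in $Sp(1)\subset\H$. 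Since $\bm{\omega}\in\C\setminus\R$, this centralizer is precisely $\C\cap Sp(1)=U(1)$, showing both that the restriction lands in $(F_4)^{\gamma_3}$ and that the image actually coincides with $(F_4)^{\gamma}\cap(F_4)^{\gamma_3}$.

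Second, the kernel is immediate. By Proposition \ref{proposition 3.2.1}, $\ker\varphi_{{}_{F_4,\gamma}}=\{(1,E),(-1,-E)\}$, and both elements already sit in $U(1)\times Sp(3)$, so $\ker\varphi_{{}_{F_4,\gamma_3}}=\Z_2$ as asserted. The homomorphism property is inherited from $\varphi_{{}_{F_4,\gamma}}$.

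The main obstacle is surjectivity. Because $F_4$ is simply connected and $\gamma_3$ has finite order, $(F_4)^{\gamma_3}$ is connected by the E.~Cartan--Raševskiĭ theorem (cf.~\cite{realization G_2}), so by the standard discrete-kernel/equal-dimension criterion it suffices to check $\dim(F_4)^{\gamma_3}=\dim(U(1)\times Sp(3))=22$. For this I would decompose $\mathfrak{f}_4$ into $\pm 1$-eigenspaces of $\gamma$ as $\mathfrak{f}_4=(\mathfrak{sp}(1)\oplus\mathfrak{sp}(3))\oplus\mathfrak{m}$ with $\dim\mathfrak{m}=28$, and examine $\ad\gamma_3$ on each summand: on $\mathfrak{sp}(1)$ it acts as $\Ad(\bm{\omega})$, fixing only the line $\R e_1\cong u(1)$; on $\mathfrak{sp}(3)$ it is trivial; and on $\mathfrak{m}$ (which carries a factor of $\H^3$ on which $\gamma_3$ acts by left multiplication by $\bm{\omega}$) the eigenvalues are only $\bm{\omega},\bar{\bm{\omega}}$, hence the fixed subspace is zero. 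Adding up, $\dim(F_4)^{\gamma_3}=1+21+0=22$, which matches, and surjectivity follows. The bookkeeping of the $\gamma_3$-action on the $\gamma$-antiinvariant $28$-dimensional $\mathfrak{m}$ is the most delicate point and is where I expect essentially all of the work to lie; once that is in place, $\varphi_{{}_{F_4,\gamma_3}}$ descends to the required isomorphism $(U(1)\times Sp(3))/\Z_2\cong(F_4)^{\gamma_3}$.
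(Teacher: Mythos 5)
Your proposal is correct and takes essentially the same route as the paper: the paper's proof of Theorem \ref{theorem 3.2.2} consists precisely of defining $\varphi_{{}_{F_4,\gamma_3}}$ as the restriction of $\varphi_{{}_{F_4,\gamma}}$ to $U(1)\times Sp(3)$ and citing Yokota for the verification. The details you supply --- well-definedness via $\gamma_3=\varphi_{{}_{F_4,\gamma}}(\bm{\omega},E)$, the kernel $\{(1,E),(-1,-E)\}$, and surjectivity from the connectedness of $(F_4)^{\gamma_3}$ together with the dimension count $1+21+0=22$ --- are exactly the standard ones behind that citation.
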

\begin{proof}
	As in the proof of Theorem \ref{theorem 3.1.2}, let $U(1)=\{a \in \C \,|\,\ov{a}a=1 \} \subset Sp(1)$. We define a mapping $\varphi_{{}_{F_4,\gamma_3}}:U(1) \times Sp(3) \to (F_4)^{\gamma_3}$ by the restriction of the mapping $\varphi_{{}_{F_4,\gamma}}$ (Proposition \ref{proposition 3.2.1}). This mapping induces the required isomorphism (see \cite [Theorem 2.2]{iy1} in detail).	
\end{proof}

Thus, since the group $(F_4)^{\gamma_3}$ is connected, together with the result of Theorem \ref{theorem 3.2.2}, we have an exceptional $\varmathbb{Z}_3$-symmetric space $F_4/((U(1) \times Sp(3))/\Z_2)$.
\vspace{1mm}

We define an $\R$-linear transformation $\sigma$ of $\mathfrak{J}$ by
\begin{align*}
\sigma X= \begin{pmatrix} \xi_1 & -x_3 & -\ov{x_2} \\
-\ov{x_3} & \xi_2 & x_1 \\
-x_2 & \ov{x_1} & \xi_3   \end{pmatrix}
,\,\,X \in \mathfrak{J},
\end{align*}
Then we have that $\sigma \in F_4$ and $\sigma^2 =1$. Hence $\sigma$ induce involutive inner automorphism $\tilde{\sigma}$ on $F_4{\rm :}\,\tilde{\sigma}(\alpha)=\sigma\alpha\sigma, \alpha \in F_4$.
\vspace{1mm}

Then we have the following well-known result.

\begin{proposition}\label{proposition 3.2.3}
	The group $(F_4)^\sigma$ is isomorphic to the group $Spin(9)${\rm:}$(F_4)^\sigma \!\cong \!Spin(9)$.
\end{proposition}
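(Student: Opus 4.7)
The plan is to construct an explicit injective homomorphism $\varphi_{{}_{F_4,\sigma}}\colon Spin(9) \to (F_4)^\sigma$ and then invoke the homomorphism theorem together with a dimension count, paralleling the arguments used for Proposition \ref{proposition 3.2.1} and Theorem \ref{theorem 3.2.2}.

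First I would decompose $\mathfrak{J}$ into the $\pm 1$-eigenspaces of $\sigma$. Direct inspection of the defining formula yields
\[
V_+ = \R E_1 \oplus \R E_2 \oplus \R E_3 \oplus F_1(\mathfrak{C}), \qquad V_- = F_2(\mathfrak{C}) \oplus F_3(\mathfrak{C}),
\]
of dimensions $11$ and $16$, and every $\alpha \in (F_4)^\sigma$ preserves this splitting. The $9$-dimensional subspace $V_0 := \R(E_2 - E_3) \oplus F_1(\mathfrak{C}) \subset V_+$ carries the restriction of the trace form on $\mathfrak{J}$, and the Freudenthal multiplication gives a Clifford-type action of $V_0$ on $V_-$, making $V_-$ a copy of the spin module $\Delta_9$ of $Spin(9)$ and $V_0$ a copy of the vector module $\R^9$.

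Next I would define $\varphi_{{}_{F_4,\sigma}}$ by letting $\beta \in Spin(9)$ act trivially on $\R E_1 \oplus \R(E_2 + E_3)$, via the vector representation $Spin(9) \to SO(9)$ on $V_0$, and via the spin representation on $V_- \cong \Delta_9$. The verification that this action preserves the Freudenthal cross product $X \times Y$ -- equivalently, that its image lies in $F_4$ -- follows from the standard Clifford-module identities for $\Delta_9$ and is carried out in detail in \cite[Chapter 2]{iy0}. By construction the action commutes with $\sigma$, so the image lies in $(F_4)^\sigma$.

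Finally, I would apply the homomorphism theorem. The central element $-1 \in Spin(9)$ acts as $+1$ on $V_+$ (trivially on $\R E_1 \oplus \R(E_2+E_3)$ and via the identity of $SO(9)$ on $V_0$) and as $-1$ on $V_-$ through the spin representation, i.e.\ precisely as $\sigma$, which is nontrivial in $F_4$; hence $\mathrm{Ker}\,\varphi_{{}_{F_4,\sigma}} = \{1\}$. A dimension count gives $\dim Spin(9) = 36 = 52 - 16 = \dim F_4 - \dim V_-$, matching $\dim (F_4)^\sigma$, and since $F_4$ is simply connected $(F_4)^\sigma$ is connected; thus $\varphi_{{}_{F_4,\sigma}}$ is a Lie group isomorphism $Spin(9) \cong (F_4)^\sigma$. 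The main obstacle is the explicit verification that the spin action on $V_0 \oplus V_-$ really extends to an automorphism of $(\mathfrak{J}, \times)$: this requires identifying Clifford multiplication with the relevant Freudenthal products and checking the resulting compatibility identities, whereas every other step is pure kernel- and dimension-bookkeeping.
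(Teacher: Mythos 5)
Your argument is correct in outline, but it takes a genuinely different route from the paper, whose entire proof is a two-line citation: Yokota proves $(F_4)_{E_1}\cong Spin(9)$ (the stabilizer of $E_1$ is a $Spin(9)$ acting by the vector representation on $\R(E_2-E_3)\oplus F_1(\mathfrak{C})$ and the spin representation on $F_2(\mathfrak{C})\oplus F_3(\mathfrak{C})$) and separately that $(F_4)^{\sigma}=(F_4)_{E_1}$, and the paper simply composes these. You bypass the intermediate stabilizer subgroup altogether and build a homomorphism $Spin(9)\to(F_4)^{\sigma}$ directly from the Clifford-module structure of $V_0\oplus V_-$, then finish by injectivity, a dimension count, and connectedness of $(F_4)^{\sigma}$ (Cartan--Ra\v{s}evskii, since $F_4$ is simply connected). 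What the paper's route buys is that both nontrivial facts are already on the shelf; what yours buys is that it makes visible why $\sigma=D_{-1}$ is exactly the image of $-1\in Spin(9)$, so the kernel computation is immediate. Two caveats keep your version from being self-contained. First, the identity $\dim(F_4)^{\sigma}=\dim F_4-\dim V_-$ is asserted, not proved: it amounts to showing that the $(-1)$-eigenspace of $\ad(\sigma)$ on the Lie algebra of $F_4$ is the $16$-dimensional spin module, which one gets from the orbit map $\delta\mapsto\delta E_1$ (its kernel is the isotropy algebra at $E_1$ and its image lies in $V_-$, with surjectivity needing the transitivity of $F_4$ on the Cayley projective plane or an explicit computation); without this the dimension count does not yet establish surjectivity. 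Second, the step you yourself flag as the main obstacle --- that the spin action on $V_0\oplus V_-$ preserves the Freudenthal product --- is deferred to exactly the same source the paper cites, so in the end both proofs lean on Yokota's Chapter 2; yours just reorganizes which theorems of that chapter are being invoked.
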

\begin{proof}
	From \cite[Thorem 2.7.4]{iy0}
	, we have $(F_4)_{E_1} \cong Spin(9)$, so by proving that $(F_4)^\sigma \cong (F_4)_{E_1}$ (\cite[Thorem 2.9.1]{iy0}) we have the required isomorphism (see \cite[Sections 2.7, 2.9 ]{iy0} in detail).
\end{proof}
\vspace{1mm}

Let $U(1)=\{a \in \C \,|\,\ov{a}a=1 \}$. For $a \in U(1)$, we define an $\R$-linear transformation $D_a$ of $\mathfrak{J}$ by
\begin{align*}
		D_a X= 
		\begin{pmatrix} \xi_1 & x_3 a & \ov{ax_2} \\
		\ov{x_3 a} & \xi_2 & \ov{a}x_1\ov{a} \\
		a x_2 & a\ov{x_1}a & \xi_3   
		\end{pmatrix},\,\, X \in \mathfrak{J}.
\end{align*}
Then, since $D_a=\varphi_{{}_{F_4,\gamma}}(1,\diag(1,\ov{a},a))$, we have that $D_a \in F_4$. Hence, by corresponding $a \in U(1)$ to $D_a \in F_4$, $U(1)$ is embedded into $F_4$.
In addition, we can express $\sigma$ defined above by $D_{-1}$: $\sigma=D_{-1}$.

Let $\bm{\omega}=-(1/2)+(\sqrt{3}/2)e_1 \in U(1)$. Then we define an $\R$-linear transformation $\sigma_3$ of $\mathfrak{J}$ by
\begin{align*}
		\sigma_3X= 
		\begin{pmatrix} \xi_1 & x_3 \bm{\omega} & \ov{\bm{\omega} x_2} \\
		\ov{x_3 \bm{\omega}} & \xi_2 & \ov{\bm{\omega}}x_1\ov{\bm{\omega}} \\
		\bm{\omega} x_2 & \bm{\omega}\ov{x_1}\bm{\omega} & \xi_3   
		\end{pmatrix},\,\, X \in \mathfrak{J}.
\end{align*}
Needless to say, since $\sigma_3=D_\omega=\varphi_{{}_{F_4,\gamma}}(1,\diag(1,\ov{\bm{\omega}},\bm{\omega}))$, we have that $\sigma_3 \in F_4$. Hence $\sigma_3$ induces the automorphism $\tilde{\sigma}_3$ of order $3$ on $F_4$: $\tilde{\sigma}_3(\alpha)={\sigma_3}^{-1}\alpha\sigma_3, \alpha \in F_4$.
\vspace{1mm}

Now, we have the following theorem.

\begin{theorem}\label{theorem 3.2.4}
	The group $(F_4)^{\sigma_3}$ is isomorphic to the group $(Spin(2) \times Spin(7))/\Z_2${\rm:} $(F_4)^{\sigma_3} \cong (Spin(2) \times Spin(7))/\Z_2, \Z_2=\{(1,1), (\sigma,\sigma)\}$.
\end{theorem}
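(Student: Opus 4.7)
The plan is to build an explicit surjective homomorphism
$\varphi_{{}_{F_4,\sigma_3}}\colon Spin(2)\times Spin(7)\to (F_4)^{\sigma_3}$
and then read off both the kernel and surjectivity. The starting observation is that $\sigma_3=D_{\bm{\omega}}$ commutes with $\sigma=D_{-1}$ (both lie in the abelian circle $D_{U(1)}\subset F_4$), so by Proposition \ref{proposition 3.2.3} the element $\sigma_3$ lies inside $(F_4)^\sigma\cong Spin(9)$. Its image in $SO(9)$ has order $3$, hence is a $2\pi/3$ rotation in a uniquely determined $2$-plane $P\subset\mathbb{R}^9$, fixing the orthogonal $7$-plane $P^\perp$ pointwise. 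I would then take $Spin(2)$ to be the double cover of $SO(P)$ inside $Spin(9)$, which coincides with $D_{U(1)}\subset F_4$, and $Spin(7)$ to be the double cover of $SO(P^\perp)$ inside $Spin(9)$. The product of these two subgroups lands in $(F_4)^{\sigma_3}$ because $D_{U(1)}$ is abelian (so it centralizes $\sigma_3$) and $Spin(7)$ was chosen inside the centralizer of $D_{U(1)}$.

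For the kernel I would use that the central elements $-1\in Spin(2)$ and $-1\in Spin(7)$ are both in the kernel of the covering $Spin(9)\to SO(9)$, hence both equal the central element $-1\in Spin(9)$; under the isomorphism of Proposition \ref{proposition 3.2.3} this corresponds to $\sigma\in F_4$. No other element of $Spin(2)\times Spin(7)$ maps to the identity of $Spin(9)$, so the kernel of $\varphi_{{}_{F_4,\sigma_3}}$ is exactly $\{(1,1),(\sigma,\sigma)\}\cong\Z_2$.

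The main obstacle is surjectivity, which I would deduce from two facts. First, $(F_4)^{\sigma_3}$ is connected: $F_4$ is simply connected and $\sigma_3$ has finite order, so this follows from the E.\ Cartan--Ra\v{s}evskii principle. Second, $\dim\mathfrak{f}_4^{\sigma_3}=22=\dim Spin(2)+\dim Spin(7)$. Once these are in place, a Lie group homomorphism with discrete kernel into a connected Lie group of the same dimension is automatically surjective. To verify the dimension identity I would decompose $\mathfrak{f}_4=\mathfrak{so}(9)\oplus\mathfrak{m}_{16}$ along the subalgebra $\mathfrak{so}(9)$, and further decompose $\mathfrak{so}(9)=(\mathfrak{so}(2)\oplus\mathfrak{so}(7))\oplus\mathfrak{m}_{14}$ along the $Spin(2)\times Spin(7)$-subalgebra. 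The element $\sigma_3$ acts trivially on the $22$-dimensional $\mathfrak{so}(2)\oplus\mathfrak{so}(7)$ and as a non-trivial cube root of unity on every irreducible constituent of the complementary pieces $\mathfrak{m}_{14}$ (a sum of seven $2$-planes rotated by $2\pi/3$) and $\mathfrak{m}_{16}$ (the $Spin(9)$-spin module, restricted with no $\sigma_3$-fixed vector), so those contribute nothing to the fixed subspace and we obtain the required dimension $22$.
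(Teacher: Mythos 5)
Your proposal is, in substance, the paper's own construction with the details filled in: the paper takes exactly the same $Spin(2)=\{D_a\,|\,a\in U(1)\}$ and $Spin(7)=(F_4)_{E_1,F_1(1),F_1(e_1)}\subset(F_4)^{\sigma}$, the same multiplication map $(D_a,\beta)\mapsto D_a\beta$ and the same kernel $\{(1,1),(\sigma,\sigma)\}$, deferring the verifications to Yokota, while your surjectivity argument (Cartan--Ra\v{s}evskii connectedness plus a dimension count with discrete kernel) is the standard way to close that argument in this literature, and your count $\dim\mathfrak{f}_4^{\,\sigma_3}=1+21=22$ is correct, since the complementary $14$- and $16$-dimensional pieces carry only the eigenvalues $\bm{\omega}^{\pm1}$ of $\sigma_3$. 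The one inference you should repair is ``its image in $SO(9)$ has order $3$, hence is a $2\pi/3$ rotation in a uniquely determined $2$-plane'': an order-$3$ element of $SO(9)$ can rotate as many as four mutually orthogonal $2$-planes, so this does not follow from the order alone, and if more than one plane were rotated the centralizer would be smaller than $SO(2)\times SO(7)$ and the theorem would fail. What is actually needed is the explicit computation that $D_{\bm{\omega}}$ acts on the $9$-dimensional vector representation $\{\xi(E_2-E_3)+F_1(x)\,|\,\xi\in\R,\ x\in\mathfrak{C}\}$ by fixing $E_2-E_3$ and $F_1(e_j)$ for $j\ge 2$ and rotating only $\mathrm{span}\{F_1(1),F_1(e_1)\}$ (a one-line check: $\ov{\bm{\omega}}x\ov{\bm{\omega}}=\bm{\omega}x$ for $x\in\C$, while $\ov{\bm{\omega}}x\ov{\bm{\omega}}=x$ for $x$ orthogonal to $\C$ in $\mathfrak{C}$); with that supplied, the rest of your argument goes through.
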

\begin{proof}
	Let $Spin(2)$ as the group $\{D_a \in F_4 \,|\,a \in U(1) \}$ defined above which is isomorphic to the group $U(1)$ and $Spin(7)$ as the subgroup $(F_4)_{E_1, F_1(1),F_1(e_1)}$ of $F_4$ (cf. \cite[Propsition 2.9 (1)]{iy2}, \cite[Subsection 2.2]{iy1}). We define a mapping $\varphi_{{}_{F_4,\sigma_3}}: Spin(2) \times Spin(7) \to (F_4)^{\sigma_3}$ by
	\begin{align*}
			\varphi_{{}_{F_4,\sigma_3}}(D_a, \beta)=D_a \beta.
	\end{align*}	
	This mapping induces the required isomorphism (see \cite[Lemmas 2.5, 2.6, Theorem 2.7]{iy1} in detail).
\end{proof}

Thus, since the group $(F_4)^{\sigma_3}$ is connected, together with the result of Theorem \ref{theorem 3.2.4}, we have an exceptional $\varmathbb{Z}_3$-symmetric space $F_4/((Spin(2) \times Spin(7))/\Z_2)$.
\vspace{2mm}	

 We define an $\R$-linear transformation $w_3$ of $\mathfrak{J}$ by
\begin{align*}
w_3X= 
\begin{pmatrix} \xi_1 & w_3 x_3 & \ov{w_3 x_2} \\
\ov{w_3 x_3} & \xi_2 & w_3 x_1 \\
w_3 x_2 & \ov{w_3 x_1} & \xi_3   
\end{pmatrix},\,\, X \in \mathfrak{J},
\end{align*}
where $w_3$ on the right hand side is the same one as $w_3 \in G_2$. Needless to say, $w_3 \in F_4$ and $(w_3)^3=1$. Hence $w_3$ induces the automorphism $\tilde{w}_3$ of order $3$ on $F_4$: $\tilde{w}_3(\alpha)={w_3}^{-1}\alpha w_3, \alpha \in F_4$.

We associate the elements $X$ of $\mathfrak{J}$ with the elements 
\begin{align*}
		\begin{pmatrix}
		\xi_1 & c_3 & \ov{c_2} \\
		\ov{c_3} & \xi_2 & c_1 \\ 
		c_2 & \ov{c_1} & \xi_3
		\end{pmatrix} +
		\begin{pmatrix}
		   &  &  \\
		\m_1 \!\!\!& \m_2 \!\!\!& \m_3 \\ 
		   &  & 
		\end{pmatrix}(=:X_{\bm{C}}+M)
\end{align*}
of $\mathfrak{J}(3,\C) \oplus M(3,\C)$, where $\m_i \in \C^3$, 
and we can define a multiplication, a conjugation and an inner product in 
$\mathfrak{J}(3, \C) \oplus M(3,\C)$ corresponding to the same ones in $\mathfrak{J}$ (see \cite[Subsection 2.12]{iy0} in detail). Hence we have that $\mathfrak{J}(3, \C) \oplus M(3,\C)$ is isomorphic to $\mathfrak{J}$ as algebra. Hereafter, if necessary we identify $\mathfrak{J}$ with $\mathfrak{J}(3, \C) \oplus M(3,\C)$: $\mathfrak{J}=\mathfrak{J}(3, \C) \oplus M(3,\C)$. Note that using $\bm{\omega}=-(1/2)+(\sqrt{3}/2)e_1 \in \C$, the action to $\mathfrak{J}=\mathfrak{J}(3, \C) \oplus M(3,\C)$ of $w_3$ is as follows.
\begin{align*}
w_3(X_{\bm{C}}+M)=X_{\bm{C}}+\bm{\omega} M,\,\,X_{\bm{C}}+M \in \mathfrak{J}(3, \C) \oplus M(3,\C)=\mathfrak{J}.
\end{align*}

Now, we have the following theorem.

\begin{theorem}\label{theorem 3.2.5}
		The group $(F_4)^{w_3}$ is isomorphic to the group $(SU(3) \times SU(3))/\Z_3${\rm :} $(F_4)^{w_3} \cong (SU(3) \times SU(3))/\Z_3, \Z_3=\{(E,E),(\bm{\omega} E,\bm{\omega} E),({\bm{\omega}}^{-1}E,{\bm{\omega}}^{-1}E) \}$.
\end{theorem}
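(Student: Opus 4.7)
The plan is to construct an explicit homomorphism $\varphi_{{}_{F_4,w_3}} : SU(3) \times SU(3) \to (F_4)^{w_3}$, determine its kernel by direct inspection, and deduce surjectivity from a dimension-plus-connectedness argument, exactly in the style of Theorems \ref{theorem 3.2.2} and \ref{theorem 3.2.4}.

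Using the identification $\mathfrak{J} = \mathfrak{J}(3,\C) \oplus M(3,\C)$, I would define
\begin{align*}
  \varphi_{{}_{F_4,w_3}}(P,Q)(X_{\bm{C}} + M) = P X_{\bm{C}} P^* + Q M P^{*},
\end{align*}
following the convention of \cite[Section 2.12]{iy0}. The substance of the first step is to verify that $\varphi_{{}_{F_4,w_3}}(P,Q)$ preserves the Freudenthal product on $\mathfrak{J}(3,\C) \oplus M(3,\C)$, and hence lies in $F_4$; this is a direct but somewhat lengthy computation in which the relations $P^*P = Q^*Q = E$ and $\det P = \det Q = 1$ are used precisely to handle the cross terms between the two summands (in particular those involving the exterior product on $\C^3$). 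Commutation with $w_3$ is then immediate, since $w_3$ acts as the identity on $\mathfrak{J}(3,\C)$ and as scalar multiplication by the central element $\bm{\omega}$ on $M(3,\C)$, while $P$ and $Q$ both act $\C$-linearly.

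Next I would compute the kernel. If $\varphi_{{}_{F_4,w_3}}(P,Q) = 1$, then $P X_{\bm{C}} P^* = X_{\bm{C}}$ for every $X_{\bm{C}} \in \mathfrak{J}(3,\C)$ forces $P$ to be a scalar unitary matrix, which combined with $\det P = 1$ gives $P = \lambda E$ with $\lambda^3 = 1$. Substituting this into $Q M P^* = M$ for every $M \in M(3,\C)$ yields $Q = \lambda E$ as well. Hence
\begin{align*}
  \ker \varphi_{{}_{F_4,w_3}} = \{(E,E),\, (\bm{\omega} E,\bm{\omega} E),\,(\ov{\bm{\omega}}E,\ov{\bm{\omega}}E)\} \cong \Z_3,
\end{align*}
matching the statement of the theorem.

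Finally, for surjectivity I would invoke the Cartan--Ra\v{s}evskii principle: since $F_4$ is simply connected and $w_3$ has finite order, $(F_4)^{w_3}$ is connected. As $\ker \varphi_{{}_{F_4,w_3}}$ is discrete, the image is a closed connected subgroup of $(F_4)^{w_3}$ of dimension $\dim(SU(3) \times SU(3)) = 16$, which coincides with the dimension of the fixed Lie subalgebra $(\mathfrak{f}_4)^{w_3}$ (read off from the $\Z_3$-grading $\mathfrak{f}_4 = (\mathfrak{f}_4)^{w_3} \oplus (\mathfrak{f}_4)_{\bm{\omega}} \oplus (\mathfrak{f}_4)_{\ov{\bm{\omega}}}$, whose two non-trivial eigenspaces each have dimension $18$). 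Therefore $\varphi_{{}_{F_4,w_3}}$ is surjective, and combined with the kernel computation this gives the claimed isomorphism. The main technical obstacle is the verification in the first step that the Freudenthal product is preserved; the remaining steps are essentially routine and parallel to the arguments used for the earlier automorphisms of $F_4$ in this section.
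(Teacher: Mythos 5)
Your proposal is correct and follows essentially the same route as the paper: the paper defines the identical homomorphism $\varphi_{{}_{F_4,w_3}}(B,A)(X_{\bm{C}}+M)=AX_{\bm{C}}A^*+BMA^*$ on $\mathfrak{J}(3,\C)\oplus M(3,\C)$ and defers the verification to Yokota's Theorem 2.9 in \cite{iy1}, where exactly your steps appear (well-definedness via preservation of the Freudenthal product, the kernel computation giving $\Z_3$, and surjectivity from the discreteness of the kernel, the equality $\dim(\mathfrak{f}_4)^{w_3}=16$, and the connectedness of $(F_4)^{w_3}$ by Cartan--Ra\v{s}evskii). No gaps; the only part left implicit in both your sketch and the paper is the routine but lengthy check that the map preserves the product on $\mathfrak{J}(3,\C)\oplus M(3,\C)$.
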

\begin{proof}
	We define a mapping $\varphi_{F_4,w_3}:SU(3) \times SU(3) \to  (F_4)^{w_3}$ by
	\begin{align*}
				\varphi_{F_4,w_3}(B, A)(X_{\bm{C}}+M)=AX_{\bm{C}}A^* + BMA^*,\,\,X_{\bm{C}}+M \in \mathfrak{J}(3, \C) \oplus M(3,\C)=\mathfrak{J}.
	\end{align*}
	This mapping induces the required isomorphism (see \cite[Theorem 2.9]{iy1} in detail).
\end{proof}

Thus, since the group $(F_4)^{w_3}$ is connected, together with the result of Theorem \ref{theorem 3.2.5}, we have an exceptional $\varmathbb{Z}_3$-symmetric space $F_4/((SU(3) \times SU(3))/\Z_3)$.
\vspace{1mm}

As in Section 3.1, the following lemma are useful to determine the structure of a group $G^{\sigma_3} \cap G^{\tau_3}$ in $F_4$.

\begin{lemma}\label{lemma 3.2.6}
	{\rm (1)} The mapping $\varphi_{{}_{F_4,\gamma_3}}:U(1) \times Sp(3) \to (G_2)^{\gamma_3}$ of \,Theorem {\rm \ref{theorem 3.2.2}} satisfies the relational formulas 
	\begin{align*}
	\gamma_3&=\varphi_{{}_{F_4,\gamma_3}}(\bm{\omega},E), 
	\\
	\sigma_3&=\varphi_{{}_{F_4,\gamma_3}}(1,\diag(1,\ov{\bm{\omega}},\bm{\omega})),
	\\
	w_3&=\varphi_{{}_{F_4,\gamma_3}}(1, \ov{\bm{\omega}}E), 
	\end{align*}
 where $\bm{\omega}=-(1/2)+(\sqrt{3}/2)e_1 \in U(1)$.
	\vspace{1mm}
	
	{\rm (2)} The mapping $\varphi_{{}_{F_4,w_3}}:SU(3)\times SU(3) \to (F_4)^{w_3}$ of \,Theorem {\rm \ref{theorem 3.2.5}} satisfies the relational formulas
	\begin{align*}
	\gamma_3&=\varphi_{{}_{F_4,w_3}}(\diag(1,\bm{\omega},\ov{\bm{\omega}}),E), 
	\\
	\sigma_3&=\varphi_{{}_{F_4,w_3}}(E,\diag(1,\ov{\bm{\omega}},\bm{\omega}))\\
	w_3&=\varphi_{{}_{F_4,w_3}}(\bm{\omega}E,E), 
	\end{align*}
 where $\bm{\omega}=-(1/2)+(\sqrt{3}/2)e_1 \in U(1)$. 	
\end{lemma}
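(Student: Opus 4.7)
The plan is to verify each of the six identities by direct computation. Since $F_4$ acts faithfully on $\mathfrak{J}$, it suffices to check that the two elements of $F_4$ claimed to be equal induce the same transformation on a generic element of $\mathfrak{J}$, computed in whichever decomposition is most convenient.

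For Part (1), I would work in the decomposition $\mathfrak{J} = \mathfrak{J}(3, \H) \oplus \H^3$ underlying $\varphi_{{}_{F_4,\gamma_3}}$, in which the restricted action reads $(p, A)(M + \a) = AMA^* + p\a A^*$. The first formula is immediate from the stated action $\gamma_3(M + \a) = M + \bm{\omega} \a$. The second formula reduces to the identity $\sigma_3 = D_{\bm{\omega}} = \varphi_{{}_{F_4,\gamma}}(1, \diag(1, \ov{\bm{\omega}}, \bm{\omega}))$ already noted in Subsection 3.2, together with the observation that $(1, \diag(1, \ov{\bm{\omega}}, \bm{\omega}))$ lies in the subgroup $U(1) \times Sp(3)$. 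For the third formula, expanding the right-hand side gives $(M + \a) \mapsto \ov{\bm{\omega}} M \bm{\omega} + \a \bm{\omega}$; for a quaternion $m = m_0 + m_1 e_2$ with $m_i \in \C$, the identities $e_2 \bm{\omega} = \ov{\bm{\omega}} e_2$ and $\ov{\bm{\omega}}^2 = \bm{\omega}$ give $\ov{\bm{\omega}} m \bm{\omega} = m_0 + \bm{\omega} m_1 e_2$, which coincides with the action of $w_3$ on the $\H$-factor via the embedding $\H \subset \C \oplus \C^3$. A parallel computation handles the $\H e_4$-factor, completing the check.

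For Part (2), I would switch to the decomposition $\mathfrak{J} = \mathfrak{J}(3, \C) \oplus M(3, \C)$ underlying $\varphi_{{}_{F_4,w_3}}$, in which the action reads $(B, A)(X_{\bm{C}} + M) = A X_{\bm{C}} A^* + B M A^*$. The third formula is immediate from the stated action $w_3(X_{\bm{C}} + M) = X_{\bm{C}} + \bm{\omega} M$. For the first formula, I would translate the defining action of $\gamma_3$ into this decomposition: using $(a e_2) e_4 = \ov{a} e_6$ for $a \in \C$, a Cayley entry $m_0 + m_1 e_2 + m_2 e_4 + m_3 e_6$ rewrites as $(m_0 + m_1 e_2) + (m_2 + \ov{m_3} e_2) e_4$, and applying $\gamma_3$ then translating back shows that $\gamma_3$ fixes $X_{\bm{C}}$ and sends each column $\m_i$ of $M$ to $\diag(1, \bm{\omega}, \ov{\bm{\omega}}) \m_i$, matching $\varphi_{{}_{F_4,w_3}}(\diag(1, \bm{\omega}, \ov{\bm{\omega}}), E)$. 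For the second formula, using $\sigma_3 = D_{\bm{\omega}} \colon (x_1, x_2, x_3) \mapsto (\ov{\bm{\omega}} x_1 \ov{\bm{\omega}}, \bm{\omega} x_2, x_3 \bm{\omega})$ together with the identity $\ov{\bm{\omega}} y \ov{\bm{\omega}} = y$ for $y$ in the $\C^3$-factor of $\mathfrak{C}$ (verified directly on the basis using alternativity), one finds that $\sigma_3$ acts as $X_{\bm{C}} \mapsto D X_{\bm{C}} D^*$ and $M \mapsto M D^*$ with $D = \diag(1, \ov{\bm{\omega}}, \bm{\omega})$, matching $\varphi_{{}_{F_4,w_3}}(E, D)$.

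The only real obstacle is the bookkeeping involved in passing between the two decompositions $\H \oplus \H e_4$ and $\C \oplus \C^3$ of the Cayley algebra. Once the identities $e_2 \bm{\omega} = \ov{\bm{\omega}} e_2$, $(a e_2) e_4 = \ov{a} e_6$ for $a \in \C$, $\ov{\bm{\omega}}^2 = \bm{\omega}$, and $\ov{\bm{\omega}} y \ov{\bm{\omega}} = y$ for $y \perp \C$ are tabulated, each of the six checks is a short calculation, in line with the author's comment that the results follow by straightforward computation.
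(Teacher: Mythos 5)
Your proposal is correct and matches the paper's approach: the paper's proof is simply ``by doing straightforward computation,'' and your outline carries out exactly that computation, with the key Cayley-algebra identities ($e_2\bm{\omega}=\ov{\bm{\omega}}e_2$, $(ae_2)e_4=\ov{a}e_6$, $\ov{\bm{\omega}}^2=\bm{\omega}$) correctly identified and the translations between the decompositions $\H\oplus\H e_4$ and $\C\oplus\C^3$ handled properly. Nothing further is needed.
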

\begin{proof}
	(1), (2) By doing straightforward computation we obtain the results above. 
\end{proof}

\subsection{In $E_6$}\label{subsection 3.3}

Let $\gamma, \gamma_3 \in G_2 \subset F_4$, and using the inclusion $F_4 \subset E_6$, 
$\gamma, \gamma_3$ are naturally extended to an $C$-linear 
transformation of $\mathfrak{J}^C$. Needless to say, $\gamma, \gamma_3 \in E_6$ and $\gamma^2=(\gamma_3)^3=1$. Hence $\gamma, \gamma_3$ induce the involutive automorphism $\tilde{\gamma}$, the automorphism $\tilde{\gamma}_3$ of order $3$ on $E_6$, respectively: $\tilde{\gamma}(\alpha)=\gamma\alpha\gamma, \tilde{\gamma}_3(\alpha)={\gamma_3}^{-1}\alpha\gamma_3, \alpha \in E_6$. 
\vspace{1mm}

Then we have the following proposition and theorem.

\begin{proposition}\label{proposition 3.3.1}
	The group $(E_6)^\gamma$ isomorphic to the group $(Sp(1) \times SU(6))/\Z_2${\rm:}
	$(E_6)^\gamma \cong (Sp(1) \times SU(6))/\Z_2,\Z_2 =\{(1, E), (-1, -E) \}$.
\end{proposition}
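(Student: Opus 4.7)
The plan is to mimic the strategy used in Propositions \ref{proposition 3.1.1} and \ref{proposition 3.2.1}: construct an explicit homomorphism $\varphi_{{}_{E_6,\gamma}}: Sp(1) \times SU(6) \to (E_6)^\gamma$, compute its kernel to be $\Z_2 = \{(1,E),(-1,-E)\}$, and then conclude surjectivity via a dimension count together with the connectedness of $(E_6)^\gamma$ (which follows from the E.~Cartan--Ra\v{s}evskii principle, since $E_6$ is simply connected and $\gamma$ has finite order).

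To build $\varphi_{{}_{E_6,\gamma}}$, I would exploit the $\gamma$-eigenspace decomposition $\mathfrak{J}^C = \mathfrak{J}(3,\H)^C \oplus (\H^3)^C$, where $\gamma$ acts as $+1$ on $\mathfrak{J}(3,\H)^C$ and as $-1$ on $(\H^3)^C$. Using the identification $\H \cong C^2$ coming from $\H = C \oplus Ce_2$, the $-1$-eigenspace $(\H^3)^C$ becomes a $C^6$, on which $SU(6)$ acts naturally. The definition should extend the $F_4$-mapping $\varphi_{{}_{F_4,\gamma}}(p,A)(M+\a)=AMA^* + p\a A^*$ to the complex setting, in a way that reduces to an $SU(6)$-action twisted by the $Sp(1)$-factor on the $\H^3$-component and preserves $\mathfrak{J}(3,\H)^C$ via the embedding $Sp(3) \hookrightarrow SU(6)$. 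The verification that $\varphi_{{}_{E_6,\gamma}}(p,A) \in E_6$ amounts to checking that it preserves both the determinant $\det$ and the Hermite inner product $\langle\,,\,\rangle$ on $\mathfrak{J}^C$, and commutes with $\gamma$.

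For the kernel, I would unwind $\varphi_{{}_{E_6,\gamma}}(p,A)=1$: restriction to $\mathfrak{J}(3,\H)^C$ forces $A \in SU(6)$ to act trivially on $M \mapsto AMA^*$, giving $A = \pm E$; inspecting the action on $(\H^3)^C$ then pins $p = \pm 1$ with the sign correlated to $A$. A dimension count $\dim(Sp(1) \times SU(6)) = 3 + 35 = 38 = \dim(E_6) - \dim(\mathfrak{J}^C /\text{fixed})$ confirms equality with $\dim((E_6)^\gamma)$, whence the homomorphism theorem together with connectedness yields the desired isomorphism. The main obstacle I anticipate is bookkeeping the identification $\H \cong C^2$ carefully enough that the $SU(6)$-action on $(\H^3)^C$ is compatible with both the Freudenthal cross product on $\mathfrak{J}^C$ and with the extension from $Sp(3)$; this is the place where the construction could easily fail to land inside $E_6$ and where the cited reference \cite[Theorem 3.7.4]{iy0} does the delicate computational work.
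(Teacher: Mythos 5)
Your proposal follows essentially the same route as the paper: the map you describe is exactly the paper's $\varphi_{{}_{E_6,\gamma}}(p,A)(M+\a)={k_J}^{-1}(A(k_J M){}^t\!A)+p\a k^{-1}(\tau\,{}^t\!A)$ built from the eigenspace decomposition $\mathfrak{J}^C=\mathfrak{J}(3,\H)^C\oplus(\H^3)^C$, and the cited source establishes the isomorphism just as you outline, by computing the kernel $\{(1,E),(-1,-E)\}$ and combining discreteness of the kernel with an equal-dimension count and connectedness of $(E_6)^\gamma$. The only minor corrections: the full $SU(6)$ acts on $\mathfrak{J}(3,\H)^C\cong\mathfrak{S}(6,C)$ by $S\mapsto AS\,{}^t\!A$ rather than only through $Sp(3)\hookrightarrow SU(6)$, and the dimension bookkeeping should simply be $\dim\,(\mathfrak{e}_6)^{\gamma}=3+35=38$.
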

\begin{proof}
	Let $SU(6)=\{A \in M(6, C)\,|\,(\tau\,{}^t A) A$ $=1, \det\, A=1) \}$, where $\tau$ is the complex conjugation of $C=\{x+iy \,|\,x,y \in \R \}$, that is, $\tau(x+yi)=x-yi, x,y \in \R$.
	We define a mapping $\varphi_{{}_{E_6,\gamma}}:Sp(1) \times SU(6) \to (E_6)^\gamma $ by
	\begin{align*}
	\varphi_{{}_{E_6,\gamma}}(p, A)(M+\a)={k_J}^{-1}(A(k_J M){}^t\!A)+p\a k^{-1}(\tau \,{}^t\!A), M+\a \in \mathfrak{J}(3, \H)^C \oplus (\H^3)^C=\mathfrak{J}^C,
	\end{align*}
	where both of $k_J:\mathfrak{J}(3, \H)^C \to \mathfrak{S}(6, C)$ and $k:M(3, \H)^C \to M(6, C)$ are the $C$-linear isomorphisms.
	This mapping induces the required isomorphism (see \cite[Theorem 3.11.4 ]{iy0} in detail).
\end{proof}

\begin{theorem}\label{theorem 3.3.2}
	The group $(E_6)^{\gamma_3}$ is isomorphic to the group $(U(1) \times SU(6))/\Z_2${\rm :} $(E_6)^{\gamma_3} \cong (U(1) \times SU(6))/\Z_2, \Z_2=\{(1,E),
	(-1,-E) \}$.	
\end{theorem}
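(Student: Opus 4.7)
The plan is to follow the pattern of Theorems~\ref{theorem 3.1.2} and~\ref{theorem 3.2.2}: define $\varphi_{{}_{E_6,\gamma_3}}: U(1) \times SU(6) \to (E_6)^{\gamma_3}$ as the restriction of the mapping $\varphi_{{}_{E_6,\gamma}}$ of Proposition~\ref{proposition 3.3.1} to the subgroup $U(1) \times SU(6)$, where $U(1) = \{a \in C \mid \ov{a}a = 1\}$ is the usual embedding in $Sp(1)$ through $C \subset \H$.

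First I would record, by the same straightforward computation used in Lemma~\ref{lemma 3.2.6}(1), the relational formula $\gamma_3 = \varphi_{{}_{E_6,\gamma}}(\bm{\omega}, E)$ with $\bm{\omega} = -(1/2)+(\sqrt{3}/2)e_1 \in U(1) \subset Sp(1)$. Since $U(1)$ is abelian, this immediately gives, for any $(p,A) \in U(1) \times SU(6)$,
\begin{align*}
\varphi_{{}_{E_6,\gamma}}(p,A)\,\gamma_3\,\varphi_{{}_{E_6,\gamma}}(p,A)^{-1}
&= \varphi_{{}_{E_6,\gamma}}(p\bm{\omega} p^{-1},\, AEA^{-1}) \\
&= \varphi_{{}_{E_6,\gamma}}(\bm{\omega},\, E) = \gamma_3,
\end{align*}
confirming that the image of $\varphi_{{}_{E_6,\gamma_3}}$ lies in $(E_6)^{\gamma_3}$. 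Next, since Proposition~\ref{proposition 3.3.1} gives $\Ker\,\varphi_{{}_{E_6,\gamma}} = \{(1,E),(-1,-E)\}$ and both $\pm 1$ belong to $U(1)$, the kernel of the restriction is exactly $\Z_2 = \{(1,E),(-1,-E)\}$.

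Surjectivity is the main hurdle; I would handle it by dimension count plus connectedness. Decomposing $\mathfrak{e}_6$ into the $\ad\gamma_3$-eigenspaces for $1,\bm{\omega},\ov{\bm{\omega}}$, one finds $\dim(\mathfrak{e}_6)^{\gamma_3} = 36 = \dim(U(1) \times SU(6))$, and because the kernel is finite this forces $\varphi_{{}_{E_6,\gamma_3}}$ to be a local diffeomorphism, so its image contains $((E_6)^{\gamma_3})_0$. Since the realization of $E_6$ used here is simply connected, the Cartan–Ra\v{s}evskii lemma yields that $(E_6)^{\gamma_3}$ is connected, finishing surjectivity; the homomorphism theorem then produces the isomorphism $(E_6)^{\gamma_3} \cong (U(1) \times SU(6))/\Z_2$. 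The delicate step is the dimension computation: one must make the action of $\gamma_3$ on $\mathfrak{e}_6$ sufficiently explicit (for example through the decomposition $\mathfrak{e}_6 = \mathfrak{f}_4 \oplus i\mathfrak{J}_0$, combined with the known action of $\gamma_3$ on $\mathfrak{f}_4$ from Theorem~\ref{theorem 3.2.2}) to read off the fixed subalgebra cleanly.
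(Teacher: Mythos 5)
Your proposal is correct and takes essentially the same route as the paper: the paper's proof simply defines $\varphi_{{}_{E_6,\gamma_3}}$ as the restriction of $\varphi_{{}_{E_6,\gamma}}$ to $U(1)\times SU(6)$ and refers to Yokota for the details, and your write-up supplies exactly those details --- well-definedness via $\gamma_3=\varphi_{{}_{E_6,\gamma}}(\bm{\omega},E)$ and the abelianness of $U(1)$, the kernel $\{(1,E),(-1,-E)\}$ inherited from $\Ker\,\varphi_{{}_{E_6,\gamma}}$, and surjectivity by the standard count $\dim(\mathfrak{e}_6)^{\gamma_3}=36=\dim(U(1)\times SU(6))$ combined with connectedness of $(E_6)^{\gamma_3}$ from simple connectedness of $E_6$. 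All three steps check out (in particular the isotropy module of $(E_6)^{\gamma}$ carries no $\gamma_3$-fixed vectors, so the fixed subalgebra is indeed $\mathfrak{u}(1)\oplus\mathfrak{su}(6)$), so there is nothing to correct.
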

\begin{proof}
	Let $U(1)=\{a \in \C\,|\, \ov{a}a=1 \} \subset Sp(1)$. We define a mapping $\varphi_{{}_{E_6,\gamma_3}}: U(1) \times SU(6) \to (E_6)^{\gamma_3}$ by the restriction of the mapping $\varphi_{{}_{E_6,\gamma}}$ (Proposition \ref{proposition 3.3.1}). This mapping induces the required isomorphism (see \cite [Theorem 3.2]{iy1} in detail). 
\end{proof}

Thus, since the group $(E_6)^{\gamma_3}$ is connected, together with the result of Theorem \ref{theorem 3.3.2}, we have an exceptional $\varmathbb{Z}_3$-symmetric space $E_6/((U(1) \times SU(6))/\Z_2)$.
\vspace{2mm}

Let $\sigma, \sigma_3 \in F_4$. Then, as in the case above, using the inclusion $F_4 \subset E_6$, $\sigma, \sigma_3$ are naturally extended to
transformations of $\mathfrak{J}^C$. Needless to say, $\sigma, \sigma_3 \in E_6$ and $\sigma^2=(\sigma_3)^3=1$. Hence $\sigma$ and $\sigma_3$ induce the involutive automorphism $\tilde{\sigma}$ and the automorphism $\tilde{\sigma}_3$ of order $3$ on $E_6$, respectively: $\tilde{\sigma}(\alpha)=\sigma\alpha\sigma, \tilde{\sigma}_3(\alpha)={\sigma_3}^{-1}\alpha\sigma_3, \alpha \in E_6$. 
\vspace{1mm}

Then we have the following proposition and theorem.

\begin{proposition}\label{proposition 3.3.3}
	The group $(E_6)^\sigma$ is isomorphic to the group $(U(1) \times Spin(10))/\Z_4${\rm:}\,
	$(E_6)^\sigma \!\cong (U(1) \times Spin(10))/\Z_4,\Z_4=\{ (1, \phi_{{}_{6,\sigma}}(1)), (-1, \phi_{{}_{6,\sigma}}(-1)), (i, \phi_{{}_{6,\sigma}}(-i)), (-i, \phi_{{}_{6,\sigma}}(i)) \}$.
\end{proposition}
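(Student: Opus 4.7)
The plan is to mimic the pattern used for $(F_4)^\sigma \cong Spin(9)$ and for $(E_6)^\gamma$, namely to construct an explicit homomorphism $\varphi_{{}_{E_6,\sigma}}:U(1)\times Spin(10) \to (E_6)^\sigma$, determine its kernel by direct computation, and then pass to surjectivity through a dimension/connectedness argument. The building blocks are standard in Yokota's framework: realize $Spin(10)$ as the isotropy subgroup $(E_6)_{E_1}$ (which already sits inside $(E_6)^\sigma$, since $\sigma$ is the involution characterized by fixing $E_1$ and flipping the off-diagonal blocks appropriately), and embed $U(1)$ into $E_6$ via a $C$-linear scaling map $\phi_{{}_{6,\sigma}}:U(1) \to E_6$, $\phi_{{}_{6,\sigma}}(a)X = \text{diag}(a^{-2},a,a)$-type rescaling of the diagonal entries with a compensating scaling of the off-diagonal Cayley entries, chosen so that $\det$ and the Hermitian product are preserved.

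First I would verify the four basic facts: (i) $\phi_{{}_{6,\sigma}}(a) \in E_6$ for every $a \in U(1)$, (ii) $\phi_{{}_{6,\sigma}}(a)$ commutes with every $\beta \in (E_6)_{E_1}$ (since $Spin(10)$ preserves the decomposition that $\phi_{{}_{6,\sigma}}(a)$ scales), (iii) $\sigma\,\phi_{{}_{6,\sigma}}(a)\,\sigma = \phi_{{}_{6,\sigma}}(a)$ (so the image lies in $(E_6)^\sigma$), and (iv) $(E_6)_{E_1} \subset (E_6)^\sigma$. Once these are in hand, the map $\varphi_{{}_{E_6,\sigma}}(a,\beta):=\phi_{{}_{6,\sigma}}(a)\,\beta$ is a well-defined homomorphism into $(E_6)^\sigma$.

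Next I would compute the kernel. The condition $\phi_{{}_{6,\sigma}}(a)\beta = 1$ forces $\beta = \phi_{{}_{6,\sigma}}(a)^{-1}$, so $\phi_{{}_{6,\sigma}}(a)^{-1}$ must fix $E_1$; reading off the scaling factor on the $(1,1)$ entry then gives a fourth-power equation $a^{4}=1$ (this is where the order-$4$ of $\Z_4$ appears, in contrast to the $\Z_2$ seen in the $F_4$ case), producing the four pairs $(1,\phi_{{}_{6,\sigma}}(1)),\;(-1,\phi_{{}_{6,\sigma}}(-1)),\;(i,\phi_{{}_{6,\sigma}}(-i)),\;(-i,\phi_{{}_{6,\sigma}}(i))$ listed in the statement. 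The sign twist between $a$ in the first coordinate and $-a$ in the second reflects the spin double cover of the corresponding rotation, and matching this carefully is the most delicate bookkeeping in the argument.

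Finally, for surjectivity I would invoke that the compact simply connected $E_6$ admits the Cartan--Ra\v{s}evski\u{\i} connectedness conclusion, so $(E_6)^\sigma$ is connected, together with the dimension count $\dim(U(1)) + \dim(Spin(10)) = 1+45 = 46 = \dim((E_6)^\sigma)$ (the complementary symmetric space $E_6/((U(1)\times Spin(10))/\Z_4)$ being the $32$-dimensional space $EIII$). Since the kernel is discrete, the differential of $\varphi_{{}_{E_6,\sigma}}$ is an isomorphism on Lie algebras and the image is open; connectedness of the target then forces it to be all of $(E_6)^\sigma$. The main obstacle is the explicit kernel analysis in step three: one must pin down the precise powers of $a$ appearing in $\phi_{{}_{6,\sigma}}$, and trace through how these interact with the $Spin(10)$-side identification so that the resulting $\Z_4$ coincides exactly with the set written in the statement.
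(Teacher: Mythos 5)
Your proposal follows essentially the same route as the paper: the paper realizes $Spin(10)$ as $(E_6)_{E_1}$, defines $\varphi_{{}_{E_6,\sigma}}(\theta,\delta)=\phi_{{}_{6,\sigma}}(\theta)\delta$ with $\phi_{{}_{6,\sigma}}(\theta)$ the explicit scaling acting by $\theta^4$ on $\xi_1$, $\theta^{-2}$ on $\xi_2,\xi_3,x_1$ and $\theta$ on $x_2,x_3$, and then establishes well-definedness, the kernel condition $\theta^4=1$, and surjectivity exactly as you outline (deferring the computations to Yokota's Theorem 3.10.7). Your kernel analysis and the connectedness-plus-dimension argument for surjectivity match the intended proof, so the proposal is correct and not a genuinely different approach.
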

\begin{proof}
	Let $Spin(10)$ as the group $(E_6)_{E_1}=\{\alpha \in E_6\,|\,\alpha E_1=E_1 \}$ (\cite[Theorem 3.10.4]{iy0}).
	We define a mapping $\varphi_{{}_{E_6,\sigma}}:U(1) \times Spin(10) \to (E_6)^\sigma $ by
	$$
	\varphi_{{}_{E_6,\sigma}}(\theta, \delta)=\phi_{{}_{6,\sigma}}(\theta)\delta,
	$$
	where $\phi_{{}_{6,\sigma}}:U(1) \to E_6$ is defined by
	\begin{align*}
	\phi_{{}_{6,\sigma}}(\theta)X=\begin{pmatrix}
	\theta^4 \xi_1 & \theta x_3 & \theta \ov{x_2} \\
	\theta \ov{x_3} & {\theta}^{-2}\xi_2 &  {\theta}^{-2}x_1 \\ 
	\theta x_2 &  {\theta}^{-2}\ov{x_1} &  {\theta}^{-2}\xi_3
	\end{pmatrix}, \,\, X \in \mathfrak{J}^C.
	\end{align*}
	This mapping induces the required isomorphism (see \cite[Theorem 3.10.7 ]{iy0} in detail).
\end{proof}

\begin{theorem}\label{theorem 3.3.4}
	The group $(E_6)^{\sigma_3}$ is isomorphic to the group $(U(1) \times Spin(2) \times Spin(8))/(\Z_4 \allowbreak \times \Z_2)${\rm :} $(E_6)^{\sigma_3} \cong (U(1) \times Spin(2) \times Spin(8))/(\Z_2 \times \Z_4), \Z_2=\{(1,1,1),(1,\sigma,\sigma) \}, \Z_4=\{(1,1,1),(i,D_{e_1},\phi_{{}_{6,\sigma}}(-i)D_{-e_1}),(-1,\allowbreak\sigma,1),(-i,D_{-e_1},\phi_{{}_{6,\sigma}}(i)D_{e_1}) \} \}$.
\end{theorem}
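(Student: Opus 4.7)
The plan is to follow the pattern of Theorem \ref{theorem 3.2.4} (the analogue in $F_4$) and Proposition \ref{proposition 3.3.3}: construct an explicit homomorphism from $U(1) \times Spin(2) \times Spin(8)$ onto $(E_6)^{\sigma_3}$ and then apply the homomorphism theorem. The $U(1)$-factor is taken to be the image of $\phi_{{}_{6,\sigma}}: U(1) \to E_6$ of Proposition \ref{proposition 3.3.3}; the $Spin(2)$ is the subgroup $\{D_a \mid a \in U(1)\} \subset F_4 \subset E_6$ introduced in Subsection \ref{subsection 3.2}; and the $Spin(8)$ is realized as $(E_6)_{E_1, F_1(1), F_1(e_1)}$, the natural $E_6$-extension of the $Spin(7) = (F_4)_{E_1, F_1(1), F_1(e_1)}$ appearing in Theorem \ref{theorem 3.2.4}.

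First I would verify that each of the three factors lies in $(E_6)^{\sigma_3}$ and that they mutually commute. Since $\phi_{{}_{6,\sigma}}(\theta)$ and $D_a$ are diagonal-type operators of the shape described in Subsection \ref{subsection 3.2} and Proposition \ref{proposition 3.3.3}, they visibly commute with $\sigma_3 = D_{\bm{\omega}}$ and with each other. The subgroup $(E_6)_{E_1, F_1(1), F_1(e_1)}$ fixes precisely the elements whose stabilization enters the definition of $\sigma_3$, so it commutes with $\sigma_3$; mutual commutativity with $\phi_{{}_{6,\sigma}}(\theta)$ and $D_a$ is an immediate extension to $E_6$ of the $F_4$-level argument in \cite[Theorem 2.7]{iy1}. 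One then defines
\begin{align*}
\varphi_{{}_{E_6,\sigma_3}}: U(1) \times Spin(2) \times Spin(8) \to (E_6)^{\sigma_3}, \quad (\theta, D_a, \beta) \mapsto \phi_{{}_{6,\sigma}}(\theta)\, D_a\, \beta,
\end{align*}
which is a group homomorphism.

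For the kernel, suppose $\phi_{{}_{6,\sigma}}(\theta) D_a \beta = 1$ as an operator on $\mathfrak{J}^C$. Evaluating at $E_1$ (fixed by $D_a$ and $\beta$) yields $\theta^4 = 1$, so $\theta \in \{1, i, -1, -i\}$; subsequent evaluation at $F_1(1)$ and $F_1(e_1)$ (both fixed by $\beta$) pins down $a$ in terms of $\theta$ up to sign, after which $\beta$ is uniquely determined by the equation. A case analysis over the four values of $\theta$ yields exactly the eight triples listed, which assemble into $\Z_2 \times \Z_4$ with the $\Z_2$ generated by $(1, \sigma, \sigma)$ and the $\Z_4$ generated by $(i, D_{e_1}, \phi_{{}_{6,\sigma}}(-i) D_{-e_1})$; one should also verify that this element has order exactly $4$, using $\phi_{{}_{6,\sigma}}(-1) = \sigma$.

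For surjectivity I would use a dimension count: $\dim U(1) + \dim Spin(2) + \dim Spin(8) = 1 + 1 + 28 = 30$, which I expect to match $\dim (E_6)^{\sigma_3}$ via a root-space computation on the eigenspace decomposition of $\mathrm{ad}(\sigma_3)$ (with a $30 + 24 + 24 = 78$ split). Since $E_6$ is simply connected, $(E_6)^{\sigma_3}$ is connected by the E.~Cartan--Ra\v{s}evski\u{i} theorem, and with discrete kernel the homomorphism is therefore surjective. The main obstacle will be identifying $Spin(8)$ precisely as $(E_6)_{E_1, F_1(1), F_1(e_1)}$ of the correct dimension $28$ (rather than a larger $U(1)$-extension inside $Spin(10) = (E_6)_{E_1}$), and the careful bookkeeping required to list all eight kernel elements and to verify the $\Z_2 \times \Z_4$ structure with the specific generators involving $\phi_{{}_{6,\sigma}}(\pm i)$ and $D_{\pm e_1}$.
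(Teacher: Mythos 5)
Your proposal is correct and follows essentially the same route as the paper: the paper defines exactly the map $\varphi_{{}_{E_6,\sigma_3}}(\theta,D_a,\beta)=\phi_{{}_{6,\sigma}}(\theta)D_a\beta$ with $Spin(2)=\{D_a\}$ and $Spin(8)=(E_6)_{E_1,F_1(1),F_1(e_1)}$, and defers the verification of well-definedness, surjectivity and the kernel computation to Yokota's Theorem 3.9 in \cite{iy1}, which proceeds by the same dimension-count-plus-connectedness and evaluation-at-$E_1,F_1(1),F_1(e_1)$ arguments you outline.
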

\begin{proof}
	Let $U(1)=\{\theta \in C\,|\,(\tau \theta)\theta=1 \}$ and $Spin(2)$, which is isomorphic to the group $U(1)$, as the group $\{D_a \in F_4 \,|\,a \in U(1) \}$ defined in $F_4$, moreover let $Spin(8)$ as the group $(E_6)_{E_1, F_1(1),F_1(e_1)}=\{ \alpha \in E_6 \,|\,\alpha E_1=E_1, \alpha F_1(1)=F_1(1), \alpha F_1(e_1)=F_1(e_1)\}$ (cf.\cite[Proposition 3.22]{iy2}, \cite[Subsection 3.2]{iy1}), respectively. We define a mapping $\varphi_{{}_{E_6,\sigma_3}}: U(1) \times Spin(2) \times Spin(8) \to (E_6)^{\sigma_3}$ by
	\begin{align*}
			\varphi_{{}_{E_6,\sigma_3}}(\theta, D_a, \beta)=\phi_{{}_{6,\sigma}}(\theta)D_a \beta.
	\end{align*}
	This mapping induces the required isomorphism (see \cite[Theorem 3.9]{iy1} in detail).
\end{proof}

Thus, since the group $(E_6)^{\sigma_3}$ is connected, together with the result of Theorem \ref{theorem 3.3.4}, we have an exceptional $\varmathbb{Z}_3$-symmetric space $E_6/((U(1) \times Spin(2) \times Spin(8))/(\Z_2 \times \Z_4))$.
\vspace{2mm}

Let $\nu=\exp(2\pi i/9) \in U(1)=\{ \theta \in C \,|\, (\tau \theta)\theta=1\} \subset C$. We consider the element $A_\nu \in SU(6) \subset M(6, C)$ as follows.
\begin{align*}
		A_\nu=\diag(\nu^5, \nu^{-1}, \nu^{-1}, \nu^{-1}, \nu^{-1},\nu^{-1}),
\end{align*} 
and using this $A_\nu$, set $\nu_3=\varphi_{{}_{E_6,\gamma}}(1,A_\nu)$. Then we have that $\nu_3 \in (E_6)^\gamma \subset E_6$ and $(\nu_3)^9=1$. Since ${A_\nu}^3= \nu^6 E \in z(SU(6))$ (the center of $SU(6)$) and $(\nu_3)^3=\varphi_{{}_{E_6,\gamma}}(1, {A_\nu}^3)=\omega 1$, where $\omega= -(1/2)+(\sqrt{3}/2)i \in C$, $\nu_3$ induces the automorphism $\tilde{\nu}_3$ of order $3$ on $E_6$: $\tilde{\nu}_3(\alpha)={\nu_3}^{-1}\alpha\nu_3, \alpha \in E_6$.
\vspace{1mm}

Now, we have the following theorem.

\begin{theorem}\label{theorem 3.3.5}
	The group $(E_6)^{\nu_3}$ is isomorphic to the group $(Sp(1) \times S(U(1) \times U(5)))/\Z_2${\rm :} $(E_6)^{\nu_3} \cong (Sp(1) \times S(U(1) \times U(5)))/\Z_2, \Z_2=\{(1,E), (-1,-E) \}$.
\end{theorem}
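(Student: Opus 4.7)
The plan is to mimic the strategy used in the earlier theorems: realize the desired isomorphism as the restriction of the map $\varphi_{{}_{E_6,\gamma}}$ from Proposition~\ref{proposition 3.3.1}, then verify injectivity via the kernel and surjectivity via a dimension/connectedness argument.

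First I would define $\varphi_{{}_{E_6,\nu_3}}: Sp(1) \times S(U(1) \times U(5)) \to (E_6)^{\nu_3}$ as the restriction of $\varphi_{{}_{E_6,\gamma}}$, where $S(U(1) \times U(5))$ is embedded in $SU(6)$ as the block-diagonal matrices $\diag(u, B)$ with $u \in U(1)$, $B \in U(5)$, $u \det B = 1$. Because $\varphi_{{}_{E_6,\gamma}}$ is a homomorphism and $\nu_3 = \varphi_{{}_{E_6,\gamma}}(1, A_\nu)$, the element $\varphi_{{}_{E_6,\gamma}}(p, A)$ commutes with $\nu_3$ exactly when $A A_\nu = A_\nu A$; since $A_\nu$ has the two distinct eigenvalues $\nu^5$ (multiplicity $1$) and $\nu^{-1}$ (multiplicity $5$), its centralizer in $SU(6)$ is precisely $S(U(1) \times U(5))$, so the image indeed lies in $(E_6)^{\nu_3}$. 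The kernel of the restriction coincides with that of $\varphi_{{}_{E_6,\gamma}}$ (since $-E \in S(U(1) \times U(5))$, as $(-1)(-1)^5 = 1$), giving $\Z_2 = \{(1, E), (-1, -E)\}$.

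For surjectivity, I would invoke the Cartan--Ra\v{s}evskii theorem (applicable because $E_6$ is simply connected and $\tilde{\nu}_3$ has finite order) to conclude that $(E_6)^{\nu_3}$ is connected, so it is enough to match Lie-algebra dimensions. The image has dimension $\dim Sp(1) + \dim S(U(1)\times U(5)) = 3 + 25 = 28$. Since $\gamma$ and $\nu_3$ commute, $\Ad(\nu_3)$ preserves the $\Ad(\gamma)$-eigenspace decomposition $\mathfrak{e}_6 = \mathfrak{k} \oplus \mathfrak{p}$, where $\mathfrak{k} \cong \mathfrak{sp}(1) \oplus \mathfrak{su}(6)$ is the Lie algebra of $(E_6)^\gamma$ and $\mathfrak{p}$ is its orthogonal complement, of real dimension $40$. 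On $\mathfrak{k}$, $\Ad(\nu_3)$ acts trivially on the $\mathfrak{sp}(1)$-summand and by $\Ad(A_\nu)$ on $\mathfrak{su}(6)$, whose fixed subspace is $\mathfrak{s}(\mathfrak{u}(1) \oplus \mathfrak{u}(5))$ of dimension $25$; this accounts for the full $28$. On $\mathfrak{p}$, which as an $Sp(1) \times SU(6)$-module is the isotropy module of the symmetric pair and complexifies to $\mathbf{2} \otimes \Lambda^3 \C^6$, the eigenvalues of $A_\nu$ acting on $\Lambda^3 \C^6$ are $\nu^{5-1-1} = \nu^3 = \omega$ (multiplicity $\binom{5}{2}=10$) and $\nu^{-3} = \omega^{-1}$ (multiplicity $\binom{5}{3}=10$); since neither equals $1$, $\mathfrak{p}$ contributes no fixed direction. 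Thus $\dim (E_6)^{\nu_3} = 28$ and surjectivity follows.

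The main obstacle is this final step: identifying $\mathfrak{p}$ with the $Sp(1) \times SU(6)$-module $\mathbf{2} \otimes \Lambda^3 \C^6$ through the paper's concrete realization $\mathfrak{J}^C = \mathfrak{J}(3,\H)^C \oplus (\H^3)^C$, and tracking the action of $A_\nu$ through the isomorphisms $k_J$, $k$ of Proposition~\ref{proposition 3.3.1}, requires some care. If that identification is unwieldy in the paper's framework, an alternative would be to establish directly that $(E_6)^{\nu_3} \subseteq (E_6)^\gamma$ and then quote Proposition~\ref{proposition 3.3.1}; however, since $\gamma$ is not a power of $\nu_3$, this containment is not transparent, so the Lie-algebra dimension count remains the most tractable route.
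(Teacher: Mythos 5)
Your proposal is correct and takes essentially the same route as the paper, which simply restricts $\varphi_{{}_{E_6,\gamma}}$ to $Sp(1)\times S(U(1)\times U(5))$ and defers the verification to Yokota's original argument; your well-definedness step (centralizer of $A_\nu$ in $SU(6)$), kernel computation, and surjectivity via connectedness of $(E_6)^{\nu_3}$ plus the dimension count $3+25=28=\dim\mathfrak{e}_6^{\,\mathrm{Ad}(\nu_3)}$ are exactly the standard devices this line of work relies on (they appear as the suppressed preliminary lemmas of the paper), and your eigenvalue count on $\mathbf{2}\otimes\Lambda^3\mathbb{C}^6$ checks out. No gaps.
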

\begin{proof}
	Let $S(U(1) \times U(5)) \subset SU(6)$. We define a mapping $\varphi_{{}_{E_6, \nu_3}}:Sp(1) \times S(U(1) \times U(5)) \to  (E_6)^{\nu_3}$ by the restriction of the mapping $\varphi_{{}_{E_6,\gamma}}$. This mapping induces the required isomorphism (see \cite[Theorem 3.4]{iy1} in detail).
\end{proof}

Thus, since the group $(E_6)^{\nu_3}$ is connected, together with the result of Theorem \ref{theorem 3.3.5}, we have an exceptional $\varmathbb{Z}_3$-symmetric space $E_6/((U(1) \times S(U(1) \times U(5)))/ \Z_2)$.
\vspace{2mm}

Let $\phi_{{}_{6,\sigma}}:U(1) \to E_6$ be the embedding defined in the proof of Proposition \ref{proposition 3.3.3}, and again let $\nu=\exp(2\pi i/9) \in U(1) \subset C$. Set $\mu_3=\phi_{{}_{6,\sigma}}(\nu)$. Then, needless to say, $\mu_3 \in E_6$ and $\nu^9=1$. 
Hence, since $\mu^3=\omega 1 \in z(E_6)$ (the center of $E_6$), $\mu_3$ induces the automorphism $\tilde{\mu_3}$ of order $3$ on $E_6$: $\tilde{\mu_3}(\alpha)={\mu_3}^{-1}\alpha\mu_3, \alpha \in E_6$.
\vspace{1mm}

Now, we have the following theorem.

\begin{theorem}\label{theorem 3.3.6}
	The group $(E_6)^{\mu_3}$ coincides with the group $(E_6)^\sigma$, that is, this group is isomorphic to the group $(U(1) \times Spin(10))/\Z_4${\rm :} $(E_6)^{\mu_3} \cong (U(1) \times Spin(10))/\Z_4, \Z_4=\{ (1, 1), (-1, \sigma), (i, \phi_{{}_{6,\sigma}}(-i)), (-i, \phi_{{}_{6,\sigma}}(i)) \}$
\end{theorem}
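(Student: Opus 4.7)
The plan is to establish the set-theoretic equality $(E_6)^{\mu_3} = (E_6)^\sigma$; the stated isomorphism to $(U(1) \times Spin(10))/\Z_4$ and the explicit form of $\Z_4$ then follow at once from Proposition~\ref{proposition 3.3.3}, after checking by direct substitution into the defining formula of $\phi_{{}_{6,\sigma}}$ that $\sigma = \phi_{{}_{6,\sigma}}(-1)$. In particular, $\sigma$ and $\mu_3 = \phi_{{}_{6,\sigma}}(\nu)$ both lie in the abelian one-parameter subgroup $T = \phi_{{}_{6,\sigma}}(U(1)) \subset E_6$, and therefore commute.

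For the inclusion $(E_6)^\sigma \subseteq (E_6)^{\mu_3}$, I would invoke Proposition~\ref{proposition 3.3.3} to write each $\alpha \in (E_6)^\sigma$ as $\alpha = \phi_{{}_{6,\sigma}}(\theta)\delta$ with $\theta \in U(1)$ and $\delta \in Spin(10) = (E_6)_{E_1}$. Because $\varphi_{{}_{E_6,\sigma}}$ is a homomorphism out of the direct product $U(1) \times Spin(10)$, the factor $\delta$ commutes with every $\phi_{{}_{6,\sigma}}(\theta')$, and in particular with $\mu_3$; combined with the homomorphism property of $\phi_{{}_{6,\sigma}}$ this yields $\alpha \mu_3 = \phi_{{}_{6,\sigma}}(\theta\nu)\delta = \mu_3 \alpha$. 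The identical argument, run with an arbitrary $\phi_{{}_{6,\sigma}}(\theta') \in T$ in place of $\mu_3$, shows $(E_6)^\sigma \subseteq (E_6)^T$, and since $\sigma \in T$ the reverse inclusion $(E_6)^T \subseteq (E_6)^\sigma$ is automatic, giving $(E_6)^T = (E_6)^\sigma$.

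For the reverse inclusion $(E_6)^{\mu_3} \subseteq (E_6)^\sigma$, both groups are connected as fixed-point sets of finite-order automorphisms of the simply connected $E_6$, so it is enough to compare their Lie algebras. I would decompose $\mathfrak{e}_6$ into $\Ad\,T$-weight components $V_w$. From the defining formula for $\phi_{{}_{6,\sigma}}(\theta)$, the $T$-weights on the fundamental representation $\mathfrak{J}^C$ are $\{4,\,1,\,-2\}$, so the $T$-weights on $\mathfrak{e}_6 \hookrightarrow \mathrm{End}_C(\mathfrak{J}^C)$ lie in $\{0,\,\pm 3,\,\pm 6\}$. The identity $(E_6)^\sigma = (E_6)^T$ from the previous paragraph forces $V_0 = (\mathfrak{e}_6)^T = (\mathfrak{e}_6)^\sigma$, whose dimension is $46$; on the other hand, $(\mathfrak{e}_6)^\sigma$ equals the sum of those $V_w$ with $(-1)^w = 1$, namely $V_0 \oplus V_6 \oplus V_{-6}$, forcing $V_{\pm 6} = 0$. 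Hence the only non-zero $T$-weights on $\mathfrak{e}_6$ are $\pm 3$, on which $\Ad(\mu_3)$ acts by $\nu^{\pm 3} = \omega^{\pm 1} \neq 1$; therefore $(\mathfrak{e}_6)^{\mu_3} = V_0 = (\mathfrak{e}_6)^\sigma$, and by connectedness $(E_6)^{\mu_3} = (E_6)^\sigma$.

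The main obstacle I anticipate is the weight-space bookkeeping; the key trick is to leverage the identity $(E_6)^\sigma = (E_6)^T$ (a by-product of the first inclusion) to pin down $\dim V_0 = 46$, which in turn rules out the a priori possible weights $\pm 6$ without any appeal to an external branching rule for $E_6 \supset U(1) \cdot Spin(10)$.
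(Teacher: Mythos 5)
Your argument is correct, but there is nothing in the paper to compare it with step by step: the paper's ``proof'' of Theorem \ref{theorem 3.3.6} consists of the single remark that one must show $(E_6)^{\mu_3}=(E_6)^\sigma$, with all details deferred to Yokota's Theorem 3.11 in \cite{iy1}. What you have written is therefore a genuinely different, self-contained route. The easy inclusion $(E_6)^\sigma\subseteq(E_6)^{\mu_3}$ is handled exactly as one would hope: the homomorphism property of $\varphi_{{}_{E_6,\sigma}}$ on the direct product $U(1)\times Spin(10)$ forces $Spin(10)$ to centralize $T=\phi_{{}_{6,\sigma}}(U(1))$, whence $(E_6)^\sigma=(E_6)^T$ and in particular everything in $(E_6)^\sigma$ commutes with $\mu_3=\phi_{{}_{6,\sigma}}(\nu)$. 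The reverse inclusion is the substantive half, and your weight argument is sound: the weights of $T$ on $\mathfrak{J}^C$ are $\{4,1,-2\}$, so the adjoint weights lie in $\{0,\pm3,\pm6\}$; since $\sigma=\phi_{{}_{6,\sigma}}(-1)$, the identity $(E_6)^T=(E_6)^\sigma$ already obtained gives $V_0=V_0\oplus V_{\pm6}$, hence $V_{\pm6}=0$, and then $\nu^{\pm3}=\bm{\omega}^{\pm1}\neq1$ kills $V_{\pm3}$ under $\Ad(\mu_3)$, so the two fixed-point algebras coincide; connectedness (the Cartan--Ra\v{s}evskii lemma, which the paper invokes implicitly throughout) finishes the argument. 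Two small remarks: the dimension count $46$ is decoration --- the containment $V_0=V_0\oplus V_6\oplus V_{-6}$ alone forces $V_{\pm6}=0$ --- and the elimination of $V_{\pm6}$ is the one step a reader should check carefully, since a direct approach would otherwise need the branching $\mathfrak{e}_6=\mathfrak{so}(10)\oplus\mathfrak{u}(1)\oplus16\oplus\overline{16}$; your trick of extracting it from $(E_6)^T=(E_6)^\sigma$ avoids that external input. Yokota's omitted proof is, in the style of that reference, presumably a direct matrix computation, so your representation-theoretic argument is arguably cleaner and is a legitimate substitute for the missing proof.
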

\begin{proof}
	We have to prove that $(E_6)^{\mu_3}=(E_6)^\sigma$.
	However the details of proof is omitted (see \cite[Theorem 3.11]{iy1} in detail).
\end{proof}
\vspace{2mm}

Let $w_3 \in G_2 \subset F_4$. Then, as in the cases above, using the inclusion $F_4 \subset E_6$, $w_3$ are naturally extended to
transformation of $\mathfrak{J}^C$.
Needless to say, $w_3 \in E_6$ by inclusion $F_4 \subset E_6$ and $(w_3)^3=1$. Hence $w_3$ induces the automorphism $\tilde{w}_3$ of order $3$ on $E_6$: $\tilde{w}_3(\alpha)={w_3}^{-1}\alpha w_3, \alpha \in E_6$.
Note that using $\bm{\omega}=-(1/2)+(\sqrt{3}/2)e_1 \in \C$, the action to $\mathfrak{J}^C=\mathfrak{J}(3, \C)^C \oplus M(3,\C)^C$ of $w_3$ is as follows.
\begin{align*}
		w_3(X_{\bm{C}}+M)=X_{\bm{C}}+\bm{\omega}M,\,\,X_{\bm{C}}+M \in \mathfrak{J}(3,\C)^C \oplus M(3, \C)^C=\mathfrak{J}^C.
\end{align*}

Now, we have the following theorem.

\begin{theorem}\label{theorem 3.3.7}
	The group $(E_6)^{w_3}$ is isomorphic to the group $(SU(3) \times SU(3) \times SU(3))/\Z_3${\rm:} $(E_6)^{w_3} \cong (SU(3) \times SU(3) \times SU(3))/\Z_3, \Z_3=\{(E,E,E),(\bm{\omega}E,\bm{\omega}E,\bm{\omega}E),(\bm{\omega}^{-1}E,\bm{\omega}^{-1}E,\allowbreak \bm{\omega}^{-1}E)  \}$.
\end{theorem}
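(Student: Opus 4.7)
The plan is to generalize the construction used for $(F_4)^{w_3}$ in Theorem \ref{theorem 3.2.5}, exploiting the fact that on the complexified Jordan algebra $\mathfrak{J}^C=\mathfrak{J}(3,\C)^C \oplus M(3,\C)^C$ the action $Q X_{\bm{C}} R^*$ (with $Q$ and $R$ independent) now preserves the determinant and Hermitian inner product that characterize $E_6$, whereas in $F_4$ the Hermiticity of $X$ over $\mathfrak{J}$ forced the two factors to coincide. Concretely, the idea is to define
\begin{align*}
\varphi_{{}_{E_6,w_3}}: SU(3) \times SU(3) \times SU(3) \to (E_6)^{w_3}, \quad
\varphi_{{}_{E_6,w_3}}(P,Q,R)(X_{\bm{C}}+M)=Q X_{\bm{C}} R^{*}+P M R^{*},
\end{align*}
where $R^{*}=\tau\,{}^{t}\!R$. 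The factor $R$ corresponds to the $A$ of Theorem \ref{theorem 3.2.5} acting on the ``right,'' the factor $Q$ is the newly liberated left-action on $X_{\bm{C}}$, and $P$ corresponds to the $B$ of Theorem \ref{theorem 3.2.5}.

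The first step is to verify that $\varphi_{{}_{E_6,w_3}}(P,Q,R)$ lies in $E_6$ and commutes with $w_3$. $C$-linearity is immediate from the definition, and $\det(Q X_{\bm{C}} R^{*})=\det(X_{\bm{C}})$ follows from $\det Q=\det R=1$. Preservation of the Hermitian inner product $\langle X,Y\rangle=(\tau X,Y)$ reduces, summand by summand, to the unitary relation $(\tau\,{}^{t}\!Q)Q=(\tau\,{}^{t}\!R)R=(\tau\,{}^{t}\!P)P=E$. Commutation with $w_3$ is automatic, since $\varphi_{{}_{E_6,w_3}}(P,Q,R)$ respects the decomposition $\mathfrak{J}(3,\C)^C \oplus M(3,\C)^C$ and $w_3$ acts as the identity on the first summand and as multiplication by $\bm{\omega}$ on the second. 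The homomorphism property is a direct matrix computation.

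For the kernel, suppose $\varphi_{{}_{E_6,w_3}}(P,Q,R)=1$. Applying the identity $Q X_{\bm{C}} R^{*}=X_{\bm{C}}$ to enough elements $X_{\bm{C}}\in\mathfrak{J}(3,\C)^C$ (for example, to rank one projections) forces $Q=R=\lambda E$ with $\lambda \in C$, and then $\det Q=\lambda^{3}=1$. The second relation $P M R^{*}=M$ for all $M$ then yields $P=R=\lambda E$, giving $\mathrm{Ker}\,\varphi_{{}_{E_6,w_3}}=\{(\lambda E,\lambda E,\lambda E)\mid \lambda^{3}=1\}\cong \Z_3$, matching the statement.

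The main obstacle is surjectivity, which I would handle by a dimension count coupled with connectedness. Decomposing $\mathfrak{e}_6$ under $\mathrm{Ad}(w_3)$ into the $1$-, $\bm{\omega}$-, and $\bm{\omega}^{-1}$-eigenspaces and identifying the fixed subalgebra with $\mathfrak{su}(3)\oplus\mathfrak{su}(3)\oplus\mathfrak{su}(3)$ gives $\dim(E_6)^{w_3}=24=\dim(SU(3)^{3})$, so $\varphi_{{}_{E_6,w_3}}$ is a submersion onto an open subgroup. Since $E_6$ is simply connected and $w_3$ has finite order, $(E_6)^{w_3}$ is connected by the Cartan--Ra\v{s}evskii lemma, and thus $\varphi_{{}_{E_6,w_3}}$ is surjective. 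Combining with the kernel computation and the homomorphism theorem gives the required isomorphism $(E_6)^{w_3}\cong(SU(3)\times SU(3)\times SU(3))/\Z_3$.
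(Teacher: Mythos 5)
There is a genuine gap at the very first step: the map you write down does not send $\mathfrak{J}^C$ into itself. The complexification in $\mathfrak{J}^C=\{X \in M(3,\mathfrak{C})^C \mid X^*=X\}$ is a complexification of the scalars only; its elements are still Hermitian matrices, so the summand $\mathfrak{J}(3,\C)^C$ consists of matrices $X_{\bm{C}}$ with $X_{\bm{C}}^{\,*}=X_{\bm{C}}$, where $*$ is the transpose combined with the conjugation of $\C=\{x+ye_1\}$. For a transformation of the form $X_{\bm{C}}\mapsto QX_{\bm{C}}R^*$ one has $(QX_{\bm{C}}R^*)^*=RX_{\bm{C}}Q^*$ on Hermitian elements, and $QX_{\bm{C}}R^*=RX_{\bm{C}}Q^*$ for all Hermitian $X_{\bm{C}}$ forces $R^{-1}Q$ to be a central scalar. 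Hence for genuinely independent $Q\neq R$ the image of $\mathfrak{J}(3,\C)^C$ leaves $\mathfrak{J}(3,\C)^C$, the map is not an element of $\Iso_C(\mathfrak{J}^C)$, and the well-definedness, kernel and surjectivity arguments all collapse because the homomorphism they are meant to apply to does not exist. Your guiding intuition --- that complexification liberates a second $SU(3)$ acting on $X_{\bm{C}}$ --- is correct, but the liberation happens through the scalars, not by decoupling left and right multiplication: $\C^C$ splits as $C\oplus C$ via the idempotents $(1\mp ie_1)/2$, and accordingly a pair $(A,B)\in SU(3)\times SU(3)$ must be packaged into the single element $h(A,B)=\frac{A+B}{2}+i\frac{(B-A)e_1}{2}\in M(3,\C)^C$, which acts by the Hermiticity-preserving conjugation $X_{\bm{C}}\mapsto h(A,B)X_{\bm{C}}h(A,B)^*$ and on $M$ by $M\mapsto LM\tau h(A,B)^*$. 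This is exactly the map the paper uses in Theorem \ref{theorem 3.3.7}. Note also that in the correct formula the right action on $M$ involves both $A$ and $B$ through $\tau h(A,B)^*$, not a single factor $R$ as in your expression $PMR^*$, so even the representation carried by the $M(3,\C)^C$ summand is misidentified in your setup.

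The remaining ingredients of your argument --- the kernel being the diagonal $\Z_3$ of cube roots of unity, and surjectivity via $\dim(E_6)^{w_3}=24=\dim SU(3)^{\times 3}$ together with connectedness of $(E_6)^{w_3}$ from the simple connectivity of $E_6$ --- are sound in outline and consistent with the proof the paper delegates to \cite{iy0}, but they cannot be carried out until the homomorphism is defined on the correct element $h(A,B)$ rather than on an ill-defined pair of one-sided multiplications.
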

\begin{proof}
	We define a mapping $\varphi_{{}_{E_6,w_3}}:SU(3) \times SU(3) \times SU(3) \to (E_6)^{w_3}$ by
	\begin{align*}
			\varphi_{{}_{E_6,w_3}}(L,A,B)(X_{C}+M)&=h(A,B)X_{C}h(A,B)^*+LM\tau h(A,B)^*, 
			\\
			&\hspace*{20mm} X_{C}+M \in \mathfrak{J}(3, \C)^C \oplus 
			M(3,\C)^C=\mathfrak{J}^C,
	\end{align*}
	where $h:M(3,\C) \times M(3,\C) \to M(3,\C)^C$ is defined by 
	\begin{align*}
			h(A,B)=\dfrac{A+B}{2}+i\dfrac{(B-A)e_1}{2}.
	\end{align*}
	This mapping induces the required isomorphism (see \cite[Theorem 13]{iy0} in detail). Note that there is a mistake for the numbering of theorems in \cite{iy0}, so Theorem 13 above is corresponding to the last theorem.
\end{proof}

Thus, since the group $(E_6)^{w_3}$ is connected, together with the result of Theorem \ref{theorem 3.3.7}, we have an exceptional $\varmathbb{Z}_3$-symmetric space $E_6/((SU(3) \times SU(3))/ \Z_3)$.

As in Subsections 3.1, 3.2, the following lemma are useful to determine the structure of groups $G^{\sigma_3} \cap G^{\tau_3}$ in $E_6$.

\begin{lemma}\label{lemma 3.3.8}
	{\rm (1)} The mapping $\varphi_{{}_{E_6,\gamma_3}}:U(1) \times SU(6) \to (E_6)^{\gamma_3}$ of \,Theorem {\rm \ref{theorem 3.3.2}} satisfies the relational formulas 
	\begin{align*}
	\gamma_3&=\varphi_{{}_{E_6,\gamma_3}}(\omega,E), 
	\\
	\sigma_3&=\varphi_{{}_{E_6,\gamma_3}}(1,\diag(1,1,\tau\omega,\omega,\omega,\tau\omega)), 
	\\
	\nu_3&=\varphi_{{}_{E_6,\gamma_3}}(1,\diag(\nu^5,\nu^{-1},\nu^{-1},\nu^{-1},\nu^{-1},\nu^{-1})),
	\\
	\mu_3&=\varphi_{{}_{E_6,\gamma_3}}(1,\diag(\nu^{-2},\nu^2,\nu^{-1},\nu,\nu^{-1},\nu)),
	\\
	w_3&=\varphi_{{}_{E_6,\gamma_3}}(1,\diag(\tau\omega,\omega,\tau\omega,\omega,\tau\omega,\omega)),
	\end{align*}
  where $\bm{\omega}=-(1/2)+(\sqrt{3}/2)i \in U(1), \nu=\exp(2\pi i/9)$.
	\vspace{1mm}
	
	{\rm (2)} The mapping $\varphi_{{}_{E_6,w_3}}:SU(3)\times SU(3) \times SU(3) \to (E_6)^{w_3}$ of \,Theorem {\rm \ref{theorem 3.3.7}} satisfies the relational formulas
	\begin{align*}
	\gamma_3&=\varphi_{{}_{E_6,w_3}}(\diag(1,\bm{\omega},\ov{\bm{\omega}}),E,E),
	\\
	\sigma_3&=\varphi_{{}_{E_6,w_3}}(E,\diag(1,\ov{\bm{\omega}},\bm{\omega}),\diag(1,\ov{\bm{\omega}},\bm{\omega})),
	\\
	\mu_3&=\varphi_{{}_{E_6,w_3}}(E,\diag({\bm{\varepsilon}}^{-2},\bm{\varepsilon},\bm{\varepsilon}),\diag({\bm{\varepsilon}}^2,{\bm{\varepsilon}}^{-1},{\bm{\varepsilon}}^{-1})),
	\\
	w_3&=\varphi_{{}_{E_6,w_3}}(\bm{\omega}E,E,E),
	\end{align*}
	 where $\bm{\omega}=-(1/2)+(\sqrt{3}/2)e_1 \in U(1), \bm{\varepsilon}=\exp(2\pi e_1/9)$. 	
\end{lemma}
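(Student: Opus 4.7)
The plan is to verify each displayed identity by applying both sides to a generic element of $\mathfrak{J}^C$ in the appropriate decomposition, using the explicit formulas for the isomorphisms $\varphi_{{}_{E_6,\gamma_3}}$ and $\varphi_{{}_{E_6,w_3}}$ from Theorems \ref{theorem 3.3.2} and \ref{theorem 3.3.7} together with the action formulas for $\gamma_3, \sigma_3, \nu_3, \mu_3, w_3$ on $\mathfrak{J}^C$ already recorded in Subsections \ref{subsection 3.1}, \ref{subsection 3.2}, \ref{subsection 3.3}. Since the two isomorphisms are (the restrictions of) injective group homomorphisms, each of the ten identities is purely a matter of checking equality of two $C$-linear endomorphisms of $\mathfrak{J}^C$ on a generating set.

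For part (1) we work with $\mathfrak{J}^C = \mathfrak{J}(3,\H)^C \oplus (\H^3)^C$ and the formula $\varphi_{{}_{E_6,\gamma_3}}(a,A)(M+\a) = k_J^{-1}(A(k_J M)\,{}^tA) + a\,\a\, k^{-1}(\tau\,{}^tA)$ inherited from Proposition \ref{proposition 3.3.1}. The identity for $\gamma_3 = \varphi_{{}_{E_6,\gamma_3}}(\bm{\omega},E)$ is immediate from $\gamma_3(M+\a)=M+\bm{\omega}\a$, and the identity for $\nu_3$ is literally the definition $\nu_3 = \varphi_{{}_{E_6,\gamma}}(1,A_\nu)$. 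For $\sigma_3, \mu_3, w_3$ I would compute the effect of the prescribed diagonal element $A \in SU(6)$ on $k_J(M)$ and on $k^{-1}(\tau\,{}^tA)$, using the explicit definition of $k_J$ and $k$ from \cite{iy0} to translate right-multiplication by a diagonal $C$-matrix into right-multiplication by a specific element of $\H^C$ on each coordinate, and match this with the direct descriptions of $\sigma_3(M+\a)$, $\mu_3(M+\a)$, $w_3(M+\a)$.

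For part (2) we work with $\mathfrak{J}^C = \mathfrak{J}(3,\C)^C \oplus M(3,\C)^C$ and the formula $\varphi_{{}_{E_6,w_3}}(L,A,B)(X_{\bm{C}}+M) = h(A,B) X_{\bm{C}} h(A,B)^* + L\,M\,\tau h(A,B)^*$. The identity for $w_3$ follows at once from $h(E,E)=E$, giving $X_{\bm{C}} + \bm{\omega} M$. For $\gamma_3$ one again has $h(E,E)=E$, so the right-hand side fixes $X_{\bm{C}}$ and sends $M$ to $\diag(1,\bm\omega,\bar{\bm\omega})M$, which matches the action of $\gamma_3$ on $M(3,\C)^C$ after recalling how the columns of $M$ encode the Cayley coordinates $x_1,x_2,x_3$. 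For $\sigma_3$ and $\mu_3$ I would first compute $h(A,B)$ for the given diagonal pair, recognize that it acts on $X_{\bm{C}}$ by the appropriate diagonal conjugation corresponding to $\sigma_3$ or $\mu_3$ on $\mathfrak{J}(3,\C)^C$, and then check the twist produced on the $M(3,\C)^C$ summand.

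The main obstacle will be the $\sigma_3$ and $\mu_3$ cases in both parts, not because any step is deep, but because the bookkeeping across the identifications $k_J,k$ in part (1) and $h$ in part (2) is delicate: one must track how right-multiplication by $\bm\omega \in \C \subset \H$ (respectively by $\nu, \bm\varepsilon$) on individual Cayley coordinates corresponds to the specific diagonal matrices in $SU(6)$ or to the specific pairs in $SU(3)\times SU(3)$ given in the statement. Once the appropriate block decomposition is written out, each check reduces to routine matrix arithmetic.
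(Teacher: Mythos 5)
Your proposal is correct and matches the paper's approach: the paper's entire proof of Lemma \ref{lemma 3.3.8} is the one-line statement that the formulas follow ``by doing straightforward computation,'' and your plan is precisely the natural expansion of that computation --- evaluating both sides on generic elements of $\mathfrak{J}^C$ in the decompositions $\mathfrak{J}(3,\H)^C\oplus(\H^3)^C$ and $\mathfrak{J}(3,\C)^C\oplus M(3,\C)^C$ and tracking the identifications $k_J$, $k$, $h$. Nothing further is needed.
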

\begin{proof}
	(1), (2) By doing straightforward computation we obtain the results above. 
\end{proof}

\section{Globally exceptional $\varmathbb{Z}_3 \times \varmathbb{Z}_3$-symmetric spaces}

In this section, we construct a finite abelian group $\varGamma=\varmathbb{Z}_3 \times \varmathbb{Z}_3$ by using the inner automorphisms $\tilde{\sigma}_3, \tilde{\tau}_3$ of order $3$ on $G=G_2, F_4,E_6$ as the Case 1 below and determine the structure of the group $G^{\sigma_3} \cap G^{\tau_3}$.

\subsection{Case 1: $\{1, \tilde{\gamma}_3,  \tilde{\gamma}_3{}^{-1}\} \times \{1, \tilde{w}_3,  \tilde{w}_3{}^{-1}\}$-symmetric space}

Let the $\R$-linear transformations $\gamma_3, w_3$ of $\mathfrak{C}$  defined in Subsection \ref{subsection 3.1}. 

\noindent From Lemma \ref{lemma 3.1.4} (1), since we can easily confirm that $\gamma_3$ and $w_3$ are commutative, $\tilde{\gamma}_3$ and $\tilde{w}_3$ are commutative in $\Aut(G_2)$: $\tilde{\gamma}_3\tilde{w}_3=\tilde{w}_3\tilde{\gamma}_3$.
\vspace{1mm}

Now, we will determine the structure of the group $(G_2)^{\gamma_3} \cap (G_2)^{w_3}$.

\begin{theorem}\label{theorem 4.1.1}
	The group  $(G_2)^{\gamma_3} \cap (G_2)^{w_3}$ is isomorphic to the group $(U(1) \times U(1))/\Z_2${\rm :} $(G_2)^{\gamma_3} \cap (G_2)^{w_3} \cong (U(1) \times U(1))/\Z_2, \Z_2=\{(1,1), (-1,-1) \}$.
\end{theorem}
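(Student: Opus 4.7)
The plan is to work inside the parametrization $\varphi_{{}_{G_2,\gamma_3}}:U(1)\times Sp(1)\to (G_2)^{\gamma_3}$ of Theorem \ref{theorem 3.1.2} and carve out the elements that additionally commute with $w_3$.

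First, I would invoke Lemma \ref{lemma 3.1.4}\,(1), which gives $w_3=\varphi_{{}_{G_2,\gamma_3}}(1,\ov{\bm{\omega}})$. Since $\varphi_{{}_{G_2,\gamma_3}}$ is a group homomorphism, an element $\varphi_{{}_{G_2,\gamma_3}}(a,q)$ of $(G_2)^{\gamma_3}$ commutes with $w_3$ if and only if
\begin{align*}
\varphi_{{}_{G_2,\gamma_3}}(a,q\ov{\bm{\omega}})=\varphi_{{}_{G_2,\gamma_3}}(a,\ov{\bm{\omega}}q).
\end{align*}
Using $\ker\varphi_{{}_{G_2,\gamma_3}}=\{(1,1),(-1,-1)\}$, this equation leads to two alternatives: either $q\ov{\bm{\omega}}=\ov{\bm{\omega}}q$, or $a=-a$. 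The second is impossible for $a\in U(1)$; the first forces $q$ to commute with $\bm{\omega}\in\C\setminus\R$ inside $\H$, and since the centralizer of $\bm{\omega}$ in $\H$ is $\C$, we conclude $q\in\C\cap Sp(1)=U(1)$. Conversely, when $(a,q)\in U(1)\times U(1)$, all of $a,q,\bm{\omega}$ lie in the commutative algebra $\C$, so the commutation is automatic.

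Next, I would define $\varphi:U(1)\times U(1)\to (G_2)^{\gamma_3}\cap(G_2)^{w_3}$ as the restriction of $\varphi_{{}_{G_2,\gamma_3}}$. The analysis above shows $\varphi$ is surjective, and since $-1\in U(1)$ its kernel is still $\{(1,1),(-1,-1)\}\cong\Z_2$. The homomorphism theorem then delivers the required isomorphism $(G_2)^{\gamma_3}\cap(G_2)^{w_3}\cong (U(1)\times U(1))/\Z_2$.

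The only point that requires care is the two-branch analysis stemming from $\ker\varphi_{{}_{G_2,\gamma_3}}=\Z_2$: one must observe that the branch $a=-a$ cannot occur in $U(1)$, after which the argument reduces to standard bookkeeping. As a sanity check, the alternative route via $\varphi_{{}_{G_2,w_3}}:SU(3)\to (G_2)^{w_3}$ and Lemma \ref{lemma 3.1.4}\,(2) would identify $(G_2)^{\gamma_3}\cap (G_2)^{w_3}$ with the centralizer in $SU(3)$ of $\diag(1,\bm{\omega},\ov{\bm{\omega}})$, which is the maximal torus $T\cong U(1)\times U(1)$; as Lie groups this is the same $2$-torus, giving an independent confirmation.
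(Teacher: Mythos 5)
Your proposal is correct and follows essentially the same route as the paper: both work inside the parametrization $\varphi_{{}_{G_2,\gamma_3}}$ of Theorem \ref{theorem 3.1.2}, use the relation $w_3=\varphi_{{}_{G_2,\gamma_3}}(1,\ov{\bm{\omega}})$ from Lemma \ref{lemma 3.1.4}\,(1), split into the two branches dictated by $\Ker\,\varphi_{{}_{G_2,\gamma_3}}=\{(1,1),(-1,-1)\}$, discard the branch $s=-s$, and conclude $q\in U(1)$ before applying the homomorphism theorem. The cross-check via $\varphi_{{}_{G_2,w_3}}$ and the maximal torus of $SU(3)$ is a pleasant addition but not part of the paper's argument.
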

\begin{proof}
	Let $U(1) \subset Sp(1)$. 
	We define a mapping $\varphi_{{}_{G_2,\gamma_3, w_3}}: U(1) \times U(1) \to (G_2)^{\gamma_3} \cap (G_2)^{w_3}$ by 
	\begin{align*}
				\varphi_{{}_{G_2,\gamma_3, w_3}}(s,t)(m+ne_4)=tm\ov{t}+(sn\ov{t})e_4,\,\,m+ne_4 \in \H \oplus \H e_4=\mathfrak{C}.
	\end{align*}
	Needless to say, this mapping is the restriction of the mapping $\varphi_{{}_{G_2,\gamma_3}}$ (Theorem \ref{theorem 3.1.2}).
	
	First, we will prove that $\varphi_{{}_{G_2,\gamma_3, w_3}}$ is well-defined. Since this mapping is also the restriction of the mapping $\varphi_{{}_{G_2,\gamma_3}}$, it is trivial that $\varphi_{{}_{G_2,\gamma_3, w_3}}(s,t) \in (G_2)^{\gamma_3}$, and from $w_3=\varphi_{{}_{G_2,\gamma_3}}(1,\ov{\bm{\omega}})$ (Lemma \ref{lemma 3.1.4} (1)), it is almost clear that  $\varphi_{{}_{G_2,\gamma_3, w_3}}(s,t) \in (G_2)^{w_3}$. Hence $\varphi_{{}_{G_2,\gamma_3, w_3}}$ is well-defined. Subsequently, since $\varphi_{{}_{G_2,\gamma_3, w_3}}$ is the restriction of $\varphi_{{}_{G_2,\gamma_3}}$, we easily see that $\varphi_{{}_{G_2,\gamma_3, w_3}}$ is a homomorphism.
	
	Next, we will prove that $\varphi_{{}_{G_2,\gamma_3, w_3}}$ is surjective. Let $\alpha \in (G_2)^{\gamma_3} \cap (G_2)^{w_3} \subset (G_2)^{\gamma_3}$. There exist $s \in U(1)$ and $q \in Sp(1)$ such that $\alpha=\varphi_{{}_{G_2,\gamma_3}}(s,q)$ (Theorem \ref{theorem 3.1.2}). Moreover, since $\alpha=\varphi_{{}_{G_2,\gamma_3}}(s,q)$ commutes with $w_3$, again using $w_3=\varphi_{{}_{G_2,\gamma_3}}(1,\ov{\bm{\omega}})$, we have that 
	\begin{align*}
			\left\{ \begin{array}{l}
			s=s \\
			\bm{\omega}q\ov{\bm{\omega}}=q 
			\end{array} \right. 
			\quad \text{or}\quad
			\left\{ \begin{array}{l}
			s=-s \\
			\bm{\omega}q\ov{\bm{\omega}}=-q.
			\end{array} \right.
	\end{align*}
	The latter case is impossible because $s \not=0$. As for the former case, from the relational formula $\bm{\omega}q\ov{\bm{\omega}}=q$ we easily see that $q \in U(1)$, and needless to say, $s \in U(1)$. Hence there exist $s,t \in U(1)$ such that $\alpha=\varphi_{{}_{G_2,\gamma_3}}(s,t)$. Namely, there exist $s,t \in U(1)$ such that $\alpha=\varphi_{{}_{G_2,\gamma_3,w_3}}(s,t)$. The proof of surjective is completed.
	
	Finally, we determine $\Ker \,\varphi_{{}_{G_2,\gamma_3, w_3}}$. However, since $\varphi_{{}_{G_2,\gamma_3, w_3}}$ is the restriction of $\varphi_{{}_{G_2,\gamma_3}}$, it is easily obtain that $\Ker \,\varphi_{{}_{G_2,\gamma_3, w_3}}=\{(1,1),(-1,-1) \} \cong \Z_2$.
	
	Therefore we have the required isomorphism
	\begin{align*}
			(G_2)^{\gamma_3} \cap (G_2)^{w_3} \cong (U(1) \times U(1))/\Z_2.
	\end{align*}
\end{proof}

Thus, since the group $(G_2)^{\gamma_3} \cap (G_2)^{w_3}$ is connected from Theorem \ref{theorem 4.1.1}, we have an exceptional $\varmathbb{Z}_3 \times \varmathbb{Z}_3$-symmetric space
\begin{align*}
							G_2/((U(1) \times U(1))/\Z_2).
\end{align*}

\subsection{Case 2: $\{1, \tilde{\gamma}_3,  \tilde{\gamma}_3{}^{-1}\} \times \{1, \tilde{\sigma}_3,  \tilde{\sigma}_3{}^{-1}\}$-symmetric space}

Let the $\R$-linear transformations $\gamma_3, \sigma_3$ of $\mathfrak{J}$  defined in Subsection \ref{subsection 3.2}. 

\noindent From Lemma \ref{lemma 3.2.6} (1), since we can easily confirm that $\gamma_3$ and $\sigma_3$ are commutative, $\tilde{\gamma}_3$ and $\tilde{\sigma}_3$ are commutative in $\Aut(F_4)$: $\tilde{\gamma}_3\tilde{\sigma}_3=\tilde{\sigma}_3\tilde{\gamma}_3$.
\vspace{1mm}

Before determining the structure of the group $(F_4)^{\gamma_3} \cap (F_4)^{\sigma_3}$, we prove proposition needed in the proof of theorem below.
\vspace{1mm}

We define subgroups $G_{1,2}$ and $G'_{1,2}$ of the group $Sp(3)$ by
\begin{align*}
	G_{1,2}&=\left\{ A=\begin{pmatrix}
	                         h & 0 & 0 \\
	                         0 & a & c \\
	                         0 & d & b
	                        \end{pmatrix} \in Sp(3)\,\left|\,h \in Sp(1), \begin{pmatrix}	                      
	                         a & c \\
	                         d & b
	                        \end{pmatrix} \in U(2) \subset Sp(2) 
	                       \right. \right\}, 
	\\
	G'_{1,2}&=\left\{ A'=\begin{pmatrix}
	h' & 0 & 0 \\
	0 & a' & c'e_2 \\
	0 & \ov{e_2}d' & b'
	\end{pmatrix} \in Sp(3)\,\left|\,h' \in Sp(1), 
	\begin{array}{l}
	(c'e_2)(\ov{c'e_2})+a'\ov{a'}=1\\
	b'\ov{b'}+(\ov{e_2}d')(\ov{\ov{e_2}d'})=1\\
	(c'e_2)\ov{b'}+a'(\ov{\ov{e_2}d'})=0\\
	a',b',c',d' \in \C
	\end{array}
	\right. \right\}, 
\end{align*}
where $e_2$ is one of basis in $\mathfrak{C}$.

 It goes without saying that $\begin{pmatrix}
                       a & c \\
                       d & b
                     \end{pmatrix} \in U(2)$ is equivalent to the conditions
\begin{align*}
		c\ov{c}+a\ov{a}=1, \,\,b\ov{b}+d\ov{d}=1,\,\,c\ov{b}+a\ov{d}=0,
\end{align*}
moreover, that $(c'e_2)(\ov{c'e_2})+a'\ov{a'}=1$ above is same as $c'\ov{c}+a'\ov{a'}=1$, so is others.
\vspace{1mm}

\begin{proposition}\label{proposition 4.2.1}
	The group $G'_{1,2}$ is isomorphic to the group $Sp(1) \times U(2)${\rm :} $G'_{1,2} \cong Sp(1) \times U(2)$.
\end{proposition}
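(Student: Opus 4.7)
The plan is to build an explicit isomorphism $\varphi_{1,2}\colon Sp(1) \times U(2) \to G'_{1,2}$ by first identifying $Sp(1) \times U(2) \cong G_{1,2}$, which is immediate from the block-diagonal definition of $G_{1,2}$, and then transporting this identification to $G'_{1,2}$ by conjugation inside $Sp(3)$ with $T := \diag(1, 1, e_2)$. Since $e_2 \ov{e_2} = -e_2^2 = 1$, the element $T$ lies in $Sp(3)$, so conjugation by $T$ is an inner automorphism of $Sp(3)$ and carries closed subgroups to closed subgroups.

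Concretely, I would set
\[
\varphi_{1,2}(h, M) := T\,\diag(h, M)\,T^*, \qquad (h, M) \in Sp(1) \times U(2).
\]
Writing $M = \begin{pmatrix} a & c \\ d & b \end{pmatrix}$ with $a, b, c, d \in \C$ and carrying out the product using $e_2 z = \ov{z}e_2$ for $z \in \C$ and $e_2^2 = -1$ (all the relevant entries lie in the associative subalgebra $\H$), one finds
\[
\varphi_{1,2}(h, M) = \begin{pmatrix} h & 0 & 0 \\ 0 & a & (-c)e_2 \\ 0 & \ov{e_2}(-d) & \ov{b} \end{pmatrix}.
\]
This has the shape defining elements of $G'_{1,2}$ with $a' = a$, $b' = \ov{b}$, $c' = -c$, $d' = -d \in \C$, and the three defining relations of $G'_{1,2}$ reduce, after sign cancellation, to precisely the $U(2)$-unitarity relations $a\ov{a} + c\ov{c} = 1$, $b\ov{b} + d\ov{d} = 1$, $a\ov{d} + c\ov{b} = 0$.

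That $\varphi_{1,2}$ is a homomorphism is then automatic, since it is conjugation by $T$ restricted to the embedded subgroup $G_{1,2} \subset Sp(3)$. Injectivity is immediate from the explicit formula, which lets me read $h$ and every entry of $M$ off $\varphi_{1,2}(h, M)$. For surjectivity, given $A' \in G'_{1,2}$ I would compute $T^* A' T$ by the same rules; an analogous collapse shows it lies in $G_{1,2}$ with $2 \times 2$ block $\begin{pmatrix} a' & -c' \\ -d' & \ov{b'} \end{pmatrix}$, which is in $U(2)$ precisely by the three defining relations of $G'_{1,2}$.

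The main obstacle, though not a deep one, is pure bookkeeping in the Cayley algebra: the anti-linear rule $e_2 z = \ov{z}e_2$ for $z \in \C$ and the identity $-e_2 b e_2 = \ov{b}$ have to be applied carefully, and one must ensure all intermediate products remain inside the associative subalgebra $\H = \C \oplus \C e_2$ so that the matrix multiplications are unambiguous.
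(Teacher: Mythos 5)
Your proposal is correct and is essentially the paper's own argument: the paper likewise identifies $Sp(1)\times U(2)$ with the block-diagonal subgroup $G_{1,2}$ via $f_{{}_{421}}$ and then carries $G_{1,2}$ onto $G'_{1,2}$ by conjugation with $\diag(1,1,e_2)$ inside $Sp(3)$ (its map $g_{{}_{421}}$). The only difference is the direction of the conjugation, $T(\cdot)T^{*}$ versus $T^{*}(\cdot)T$, which merely changes the explicit correspondence of entries (your $a'=a,\ b'=\ov{b},\ c'=-c,\ d'=-d$) and does not affect the isomorphism.
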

\begin{proof}
	First, we will prove that the group $G'_{1,2}$ is isomorphic to the group $G_{1,2}$. 
	We define a mapping $g_{{}_{421}}: G_{1,2} \to G'_{1,2}$ by
	\begin{align*}
			g_{{}_{421}}(\begin{pmatrix}
			h & 0 & 0 \\
			0 & a & c \\
			0 & d & b
			\end{pmatrix})
			&=\begin{pmatrix}
			1 & 0 & 0 \\
			0 & 1 & 0 \\
			0 & 0 & \ov{e_2}
			\end{pmatrix}\begin{pmatrix}
			h & 0 & 0 \\
			0 & a & c \\
			0 & d & b
			\end{pmatrix}\begin{pmatrix}
			1 & 0 & 0 \\
			0 & 1 & 0 \\
			0 & 0 & e_2
			\end{pmatrix}\left(=\begin{pmatrix}
			h & 0 & 0 \\
			0 & a & ce_2 \\
			0 & \ov{e_2}d & b
			\end{pmatrix} \right).
	\end{align*}
	First, it is clear that $g_{{}_{421}}$ is well-defined and a homomorphism. Moreover, it is easy to verify that $g_{{}_{421}}$ is bijective. Thus we have the isomorphism $G'_{1,2} \cong G_{1,2}$. 
	
	Here, by defining a mapping $f_{{}_{421}}:Sp(1) \times U(2) \to G_{1,2}$ as follows:
	\begin{align*}
		f_{{}_{421}}(p,U)=\scalebox{0.8}{$
			\left( \begin{array}{cccccccc@{\!}}
			&\multicolumn{2}{c}{\raisebox{-15pt}[0pt][0pt]{\Large$p$}}&&&&
			\\
			&&&&\multicolumn{2}{c}{\raisebox{-5pt}[0pt]{\Large $0$}}&
			\\
			&&&&&&&
			\\
			&&&&\multicolumn{2}{c}{\raisebox{-18pt}[0pt][0pt]{\huge $U$}}&
			\\
			&\multicolumn{2}{c}{\raisebox{-5pt}[0pt]{\Large $0$}}&&&&
			\\[-2mm]
			&&&&&&&
			\end{array}\right)$},
		\end{align*}
	we have the isomorphism $G_{1,2} \cong Sp(1) \times U(2)$.
	
	Therefore, together with the result of $G'_{1,2} \cong G_{1,2}$, we have the required isomorphism 
	\begin{align*}
			G'_{1,2} \cong Sp(1) \times U(2).
	\end{align*}
\end{proof}

Now, we will determine the structure of the group $(F_4)^{\gamma_3} \cap (F_4)^{\sigma_3}$.

\begin{theorem} \label{theorem 4.2.2}
	The group $(F_4)^{\gamma_3} \cap (F_4)^{\sigma_3}$ is isomorphic to the group $(U(1) \times Sp(1) \times U(2))/\Z_2$ {\rm: } $(F_4)^{\gamma_3} \cap (F_4)^{\sigma_3} \cong (U(1) \times Sp(1) \times U(2))/\Z_2, \Z_2=\{(1,1,E),(-1,-1,-E) \}$.
\end{theorem}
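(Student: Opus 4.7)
The plan is to mirror the structure of the proof of Theorem \ref{theorem 4.1.1}, but with the extra geometric wrinkle that we must describe the centralizer of $\mathrm{diag}(1,\bar{\bm{\omega}},\bm{\omega})$ inside $Sp(3)$, which is where Proposition \ref{proposition 4.2.1} enters. Specifically, I would define
\[
\varphi_{{}_{F_4,\gamma_3,\sigma_3}}:U(1)\times Sp(1)\times U(2)\longrightarrow (F_4)^{\gamma_3}\cap (F_4)^{\sigma_3}
\]
as the composition of the isomorphism $Sp(1)\times U(2)\cong G'_{1,2}$ of Proposition \ref{proposition 4.2.1} with the restriction of $\varphi_{{}_{F_4,\gamma_3}}$ to $U(1)\times G'_{1,2}\subset U(1)\times Sp(3)$. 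Explicitly, $\varphi_{{}_{F_4,\gamma_3,\sigma_3}}(a,h,U)=\varphi_{{}_{F_4,\gamma_3}}(a,A')$, where $A'\in G'_{1,2}$ is the block-diagonal element with $h$ in the $(1,1)$-slot and $U$ (after conjugation by $\mathrm{diag}(1,1,e_2)$) in the lower $2\times 2$ block.

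Well-definedness and the homomorphism property follow at once from the corresponding properties of $\varphi_{{}_{F_4,\gamma_3}}$ once one checks that every $A'\in G'_{1,2}$ commutes with $\mathrm{diag}(1,\bar{\bm{\omega}},\bm{\omega})$, which is immediate from the definition of $G'_{1,2}$ together with the identity $\bm{\omega}e_2=e_2\bar{\bm{\omega}}$ in $\H$; combined with Lemma \ref{lemma 3.2.6}(1), this places the image inside $(F_4)^{\sigma_3}$ as well as $(F_4)^{\gamma_3}$.

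The main step---and the only real obstacle---is surjectivity. Given $\alpha\in (F_4)^{\gamma_3}\cap (F_4)^{\sigma_3}$, Theorem \ref{theorem 3.2.2} yields $(a,A)\in U(1)\times Sp(3)$ with $\alpha=\varphi_{{}_{F_4,\gamma_3}}(a,A)$. The relation $\alpha\sigma_3=\sigma_3\alpha$ together with Lemma \ref{lemma 3.2.6}(1) and the kernel $\Z_2=\{(1,E),(-1,-E)\}$ of $\varphi_{{}_{F_4,\gamma_3}}$ gives the dichotomy
\[
A\,\mathrm{diag}(1,\bar{\bm{\omega}},\bm{\omega})=\pm\,\mathrm{diag}(1,\bar{\bm{\omega}},\bm{\omega})\,A,\qquad a=\pm a.
\]
The minus case forces $a=0$, contradicting $a\in U(1)$, so $A$ must commute with $\mathrm{diag}(1,\bar{\bm{\omega}},\bm{\omega})$. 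An entrywise analysis $a_{ij}d_j=d_ia_{ij}$ with $(d_1,d_2,d_3)=(1,\bar{\bm{\omega}},\bm{\omega})$ then pins down $A$: the off-diagonal entries of the first row and column vanish (since $\bar{\bm{\omega}}-1,\bm{\omega}-1$ are units in $\H$), the diagonal entries $a_{22},a_{33}$ lie in the $\H$-centralizer $\C$ of $\bm{\omega}$, and writing $a_{23}=x+ye_2$, $a_{32}=u+ve_2$ with $x,y,u,v\in\C$ and invoking $\bm{\omega}e_2=e_2\bar{\bm{\omega}}$ forces $x=u=0$, so $a_{23},a_{32}\in\C e_2$. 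This is exactly the shape describing $G'_{1,2}$. Applying Proposition \ref{proposition 4.2.1} to pull $A$ back to $(h,U)\in Sp(1)\times U(2)$ completes surjectivity.

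For the kernel, since $\varphi_{{}_{F_4,\gamma_3,\sigma_3}}$ is a restriction of $\varphi_{{}_{F_4,\gamma_3}}$, we intersect the latter's kernel $\{(1,E),(-1,-E)\}$ with the image of $U(1)\times G'_{1,2}$ in $U(1)\times Sp(3)$; both elements lie in that image and correspond to $(1,1,E)$ and $(-1,-1,-E)$ respectively, giving $\Z_2$. Combined with the homomorphism theorem, this yields the required isomorphism $(F_4)^{\gamma_3}\cap (F_4)^{\sigma_3}\cong (U(1)\times Sp(1)\times U(2))/\Z_2$.
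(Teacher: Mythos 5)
Your proposal follows essentially the same route as the paper: the same homomorphism $\varphi_{{}_{F_4,\gamma_3,\sigma_3}}(s,p,U)=\varphi_{{}_{F_4,\gamma_3}}(s,h(p,U))$ built from Proposition \ref{proposition 4.2.1}, the same dichotomy from $\Ker\,\varphi_{{}_{F_4,\gamma_3}}=\{(1,E),(-1,-E)\}$ with the minus case excluded, and the same identification of the centralizer of $\diag(1,\ov{\bm{\omega}},\bm{\omega})$ in $Sp(3)$ with $G'_{1,2}$, leading to the same kernel $\{(1,1,E),(-1,-1,-E)\}$. The only difference is that your entrywise analysis (vanishing of the first row and column off-diagonal entries, $a_{22},a_{33}\in\C$, $a_{23},a_{32}\in\C e_2$ via $\bm{\omega}e_2=e_2\ov{\bm{\omega}}$) spells out what the paper dismisses as a ``straightforward computation,'' and it is correct.
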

\begin{proof}
	First, we denote the composition of $g_{{}_{421}}$ and $f_{{}_{421}}$ by $h$: $h=g_{{}_{421}}f_{{}_{421}}$ (in the proof of Proposition \ref{proposition 4.2.1}). Then we define a mapping $\varphi_{{}_{F_4,\gamma_3,\sigma_3}}:U(1) \times Sp(1) \times U(2) \to (F_4)^{\gamma_3} \cap (F_4)^{\sigma_3}$ by
		\begin{align*}
		\varphi_{{}_{F_4,\gamma_3,\sigma_3}}(s,p,U)(M+\a)=h(p,U)Mh(p,U)^*+s\a h(p,U)^*,\,
		M+\a \in \mathfrak{J}(3,\H) \oplus \H^3=\mathfrak{J}.
		\end{align*}
		Needless to say, this mapping is the restriction of the mapping $\varphi_{{}_{F_4,\gamma_3}}$, that is, $\varphi_{{}_{F_4,\gamma_3,\sigma_3}}(s,p,U) \allowbreak =\varphi_{{}_{F_4,\gamma_3}}(s,h(p,U))$ (Theorem \ref{theorem 3.2.2}).
		
		First, we will prove that $\varphi_{{}_{F_4,\gamma_3,\sigma_3}}$ is well-defined. It is clear that $\varphi_{{}_{F_4,\gamma_3,\sigma_3}}(s,p,U) \in (F_4)^{\gamma_3}$, and using $\sigma_3=\varphi_{{}_{F_4,\gamma_3}}(1,\diag(1,\ov{\bm{\omega}}, \bm{\omega}))$ (Lemma \ref{lemma 3.2.6} (1)), it follows that
		\begin{align*}
		{\sigma_3}^{-1}\varphi_{{}_{F_4,\gamma_3,\sigma_3}}(s,p,U)\sigma_3
		&=\varphi_{{}_{F_4,\gamma_3}}(1,\diag(1,\ov{\bm{\omega}},\bm{\omega}))^{-1}\varphi_{{}_{F_4,\gamma_3,\sigma_3}}(s,p,U)
		\varphi_{{}_{F_4,\gamma_3}}(1,\diag(1,\ov{\bm{\omega}},\bm{\omega}))
		\\
		&=\varphi_{{}_{F_4,\gamma_3}}(1,\diag(1,\bm{\omega},\ov{\bm{\omega}}))
		\varphi_{{}_{F_4,\gamma_3}}(s,h(p,U))\varphi_{{}_{F_4,\gamma_3}}(1,\diag(1,\ov{\bm{\omega}},\bm{\omega}))
		\\
		&=\varphi_{{}_{F_4,\gamma_3}}(s,\diag(1,\bm{\omega},\ov{\bm{\omega}})h(p,U)\diag(1,\ov{\bm{\omega}},\bm{\omega})), h(p,U)\!=
		\begin{pmatrix}
		p & 0 & 0 \\
		0 & a & c \\
		0 & d & b
		\end{pmatrix}
		\\
		&=\varphi_{{}_{F_4,\gamma_3}}(s,
		\begin{pmatrix}
		p & 0 & 0 \\
		0 & \bm{\omega}a\ov{\bm{\omega}} & \bm{\omega}(ce_2)\bm{\omega} \\
		0 & \ov{\bm{\omega}}(\ov{e_2}d)\ov{\bm{\omega}} & \ov{\bm{\omega}}b\bm{\omega}
		\end{pmatrix})
		\\
		&=\varphi_{{}_{F_4,\gamma_3}}(s,
		\begin{pmatrix}
		p & 0 & 0 \\
		0 & a & c e_2\\
		0 & \ov{e_2}d & b
		\end{pmatrix})
		\\
		&=\varphi_{{}_{F_4,\gamma_3}}(s,h(p,U))
		\\
		&=\varphi_{{}_{F_4,\gamma_3,\sigma_3}}(s,p,U).
		\end{align*}
	Hence we have that $\varphi_{{}_{F_4,\gamma_3,\sigma_3}}(s,p,U) \in (F_4)^{\sigma_3}$. Thus $\varphi_{{}_{F_4,\gamma_3,\sigma_3}}$ is well-defined.
	Subsequently, since $\varphi_{{}_{F_4,\gamma_3,\sigma_3}}$ is the restriction of the mapping $\varphi_{{}_{F_4,\gamma_3}}$, we easily see  that $\varphi_{{}_{F_4,\gamma_3,\sigma_3}}$ is a homomorphism. 

	Next, we will prove that $\varphi_{{}_{F_4,\gamma_3,\sigma_3}}$ is surjective. Let $\alpha \in (F_4)^{\gamma_3} \cap (F_4)^{\sigma_3} \subset (F_4)^{\gamma_3}$. There exist  $s \in U(1)$ and $A \in Sp(3)$ such that $\alpha=\varphi_{{}_{F_4,\gamma_3}}(s,A)$ (Theorem \ref{theorem 3.2.2}). Moreover, from the condition $\alpha\in	(F_4)^{\sigma_3}$, that is, ${\sigma_3}^{-1}\varphi_{{}_{F_4,\gamma_3}}(s,A)\sigma_3=\varphi_{{}_{F_4,\gamma_3}}(s,A)$, and using ${\sigma_3}^{-1}\varphi_{{}_{F_4,\gamma_3}}(s,A)\sigma_3\!=\!\varphi_{{}_{F_4,\gamma_3}}(s,\diag(1,\bm{\omega},\ov{\bm{\omega}})A\,\diag(1,\ov{\bm{\omega}},\bm{\omega}))$ (Lemma \ref{lemma 3.2.6} (1)), we have that
	\begin{align*}
	\left\{ 
	\begin{array}{l}
	s=s \\
	\diag(1,\bm{\omega},\ov{\bm{\omega}})A\,\diag(1,\ov{\bm{\omega}},\bm{\omega})=A 
	\end{array} \right.
	\quad {\text{or}}\quad
	\left\{ 
	\begin{array}{l}
	s=-s \\
	\diag(1,\bm{\omega},\ov{\bm{\omega}})A\,\diag(1,\ov{\bm{\omega}},\bm{\omega})=-A. 
	\end{array} \right.
	\end{align*}
	The latter case is impossible because of $s\not=0$. As for the former case, from the second condition, by doing straightforward computation $A$ takes the following form $\begin{pmatrix}
	p & 0 & 0 \\
	0 & a & c e_2\\
	0 & \ov{e_2}d & b
	\end{pmatrix} \allowbreak \in Sp(3)$, that is, $A \in G'_{1,2}$. Hence there exist $s \in U(1)$ and $h(p,U) \in Sp(3)$ such that $\alpha=\varphi_{{}_{F_4,\gamma_3}}(s,h(p,U))$. 
	Moreover, from Lemma \ref{lemma 3.2.6} (1) there exist $p \in Sp(1)$ and $U \in U(2)$ such that $A=h(p,U)$. Needless to say, $s \in U(1)$.
  Thus, there exist $s \in U(1),p \in Sp(1)$ and $U \in U(2)$ such that $\alpha=\varphi_{{}_{F_4,\gamma_3, \sigma_3}}(s,p,U)$. The proof of surjective is completed.
	
	Finally, we will determine $\Ker\,\varphi_{{}_{F_4,\gamma_3,\sigma_3}}$. However, from $\Ker\,\varphi_{{}_{F_4,\gamma_3}}=\{(1,E), (-1,-E) \}$ we easily obtain that $\Ker\,\varphi_{{}_{F_4,\gamma_3,\sigma_3}}=\{(1,1,E), (-1,-1,-E) \} \cong \Z_2$. 

 Therefore we have the required isomorphism
 \begin{align*}
 (F_4)^{\gamma_3} \cap (F_4)^{\sigma_3} \cong (U(1) \times Sp(1) \times U(2))/\Z_2.
  \end{align*}
\end{proof}
\vspace{1mm}

Thus, since the group $(F_4)^{\gamma_3} \cap (F_4)^{\sigma_3}$ is connected from Theorem \ref{theorem 4.2.2}, we have an exceptional $\varmathbb{Z}_3 \times \varmathbb{Z}_3$-symmetric space
\begin{align*}
     F_4/((U(1) \times Sp(1) \times U(2))/\Z_2).
\end{align*}

\subsection{Case 3: $\{1, \tilde{\gamma}_3,  \tilde{\gamma}_3{}^{-1}\} \times \{1, \tilde{w}_3,  \tilde{w}_3{}^{-1}\}$-symmetric space}

Let the $\R$-linear transformations $\gamma_3, w_3$ of $\mathfrak{J}$  defined in Subsection \ref{subsection 3.2}. 

\noindent From Lemma \ref{lemma 3.2.6} (2), since we can easily confirm that $\gamma_3$ and $w_3$ are commutative, $\tilde{\gamma}_3$ and $\tilde{w}_3$ are commutative in $\Aut(F_4)$: $\tilde{\gamma}_3\tilde{w}_3=\tilde{w}_3\tilde{\gamma}_3$.
\vspace{1mm}

Before determining the structure of the group $(F_4)^{\gamma_3} \cap (F_4)^{w_3}$, we prove lemma needed in the proof of theorem below.

\begin{lemma}\label{lemma 4.3.1}
	The group $S(U(1)\times U(1)\times U(1))$ is isomorphic to the group $U(1)\times U(1)${\rm :} $S(U(1)\times U(1)\times U(1)) \cong U(1)\times U(1)$.
\end{lemma}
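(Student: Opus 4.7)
The plan is to exhibit an explicit isomorphism between the two groups. Recall that
$$S(U(1)\times U(1)\times U(1))=\{\diag(a,b,c)\mid a,b,c\in U(1),\ abc=1\},$$
so the third entry is forced by the first two.

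First I would define the map $\varphi:S(U(1)\times U(1)\times U(1))\to U(1)\times U(1)$ by projection onto the first two diagonal entries,
$$\varphi(\diag(a,b,c))=(a,b).$$
Since the product in $S(U(1)\times U(1)\times U(1))$ is just the coordinatewise product in $U(1)^{3}$, and the projection $(a,b,c)\mapsto(a,b)$ respects coordinatewise multiplication, $\varphi$ is a group homomorphism.

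Next I would verify bijectivity. Injectivity is immediate because an element $\diag(a,b,c)$ in $S(U(1)\times U(1)\times U(1))$ satisfies $c=(ab)^{-1}$, so the first two entries determine the third. For surjectivity, given any $(s,t)\in U(1)\times U(1)$, the element $\diag(s,t,(st)^{-1})$ lies in $S(U(1)\times U(1)\times U(1))$ and maps to $(s,t)$ under $\varphi$. Alternatively one could go the other way and define $\psi:U(1)\times U(1)\to S(U(1)\times U(1)\times U(1))$ by $\psi(s,t)=\diag(s,t,(st)^{-1})$, check directly that it is a homomorphism, and verify $\varphi\circ\psi=\mathrm{id}$ and $\psi\circ\varphi=\mathrm{id}$.

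There is essentially no obstacle here; the content is entirely formal and the determinant-one condition simply singles out a complementary subgroup of $U(1)^{3}$ isomorphic to $U(1)^{2}$. I would present the argument in two short sentences, stating the map $\varphi$ (or equivalently $\psi$) and observing that it is a bijective homomorphism, yielding the required isomorphism $S(U(1)\times U(1)\times U(1))\cong U(1)\times U(1)$.
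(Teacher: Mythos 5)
Your proposal is correct and is essentially the same argument as the paper's: the paper defines the map $f_{{}_{431}}(s,t)=\diag(s,t,(st)^{-1})$, which is exactly your $\psi$, and notes that it induces the isomorphism. Your presentation via the projection $\varphi$ is just the inverse of that map, so there is no substantive difference.
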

\begin{proof}
	We define a mapping $f_{{}_{431}}: U(1)\times U(1) \to S(U(1)\times U(1)\times U(1))$ by 
	\begin{align*}
	f_{{}_{431}}(a,b)=\left( 
	\begin{array}{ccc}
	s & & {\raisebox{-7pt}[0pt]{\large $0$}}
	\\[2mm]
	& t & 
	\\[2mm]
	{\raisebox{1pt}[0pt]{\large $0$}}&& (st)^{-1}
	\end{array}\right) \in SU(3).
	\end{align*}
	Then this mapping induces the required isomorphism.
\end{proof}

Now, we will determine the structure of the group $(F_4)^{\gamma_3} \cap (F_4)^{w_3}$.

\begin{theorem}\label{theorem 4.3.2}
	The group $(F_4)^{\gamma_3} \cap (F_4)^{w_3}$ is isomorphic to the group $(U(1) \times U(1) \times SU(3))/\Z_3${\rm :} $(F_4)^{\gamma_3} \cap (F_4)^{w_3} \cong (U(1) \times U(1)\times SU(3))/\Z_3, \Z_3=\{(1,1,E), (\bm{\omega},\bm{\omega}, \bm{\omega}E),(\bm{\omega}^{-1},\allowbreak\bm{\omega}^{-1}, \bm{\omega}^{-1}E)\}$.
\end{theorem}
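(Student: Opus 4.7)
The plan is to adapt the proof pattern of Theorem \ref{theorem 4.2.2}, but this time using $\varphi_{{}_{F_4,w_3}}$ (Theorem \ref{theorem 3.2.5}) as the ambient parametrization rather than $\varphi_{{}_{F_4,\gamma_3}}$, since the diagonal subgroup of $SU(3)$ arises naturally as the centralizer of $\diag(1,\bm{\omega},\ov{\bm{\omega}})$. Using Lemma \ref{lemma 4.3.1} to identify $U(1)\times U(1)$ with $S(U(1)\times U(1)\times U(1)) \subset SU(3)$ via $(s,t)\mapsto \diag(s,t,(st)^{-1})$, I would define
\begin{align*}
\varphi_{{}_{F_4,\gamma_3,w_3}}: U(1)\times U(1)\times SU(3) &\to (F_4)^{\gamma_3}\cap(F_4)^{w_3},
\\
\varphi_{{}_{F_4,\gamma_3,w_3}}(s,t,A) &= \varphi_{{}_{F_4,w_3}}(\diag(s,t,(st)^{-1}),A),
\end{align*}
so that this map is a restriction of $\varphi_{{}_{F_4,w_3}}$ and hence automatically a homomorphism landing in $(F_4)^{w_3}$.

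To check that the image also lies in $(F_4)^{\gamma_3}$, I would use $\gamma_3 = \varphi_{{}_{F_4,w_3}}(\diag(1,\bm{\omega},\ov{\bm{\omega}}),E)$ from Lemma \ref{lemma 3.2.6} (2); conjugation by $\gamma_3$ acts on $(B,A)$ as $(\diag(1,\bm{\omega},\ov{\bm{\omega}})\,B\,\diag(1,\ov{\bm{\omega}},\bm{\omega}), A)$, and diagonal matrices $\diag(s,t,(st)^{-1})$ commute with $\diag(1,\bm{\omega},\ov{\bm{\omega}})$ trivially. For surjectivity, take $\alpha \in (F_4)^{\gamma_3}\cap(F_4)^{w_3}$ and write $\alpha = \varphi_{{}_{F_4,w_3}}(B,A)$ by Theorem \ref{theorem 3.2.5}. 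The condition ${\gamma_3}^{-1}\alpha\gamma_3=\alpha$ then forces
\begin{align*}
(\diag(1,\bm{\omega},\ov{\bm{\omega}})\,B\,\diag(1,\ov{\bm{\omega}},\bm{\omega}),\,A) \equiv (B,A) \pmod{\Z_3},
\end{align*}
i.e.\ these pairs differ by some $(\bm{\omega}^k E,\bm{\omega}^k E)$. Comparing second coordinates gives $\bm{\omega}^k A = A$, hence $k=0$, so $\diag(1,\bm{\omega},\ov{\bm{\omega}})\,B\,\diag(1,\ov{\bm{\omega}},\bm{\omega}) = B$, and since $1,\bm{\omega},\ov{\bm{\omega}}$ are pairwise distinct, $B$ must be diagonal, so $B = \diag(s,t,(st)^{-1})$ for some $s,t \in U(1)$ with $\det B = st(st)^{-1}=1$.

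For the kernel, $(s,t,A) \in \Ker\,\varphi_{{}_{F_4,\gamma_3,w_3}}$ iff $(\diag(s,t,(st)^{-1}),A) \in \Ker\,\varphi_{{}_{F_4,w_3}}=\Z_3$, which forces $A = \bm{\omega}^k E$ and $s=t=(st)^{-1}=\bm{\omega}^k$ for some $k \in \{0,1,2\}$; this recovers exactly the three elements $\{(1,1,E),(\bm{\omega},\bm{\omega},\bm{\omega}E),(\bm{\omega}^{-1},\bm{\omega}^{-1},\bm{\omega}^{-1}E)\}$. Combining these four steps via the homomorphism theorem yields the claimed isomorphism. The main subtle point is the surjectivity argument: one must carefully exploit the second-coordinate constraint in the $\Z_3$-quotient to exclude the ``shifted-diagonal'' matrices $B$ (with nonzero entries on the off-diagonals $d_i d_j^{-1}=\bm{\omega}^{\pm 1}$), which would otherwise appear if $(B,A)$ were only required to be fixed mod $\Z_3$ in both factors independently.
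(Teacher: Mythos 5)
Your proposal is correct and follows essentially the same route as the paper: both restrict $\varphi_{{}_{F_4,w_3}}$ to the diagonal torus in the first $SU(3)$ factor, use $\gamma_3=\varphi_{{}_{F_4,w_3}}(\diag(1,\bm{\omega},\ov{\bm{\omega}}),E)$ to check well-definedness and to reduce surjectivity to the three cases indexed by $\Ker\,\varphi_{{}_{F_4,w_3}}\cong\Z_3$, rule out the nontrivial cases via the second coordinate ($\bm{\omega}^kA=A$ forces $k=0$), and conclude with the identification $S(U(1)\times U(1)\times U(1))\cong U(1)\times U(1)$. The only difference is that you build that identification into the domain of the map at the outset, whereas the paper applies Lemma \ref{lemma 4.3.1} at the end; this is cosmetic.
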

\begin{proof}
	Let $S(U(1) \times U(1) \times U(1)) \subset SU(3)$.
	We define a mapping $\varphi_{{}_{F_4,\gamma_3, w_3}}: S(U(1) \times U(1) \times U(1)) \times SU(3) \to (F_4)^{\gamma_3} \cap (F_4)^{w_3}$ by
	\begin{align*}
			\varphi_{{}_{F_4,\gamma_3, w_3}}(L,A)(X_{\bm{C}}+M)=AX_{\bm{C}}A^*+LMA^*,\,\,X_{\bm{C}}+M \in \mathfrak{J}(3,\C)\oplus M(3,\C)=\mathfrak{J}.
	\end{align*}
	Needless to say, this mapping is the restriction of the mapping $\varphi_{{}_{F_4,w_3}}$, that is, $\varphi_{{}_{F_4,\gamma_3, w_3}}(L,A)=\varphi_{{}_{F_4,w_3}}(L,A)$ (Theorem \ref{theorem 3.2.5}).
	
	As usual, we will prove that $\varphi_{{}_{F_4,\gamma_3, w_3}}$ is well-defined. It is clear that $\varphi_{{}_{F_4,\gamma_3, w_3}}(L,A) \in (F_4)^{w_3}$, and using $\gamma_3=\varphi_{{}_{F_4,w_3}}(\diag(1,\ov{\bm{\omega}},\bm{\omega}), E)$ (Lemma \ref{lemma 3.2.6} (2)), it follows that 
	\begin{align*}
	     {\gamma_3}^{-1}\varphi_{{}_{F_4,\gamma_3, w_3}}(L,A)\gamma_3
	     &=\varphi_{{}_{F_4,w_3}}(\diag(1,\ov{\bm{\omega}},\bm{\omega}), E)^{-1}\varphi_{{}_{F_4,\gamma_3, w_3}}(L,A)\varphi_{{}_{F_4,w_3}}(\diag(1,\ov{\bm{\omega}},\bm{\omega}), E)
	     \\
	     &=\varphi_{{}_{F_4,w_3}}(\diag(1,\bm{\omega},\ov{\bm{\omega}}), E)\varphi_{{}_{F_4,w_3}}(L,A)\varphi_{{}_{F_4,w_3}}(\diag(1,\ov{\bm{\omega}},\bm{\omega}), E)
	     \\
	     &=\varphi_{{}_{F_4,w_3}}(\diag(1,\bm{\omega},\ov{\bm{\omega}})L\diag(1,\ov{\bm{\omega}},\bm{\omega}),A),L=\diag(a,b,c), abc=1
	     \\
	     &=\varphi_{{}_{F_4,w_3}}(L,A)
	     \\
	     &=\varphi_{{}_{F_4,\gamma_3,w_3}}(L,A).
	\end{align*}
	Hence we have that $\varphi_{{}_{F_4,\gamma_3, w_3}}(L,A) \in (F_4)^{\gamma_3}$.  Thus $\varphi_{{}_{F_4,\gamma_3, w_3}}$ is well-defined. Subsequently, since $\varphi_{{}_{F_4,\gamma_3,w_3}}$ is the restriction of the mapping $\varphi_{{}_{F_4,w_3}}$, we easily see that $\varphi_{{}_{F_4,\gamma_3,w_3}}$ is a homomorphism. 
	
	Next, we will prove that $\varphi_{{}_{F_4,\gamma_3,w_3}}$ is surjective. Let $\alpha \in (F_4)^{\gamma_3} \cap (F_4)^{w_3} \subset (F_4)^{w_3}$. There exist $P, A \in SU(3)$ such that $\alpha=\varphi_{{}_{F_4,w_3}}(P,A)$ (Theorem \ref{theorem 3.2.5}). Moreover, from the condition $\alpha \in (F_4)^{\gamma_3}$, that is, ${\gamma_3}^{-1}\varphi_{{}_{F_4,w_3}}(P,A)\gamma_3=\varphi_{{}_{F_4,w_3}}(P,A)$, and using ${\gamma_3}^{-1}\varphi_{{}_{F_4,w_3}}(P,A)\gamma_3=\varphi_{{}_{F_4,w_3}}(\diag(1,\bm{\omega},\ov{\bm{\omega}})P\,\diag(1,\ov{\bm{\omega}},\bm{\omega}),A)$ (Lemma \ref{lemma 3.2.6} (2)), we have that
	\begin{align*}
	&\,\,\,{\rm(i)}\,\left\{
	\begin{array}{l}
	\diag(1,\bm{\omega},\ov{\bm{\omega}})P\diag(1,\ov{\bm{\omega}},\bm{\omega})=P \\
	A=A,
	\end{array} \right.
	\qquad
	 {\rm(ii)}\,\left\{
 	\begin{array}{l}
 	\diag(1,\bm{\omega},\ov{\bm{\omega}})P\diag(1,\ov{\bm{\omega}}, \bm{\omega})=\bm{\omega}P \\
	A=\bm{\omega}A,
	\end{array} \right.
	\\[2mm]
	&{\rm(iii)}\,\left\{
	\begin{array}{l}
	\diag(1,\bm{\omega},\ov{\bm{\omega}})P\diag(1,\ov{\bm{\omega}},\bm{\omega})=\bm{\omega}^{-1}P \\
	A=\bm{\omega}^{-1}A.
	\end{array} \right.
	\end{align*}
	The Cases (ii) and (iii) are impossible because of $A\not=0$. As for the Case (i), from the first condition, by doing straightforward computation $P$ takes the form  $\diag(a,b,c)
  \in SU(3)$, that is, $P \in S(U(1)\times U(1)\times U(1))$. Needless to say, $A \in SU(3)$. Hence there exist $L \in S(U(1)\times U(1) \times U(1))$ and $A \in SU(3)$ such that $\alpha=\varphi_{{}_{F_4,w_3}}(L,A)$. Namely, there exist $L \in S(U(1)\times U(1) \times U(1))$ and $A \in SU(3)$ such that $\alpha=\varphi_{{}_{F_4,\gamma_3,w_3}}(L,A)$. With above, the proof of surjective is completed.
	
	Finally, we will determine $\Ker\,\varphi_{{}_{F_4,\gamma_3,w_3}}$. However, from $\Ker\,\varphi_{{}_{F_4,w_3}}=\{(E,E),(\bm{\omega}E,\bm{\omega}E), \allowbreak (\bm{\omega}^{-1}E,\bm{\omega}^{-1}E)\}$, we easily obtain that $\Ker\,\varphi_{{}_{F_4,\gamma_3,w_3}}=\{(E,E),(\bm{\omega}E,\bm{\omega}E), (\bm{\omega}^{-1}E,\bm{\omega}^{-1}E)\} \cong \Z_3$. Thus we have the isomorphism $(F_4)^{\gamma_3} \cap (F_4)^{w_3} \cong (S(U(1) \times U(1) \times U(1))\times SU(3))/\Z_3$.
	
	Therefore, by Lemma \ref{lemma 4.3.1} we have the required isomorphism 
	\begin{align*}
	(F_4)^{\gamma_3} \cap (F_4)^{w_3} \cong (U(1) \times U(1)\times SU(3))/\Z_3,
	\end{align*}
	where $\Z_3=\{(1,1,E), (\bm{\omega},\bm{\omega}, \bm{\omega}E),(\bm{\omega}^{-1},\bm{\omega}^{-1}, \bm{\omega}^{-1}E)\}$.
\end{proof}
\vspace{1mm}

Thus, since the group $(F_4)^{\gamma_3} \cap (F_4)^{w_3}$ is connected  from Theorem \ref{theorem 4.3.2}, we have an exceptional $\varmathbb{Z}_3 \times \varmathbb{Z}_3$-symmetric space
\begin{align*}
F_4/((U(1) \times U(1) \times SU(3))/\Z_3).
\end{align*}

\subsection{Case 4: $\{1, \tilde{\sigma}_3,  \tilde{\sigma}_3{}^{-1}\} \times \{1, \tilde{w}_3,  \tilde{w}_3{}^{-1}\}$-symmetric space}\label{case 4}

Let the $\R$-linear transformations $\sigma_3, w_3$ of $\mathfrak{J}$  defined in Subsection \ref{subsection 3.2}. 

\noindent From Lemma \ref{lemma 3.2.6} (1), since we can easily confirm that $\gamma_3$ and $\sigma_3$ are commutative, $\tilde{\sigma}_3$ and $\tilde{w}_3$ are commutative in $\Aut(F_4)$: $\tilde{\sigma}_3\tilde{w}_3=\tilde{w}_3\tilde{\sigma}_3$.
\vspace{1mm}

Now, we will determine the structure of the group $(F_4)^{\sigma_3} \cap (F_4)^{w_3}$. Note that we can prove theorem below as in the proof of Theorem \ref{theorem 4.3.2}, however we give the proof as detailed as possible.

\begin{theorem}\label{theorem 4.4.1}
	The group $(F_4)^{\sigma_3} \cap (F_4)^{w_3}$ is isomorphic to the group $(SU(3)\times U(1) \times U(1))/\Z_3${\rm :} $(F_4)^{\sigma_3} \cap (F_4)^{w_3} \cong (SU(3)\times U(1) \times U(1))/\Z_3, \Z_3=\{(E,1,1), (\bm{\omega}E,\bm{\omega},\bm{\omega}),( \bm{\omega}^{-1}E,\allowbreak \bm{\omega}^{-1},\bm{\omega}^{-1}\}$.
\end{theorem}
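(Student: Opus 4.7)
The plan is to mimic the approach of Theorem \ref{theorem 4.3.2}, using the mapping $\varphi_{{}_{F_4,w_3}}$ from Theorem \ref{theorem 3.2.5} and restricting it so that the image also lies in $(F_4)^{\sigma_3}$. Specifically, I would define
\begin{align*}
\varphi_{{}_{F_4,\sigma_3,w_3}}: SU(3)\times S(U(1)\times U(1)\times U(1))\to (F_4)^{\sigma_3}\cap(F_4)^{w_3}
\end{align*}
by $\varphi_{{}_{F_4,\sigma_3,w_3}}(B,L)=\varphi_{{}_{F_4,w_3}}(B,L)$, where $S(U(1)\times U(1)\times U(1))\subset SU(3)$ sits in the second factor. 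Compared with Case 3, the roles of the two $SU(3)$ factors are swapped because it is now $\sigma_3$, not $\gamma_3$, that is being imposed, and by Lemma \ref{lemma 3.2.6} (2) $\sigma_3=\varphi_{{}_{F_4,w_3}}(E,\diag(1,\ov{\bm{\omega}},\bm{\omega}))$ only affects the second argument.

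The first step is to verify $\varphi_{{}_{F_4,\sigma_3,w_3}}$ is well-defined. Membership in $(F_4)^{w_3}$ is automatic from the domain of $\varphi_{{}_{F_4,w_3}}$. For membership in $(F_4)^{\sigma_3}$, I would compute
\begin{align*}
{\sigma_3}^{-1}\varphi_{{}_{F_4,w_3}}(B,L)\sigma_3=\varphi_{{}_{F_4,w_3}}(B,\diag(1,\bm{\omega},\ov{\bm{\omega}})L\diag(1,\ov{\bm{\omega}},\bm{\omega}))
\end{align*}
and observe that for diagonal $L=\diag(a,b,c)\in S(U(1)^{\times 3})$ the conjugation fixes $L$. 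That $\varphi_{{}_{F_4,\sigma_3,w_3}}$ is a homomorphism is inherited from $\varphi_{{}_{F_4,w_3}}$.

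For surjectivity, given $\alpha\in(F_4)^{\sigma_3}\cap(F_4)^{w_3}\subset(F_4)^{w_3}$ I write $\alpha=\varphi_{{}_{F_4,w_3}}(B,A)$ with $B,A\in SU(3)$ (Theorem \ref{theorem 3.2.5}). The condition $\sigma_3^{-1}\alpha\sigma_3=\alpha$ together with $\Ker\,\varphi_{{}_{F_4,w_3}}=\{(E,E),(\bm{\omega}E,\bm{\omega}E),(\bm{\omega}^{-1}E,\bm{\omega}^{-1}E)\}$ splits into three sub-cases: $\diag(1,\bm{\omega},\ov{\bm{\omega}})A\diag(1,\ov{\bm{\omega}},\bm{\omega})=A$, $=\bm{\omega}A$, or $=\bm{\omega}^{-1}A$ (with $B$ multiplied correspondingly by the same scalar). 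The latter two are ruled out by taking a single nonzero diagonal entry of $A$; the first forces $A$ to be diagonal, hence $A\in S(U(1)\times U(1)\times U(1))$, while $B\in SU(3)$ is unconstrained. This is the step where I expect to have to be most careful, as in Case 3, because the multi-case analysis coming from the nontrivial kernel $\Z_3$ is the only real content of the proof.

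Finally, $\Ker\,\varphi_{{}_{F_4,\sigma_3,w_3}}$ is computed directly from $\Ker\,\varphi_{{}_{F_4,w_3}}$ by intersecting with the restricted domain, giving $\{(E,1,1),(\bm{\omega}E,\bm{\omega},\bm{\omega}),(\bm{\omega}^{-1}E,\bm{\omega}^{-1},\bm{\omega}^{-1})\}\cong\Z_3$ after applying the isomorphism $S(U(1)\times U(1)\times U(1))\cong U(1)\times U(1)$ of Lemma \ref{lemma 4.3.1} (sending $\diag(a,b,(ab)^{-1})\mapsto(a,b)$). The homomorphism theorem then yields the desired isomorphism $(F_4)^{\sigma_3}\cap(F_4)^{w_3}\cong(SU(3)\times U(1)\times U(1))/\Z_3$.
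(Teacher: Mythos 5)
Your construction is the same as the paper's: you restrict $\varphi_{{}_{F_4,w_3}}$ to $SU(3)\times S(U(1)\times U(1)\times U(1))$, verify well-definedness via $\sigma_3=\varphi_{{}_{F_4,w_3}}(E,\diag(1,\ov{\bm{\omega}},\bm{\omega}))$, run the three-case analysis forced by $\Ker\,\varphi_{{}_{F_4,w_3}}\cong\Z_3$, and finish with $S(U(1)\times U(1)\times U(1))\cong U(1)\times U(1)$. The well-definedness, homomorphism and kernel computations all match the paper's proof.

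There is, however, one step whose justification as written would fail. You rule out the sub-cases $\diag(1,\bm{\omega},\ov{\bm{\omega}})A\,\diag(1,\ov{\bm{\omega}},\bm{\omega})=\bm{\omega}^{\pm1}A$ ``by taking a single nonzero diagonal entry of $A$''. A matrix in $SU(3)$ need not have a nonzero diagonal entry, and in fact the equation $\diag(1,\bm{\omega},\ov{\bm{\omega}})A\,\diag(1,\ov{\bm{\omega}},\bm{\omega})=\bm{\omega}A$ does have nonzero solutions in $SU(3)$: conjugation multiplies the $(i,j)$ entry of $A$ by $d_i\ov{d_j}$ with $d=(1,\bm{\omega},\ov{\bm{\omega}})$, and this factor equals $\bm{\omega}$ precisely at the positions $(1,3),(2,1),(3,2)$, so the cyclic permutation matrix supported there lies in $SU(3)$ and satisfies the equation. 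Hence the contradiction cannot be extracted from the $A$-condition at all; it must come from the first component, which you mention only parenthetically: in those sub-cases one also has $B=\bm{\omega}^{\pm1}B$, forcing $B=0$, impossible for $B\in SU(3)$. This is exactly how the paper disposes of its Cases (ii) and (iii) (``impossible because of $P\not=0$''). With that one-line repair your argument coincides with the paper's proof; the remaining steps (Case (i) forcing $A$ diagonal, the kernel being $\{(E,E),(\bm{\omega}E,\bm{\omega}E),(\bm{\omega}^{-1}E,\bm{\omega}^{-1}E)\}$, and the passage to $U(1)\times U(1)$) are correct as you state them.
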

\begin{proof}
	Let $S(U(1) \times U(1) \times U(1)) \subset SU(3)$.
	We define a mapping $\varphi_{{}_{F_4,\sigma_3, w_3}}: SU(3) \times S(U(1) \times U(1) \times U(1)) \to (F_4)^{\sigma_3} \cap (F_4)^{w_3}$ by
	\begin{align*}
	\varphi_{{}_{F_4,\sigma_3, w_3}}(P,L)(X_{\bm{C}}+M)=LX_{\bm{C}}L^*+PML^*,\,\,X_{\bm{C}}+M \in \mathfrak{J}(3,\C)\oplus M(3,\C)=\mathfrak{J}.
	\end{align*}
	Needless to say, this mapping is the restriction of the mapping $\varphi_{{}_{F_4,w_3}}$, that is, $\varphi_{{}_{F_4,\gamma_3, w_3}}(P,L)=\varphi_{{}_{F_4,w_3}}(P,L)$ (Theorem \ref{theorem 3.2.5}).
	
	As usual, we will prove that $\varphi_{{}_{F_4,\sigma_3, w_3}}$ is well-defined. It is clear that $\varphi_{{}_{F_4,\sigma_3, w_3}}(P,L) \in (F_4)^{w_3}$, and using $\sigma_3=\varphi_{{}_{F_4,w_3}}(E,\diag(1,\ov{\bm{\omega}},\bm{\omega}))$ (Lemma \ref{lemma 3.2.6} (2)), it follows that 
	\begin{align*}
	{\sigma_3}^{-1}\varphi_{{}_{F_4,\sigma_3, w_3}}(P,L)\sigma_3
	&=\varphi_{{}_{F_4,w_3}}(E,\diag(1,\ov{\bm{\omega}},\bm{\omega}))^{-1}\varphi_{{}_{F_4,\gamma_3, w_3}}(P,L)\varphi_{{}_{F_4,w_3}}(E,\diag(1,\ov{\bm{\omega}},\bm{\omega}))
	\\
	&=\varphi_{{}_{F_4,w_3}}(E,\diag(1,\bm{\omega},\ov{\bm{\omega}}))\varphi_{{}_{F_4,w_3}}(P,L)\varphi_{{}_{F_4,w_3}}(E,\diag(1,\ov{\bm{\omega}},\bm{\omega}))
	\\
	&=\varphi_{{}_{F_4,w_3}}(P,\diag(1,\bm{\omega},\ov{\bm{\omega}})L\diag(1,\ov{\bm{\omega}},\bm{\omega})),\,\,L=\diag(a,b,c)
	\\
	&=\varphi_{{}_{F_4,w_3}}(P,L)
	\\
	&=\varphi_{{}_{F_4,\sigma_3,w_3}}(P,L).
	\end{align*}
	Hence we have that $\varphi_{{}_{F_4,\sigma_3, w_3}}(P,L) \in (F_4)^{\sigma_3}$.  Thus $\varphi_{{}_{F_4,\sigma_3, w_3}}$ is well-defined. Subsequently, since $\varphi_{{}_{F_4,\sigma_3,w_3}}$ is the restriction of the mapping $\varphi_{{}_{F_4,w_3}}$, we easily see that $\varphi_{{}_{F_4,\sigma_3,w_3}}$ is a homomorphism. 
	
	Next, we will prove that $\varphi_{{}_{F_4,\sigma_3,w_3}}$ is surjective. Let $\alpha \in (F_4)^{\sigma_3} \cap (F_4)^{w_3} \subset (F_4)^{w_3}$.
	 There exist $P, A \in SU(3)$ such that $\alpha=\varphi_{{}_{F_4,w_3}}(P,A)$ (Theorem \ref{theorem 3.2.5}). Moreover, from the condition $\alpha \in (F_4)^{\sigma_3}$, that is, ${\sigma_3}^{-1}\varphi_{{}_{F_4,w_3}}(P,A)\sigma_3=\varphi_{{}_{F_4,w_3}}(P,A)$, and using ${\sigma_3}^{-1}\varphi_{{}_{F_4,w_3}}(P,A)\sigma_3\allowbreak=\varphi_{{}_{F_4,w_3}}(P,\diag(1,\bm{\omega},\ov{\bm{\omega}})A\diag(1,\ov{\bm{\omega}},\bm{\omega}))$ (Lemma \ref{lemma 3.2.6} (2)), we have that
	\begin{align*}
	&\,\,\,{\rm(i)}\,\left\{
	\begin{array}{l}
	P=P\\
	\diag(1,\bm{\omega},\ov{\bm{\omega}})A\,\diag(1,\ov{\bm{\omega}},\bm{\omega})=A, 
	\end{array} \right.
	\qquad
	{\rm(ii)}\,\left\{
	\begin{array}{l}
	P=\bm{\omega}P\\
	\diag(1,\bm{\omega},\ov{\bm{\omega}})A\,\diag(1,\ov{\bm{\omega}},\bm{\omega})=\bm{\omega}A, 
	\end{array} \right.
	\\[2mm]
	&{\rm(iii)}\,\left\{
	\begin{array}{l}
	P=\bm{\omega}^{-1}P\\
	\diag(1,\bm{\omega},\ov{\bm{\omega}})A\,\diag(1,\ov{\bm{\omega}},\bm{\omega})=\bm{\omega}^{-1}A.
	\end{array} \right.
	\end{align*}
	The Cases (ii) and (iii) are impossible because of $P\not=0$. As for the Case (i), from the first condition, by doing straightforward computation $A$ takes the following form  $\diag(a,b,c), a,b,c \in U(1), abc=1$, that is, $A \in S(U(1)\times U(1)\times U(1))$. Needless to say, $P \in SU(3)$.  Hence there exist $P \in SU(3)$ and $A \in S(U(1)\times U(1) \times U(1))$ such that $\alpha=\varphi_{{}_{F_4,w_3}}(P,A)$. Namely, there exist $P \in SU(3)$ and $A \in S(U(1)\times U(1) \times U(1))$ such that $\alpha=\varphi_{{}_{F_4,\sigma_3,w_3}}(P,A)$. The proof of surjective is completed.
	
	Finally, we will determine $\Ker\,\varphi_{{}_{F_4,\sigma_3,w_3}}$. However, from $\Ker\,\varphi_{{}_{F_4,w_3}}=\{(E,E),(\bm{\omega}E,\bm{\omega}E), \allowbreak (\bm{\omega}^{-1}E,\bm{\omega}^{-1}E)\}$, we easily obtain that $\Ker\,\varphi_{{}_{F_4,\sigma_3,w_3}}=\{(E,E),(\bm{\omega}E,\bm{\omega}E), (\bm{\omega}^{-1}E,\bm{\omega}^{-1}E)\} \cong \Z_3$. Thus we have the isomorphism $(F_4)^{\sigma_3} \cap (F_4)^{w_3} \cong (SU(3)\times S(U(1) \times U(1) \times U(1)))/\Z_3$.
	
	Here, as in the proof of Theorem \ref{theorem 4.3.2} we have the isomorphism $U(1) \times U(1) \cong S(U(1) \times U(1) \times U(1))$.
	
	Therefore we have the required isomorphism 
	\begin{align*}
	(F_4)^{\sigma_3} \cap (F_4)^{w_3} \cong (SU(3)\times U(1) \times U(1))/\Z_3,
	\end{align*}
	where $\Z_3=\{(E,1,1), (\bm{\omega}E,\bm{\omega},\bm{\omega}),( \bm{\omega}^{-1}E,\allowbreak \bm{\omega}^{-1},\bm{\omega}^{-1}\}$.
\end{proof}
\vspace{1mm}

Thus, since the group $(F_4)^{\sigma_3} \cap (F_4)^{w_3}$ is connected from Theorem \ref{theorem 4.4.1}, we have an exceptional $\varmathbb{Z}_3 \times \varmathbb{Z}_3$-symmetric space
\begin{align*}
F_4/((SU(3)\times U(1) \times U(1))/\Z_3).
\end{align*} 

\begin{assertion}\label{assertion}
	On Theorem \ref{theorem 4.4.1} from a different view point. 
\end{assertion}

	First, let $U(3) \subset Sp(3)$. Then, we can embed $U(3)$ into $F_4$ using the mapping $\varphi_{{}_{F_4,\gamma_3}}$ as follows:
	\begin{align*}
		\varphi_{{}_{F_4,\gamma_3}}(1,U)(M+\a)=UMU^*+\a U^*,\,\,M+\a \in \mathfrak{J}(3,\H) \oplus \H^3=\mathfrak{J},
	\end{align*}
	more detail, since $w_3$ induces an automorphism of the group $(F_4)_{E_1, F_1(1),F_1(e_1)}$, it follows that $\varphi_{{}_{F_4,\gamma_3}}(1,U) \in ((F_4)_{E_1, F_1(1),F_1(e_1)})^{w_3} \cong (Spin(7))^{w_3}$ , where $Spin(7)$ is defined in Theorem \ref{theorem 3.2.4}. Here, we denote $\varphi_{{}_{F_4,\gamma_3}}(1,U)$ by $\varphi(U)$: $\varphi(U)=\varphi_{{}_{F_4,\gamma_3}}(1,U)$, and we define a mapping $\psi: U(1) \times U(3) \to (F_4)^{\sigma_3} \cap (F_4)^{w_3}$ by
	\begin{align*}
		\psi(a,U)=D_a\varphi(U),
	\end{align*}
	where $D_a$ is defined in Subsection 3.2. Then the mapping $\psi$ induces the isomorphism $(F_4)^{\sigma_3} \cap (F_4)^{w_3} \cong (U(1)\times U(3))/\Z_3$, where $\Z_3=\{(1,E), (\bm{\omega},\bm{\omega}^{-1}E), (\bm{\omega}^{-1}, \bm{\omega}E) \}$.

\subsection{Case 5: $\{1, \tilde{\gamma}_3,  \tilde{\gamma}_3{}^{-1}\} \times \{1, \tilde{\sigma}_3,  \tilde{\sigma}_3{}^{-1}\}$-symmetric space}

Let the $C$-linear transformations $\gamma_3, \sigma_3$ of $\mathfrak{J}^C$  defined in Subsection \ref{subsection 3.3}.

\noindent From Lemma \ref{lemma 3.3.8} (1), since we can easily confirm that $\gamma_3$ and $\sigma_3$ are commutative, $\tilde{\gamma}_3$ and $\tilde{\sigma}_3$ are commutative in $\Aut(E_6)$: $\tilde{\gamma}_3\tilde{\sigma}_3=\tilde{\sigma}_3\tilde{\gamma}_3$.
\vspace{1mm}

Before determining the structure of the group $(E_6)^{\gamma_3} \cap (E_6)^{\sigma_3}$, we prove proposition and lemma needed in the proof of theorem below.
\vspace{1mm}

We define a $C$-linear transformation $\sigma'_3$ of $\mathfrak{J}^C$ by
\begin{align*}
			\sigma'_3=\varphi_{{}_{E_6,\gamma_3}}(1,\diag(1,1,\omega,\omega,\tau\omega,\tau\omega)) \in (E_6)^{\gamma_3} \subset E_6,
\end{align*}
where $\omega=-(1/2)+(\sqrt{3}/2)i \in C$.

Let an element 
\begin{align*}R:=\scalebox{0.8}{$\begin{pmatrix}
	1&&&&&\\
	&1&&&&\\
	&&&&1&\\
	&&&1&&\\
	&&-1&&&\\
	&&&&&1
	\end{pmatrix}$} \in SO(6) \subset SU(6), 
\end{align*}
where the blanks are $0$, and we consider an element $\varphi_{{}_{E_6,\gamma_3}}(1,R) \in (E_6)^{\gamma_3} \subset E_6$. Here, we denote this element by $\delta_R$: $\delta_R=\varphi_{{}_{E_6,\gamma_3}}(1,R)$.
Then by doing straightforward computation, we have that $\sigma_3\delta_R=\delta_R\sigma'_3$, that is, $\sigma_3$ is conjugate to $\sigma'_3$ under $\delta_R \in (E_6)^{\gamma_3} \subset E_6$: $\sigma_3 \sim \sigma'_3$. Moreover, $\sigma'_3$ induces the automorphism $\tilde{\sigma'}_3$ of order $3$ on $E_6$: $\tilde{\sigma'}_3(\alpha)={\sigma'_3}^{-1}\alpha\sigma'_3, \alpha \in E_6$.
\vspace{1mm}

Then we have the following proposition.

\begin{proposition}\label{proposition 4.5.1}
	The group $(E_6)^{\gamma_3} \cap (E_6)^{\sigma_3}$ is isomorphic to the group $(E_6)^{\gamma_3} \cap (E_6)^{\sigma'_3}${\rm :} $(E_6)^{\gamma_3} \cap (E_6)^{\sigma_3} \cong (E_6)^{\gamma_3} \cap (E_6)^{\sigma'_3}$.
\end{proposition}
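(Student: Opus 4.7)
The plan is to exploit the conjugation relation $\sigma_3 \delta_R = \delta_R \sigma'_3$ (equivalently $\delta_R^{-1}\sigma_3\delta_R = \sigma'_3$) together with the fact that $\delta_R$ itself lies in $(E_6)^{\gamma_3}$, so conjugation by $\delta_R$ gives an inner automorphism of $E_6$ that simultaneously preserves $(E_6)^{\gamma_3}$ and carries fixed points of $\sigma_3$ to fixed points of $\sigma'_3$.

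Concretely, I would define the mapping
\begin{align*}
\varphi : (E_6)^{\gamma_3} \cap (E_6)^{\sigma_3} \longrightarrow (E_6)^{\gamma_3} \cap (E_6)^{\sigma'_3}, \quad \varphi(\alpha) = {\delta_R}^{-1}\alpha\,\delta_R.
\end{align*}
First I would check well-definedness. For $\alpha$ in the source group, $\varphi(\alpha)\in E_6$ is automatic. Since $\delta_R \in (E_6)^{\gamma_3}$ means $\gamma_3 \delta_R = \delta_R \gamma_3$, the element $\varphi(\alpha)$ commutes with $\gamma_3$, hence lies in $(E_6)^{\gamma_3}$. To verify $\varphi(\alpha) \in (E_6)^{\sigma'_3}$, I would compute directly using $\sigma_3 \delta_R = \delta_R \sigma'_3$:
\begin{align*}
{\sigma'_3}^{-1}\varphi(\alpha)\sigma'_3
 = {\sigma'_3}^{-1}{\delta_R}^{-1}\alpha\,\delta_R\,\sigma'_3
 = {\delta_R}^{-1}{\sigma_3}^{-1}\alpha\,\sigma_3\,\delta_R
 = {\delta_R}^{-1}\alpha\,\delta_R
 = \varphi(\alpha),
\end{align*}
where the last equality uses $\alpha \in (E_6)^{\sigma_3}$.

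Next, $\varphi$ is a homomorphism because it is a restriction of the inner automorphism of $E_6$ induced by $\delta_R$. Finally, bijectivity is immediate from the same construction: the mapping
\begin{align*}
\psi : (E_6)^{\gamma_3} \cap (E_6)^{\sigma'_3} \longrightarrow (E_6)^{\gamma_3} \cap (E_6)^{\sigma_3}, \quad \psi(\beta) = \delta_R\,\beta\,{\delta_R}^{-1},
\end{align*}
is well-defined by the same argument (with the roles of $\sigma_3$ and $\sigma'_3$ interchanged, using $\delta_R\sigma'_3{\delta_R}^{-1}=\sigma_3$), and it is clearly a two-sided inverse of $\varphi$. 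This establishes the required isomorphism.

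There is no genuine obstacle here; the proposition is essentially a formal consequence of the conjugation identity $\sigma_3 \sim \sigma'_3$ under the element $\delta_R \in (E_6)^{\gamma_3}$. The only point that requires attention is to be sure that $\delta_R$ really does lie in $(E_6)^{\gamma_3}$ (guaranteed by the form $\delta_R = \varphi_{{}_{E_6,\gamma_3}}(1,R)$) and to verify the conjugation relation $\sigma_3\delta_R = \delta_R\sigma'_3$, which is asserted in the paragraph preceding the statement and reduces to a direct computation using Lemma~\ref{lemma 3.3.8}(1) and the explicit form of $R$.
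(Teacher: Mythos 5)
Your proposal is correct and follows essentially the same route as the paper: both define the conjugation map $\alpha \mapsto {\delta_R}^{-1}\alpha\delta_R$ and reduce everything to the two commutation facts $\delta_R\gamma_3=\gamma_3\delta_R$ and $\sigma_3\delta_R=\delta_R\sigma'_3$. Your version is in fact slightly more complete, since you write out the conjugation computation and exhibit the explicit inverse, whereas the paper only remarks that well-definedness suffices.
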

\begin{proof}
	We define a mapping $g_{{}_{451}}: (E_6)^{\gamma_3} \cap (E_6)^{\sigma_3} \to (E_6)^{\gamma_3} \cap (E_6)^{\sigma'_3}$ by
	\begin{align*}
			g_{{}_{451}}(\alpha)={\delta_R}^{-1}\alpha\delta_R.	
	\end{align*}
	In order to prove this isomorphism, it is sufficient to show that $g_{{}_{452}}$ is well-defined. 
	
	\noindent First, we will show that $g_{{}_{451}} \in (E_6)^{\gamma_3}$. Since it follows from $\delta_R=\varphi_{{}_{E_6,\gamma_3}}(1,R)$ and $\gamma_3=\varphi_{{}_{E_6,\gamma_3}}(\omega,E)$ that $\delta_R\gamma_3=\gamma_3\delta_R$, we have that $g_{{}_{451}} \in (E_6)^{\gamma_3}$. Similarly, from $\sigma_3\delta_R=\delta_R\sigma'_3$ we have that $g_{{}_{451}} \in (E_6)^{\sigma'_3}$. Hence $g_{{}_{451}}$ is well-defined. With above, the proof of this proposition is completed.	
\end{proof}
\vspace{1mm}

Subsequently, we will prove the following lemma. 

\begin{lemma}\label{lemma 4.5.2}
	The group $S(U(2)\times U(2)\times U(2))$ is isomorphic to the group $(U(1) \times U(1)\times SU(2)\times SU(2)\times SU(2))/(\Z_2\times\Z_2)${\rm :} $S(U(2)\times U(2)\times U(2)) \cong (U(1) \times U(1)\times SU(2)\times SU(2)\times SU(2))/(\Z_2\times\Z_2), \Z_2=\!\{(1,1,E,E,E), (1,-1,E,-E,E) \}, \Z_2=\!\{(1,1,E,E,E), (-1,1,-E,\allowbreak E,E) \}$.
\end{lemma}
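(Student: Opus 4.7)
The plan is to construct an explicit surjective homomorphism from $U(1) \times U(1) \times SU(2) \times SU(2) \times SU(2)$ onto $S(U(2) \times U(2) \times U(2))$ and determine its kernel directly. The essential input is the standard isomorphism $U(2) \cong (U(1) \times SU(2))/\Z_2$, under which every $U \in U(2)$ factors as $U = aA$ with $a \in U(1)$ and $A \in SU(2)$, uniquely up to the relation $(a, A) \sim (-a, -A)$, and satisfying $\det U = a^2$.

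Concretely, I would define the homomorphism
\begin{align*}
   \varphi(s, t, A_1, A_2, A_3) = (s A_1,\, t A_2,\, (st)^{-1} A_3).
\end{align*}
That the image lies in the target is immediate, since
\begin{align*}
   \det(sA_1)\det(tA_2)\det((st)^{-1}A_3) = s^2 t^2 (st)^{-2} = 1,
\end{align*}
and $\varphi$ is manifestly a group homomorphism. For surjectivity, given $(U_1, U_2, U_3) \in S(U(2) \times U(2) \times U(2))$, I would write each $U_i = a_i A_i$ with $a_i \in U(1)$ and $A_i \in SU(2)$; the determinant condition forces $(a_1 a_2 a_3)^2 = 1$, hence $a_1 a_2 a_3 = \pm 1$. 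When the sign is $+1$, setting $s = a_1$, $t = a_2$ yields a preimage. When the sign is $-1$, the ambiguity $(a_3, A_3) \sim (-a_3, -A_3)$ allows one to flip signs in the third factor and reduce to the previous case.

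The kernel computation is then routine: $\varphi(s, t, A_1, A_2, A_3) = (E, E, E)$ forces $A_1 = s^{-1} E$, $A_2 = t^{-1} E$, $A_3 = st\, E$, and the requirement $A_i \in SU(2)$ constrains $s, t \in \{\pm 1\}$. The four resulting elements form the group $\Z_2 \times \Z_2$ recorded in the statement, with the two $\Z_2$-factors corresponding to the two independent sign choices of $s$ and $t$.

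No step should be genuinely deep; the only point requiring genuine care is the surjectivity argument, where one must exploit the $\Z_2$-ambiguity in $U(2) \cong (U(1) \times SU(2))/\Z_2$, applied to a single factor, in order to absorb the possible overall sign $a_1 a_2 a_3 = -1$. Once this is handled, both the well-definedness and the kernel enumeration are straightforward coordinate-wise calculations.
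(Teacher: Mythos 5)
Your proposal is correct and follows essentially the same route as the paper: an explicit homomorphism $U(1)^{\times 2}\times SU(2)^{\times 3}\to S(U(2)\times U(2)\times U(2))$, surjectivity by extracting square roots of determinants, and a direct kernel computation. The one discrepancy is your choice of $(st)^{-1}A_3$ in the third block (the paper uses $(ab)^{-2}C$): with your map the kernel is $\{(s,t,sE,tE,stE)\mid s,t=\pm1\}$, whose nontrivial elements are $(1,-1,E,-E,-E)$, $(-1,1,-E,E,-E)$, $(-1,-1,-E,-E,E)$ --- abstractly still $\Z_2\times\Z_2$ and still proving the isomorphism, but not literally the subgroup recorded in the statement, whose elements all have last entry $E$; so your claim that the four kernel elements are ``the group recorded in the statement'' is slightly off, and the exponent $-2$ is what makes the recorded description come out exactly.
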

\begin{proof}
	We define a mapping $f_{{}_{452}}:U(1) \times U(1)\times SU(2)\times SU(2)\times SU(2) \to S(U(2)\times U(2)\times U(2))$ by
	\begin{align*}
			f_{{}_{452}}(a,b,A,B,C)=\left( 
			\begin{array}{ccc}
			 a\mbox{\large {$A$}} & & {\raisebox{-7pt}[0pt]{\large $0$}}
			 \\[2mm]
			 & b\mbox{\large {$B$}} & 
			 \\[2mm]
			 {\raisebox{1pt}[0pt]{\large $0$}}&& (ab)^{-2}\mbox{\large {$C$}}
			\end{array}\right) \in SU(6).
	\end{align*}
	Then it is clear that $f_{{}_{452}}$ is well-defined and a homomorphism. 
	
	We will prove that $f_{{}_{452}}$ is surjective. Let $P \in S(U(2)\times U(2)\times U(2))$. Then $P$ takes the form of $\diag(P_1,P_2,P_3),P_j \in U(2), (\det\,P_1)(\det\,P_2)(\det\,P_3)=1$. Here, since $P_1 \in U(2)$, we see that $\det\,P_1 \in U(1)$. We choose $a \in U(1)$ such that $a^2=\det\,P_1$, and set $A=(1/a)P_1$. Then we have that $ A \in SU(2)$. Similarly, for $P_2 \in U(2)$, there exist $b \in U(1)$ and $B \in SU(2)$ such that $P_2=bB, b^2=\det\,P_2$. From $(\det\,P_1)(\det\,P_2)(\det\,P_3)=1$, we have that $\det\,P_3=(ab)^{-2}$. Set $C=(ab)^2P_3$. Then we have that $C \in SU(2)$. With above, the proof of surjective is completed.
	
	Finally, we will determine $\Ker\,f_{{}_{452}}$. It follows from the kernel of definition that
	\begin{align*}
			\Ker\,f_{{}_{452}}&=\{(a,b,A,B,C)\in U(1)^{\times 2}\times SU(2)^{\times 3} \,|\,f_{{}_{453}}(a,b,A,B,C)=E \}
			\\
			&=\{(a,b,a^{-1}E,b^{-1}E,(ab)^2E)\in U(1)^{\times 2}\times SU(2)^{\times 3} \,|\,a^2=b^2=1 \}
			\\
			&=\{(1,1,E,E,E), (1,-1,E,-E,E),(-1,1,-E,E,E), (-1,-1,-E,-E,E) \}
			\\
			&=\{(1,1,E,E,E), (1,-1,E,-E,E) \} \times \{(1,1,E,E,E), (-1,1,-E,E,E) \}
			\\
			& \cong \Z_2 \times \Z_2.
	\end{align*}
	
	Therefore we have the required isomorphism 
	\begin{align*}
				S(U(2)\times U(2)\times U(2)) \cong (U(1) \times U(1)\times SU(2)\times SU(2)\times SU(2))/(\Z_2\times\Z_2).
	\end{align*}
\end{proof}

Now, we will determine the structure of the group $(E_6)^{\gamma_3} \cap (E_6)^{\sigma_3}$.

\begin{theorem}\label{theorem 4.5.3}
	The group $(E_6)^{\gamma_3} \cap (E_6)^{\sigma_3}$ is isomorphic the group $(U(1)\times U(1) \times U(1)\allowbreak \times SU(2) \times SU(2)\times SU(2))/(\Z_2\times\Z_2\times\Z_2\times\Z_2)${\rm :} $(E_6)^{\gamma_3} \cap (E_6)^{\sigma_3} \cong (U(1)\times U(1) \times U(1)\times SU(2) \allowbreak \times SU(2)\times SU(2))/(\Z_2\times\Z_2\times\Z_2\times\Z_2), \Z_2=\{(1,1,1,E,E,E), (-1,1,1,-E,-E,E) \},\,\Z_2=\{(1,1,1,E,E,E), (-1,1,-1,-E,E,E) \},\Z_2\!=\!\{(1,1,1,E,E,E), (-1,-1,1,-E,-E,E) \},\!\Z_2\allowbreak=\{(1,1,1,E,E,E), (-1,-1,-1,E,E,E) \}$.
\end{theorem}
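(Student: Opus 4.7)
The plan is to follow the pattern of Cases~2--4 but adapted to the $E_6$ setting, where the diagonal associated with $\sigma_3$ in Lemma~\ref{lemma 3.3.8}~(1) is not block-diagonal. I would first invoke Proposition~\ref{proposition 4.5.1} to pass to the $\delta_R$-conjugate $\sigma'_3 = \varphi_{{}_{E_6,\gamma_3}}(1, D)$ with $D = \diag(1,1,\omega,\omega,\tau\omega,\tau\omega)$ and work instead with $(E_6)^{\gamma_3} \cap (E_6)^{\sigma'_3}$. The payoff is that $D$ is scalar on each of three $2\times 2$ blocks, so its centralizer in $SU(6)$ is exactly $S(U(2)\times U(2)\times U(2))$; together with Lemma~\ref{lemma 4.5.2}, this produces the three $SU(2)$ factors plus two $U(1)$ factors that appear on the right-hand side.

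Next I would define the candidate
\[
\varphi_{{}_{E_6,\gamma_3,\sigma_3}}\colon U(1)\times U(1)\times U(1)\times SU(2)^{\times 3} \longrightarrow (E_6)^{\gamma_3} \cap (E_6)^{\sigma'_3}
\]
by $(s,a,b,A,B,C)\mapsto \varphi_{{}_{E_6,\gamma_3}}(s, f_{{}_{452}}(a,b,A,B,C))$, where $f_{{}_{452}}$ is taken from Lemma~\ref{lemma 4.5.2}. Well-definedness into $(E_6)^{\gamma_3}$ is automatic, and the image lies in $(E_6)^{\sigma'_3}$ because $f_{{}_{452}}(a,b,A,B,C)$ is block-diagonal with three $2\times 2$ blocks while $D$ is scalar on each, so $D$ centralizes it. The homomorphism property is inherited from $\varphi_{{}_{E_6,\gamma_3}}$ and $f_{{}_{452}}$.

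For surjectivity, I would take $\alpha \in (E_6)^{\gamma_3} \cap (E_6)^{\sigma'_3}$ and use Theorem~\ref{theorem 3.3.2} to write $\alpha = \varphi_{{}_{E_6,\gamma_3}}(s, A)$ with $s \in U(1)$, $A \in SU(6)$. The invariance ${\sigma'_3}^{-1}\alpha\sigma'_3 = \alpha$ becomes $\varphi_{{}_{E_6,\gamma_3}}(s, D^{-1}AD) = \varphi_{{}_{E_6,\gamma_3}}(s, A)$, which modulo $\Ker \varphi_{{}_{E_6,\gamma_3}} = \{(1,E),(-1,-E)\}$ splits into two cases; the case $s = -s$, $D^{-1}AD = -A$ is excluded by $s \neq 0$, leaving $D^{-1}AD = A$. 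Then $A$ belongs to the centralizer $S(U(2)\times U(2)\times U(2))$ of $D$ in $SU(6)$, and Lemma~\ref{lemma 4.5.2} supplies the required preimage of $A$ under $f_{{}_{452}}$.

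Finally, the kernel is computed by pulling back $\{(1,E),(-1,-E)\}$ through $(s, f_{{}_{452}})$: the $(1,E)$-preimage is $\{1\} \times \Ker f_{{}_{452}}$, contributing a $\Z_2 \times \Z_2$ via Lemma~\ref{lemma 4.5.2}, while the $(-1,-E)$-preimage is enumerated from the scalar equations $aA = bB = -E_2$ and $(ab)^{-2}C = -E_2$, each of which forces $a^2 = b^2 = 1$. Collecting all sign combinations assembles the full kernel; composition with $(g_{{}_{451}})^{-1}$ from Proposition~\ref{proposition 4.5.1} then transports the isomorphism back to $(E_6)^{\gamma_3} \cap (E_6)^{\sigma_3}$. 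The main obstacle I anticipate is the kernel bookkeeping: matching the enumerated sign combinations with the four independent $\Z_2$-generators listed in the theorem, and verifying that they are indeed independent and generate a group of the stated order.
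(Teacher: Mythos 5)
Your proposal follows essentially the same route as the paper's proof: pass to the conjugate $\sigma'_3$ via Proposition \ref{proposition 4.5.1}, restrict $\varphi_{{}_{E_6,\gamma_3}}$ to $U(1)\times S(U(2)\times U(2)\times U(2))$ (the centralizer of $\diag(1,1,\omega,\omega,\tau\omega,\tau\omega)$ in $SU(6)$), run the well-definedness/surjectivity/kernel argument, and finish with Lemma \ref{lemma 4.5.2}. The only difference is cosmetic --- you precompose with $f_{{}_{452}}$ and compute the kernel of the composite directly, whereas the paper first obtains $(U(1)\times S(U(2)^{\times 3}))/\Z_2$ with $\Z_2=\{(1,E),(-1,-E)\}$ and only then invokes Lemma \ref{lemma 4.5.2} --- and your explicit enumeration of the $(-1,-E)$-preimage is, if anything, more careful than the paper's final step, which leaves that bookkeeping implicit.
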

\begin{proof}
	Let $S(U(2)\times U(2)\times U(2)) \subset SU(6)$. 
	We define a mapping $\varphi_{{}_{E_6,\gamma_3,\sigma'_3}}: U(1)\times S(U(2)\times U(2)\times U(2)) \to (E_6)^{\gamma_3} \cap (E_6)^{\sigma'_3}$ by
	\begin{align*}
	\varphi_{{}_{E_6,\gamma_3,\sigma'_3}}(s, P)(M+\a)&={k_J}^{-1}(P(k_J M){}^t\!P)+s\a k^{-1}(\tau \,{}^t\!P), 
  \\
	&\hspace*{40mm}M+\a \in \mathfrak{J}(3, \H)^C \oplus (\H^3)^C\!\!=\!\mathfrak{J}^C.
	\end{align*}
	Needless to say, this mapping is the restriction of the mapping $\varphi_{{}_{E_6,\gamma_3}}$, that is, $\varphi_{{}_{E_6,\gamma_3,\sigma'_3}}(s, P)=\varphi_{{}_{E_6,\gamma_3}}(s,P)$ (Theorem \ref{theorem 3.3.2}). 
	
	First, we will prove that $\varphi_{{}_{E_6,\gamma_3,\sigma'_3}}$ is well-defined. It is clear that $\varphi_{{}_{E_6,\gamma_3,\sigma'_3}}(s,P) \in (E_6)^{\gamma_3}$, and it follows from $\sigma'_3=\varphi_{{}_{E_6,\gamma_3}}(1,\diag(1,1,\omega,\omega,\tau\omega,\tau\omega))$ that
	\begin{align*}
			&\quad {\sigma'_3}^{-1}\varphi_{{}_{E_6,\gamma_3,\sigma'_3}}(s,P)\sigma'_3
			\\
			&=\varphi_{{}_{E_6,\gamma_3}}(1,\diag(1,1,\omega,\omega,\tau\omega,\tau\omega))^{-1}\varphi_{{}_{E_6,\gamma_3,\sigma'_3}}(s,P)\varphi_{{}_{E_6,\gamma_3}}(1,\diag(1,1,\omega,\omega,\tau\omega,\tau\omega))
			\\
			&=\varphi_{{}_{E_6,\gamma_3}}(1,\diag(1,1,\tau\omega,\tau\omega,\omega,\omega))\varphi_{{}_{E_6,\gamma_3}}(s,P)\varphi_{{}_{E_6,\gamma_3}}(1,\diag(1,1,\omega,\omega,\tau\omega,\tau\omega))
			\\
			&=\varphi_{{}_{E_6,\gamma_3}}(s,\diag(1,1,\tau\omega,\tau\omega,\omega,\omega)P\diag(1,1,\omega,\omega,\tau\omega,\tau\omega)),P=\diag(P_1,P_2,P_3)
			\\
			&=\varphi_{{}_{E_6,\gamma_3}}(s,\diag(P_1,(\tau\omega E) P_2(\omega E),(\omega E) P_3(\tau\omega E)))
			\\
			&=\varphi_{{}_{E_6,\gamma_3}}(s,P)
			\\
			&=\varphi_{{}_{E_6,\gamma_3,\sigma'_3}}(s,P).
	\end{align*}
	Hence we have that $\varphi_{{}_{E_6,\gamma_3,\sigma'_3}}(s,P) \in (E_6)^{\sigma'_3}$. Thus $\varphi_{{}_{E_6,\gamma_3,\sigma'_3}}$ is well-defined. Subsequently, since $\varphi_{{}_{E_6,\gamma_3,\sigma'_3}}$ is the restriction of the mapping $\varphi_{{}_{E_6,\gamma_3}}$, we easily see that $\varphi_{{}_{E_6,\gamma_3,\sigma'_3}}$ is a homomorphism.
	
	Next, we will prove that $\varphi_{{}_{E_6,\gamma_3,\sigma'_3}}$ is surjective. Let $\alpha \in (E_6)^{\gamma_3} \cap (E_6)^{\sigma'_3} \subset (E_6)^{\gamma_3}$. There exist $s \in U(1)$ and $A \in SU(6)$ such that $\alpha=\varphi_{{}_{E_6,\gamma_3}}(s,A)$ (Theorem \ref{theorem 3.3.2}). Moreover, from the condition $\alpha \in (E_6)^{\sigma'_3}$, that is, ${\sigma'_3}^{-1}\varphi_{{}_{E_6,\gamma_3}}(s,A)\sigma'_3=\varphi_{{}_{E_6,\gamma_3}}(s,A)$, and using ${\sigma'_3}^{-1}\varphi_{{}_{E_6,\gamma_3}}(s,A)\sigma'_3=\varphi_{{}_{E_6,\gamma_3}}(s,\diag(1,1,\tau\omega,\tau\omega,\omega,\omega)A\,\diag(1,1,\omega,\omega,\tau\omega,\tau\omega))$ (Lemma \ref{lemma 3.3.8} (1)), we have that
	\begin{align*}
	&\left\{
	    \begin{array}{l}
	      s=s \\
	      \diag(1,1,\tau\omega,\tau\omega,\omega,\omega)A\,\diag(1,1,\omega,\omega,\tau\omega,\tau\omega)=A   
	    \end{array}\right. 
	 \\
	&\hspace*{45mm}{\text{or}}
	 \\
	&\left\{
	\begin{array}{l}
	s=-s \\
	\diag(1,1,\tau\omega,\tau\omega,\omega,\omega)A\,\diag(1,1,\omega,\omega,\tau\omega,\tau\omega)=-A.   
	\end{array}\right.     
	\end{align*}
	The latter case is impossible because of $s\not=0$. As for the former case, from the second condition, by doing straightforward computation $A$ takes the following form $\diag(A_1, A_2, A_3), A_j \in U(2), (\det\,A_1)(\det\,A_2)(\det\,A_3)=1$, that is, $A \in S(U(2)\times U(2)\times U(2))$.
	Needless to say, $s \in U(1)$. Hence there exist $s \in U(1)$ and $P \in S(U(2)\times U(2) \times U(2))$ such that $\alpha=\varphi_{{}_{E_6,\gamma_3}}(s,P)$. Namely, there exist $s \in U(1)$ and $P \in S(U(2)\times U(2) \times U(2))$ such that $\alpha=\varphi_{{}_{E_6,\gamma_3,\sigma'_3}}(s,P)$. The proof of surjective is completed.
	
	Finally, we will determine $\Ker\,\varphi_{{}_{E_6,\gamma_3,\sigma'_3}}$. However, from $\Ker\,\varphi_{{}_{E_6,\gamma_3}}=\{(1,E),(-1,-E) \}$, we easily obtain that $\Ker\,\varphi_{{}_{E_6,\gamma_3,\sigma'_3}}=\{(1,E),(-1,-E) \} \cong \Z_2$. Thus we have the isomorphism $(E_6)^{\gamma_3} \cap (E_6)^{\sigma'_3} \cong (U(1)\times S(U(2)\times U(2)\times U(2)))/\Z_2$. Here, from Proposition \ref{proposition 4.5.1} we have the isomorphism $(E_6)^{\gamma_3} \cap (E_6)^{\sigma_3} \cong (U(1)\times S(U(2)\times U(2)\times U(2)))/\Z_2$. Moreover, by Lemma \ref{lemma 4.5.2} we have the required isomorphism 
	\begin{align*}
	(E_6)^{\gamma_3} \cap (E_6)^{\sigma_3} \!\cong (U(1)\times U(1) \times U(1)\times SU(2) \allowbreak \times SU(2)\times SU(2))/(\Z_2\times\Z_2\times\Z_2\times\Z_2), 
	\end{align*}
	where 
	\begin{align*}
	&\Z_2=\{(1,1,1,E,E,E), (-1,1,1,-E,-E,E) \},
	\\
	&\Z_2=\{(1,1,1,E,E,E), (-1,1,-1,-E,E,E) \},
	\\
	&\Z_2=\{(1,1,1,E,E,E), (-1,-1,1,-E,-E,E) \},
	\\
	&\Z_2\allowbreak=\{(1,1,1,E,E,E), (-1,-1,-1,E,E,E) \}.
	\end{align*}
\end{proof}

Thus, since the group $(E_6)^{\gamma_3} \cap (E_6)^{\sigma_3}$ is connected from Theorem \ref{theorem 4.5.3}, we have an exceptional $\varmathbb{Z}_3 \times \varmathbb{Z}_3$-symmetric space
\begin{align*}
					E_6/((U(1)\times U(1) \times U(1)\times SU(2) \allowbreak \times SU(2)\times SU(2))/(\Z_2\times\Z_2\times\Z_2\times\Z_2)).
\end{align*}

\subsection{Case 6: $\{1, \tilde{\gamma}_3,  \tilde{\gamma}_3{}^{-1}\} \times \{1, \tilde{\nu}_3,  \tilde{\nu}_3{}^{-1}\}$-symmetric space}

Let the $C$-linear transformations $\gamma_3, \nu_3$ of $\mathfrak{J}^C$  defined in Subsection \ref{subsection 3.3}.

\noindent From Lemma \ref{lemma 3.3.8} (1), together with $\gamma_3=\varphi_{{}_{E_6,\gamma_3}}(\omega,E)$, since we can easily confirm that $\gamma_3$ and $\nu_3$ are commutative, $\tilde{\gamma}_3$ and $\tilde{\nu}_3$ are commutative in $\Aut(E_6)$: $\tilde{\gamma}_3\tilde{\nu}_3=\tilde{\nu}_3\tilde{\gamma}_3$. 

Before determining the structure of the group $(E_6)^{\gamma_3} \cap (E_6)^{\nu_3}$, we prove lemma needed in the proof of theorem below.

\begin{lemma}\label{lemma 4.6.1}
	The group $S(U(1)\times U(5))$ is isomorphism the group $(U(1)\times SU(5))/\Z_5${\rm :} $S(U(1)\times U(5))\! \cong\! (U(1)\times SU(5))/\Z_5, \Z_5\!=\!\{(\varepsilon_k, {\varepsilon_k}^{-1}E) | \varepsilon_k\!=\!\exp((2\pi i/5)k), k\!=0,1,2,3,4\}$.
\end{lemma}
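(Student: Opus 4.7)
My plan is to construct an explicit surjective homomorphism $f_{{}_{461}} : U(1) \times SU(5) \to S(U(1) \times U(5))$ and apply the homomorphism theorem, with the main work being the identification of the kernel.

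The definition I will use is
\begin{align*}
f_{{}_{461}}(a, B) = \diag(a^{-5}, \, aB), \quad a \in U(1), \ B \in SU(5).
\end{align*}
First I would verify that the image lies in $S(U(1) \times U(5))$: since $a \in U(1)$ and $B \in SU(5) \subset U(5)$, the block $aB$ lies in $U(5)$, and the total determinant is $a^{-5} \cdot \det(aB) = a^{-5} \cdot a^{5}\det B = 1$. It is immediate from matrix multiplication and the scalar commuting with $B$ that $f_{{}_{461}}$ is a homomorphism.

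For surjectivity, given an arbitrary $\diag(c, D) \in S(U(1) \times U(5))$ with $c \in U(1)$, $D \in U(5)$, and $c \det D = 1$, I would choose $a \in U(1)$ with $a^{5} = \det D$ (this is possible since $\det D \in U(1)$ and $U(1)$ is divisible), and set $B := a^{-1}D$. Then $\det B = a^{-5}\det D = 1$, so $B \in SU(5)$, while $a^{-5} = (\det D)^{-1} = c$, so $f_{{}_{461}}(a, B) = \diag(c, D)$.

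The main step is computing $\Ker f_{{}_{461}}$. The equation $f_{{}_{461}}(a, B) = E$ forces $a^{-5} = 1$ and $aB = E$, hence $a^{5} = 1$ and $B = a^{-1}E$. Writing $a = \varepsilon_k = \exp((2\pi i/5)k)$ for $k = 0,1,2,3,4$, we obtain exactly
\begin{align*}
\Ker f_{{}_{461}} = \{(\varepsilon_k, \, \varepsilon_k^{-1} E) \mid k = 0,1,2,3,4\} \cong \Z_5,
\end{align*}
and the homomorphism theorem then yields the required isomorphism $S(U(1) \times U(5)) \cong (U(1) \times SU(5))/\Z_5$. The only conceptual subtlety is the choice of a 5th root $a$ of $\det D$ in the surjectivity argument; since any two such choices differ by an element of $\Z_5$, this freedom is exactly what produces the kernel.
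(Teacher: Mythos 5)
Your proposal is correct and coincides with the paper's own proof: the same map $(a,B)\mapsto\diag(a^{-5},aB)$, the same choice of a fifth root of $\det D$ for surjectivity, and the same kernel computation yielding $\Z_5$. No issues.
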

\begin{proof}
	We define a mapping $f_{{}_{461}}:U(1) \times SU(5) \to S(U(1)\times U(5))$ by
	\begin{align*}
	f_{{}_{461}}(t, T)=\scalebox{0.7}{$
		\left(\begin{array}{cccccccc@{\!}}
		&\multicolumn{2}{c}{\raisebox{-15pt}[0pt][0pt]{\Large$t^{-5}$}}&&&&
		\\
		&&&&\multicolumn{2}{c}{\raisebox{-5pt}[0pt]{\Large $0$}}&
		\\
		&&&&&&&
		\\
		&&&&\multicolumn{2}{c}
		{\raisebox{-15pt}[0pt][0pt]{\Large $t$}\,\raisebox{-18pt}[0pt][0pt]{\huge $T$}}&
		\\
		&\multicolumn{2}{c}{\raisebox{0pt}[0pt]{\Large $0$}}&&&&
		\\[-2mm]
		&&&&&&&
		\end{array}\right)$}.
	\end{align*}
	Then it is clear that $f_{{}_{461}}$ is well-defined and a homomorphism. 
	
	Now, we will prove that $f_{{}_{461}}$ is surjective. Let $P \in S(U(1) \times U(5))$. Then $P$ takes the form of 
	\scalebox{0.6}
	{$\left(\begin{array}{cccccccc@{\!}}
			&\multicolumn{2}{c}{\raisebox{-15pt}[0pt][0pt]{\Large $s$}}&&&&
			\\
			&&&&\multicolumn{2}{c}{\raisebox{-5pt}[0pt]{\Large $0$}}&
			\\
			&&&&&&&
			\\
			&&&&\multicolumn{2}{c}{\raisebox{-18pt}[0pt][0pt]{\huge $S$}}&
			\\
			&\multicolumn{2}{c}{\raisebox{0pt}[0pt]{\Large $0$}}&&&&
			\\[-2mm]
			&&&&&&&
		\end{array}\right)$},\,\,$s \in U(1), S \in U(5), s(\det S)=1$.
	Here, since $S \in U(5)$, we see that $\det\,S \in U(1)$, and so we choose $t \in U(1)$ such that $t^5=\det\,S$. Set $T=t^{-1}S$, then we have that $T \in SU(5)$ and $s=t^{-5}$. With above, the proof of surjective is completed.
	
	Finally, we will determine $\Ker\,f_{{}_{461}}$. It follows from the definition of kernel that 
	\begin{align*}
	\Ker\,f_{{}_{461}}&=\{(t,T) \in U(1)\times SU(5)\,|\, f_{{}_{461}}(t,T)=E \}
	\\
	&=\{(t,T) \in U(1)\times SU(5)\,|\,t^5=1, T=t^{-1}E \}
	\\
	&=\{(\varepsilon_k, {\varepsilon_k}^{-1}E) \,|\, \varepsilon_k=\exp((2\pi i/5)k), k=0,1,2,3,4\}
	\\
	& \cong \Z_5.
	\end{align*}
	
	Therefore we have the required isomorphism 
	\begin{align*}
	S(U(1) \times U(5)) \cong (U(1)\times SU(5))/\Z_5.
	\end{align*}
\end{proof}
 
Now, we will determine the structure of the group $(E_6)^{\gamma_3} \cap (E_6)^{\nu_3}$.

\begin{theorem}\label{theorem 4.6.2}
	The group $(E_6)^{\gamma_3} \cap (E_6)^{\nu_3}$ is isomorphic to the group $(U(1)\times U(1)\times SU(5))/\Z_2${\rm :} $(E_6)^{\gamma_3} \cap (E_6)^{\nu_3} \cong (U(1)\times U(1)\times SU(5))/(\Z_2\times \Z_5), \Z_2=\{(1,1,E), (-1,-1,\allowbreak -E) \}, \Z_5=\{(1,\varepsilon_i, {\varepsilon_i}^{-1}E) \,|\, \varepsilon_i=\exp ((2\pi i/5)k), k=0,1,2,3,4\}$.
\end{theorem}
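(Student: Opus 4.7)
The plan is to follow the template of Theorems \ref{theorem 4.5.3}, \ref{theorem 4.3.2}, etc., by restricting the isomorphism $\varphi_{{}_{E_6,\gamma_3}}: U(1)\times SU(6) \to (E_6)^{\gamma_3}$ of Theorem \ref{theorem 3.3.2} to the subgroup $U(1)\times S(U(1)\times U(5))$. Concretely, I would define
\begin{align*}
\varphi_{{}_{E_6,\gamma_3,\nu_3}}: U(1)\times S(U(1)\times U(5)) \longrightarrow (E_6)^{\gamma_3}\cap(E_6)^{\nu_3}
\end{align*}
by $\varphi_{{}_{E_6,\gamma_3,\nu_3}}(s,P)=\varphi_{{}_{E_6,\gamma_3}}(s,P)$, and verify the four standard steps: well-definedness, homomorphism property, surjectivity, and kernel computation.

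For well-definedness, by Lemma \ref{lemma 3.3.8} (1) we have $\nu_3=\varphi_{{}_{E_6,\gamma_3}}(1,D)$ with $D=\diag(\nu^5,\nu^{-1},\nu^{-1},\nu^{-1},\nu^{-1},\nu^{-1})$. Any $P\in S(U(1)\times U(5))$ has the block form $\diag(a,S)$ with $a\in U(1)$, $S\in U(5)$, $a(\det S)=1$; since the lower $5\times 5$ block of $D$ is the scalar $\nu^{-1}E_5$, $P$ commutes with $D$, giving ${\nu_3}^{-1}\varphi_{{}_{E_6,\gamma_3}}(s,P)\nu_3=\varphi_{{}_{E_6,\gamma_3}}(s,D^{-1}PD)=\varphi_{{}_{E_6,\gamma_3}}(s,P)$, so the image lies in $(E_6)^{\nu_3}$. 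The homomorphism property is automatic from the restriction.

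For surjectivity, take $\alpha\in(E_6)^{\gamma_3}\cap(E_6)^{\nu_3}$. By Theorem \ref{theorem 3.3.2} there exist $s\in U(1)$ and $A\in SU(6)$ with $\alpha=\varphi_{{}_{E_6,\gamma_3}}(s,A)$. The condition ${\nu_3}^{-1}\alpha\nu_3=\alpha$ rewrites as $\varphi_{{}_{E_6,\gamma_3}}(s,D^{-1}AD)=\varphi_{{}_{E_6,\gamma_3}}(s,A)$, and using $\Ker\,\varphi_{{}_{E_6,\gamma_3}}=\{(1,E),(-1,-E)\}$ gives either $D^{-1}AD=A$ or simultaneously $s=-s$ and $D^{-1}AD=-A$. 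The second alternative is impossible since $s\neq 0$, so $A$ commutes with $D$. Since $\nu=\exp(2\pi i/9)$, the eigenvalues $\nu^5$ and $\nu^{-1}$ of $D$ are distinct (their ratio is $\nu^6\neq 1$), so $A$ preserves the corresponding eigenspace decomposition $C\oplus C^5$, and hence has the form $\diag(a,S)$ with $a\in U(1)$, $S\in U(5)$, $a(\det S)=1$, i.e.\ $A\in S(U(1)\times U(5))$. The kernel computation is then immediate: $\Ker\,\varphi_{{}_{E_6,\gamma_3,\nu_3}}=\{(1,E),(-1,-E)\}\cong \Z_2$, giving
\begin{align*}
(E_6)^{\gamma_3}\cap(E_6)^{\nu_3}\cong (U(1)\times S(U(1)\times U(5)))/\Z_2.
\end{align*}

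Finally, I would apply Lemma \ref{lemma 4.6.1} to replace $S(U(1)\times U(5))$ by $(U(1)\times SU(5))/\Z_5$ and combine the two quotients into $(U(1)\times U(1)\times SU(5))/(\Z_2\times \Z_5)$, reading off the explicit $\Z_2$ and $\Z_5$ subgroups as stated. The main nontrivial step is the eigenvalue analysis in the surjectivity argument, since one must carefully use the order-$9$ property of $\nu$ to separate the $1$-dimensional and $5$-dimensional eigenspaces of $D$; this is in contrast to previous cases where the diagonal conjugation matrix had three equally weighted blocks. The rest of the argument proceeds by the now-familiar routine, and since the kernel analysis combines a $\Z_2$ from $\varphi_{{}_{E_6,\gamma_3}}$ and a $\Z_5$ from the $U(5)$-normalization, the product $\Z_2\times \Z_5$ arises naturally with no further identifications.
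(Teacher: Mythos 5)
Your proposal is correct, but the surjectivity step takes a genuinely different route from the paper's. You start from the parametrization of $(E_6)^{\gamma_3}$ in Theorem \ref{theorem 3.3.2}, write $\alpha=\varphi_{{}_{E_6,\gamma_3}}(s,A)$ with $A\in SU(6)$, and then use the commutation of $A$ with $D=\diag(\nu^5,\nu^{-1},\dots,\nu^{-1})$ together with the eigenspace decomposition $C\oplus C^5$ (valid because $\nu^5/\nu^{-1}=\nu^6\neq 1$) to force $A\in S(U(1)\times U(5))$. The paper instead goes through the other fixed-point subgroup: it takes $\alpha\in(E_6)^{\nu_3}$ and writes $\alpha=\varphi_{{}_{E_6,\nu_3}}(q,P)$ with $q\in Sp(1)$, $P\in S(U(1)\times U(5))$ via Theorem \ref{theorem 3.3.5}, and then the $\gamma_3$-invariance condition only requires observing that $\omega^{-1}q\omega=q$ forces $q\in U(1)$ (the centralizer of $\bm{\omega}$ in $Sp(1)$). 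The paper's route outsources the hard block-diagonalization to the already-established structure theorem for $(E_6)^{\nu_3}$ and reduces the new work to a one-line quaternion computation; your route is more uniform with Cases 5, 7 and 8 (all of which cut down $(E_6)^{\gamma_3}$ by a diagonal conjugation) at the cost of carrying out the commutant argument explicitly. Both yield the same intermediate isomorphism $(E_6)^{\gamma_3}\cap(E_6)^{\nu_3}\cong(U(1)\times S(U(1)\times U(5)))/\Z_2$, after which the application of Lemma \ref{lemma 4.6.1} and the identification of the $\Z_2\times\Z_5$ kernel proceed identically.
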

\begin{proof}
	Let $S(U(1)\times U(5)) \subset SU(6)$. Then we define a mapping $\varphi_{{}_{E_6, \gamma, \nu_3}}: U(1)\times S(U(1)\times U(5)) \to (E_6)^{\gamma_3} \cap (E_6)^{\nu_3}$ by
	\begin{align*}
	\varphi_{{}_{E_6,\gamma_3,\nu_3}}(s, P)(M+\a)&={k_J}^{-1}(P(k_J M){}^t\!P)+s\a k^{-1}(\tau \,{}^t\!P), 
	\\
	&\hspace*{40mm}M+\a \in \mathfrak{J}(3, \H)^C \oplus (\H^3)^C=\mathfrak{J}^C.
	\end{align*}
	Needless to say, this mapping is the restriction of the mapping $\varphi_{{}_{E_6,\gamma_3}}$, that is, $\varphi_{{}_{E_6,\gamma_3,\nu_3}}(s, P)=\varphi_{{}_{E_6,\gamma_3}}(s,P)$ (Theorem \ref{theorem 3.3.2}).  

	First, we will prove that $\varphi_{{}_{E_6,\gamma_3,\nu_3}}$ is well-defined. It is clear that $\varphi_{{}_{E_6,\gamma_3,\nu_3}}(s, P) \in (E_6)^{\gamma_3}$, and using $\nu_3=\varphi_{{}_{E_6,\gamma_3}}(1,\diag(\nu^5, \nu^{-1},\nu^{-1},\nu^{-1},\nu^{-1},\nu^{-1}))$ (Lemma \ref{lemma 3.3.8} (1)), it follows that 
	\begin{align*}
			&\quad {\nu_3}^{-1}\varphi_{{}_{E_6, \gamma_3, \nu_3}}(s,P)\nu_3
			\\
			&=\varphi_{{}_{E_6,\gamma_3}}(1,\diag(\nu^5, \nu^{-1},\ldots,\nu^{-1}))^{-1}\varphi_{{}_{E_6, \gamma_3, \nu_3}}(s,P)\varphi_{{}_{E_6,\gamma_3}}(1,\diag(\nu^5, \nu^{-1},\ldots,\nu^{-1}))
			\\
			&=\varphi_{{}_{E_6,\gamma_3}}(1,\diag(\nu^{-5}, \nu,\ldots,\nu))\varphi_{{}_{E_6, \gamma_3}}(s,P)\varphi_{{}_{E_6,\gamma_3}}(1,\diag(\nu^5, \nu^{-1},\ldots,\nu^{-1}))
			\\
			&=\varphi_{{}_{E_6, \gamma_3}}(s,\diag(\nu^{-5}, \nu,\ldots,\nu)P\,\diag(\nu^5, \nu^{-1},\ldots,\nu^{-1})), P=\scalebox{0.6}{$
				\left( \begin{array}{cccccccc@{\!}}
				&\multicolumn{2}{c}{\raisebox{-15pt}[0pt][0pt]{\Large$t$}}&&&&
				\\
				&&&&\multicolumn{2}{c}{\raisebox{-5pt}[0pt]{\Large $0$}}&
				\\
				&&&&&&&
				\\
				&&&&\multicolumn{2}{c}{\raisebox{-18pt}[0pt][0pt]{\huge $U$}}&
				\\
				&\multicolumn{2}{c}{\raisebox{0pt}[0pt]{\Large $0$}}&&&&
				\\[-2mm]
				&&&&&&&
				\end{array}\right)$}
			\\
			&=\varphi_{{}_{E_6, \gamma_3}}(s,P)
			\\
			&=\varphi_{{}_{E_6, \gamma_3, \nu_3}}(s,P)
	\end{align*}
	Hence we have that $\varphi_{{}_{E_6, \gamma_3, \nu_3}}(s,P) \in (E_6)^{\nu_3}$. Thus $\varphi_{{}_{E_6,\gamma_3,\nu_3}}$ is well-defined. Subsequently, since $\varphi_{{}_{E_6,\gamma_3,\nu_3}}$ is the restriction of the mapping $\varphi_{{}_{E_6,\gamma_3}}$, we easily see  that $\varphi_{{}_{E_6,\gamma_3,\nu_3}}$ is a homomorphism.
	
	Next, we will prove that $\varphi_{{}_{E_6, \gamma_3, \nu_3}}$ is surjective. Let $\alpha \in  (E_6)^{\gamma_3} \cap (E_6)^{\nu_3} \subset (E_6)^{\nu_3}$. There exist $ q \in Sp(1)$ and $P \in S(U(1) \times U(5))$ such that $\alpha=\varphi_{{}_{E_6, \nu_3}}(q, P)$ (Theorem \ref{theorem 3.3.5}). Moreover, from the condition $\alpha \in (E_6)^{\gamma_3}$, that is, ${\gamma_3}^{-1}\varphi_{{}_{E_6, \nu_3}}(q, P)\gamma_3=\varphi_{{}_{E_6, \nu_3}}(q, P)$, and note that $\gamma_3=\varphi_{{}_{E_6, \nu_3}}(\omega,E)(=\varphi_{{}_{E_6,\gamma_3}}(\omega,E))$ (Lemma \ref{lemma 3.3.8} (1)), since it follows that
	${\gamma_3}^{-1}\varphi_{{}_{E_6, \nu_3}}(q, P)\gamma_3=\varphi_{{}_{E_6,\nu_3}}(\omega^{-1}q\omega, P)$, we have that
	\begin{align*}
				\left\{
				\begin{array}{l}
				\omega^{-1}q\omega=q \\
				P=P
				\end{array}\right.
				\quad {\text{or}}\quad
				\left\{
				\begin{array}{l}
				\omega^{-1}q\omega=-q \\
				P=-P.
				\end{array}\right.
	\end{align*}
	The latter case is impossible because of $P\not=0$. As for the former case, from the first condition, we easily see that $q \in U(1)$, and needless to say, $P \in S(U(1)\times U(5))$.  Hence there exist $s \in U(1)$ and $P \in S(U(1)\times U(5))$ such that $\alpha=\varphi_{{}_{E_6,\nu_3}}(s,P)$. Namely, there exist $s \in U(1)$ and $P \in S(U(1)\times U(5))$ such that $\alpha=\varphi_{{}_{E_6,\gamma_3,\nu_3}}(s,P)$. The proof of surjective is completed.
	
	Finally, we will determine $\Ker\,\varphi_{{}_{E_6, \gamma_3, \nu_3}}$. However, from $\Ker\,\varphi_{{}_{E_6,\gamma_3}}=\{(1,E),(-1,-E) \}$, we easily obtain that $\Ker\,\varphi_{{}_{E_6, \gamma_3, \nu_3}}=\{(1,(1,E)),(-1,(-1,-E)) \} \cong \Z_2$. Thus we have the isomorphism $(E_6)^{\gamma_3} \cap (E_6)^{\nu_3} \cong (U(1)\times S(U(1)\times U(5)))/\Z_2$.

	Therefore, by Lemma \ref{lemma 4.6.1} we have the required isomorphism
	\begin{align*}
				(E_6)^{\gamma_3} \cap (E_6)^{\nu_3} \cong (U(1)\times U(1)\times SU(5))/(\Z_2\times \Z_5),
	\end{align*}
	where 
	\begin{align*}
			&\Z_2=\{(1,1,E), (-1,-1,-E) \},
			\\
			&\Z_5=\{(1,\varepsilon_i, {\varepsilon_i}^{-1}E) \,|\, \varepsilon_i=\exp ((2\pi i/5)k), k=0,1,2,3,4\}.
	\end{align*}
\end{proof}

Thus, since the group $(E_6)^{\gamma_3} \cap (E_6)^{\nu_3}$ is connected from Theorem \ref{theorem 4.6.2}, we have an exceptional $\varmathbb{Z}_3 \times \varmathbb{Z}_3$-symmetric space
\begin{align*}
E_6/((U(1)\times U(1)\times SU(5))/(\Z_2\times \Z_5)).
\end{align*}

\subsection{Case 7: $\{1, \tilde{\gamma}_3,  \tilde{\gamma}_3{}^{-1}\} \times \{1, \tilde{\mu}_3,  \tilde{\mu}_3{}^{-1}\}$-symmetric space}

Let the $C$-linear transformations $\gamma_3, \mu_3$ of $\mathfrak{J}^C$  defined in Subsection \ref{subsection 3.3}. 

\noindent From Lemma \ref{lemma 3.3.8} (1), since we can easily confirm that $\gamma_3$ and $\mu_3$ are commutative, $\tilde{\gamma}_3$ and $\tilde{\mu}_3$ are commutative in $\Aut(E_6)$: $\tilde{\gamma}_3\tilde{\mu}_3=\tilde{\mu}_3\tilde{\gamma}_3$.
\vspace{1mm}

Before determining the structure of the group $(E_6)^{\gamma_3} \cap (E_6)^{\mu_3}$, we prove proposition and lemma needed in the proof of theorem below.
\vspace{1mm}

We define a $C$-linear transformation $\mu'_3$ of $\mathfrak{J}^C$ by
\begin{align*}
\mu'_3=\varphi_{{}_{E_6,\gamma_3}}(1,\diag(\nu^{-2},\nu^2,\nu^{-1},\nu^{-1},\nu,\nu))\in (E_6)^{\gamma_3} \subset E_6,
\end{align*}
where ${\nu}=\exp(2\pi i/9)\in C$.

Let an element 
\begin{align*}Q:=\scalebox{0.8}{$\begin{pmatrix}
	1&&&&&\\
	&1&&&&\\
	&&1&&&\\
	&&&&1&\\
	&&&-1&&\\
	&&&&&1
	\end{pmatrix}$} \in SO(6) \subset SU(6), 
\end{align*}
where the blanks are $0$, and we consider an element $\varphi_{{}_{E_6,\gamma_3}}(1,Q) \in (E_6)^{\gamma_3} \subset E_6$. Here, we denote this element by $\delta_Q$: $\delta_Q=\varphi_{{}_{E_6,\gamma_3}}(1,Q)$.
Then by doing straightforward computation, we have that $\mu_3\delta_Q=\delta_Q\mu'_3$, that is, $\mu_3$ is conjugate to $\mu'_3$ under $\delta_Q \in (E_6)^{\gamma_3} \subset E_6$: $\mu_3 \sim \mu'_3$. Moreover, $\mu'_3$ induces the automorphism $\tilde{\mu'}_3$ of order $3$ on $E_6$: $\tilde{\mu'}_3(\alpha)={\mu'_3}^{-1}\alpha\mu'_3, \alpha \in E_6$.
\vspace{1mm}

Then we have the following proposition.

\begin{proposition}\label{proposition 4.7.1}
	The group $(E_6)^{\gamma_3} \cap (E_6)^{\mu_3}$ is isomorphic to the group $(E_6)^{\gamma_3} \cap (E_6)^{\mu'_3}${\rm :} $(E_6)^{\gamma_3} \cap (E_6)^{\mu_3} \cong (E_6)^{\gamma_3} \cap (E_6)^{\mu'_3}$.
\end{proposition}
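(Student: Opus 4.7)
The plan is to mimic Proposition \ref{proposition 4.5.1} essentially verbatim, since the setup is parallel: we have exhibited an auxiliary element $\delta_Q \in (E_6)^{\gamma_3}$ and verified the intertwining relation $\mu_3 \delta_Q = \delta_Q \mu'_3$, so conjugation by $\delta_Q$ should swap the two intersections. Concretely, I would define
\begin{align*}
g_{{}_{471}} : (E_6)^{\gamma_3} \cap (E_6)^{\mu_3} \longrightarrow (E_6)^{\gamma_3} \cap (E_6)^{\mu'_3}, \quad g_{{}_{471}}(\alpha) = {\delta_Q}^{-1} \alpha \delta_Q,
\end{align*}
and the task reduces to checking that $g_{{}_{471}}$ is well-defined; bijectivity and the homomorphism property are then automatic from the fact that conjugation by a fixed element of $E_6$ is always a group automorphism of $E_6$.

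For well-definedness, I would argue in two steps. First, $g_{{}_{471}}(\alpha) \in (E_6)^{\gamma_3}$: since $\delta_Q = \varphi_{{}_{E_6,\gamma_3}}(1,Q)$ and $\gamma_3 = \varphi_{{}_{E_6,\gamma_3}}(\omega, E)$ (Lemma \ref{lemma 3.3.8}(1)), and $\omega E$ is central in $SU(6)$, the two elements $\delta_Q$ and $\gamma_3$ commute in $E_6$, so conjugation by $\delta_Q$ preserves $(E_6)^{\gamma_3}$. Second, $g_{{}_{471}}(\alpha) \in (E_6)^{\mu'_3}$: from the established relation $\mu_3 \delta_Q = \delta_Q \mu'_3$, rewriting as ${\mu'_3}^{-1} {\delta_Q}^{-1} \alpha \delta_Q \mu'_3 = {\delta_Q}^{-1} \mu_3^{-1} \alpha \mu_3 \delta_Q = {\delta_Q}^{-1} \alpha \delta_Q$, which is precisely $g_{{}_{471}}(\alpha) \in (E_6)^{\mu'_3}$. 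Hence $g_{{}_{471}}$ maps into the claimed target.

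For the inverse, I would define $h_{{}_{471}}(\beta) = \delta_Q \beta {\delta_Q}^{-1}$ and verify by the same arguments (using that $\delta_Q$ commutes with $\gamma_3$ and that $\delta_Q \mu'_3 {\delta_Q}^{-1} = \mu_3$) that $h_{{}_{471}}$ lands in $(E_6)^{\gamma_3} \cap (E_6)^{\mu_3}$. Clearly $g_{{}_{471}} \circ h_{{}_{471}} = \mathrm{id}$ and $h_{{}_{471}} \circ g_{{}_{471}} = \mathrm{id}$, so $g_{{}_{471}}$ is a group isomorphism, giving the required isomorphism $(E_6)^{\gamma_3} \cap (E_6)^{\mu_3} \cong (E_6)^{\gamma_3} \cap (E_6)^{\mu'_3}$.

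There is essentially no obstacle: the only nontrivial input is the conjugation relation $\mu_3 \delta_Q = \delta_Q \mu'_3$, which the author has already obtained by direct computation before stating the proposition. The value of this proposition, as in Case 5, is purely strategic—replacing $\mu_3$ with the diagonal form $\mu'_3 = \varphi_{{}_{E_6,\gamma_3}}(1,\mathrm{diag}(\nu^{-2},\nu^2,\nu^{-1},\nu^{-1},\nu,\nu))$ will make the subsequent determination of $(E_6)^{\gamma_3} \cap (E_6)^{\mu_3}$ via the mapping $\varphi_{{}_{E_6,\gamma_3}}$ a routine block-diagonal computation in $SU(6)$.
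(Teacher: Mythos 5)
Your proposal is correct and follows essentially the same route as the paper: conjugation by $\delta_Q$, justified by the commutation $\delta_Q\gamma_3=\gamma_3\delta_Q$ (since $\omega E$ is central in $SU(6)$) and the intertwining relation $\mu_3\delta_Q=\delta_Q\mu'_3$. The only cosmetic difference is that you orient the map as $\alpha\mapsto{\delta_Q}^{-1}\alpha\delta_Q$ from $(E_6)^{\gamma_3}\cap(E_6)^{\mu_3}$ to $(E_6)^{\gamma_3}\cap(E_6)^{\mu'_3}$ and write out the inverse explicitly, whereas the paper takes the opposite direction and leaves bijectivity implicit.
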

\begin{proof}
	We define a mapping $g_{{}_{471}}: (E_6)^{\gamma_3} \cap (E_6)^{\mu'_3} \to (E_6)^{\gamma_3} \cap (E_6)^{\mu_3}$ by
	\begin{align*}
	g_{{}_{471}}(\alpha)=\delta_Q\alpha{\delta_Q}^{-1}.	
	\end{align*}
	In order to prove this isomorphism, it is sufficient to show that $g_{{}_{471}}$ is well-defined. 
	
	\noindent First, we will show that $g_{{}_{471}} \in (E_6)^{\gamma_3}$. Since it follows from $\delta_Q=\varphi_{{}_{E_6,\gamma_3}}(1,Q)$ and $\gamma_3=\varphi_{{}_{E_6,\gamma_3}}(\omega,E)$ that $\delta_Q\gamma_3=\gamma_3\delta_Q$, we have that $g_{{}_{471}} \in (E_6)^{\gamma_3}$. Similarly, from $\mu_3\delta_Q=\delta_Q\mu'_3$ we have that $g_{{}_{471}} \in (E_6)^{\sigma_3}$. Hence $g_{{}_{471}}$ is well-defined. With above, the proof of this proposition is completed.	
\end{proof}
\vspace{1mm}

Subsequently, we will prove the following lemma. 

\begin{lemma}\label{lemma 4.7.2}
	The group $S(U(1)\times U(1)\times U(2)\times U(2))$ is isomorphic to the group $(U(1) \times U(1)\times  U(1)\times SU(2)\times SU(2))/(\Z_2\times\Z_2\times \Z_2)${\rm :} $S(U(1)\times U(1)\times U(2)\times U(2)) \cong (U(1) \times U(1)\times U(1)\times SU(2)\times SU(2)\times SU(2))/(\Z_2\times\Z_2\times  \Z_2), \Z_2=\{(1,1,1,E,E), (1,-1,1,E,-E) \}, \Z_2=\{(1,1,1,E,E), (1,-1,-1,-E,E) \}, \Z_2=\{ (1,1,1,E,E), (-1,1,1,E,-E)\}$.
\end{lemma}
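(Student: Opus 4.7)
The plan is to imitate the proof of Lemma~\ref{lemma 4.5.2}: construct an explicit homomorphism onto $S(U(1)\times U(1)\times U(2)\times U(2))$, verify surjectivity by decomposing a general element, and then read off the kernel. A quick dimension count ($1+1+4+4-1=9=3+2\cdot 3$) shows the source and target have the same dimension, as they should.

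Explicitly I would set $f_{{}_{472}}\colon U(1)\times U(1)\times U(1)\times SU(2)\times SU(2)\to S(U(1)\times U(1)\times U(2)\times U(2))$ by
$$f_{{}_{472}}(a,b,c,A,B)=\diag\bigl(a^{2},\,b^{2},\,cA,\,(abc)^{-1}B\bigr),$$
sitting inside $SU(6)$. The four block determinants multiply to $a^{2}\cdot b^{2}\cdot c^{2}\cdot(abc)^{-2}=1$, so $f_{{}_{472}}$ is well-defined, and block multiplication makes it a homomorphism. The reason for putting $a^{2},b^{2}$ in the first two slots and $(abc)^{-1}B$ in the last is that the determinant identity then becomes a polynomial identity, avoiding any spurious square-root ambiguity.

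For surjectivity, given $P=\diag(\alpha,\beta,P_{1},P_{2})$ in the target, I would pick square roots $a,b\in U(1)$ of $\alpha,\beta$ and $c\in U(1)$ of $\det P_{1}$, set $A=c^{-1}P_{1}\in SU(2)$, and set $B=(abc)P_{2}$; the determinant condition on $P$ forces
$$\det B=(abc)^{2}\det P_{2}=\alpha\beta\det P_{1}\det P_{2}=1,$$
so $B\in SU(2)$ and $f_{{}_{472}}(a,b,c,A,B)=P$.

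The only delicate step is the kernel. From $f_{{}_{472}}(a,b,c,A,B)=E$ one reads off $a,b\in\{\pm 1\}$, and $cA=E$ together with $A\in SU(2)$ forces $c\in\{\pm 1\}$, $A=c^{-1}E$; then $B=(abc)E$, whose determinant $(abc)^{2}=1$ is automatic. This yields exactly the $2^{3}=8$ elements indexed by $(a,b,c)\in\{\pm 1\}^{3}$. A direct check shows they form the product $\Z_{2}\times\Z_{2}\times\Z_{2}$ generated by $(1,-1,1,E,-E)$, $(1,-1,-1,-E,E)$ and $(-1,1,1,E,-E)$, matching the statement. I do not expect any substantive obstacle beyond this bookkeeping; the argument is essentially Lemma~\ref{lemma 4.5.2} adapted to the presence of two extra scalar $U(1)$ blocks.
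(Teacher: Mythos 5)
Your proof is correct and follows essentially the same route as the paper: the paper's map is $f_{{}_{472}}(a,b,c,A,B)=\diag(a^{-2},b^{-2},c^{-1}A,(abc)B)$, which differs from yours only by inverting the $U(1)$ parameters, and the surjectivity argument (choosing square roots of the block determinants) and the kernel enumeration yielding the same eight elements and the same three $\Z_2$ generators are identical. No gaps.
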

\begin{proof}
	We define a mapping $f_{{}_{472}}:U(1)\times U(1) \times U(1)\times SU(2)\times SU(2) \to S(U(1)\times U(1)\times U(2)\times U(2))$ by
	\begin{align*}
	f_{{}_{472}}(a,b,c,A,B)=\left( 
	\begin{array}{cccc}
	a^{-2} && &{\raisebox{-7pt}[0pt]{\large $0$}}
	\\[2mm]
	& b^{-2} &&
	\\[2mm]
	&& c^{-1}\mbox{\large {$A$}}&
	\\[2mm]
	{\raisebox{3pt}[0pt]{\large $0$}}&&&(abc)\mbox{\large {$B$}}
	\end{array}\right) \in SU(6).
	\end{align*}
	Then it is clear that $f_{{}_{472}}$ is well-defined and a homomorphism. 
	
	Now, we will prove that $f_{{}_{472}}$ is surjective. Let $P \in S(U(1)\times U(1)\times U(2)\times U(2))$. Then $P$ takes the form of $\diag(s,t,P_1,P_2),s,t \in U(1),P_j \in U(2), (st)(\det\,P_1)(\det\,P_2)=1$. Here, first we choose $a \in C$ such that $s=a^{-2}$. Then it is clear that $a \in U(1)$, so is $b \in C$ such that $t=b^{-2}$, that is, $b \in U(1)$. 
	Moreover, since $P_1 \in U(2)$, we see that $\det\,P_1 \in U(1)$, and so we choose $c \in U(1)$ such that $c^2=\det\,P_1$. Set $A=c^{-1}P_1$, then we have that $A \in SU(2)$. Similarly, for $P_2 \in U(2)$, set $B=(stc)P_2$. Since $stc=(\det\,P_2)^{-1}$, we have that $B \in SU(2)$. With above, the proof of surjective is completed.
	
	Finally, we will determine $\Ker\,f_{{}_{472}}$. It follows from the kernel of definition that
	\begin{align*}
	\Ker\,f_{{}_{472}}&=\{(a,b,c,A,B)\in U(1)^{\times 3}\times SU(2)^{\times 2} \,|\,f_{{}_{472}}(a,b,c,A,B)=E \}
	\\
	&=\{(a,b,c,A,B)\in U(1)^{\times 3}\times SU(2)^{\times 2} \,|\,a^2=b^2=1,A=cE, B=(abc)^{-1}E \}
	\\
	&=\{(1,1,1,E,E), (1,1,-1,-E,-E),(1,-1,1,E,-E), (1,-1,-1,-E,E) \}
	\\
	& \quad \cup \{ (-1,1,1,E,-E), (-1,1,-1,-E,E),(-1,-1,1,E,E), (-1,-1,-1,-E,-E)\}
	\\
	&=\{(1,1,1,E,E), (1,-1,1,E,-E) \}\times \{(1,1,1,E,E), (1,-1,-1,-E,E) \}
	\\
	&\quad \times \{(1,1,1,E,E), (-1,1,1,E,-E) \}
	\\
	& \cong \Z_2 \times \Z_2 \times\Z_2.
	\end{align*}
	
	Therefore we have the required isomorphism 
	\begin{align*}
	&\quad S(U(1)\times U(1)\times U(2)\times U(2)) 
	\\
	&\cong (U(1)\times U(1) \times U(1)\times SU(2)\times SU(2))/(\Z_2\times\Z_2\times \Z_2).
	\end{align*}
\end{proof}

Now, we will determine the structure of the group $(E_6)^{\gamma_3} \cap (E_6)^{\mu_3}$.

\begin{theorem}\label{theorem 4.7.3}
	The group $(E_6)^{\gamma_3} \cap (E_6)^{\mu_3}$ is isomorphic the group $(U(1)\times U(1) \times U(1)\allowbreak \times U(1) \times SU(2)\times SU(2))/(\Z_2\times\Z_2\times\Z_4)${\rm :} $(E_6)^{\gamma_3} \cap (E_6)^{\mu_3} \cong (U(1)\times U(1) \times U(1)\times U(1) \allowbreak \times SU(2)\times SU(2))/(\Z_2\times\Z_2\times\Z_4), \Z_2=\{(1,1,1,1,E,E), (1,1,-1,1,E,-E) \},
	\Z_2=\{(1,1,1,1,E,E), (1,1,-1,-1,-E,E) \},
	\Z_4=\{(1,1,1,E,E,E), (1,-1,1-,1,E,E), (-1,i,i,\allowbreak 1,-E,E), (-1,-i,-i,1,-E,E) \}$.
\end{theorem}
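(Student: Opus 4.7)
The plan is to mirror the proof of Theorem \ref{theorem 4.5.3}. By Proposition \ref{proposition 4.7.1}, it suffices to determine the structure of $(E_6)^{\gamma_3} \cap (E_6)^{\mu'_3}$, where $\mu'_3 = \varphi_{{}_{E_6,\gamma_3}}(1,\diag(\nu^{-2},\nu^2,\nu^{-1},\nu^{-1},\nu,\nu))$ lies conveniently inside the image of $\varphi_{{}_{E_6,\gamma_3}}$. I would then define the restriction
\begin{align*}
\varphi_{{}_{E_6,\gamma_3,\mu'_3}}: U(1) \times S(U(1)\times U(1)\times U(2)\times U(2)) &\to (E_6)^{\gamma_3} \cap (E_6)^{\mu'_3},\\
(s,P) &\mapsto \varphi_{{}_{E_6,\gamma_3}}(s,P),
\end{align*}
where $S(U(1)\times U(1)\times U(2)\times U(2))$ is viewed as the subgroup of $SU(6)$ of block-diagonal matrices of shape $1+1+2+2$.

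To check well-definedness, I would compute
\begin{align*}
{\mu'_3}^{-1}\varphi_{{}_{E_6,\gamma_3}}(s,P)\mu'_3 = \varphi_{{}_{E_6,\gamma_3}}(s,\,\diag(\nu^2,\nu^{-2},\nu,\nu,\nu^{-1},\nu^{-1})\,P\,\diag(\nu^{-2},\nu^2,\nu^{-1},\nu^{-1},\nu,\nu)),
\end{align*}
which equals $\varphi_{{}_{E_6,\gamma_3}}(s,P)$ precisely because $P$ is block-diagonal of the matching shape. For surjectivity, starting with $\alpha \in (E_6)^{\gamma_3} \cap (E_6)^{\mu'_3}$ and writing $\alpha = \varphi_{{}_{E_6,\gamma_3}}(s,A)$ via Theorem \ref{theorem 3.3.2}, I would use the $\mu'_3$-commutation together with the fact that $\nu^{-2},\nu^2,\nu^{-1},\nu$ are four pairwise distinct $9$-th roots of unity to force $A$ into block-diagonal form of shape $1+1+2+2$; the alternative $s=-s$, $A=-A$ is excluded because $s\neq 0$. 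The kernel is inherited from $\varphi_{{}_{E_6,\gamma_3}}$ as $\{(1,E),(-1,-E)\}\cong\Z_2$, giving $(E_6)^{\gamma_3}\cap(E_6)^{\mu_3} \cong (U(1)\times S(U(1)\times U(1)\times U(2)\times U(2)))/\Z_2$.

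The last step is to substitute Lemma \ref{lemma 4.7.2} and combine the two quotients, and this is where the main obstacle lies: in contrast with Case 5, the combined kernel is $\Z_2\times\Z_2\times\Z_4$ rather than $\Z_2^{\times 4}$. The reason is that any preimage of $-E$ under $f_{{}_{472}}$ must satisfy $a^{-2}=b^{-2}=-1$, so $a,b\in\{\pm i\}$ are of order $4$. Explicitly, the lift $(-1,i,i,1,-E,E)$ of $(-1,-E)\in\Ker\varphi_{{}_{E_6,\gamma_3}}$ has order $4$ and its square $(1,-1,-1,1,E,E)$ does not belong to $\{1\}\times\Ker f_{{}_{472}}$; together with two of the three $\Z_2$-generators of $\Ker f_{{}_{472}}$ (the third being absorbed into this $\Z_4$) this yields the stated kernel. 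A final order count, confirming that all sixteen kernel elements are realized, completes the proof.
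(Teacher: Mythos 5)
Your proposal follows the paper's proof essentially verbatim: reduce to $\mu'_3$ via Proposition \ref{proposition 4.7.1}, restrict $\varphi_{{}_{E_6,\gamma_3}}$ to $U(1)\times S(U(1)\times U(1)\times U(2)\times U(2))$, verify well-definedness and surjectivity by the same block-diagonal computation, inherit the kernel $\{(1,E),(-1,-E)\}\cong\Z_2$, and then pull everything back through Lemma \ref{lemma 4.7.2}. The one slip is in your justification of the $\Z_4$ factor: the square of $(-1,(i,i,1,-E,E))$, namely $(1,(-1,-1,1,E,E))$, \emph{does} lie in $\{1\}\times\Ker f_{{}_{472}}$ (indeed $f_{{}_{472}}(-1,-1,1,E,E)=\diag(1,1,E,E)=E$), and that is exactly why this order-$4$ element generates a $\Z_4$ that absorbs one of the three $\Z_2$ generators of $\Ker f_{{}_{472}}$ --- if its square fell outside the kernel, the element could not close up into a cyclic subgroup of the kernel at all. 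With that sentence corrected, your identification of the combined sixteen-element kernel as $\Z_2\times\Z_2\times\Z_4$ matches the paper's explicit enumeration under the map $h_{{}_{473}}$.
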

\begin{proof}
	Let $S(U(1)\times U(1)\times U(2)\times U(2)) \subset SU(6)$. 
	We define a mapping $\varphi_{{}_{E_6,\gamma_3,\mu'_3}}: U(1)\times S(U(1)\times U(1)\times U(2)\times U(2)) \to (E_6)^{\gamma_3} \cap (E_6)^{\mu'_3}$ by
	\begin{align*}
	\varphi_{{}_{E_6,\gamma_3,\mu'_3}}(s, P)(M+\a)&={k_J}^{-1}(P(k_J M){}^t\!P)+s\a k^{-1}(\tau \,{}^t\!P), 
	\\
	&\hspace*{40mm}M+\a \in \mathfrak{J}(3, \H)^C \oplus (\H^3)^C=\mathfrak{J}^C.
	\end{align*}
	Needless to say, this mapping is the restriction of the mapping $\varphi_{{}_{E_6,\gamma_3}}$, that is, $\varphi_{{}_{E_6,\gamma_3,\mu'_3}}(s, P)=\varphi_{{}_{E_6,\gamma_3}}(s,P)$ (Theorem \ref{theorem 3.3.2}). 
	
	As usual, we will prove that $\varphi_{{}_{E_6,\gamma_3,\mu'_3}}$ is well-defined. It is clear that $\varphi_{{}_{E_6,\gamma_3,\mu'_3}}(s,P) \in (E_6)^{\gamma_3}$, and it follows from $\mu'_3=\varphi_{{}_{E_6,\gamma_3}}(1,\diag({\nu}^{-2},\nu^2,{\nu}^{-1},{\nu}^{-1},\nu,\nu))$ that
	\begin{align*}
	&\quad {\mu'_3}^{-1}\varphi_{{}_{E_6,\gamma_3,\nu'_3}}(s,P)\mu'_3
	\\
	&=\varphi_{{}_{E_6,\gamma_3}}(1,\diag({\nu}^{-2},\nu^2,{\nu}^{-1},{\nu}^{-1},\nu,\nu))^{-1}\varphi_{{}_{E_6,\gamma_3,\mu'_3}}(s,P)\varphi_{{}_{E_6,\gamma_3}}(1,\diag({\nu}^{-2},\nu^2,{\nu}^{-1},{\nu}^{-1},\nu,\nu))
	\\
	&=\varphi_{{}_{E_6,\gamma_3}}(1,\diag({\nu}^2,\nu^{-2},\nu,\nu,{\nu}^{-1},{\nu}^{-1}))\varphi_{{}_{E_6,\gamma_3}}(s,P)\varphi_{{}_{E_6,\gamma_3}}(1,\diag({\nu}^{-2},\nu^2,{\nu}^{-1},{\nu}^{-1},\nu,\nu))
	\\
	&=\varphi_{{}_{E_6,\gamma_3}}(s,\diag({\nu}^2,\nu^{-2},\nu,\nu,{\nu}^{-1},{\nu}^{-1})P\diag({\nu}^{-2},\nu^2,{\nu}^{-1},{\nu}^{-1},\nu,\nu)),P=\diag(a,b,P_1,P_2)
	\\
	&=\varphi_{{}_{E_6,\gamma_3}}(s,\diag(\nu^2 a \nu^{-2}, {\nu}^{-2} b \nu^2, (\nu E) P_1(\nu^{-1}E), ({\nu}^{-1}E) P_2 (\nu E) ))
	\\
	&=\varphi_{{}_{E_6,\gamma_3}}(s,P)
	\\
	&=\varphi_{{}_{E_6,\gamma_3,\mu'_3}}(s,P).
	\end{align*}
	Hence we have that $\varphi_{{}_{E_6,\gamma_3,\mu'_3}}(s,P) \in (E_6)^{\sigma'_3}$. Thus $\varphi_{{}_{E_6,\gamma_3,\mu'_3}}$ is well-defined. Subsequently, since $\varphi_{{}_{E_6,\gamma_3,\mu'_3}}$ is the restriction of the mapping $\varphi_{{}_{E_6,\gamma_3}}$, we easily see that $\varphi_{{}_{E_6,\gamma_3,\mu'_3}}$ is a homomorphism.
	
	Next, we will prove that $\varphi_{{}_{E_6,\gamma_3,\mu'_3}}$ is surjective. Let $\alpha \in (E_6)^{\gamma_3} \cap (E_6)^{\mu'_3} \subset (E_6)^{\gamma_3}$. There exist $s \in U(1)$ and $A \in SU(6)$ such that $\alpha=\varphi_{{}_{E_6,\gamma_3}}(s,A)$ (Theorem \ref{theorem 3.3.2}). Moreover, from the condition $\alpha \in (E_6)^{\mu'_3}$, that is, ${\mu'_3}^{-1}\varphi_{{}_{E_6,\gamma_3}}(s,A)\mu'_3=\varphi_{{}_{E_6,\gamma_3}}(s,A)$, and using ${\mu'_3}^{-1}\varphi_{{}_{E_6,\gamma_3}}(s,A)\mu'_3=\varphi_{{}_{E_6,\gamma_3}}(s,\diag({\nu}^2,\nu^{-2},\nu,\nu,{\nu}^{-1},{\nu}^{-1}) A \,\diag({\nu}^{-2},\nu^2,{\nu}^{-1},{\nu}^{-1},\nu,\nu))$, we have that
	\begin{align*}
	&\left\{
	\begin{array}{l}
	s=s \\
	\diag({\nu}^2,\nu^{-2},\nu,\nu,{\nu}^{-1},{\nu}^{-1}) A \diag({\nu}^{-2},\nu^2,{\nu}^{-1},{\nu}^{-1},\nu,\nu)=A   
	\end{array}\right. 
	\\
	&\hspace*{50mm}{\text{or}}
	\\
	&\left\{
	\begin{array}{l}
	s=-s \\
	\diag({\nu}^2,\nu^{-2},\nu,\nu,{\nu}^{-1},{\nu}^{-1}) A \diag({\nu}^{-2},\nu^2,{\nu}^{-1},{\nu}^{-1},\nu,\nu)=-A.   
	\end{array}\right.     
	\end{align*}
	The latter case is impossible because of $s\not=0$. As for the former case, from the second condition, by doing straightforward computation $A$ takes the following form $\diag(a, b, C, D), a,b \in U(1),C, D \in U(2),  (ab)(\det\,C)(\det\,D)=1$, that is, $A \in S(U(1)\times U(1)\times  U(2)\times U(2))$. Needless to say, $s \in U(1)$.
	Hence there exist $s \in U(1)$ and $P \in S(U(1)\times U(1)\times U(2)\times U(2))$ such that $\alpha=\varphi_{{}_{E_6,\gamma_3}}(s,P)$. Namely, there exist $s \in U(1)$ and $P \in S(U(1)\times U(1)\times U(2)\times U(2))$ such that $\alpha=\varphi_{{}_{E_6,\gamma_3,\mu'_3}}(s,P)$. With above, the proof of surjective is completed.
	
	Finally, we will determine $\Ker\,\varphi_{{}_{E_6,\gamma_3,\mu'_3}}$. However, from $\Ker\,\varphi_{{}_{E_6,\gamma_3}}=\{(1,E),(-1,-E) \}$, we easily obtain that $\Ker\,\varphi_{{}_{E_6,\gamma_3,\mu'_3}}=\{(1,E),(-1,-E) \} \cong \Z_2$. Thus we have the isomorphism $(E_6)^{\gamma_3} \cap (E_6)^{\mu'_3} \cong (U(1)\times S(U(1)\times U(1)\times U(2)\times U(2)))/\Z_2$. In addition, from Proposition \ref{proposition 4.7.1} we have the isomorphism $(E_6)^{\gamma_3} \cap (E_6)^{\mu_3} \cong (U(1)\times S(U(1)\times U(1)\times U(2)\times U(2)))/\Z_2$. Here, using the mapping $f_{{}_{472}}$ in the proof of Lemma \ref{lemma 4.7.2}, we define a homomorphism $h_{{}_{473}}:U(1)\times (U(1)\times U(1)\times U(1)\times SU(2)\times SU(2)) \to U(1)\times S(U(1)\times U(1)\times U(2)\times U(2))$ by
	\begin{align*}
			h_{{}_{473}}(s,(a,b,c,A,B))=(s,f_{{}_{472}}(a,b,c,A,B)).
	\end{align*}
	Then, the elements $(s,(a,b,c,A,B))$ corresponding to the elements $(1,E), (-1,-E) \in \Ker\,\varphi_{{}_{E_6,\gamma_3,\mu'_3}}$ under the mapping $h_{{}_{473}}$ are as follows.
	\begin{align*}
	&(1,(1,1,1,E,E)),(1,(1,1,-1,-E,-E)),(1,(1,-1,1,E,-E)),(1,(1,-1,-1,-E,E)),
	\\
	&(1,(-1,1,1,E,-E)),(1,(-1,1,-1,-E,E)),(1,(-1,-1,1,E,E)),(1,(-1,-1,-1,-E,-E)),
	\\
	&(-1,(i,i,1,-E,E)),(-1,(i,i,-1,E,-E)),(-1,(i,-i,1,-E,-E)),(-1,(i,i,-1,-E,E)),
	\\
	&(-1,(-i,i,1,-E,\!E)),(-1,(-i,i,-1,E,\!E)),(-1,(-i,-i,1,-E,E)),(-1,(-i,-i,-1,E,-E)).
	\end{align*}
	
	Therefore we have the required isomorphism 
	\begin{align*}
	(E_6)^{\gamma_3} \cap (E_6)^{\mu_3} \!\cong (U(1)\times U(1) \times U(1)\times U(1) \allowbreak \times SU(2)\times SU(2))/(\Z_2\times\Z_2\times\Z_4), 
	\end{align*}
	where 
	\begin{align*}
	&\Z_2=\{(1,1,1,1,E,E), (1,1,-1,1,E,-E) \},
	\\
	&\Z_2=\{(1,1,1,1,E,E), (1,1,-1,-1,-E,E) \},
	\\
	&\Z_4=\{(1,1,1,E,E,E), (1,-1,1-,1,E,E), (-1,i,i,1,-E,E), (-1,-i,-i,1,-E,E) \}.
	\end{align*}
\end{proof}

Thus, since the group $(E_6)^{\gamma_3} \cap (E_6)^{\mu_3}$ is connected from Theorem \ref{theorem 4.7.3}, we have an exceptional $\varmathbb{Z}_3 \times \varmathbb{Z}_3$-symmetric space
\begin{align*}
E_6/((U(1)\times U(1) \times U(1)\times U(1) \allowbreak \times SU(2)\times SU(2))/(\Z_2\times\Z_2\times\Z_4)).
\end{align*}

\subsection{Case 8: $\{1, \tilde{\gamma}_3,  \tilde{\gamma}_3{}^{-1}\} \times \{1, \tilde{w}_3,  \tilde{w}_3{}^{-1}\}$-symmetric space}

Let the $C$-linear transformations $\gamma_3, w_3$ of $\mathfrak{J}^C$  defined in Subsection \ref{subsection 3.3}. 

\noindent From Lemma \ref{lemma 3.3.8} (1), since we can easily confirm that $\gamma_3$ and $w_3$ are commutative, $\tilde{\gamma}_3$ and $\tilde{w}_3$ are commutative in $\Aut(E_6)$: $\tilde{\gamma}_3\tilde{w}_3=\tilde{w}_3\tilde{\gamma}_3$.
\vspace{1mm}

Before determining the structure of the group $(E_6)^{\gamma_3} \cap (E_6)^{w_3}$, we prove proposition and lemma needed in the proof of theorem below.
\vspace{1mm}

We define a $C$-linear transformation $w'_3$ of $\mathfrak{J}^C$ by
\begin{align*}
		w'_3=\varphi_{{}_{E_6,\gamma_3}}(1,\diag(\tau\omega,\tau\omega,\tau\omega,\omega,\omega,\omega)) \in (E_6)^{\gamma_3} \subset E_6.
\end{align*}

Let an element 
\begin{align*}N:=\scalebox{0.8}{$\begin{pmatrix}
	1&&&&&\\
	&&&&1&\\
	&&1&&&\\
	&&&1&&\\
	&-1&&&&\\
	&&&&&1
	\end{pmatrix}$} \in SO(6) \subset SU(6), 
\end{align*}
where the blanks are $0$, and we consider an element $\varphi_{{}_{E_6,\gamma_3}}(1,N) \in (E_6)^{\gamma_3} \subset E_6$. Here, we denote this element by $\delta_N$: $\delta_N=\varphi_{{}_{E_6,\gamma_3}}(1,N)$.
Then by doing straightforward computation, we have that $w_3\delta_Q=\delta_Q w'_3$, that is, $w_3$ is conjugate to $w'_3$ under $\delta_N \in (E_6)^{\gamma_3} \subset E_6$: $w_3 \sim w'_3$. Moreover, $w'_3$ induces the automorphism $\tilde{w'}_3$ of order $3$ on $E_6$: $\tilde{w'}_3(\alpha)={w'_3}^{-1}\alpha w'_3, \alpha \in E_6$.
\vspace{1mm}

Then we have the following proposition.

\begin{proposition}\label{proposition 4.8.1}
	The group $(E_6)^{\gamma_3} \cap (E_6)^{w_3}$ is isomorphic to the group $(E_6)^{\gamma_3} \cap (E_6)^{w'_3}${\rm :} $(E_6)^{\gamma_3} \cap (E_6)^{w_3} \cong (E_6)^{\gamma_3} \cap (E_6)^{w'_3}$.
\end{proposition}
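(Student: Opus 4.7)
The plan is to mimic the proof of Proposition \ref{proposition 4.5.1} verbatim, using the conjugating element $\delta_N \in (E_6)^{\gamma_3}$ introduced just above the statement. Explicitly, I will define a mapping
\begin{align*}
g_{{}_{481}}\!: (E_6)^{\gamma_3} \cap (E_6)^{w_3} \to (E_6)^{\gamma_3} \cap (E_6)^{w'_3},\quad
g_{{}_{481}}(\alpha)={\delta_N}^{-1}\alpha\,\delta_N,
\end{align*}
and check that it is a well-defined isomorphism. The homomorphism property is automatic from the fact that $g_{{}_{481}}$ is conjugation by a fixed element, and since conjugation by $\delta_N$ has the obvious inverse (conjugation by ${\delta_N}^{-1}$, which gives a map in the opposite direction), bijectivity is automatic once well-definedness in both directions is settled.

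The main work, therefore, is to verify that $g_{{}_{481}}(\alpha)$ really lies in $(E_6)^{\gamma_3}\cap (E_6)^{w'_3}$ whenever $\alpha \in (E_6)^{\gamma_3} \cap (E_6)^{w_3}$. For the factor $(E_6)^{\gamma_3}$, I will use that $\delta_N=\varphi_{{}_{E_6,\gamma_3}}(1,N)$ and $\gamma_3=\varphi_{{}_{E_6,\gamma_3}}(\omega,E)$ manifestly commute (since the $U(1)$-factor and the $SU(6)$-factor in the image of $\varphi_{{}_{E_6,\gamma_3}}$ commute with each other), hence $\delta_N\gamma_3=\gamma_3\delta_N$, which gives ${\gamma_3}^{-1}({\delta_N}^{-1}\alpha\delta_N)\gamma_3={\delta_N}^{-1}({\gamma_3}^{-1}\alpha\gamma_3)\delta_N={\delta_N}^{-1}\alpha\delta_N$. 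For the factor $(E_6)^{w'_3}$, the relation $w_3\delta_N=\delta_N w'_3$ stated just above the proposition rewrites as $w'_3={\delta_N}^{-1}w_3\delta_N$, and substituting this into ${w'_3}^{-1}({\delta_N}^{-1}\alpha\delta_N)w'_3$ collapses (using $w_3^{-1}\alpha w_3=\alpha$) to ${\delta_N}^{-1}\alpha\delta_N$, as required.

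The reverse direction is symmetric: conjugation by $\delta_N$ sends $(E_6)^{\gamma_3}\cap (E_6)^{w'_3}$ into $(E_6)^{\gamma_3}\cap (E_6)^{w_3}$, by the same two relations (now used in the opposite direction). Hence $g_{{}_{481}}$ has a two-sided inverse, so is an isomorphism, which proves the proposition. The only potential obstacle is the verification of the conjugation identity $w_3\delta_N=\delta_N w'_3$, but this is asserted in the text preceding the statement as a straightforward matrix computation via the explicit form of $N$, so no additional work is needed here.
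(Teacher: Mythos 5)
Your proposal is correct and is essentially the paper's own proof: both arguments conjugate by $\delta_N$ and use the two relations $\delta_N\gamma_3=\gamma_3\delta_N$ and $w_3\delta_N=\delta_N w'_3$ to check well-definedness, the only (immaterial) difference being that you write the map in the direction $(E_6)^{\gamma_3}\cap(E_6)^{w_3}\to(E_6)^{\gamma_3}\cap(E_6)^{w'_3}$ via $\alpha\mapsto{\delta_N}^{-1}\alpha\delta_N$ while the paper writes its inverse. Your explicit treatment of the reverse direction and bijectivity is slightly more careful than the paper's one-line remark, but the content is the same.
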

\begin{proof}
	We define a mapping $g_{{}_{481}}: (E_6)^{\gamma_3} \cap (E_6)^{w'_3} \to (E_6)^{\gamma_3} \cap (E_6)^{w_3}$ by
	\begin{align*}
	g_{{}_{481}}(\alpha)=\delta_N\alpha{\delta_N}^{-1}.	
	\end{align*}
	In order to prove this isomorphism, it is sufficient to show that $g_{{}_{481}}$ is well-defined. 
	
	\noindent First, we will show that $g_{{}_{481}} \in (E_6)^{\gamma_3}$. Since it follows from $\delta_N=\varphi_{{}_{E_6,\gamma_3}}(1,N)$ and $\gamma_3=\varphi_{{}_{E_6,\gamma_3}}(\omega,E)$ that $\delta_N\gamma_3=\gamma_3\delta_N$, we have that $g_{{}_{481}} \in (E_6)^{\gamma_3}$. Similarly, from $w_3\delta_N=\delta_N w'_3$ we have that $g_{{}_{481}} \in (E_6)^{w_3}$. Hence $g_{{}_{481}}$ is well-defined. With above, the proof of this proposition is completed.	
\end{proof}

Subsequently, we will prove the following lemma. 

\begin{lemma}\label{lemma 4.8.2}
	The group $S(U(3)\times U(3))$ is isomorphic to the group $(U(1) \times SU(3)\times SU(3))/\Z_3${\rm :} $S(U(3)\times U(3)) \cong (U(1) \times SU(3)\times SU(3))/\Z_3, \Z_3=\{(1,E,E), (\omega,{\omega}^{-1} E,\omega E),\allowbreak (\omega,\omega E,{\omega}^{-1} E)\}$, where $\omega=(-1/2)+(\sqrt{3}/2)i \in C$.
\end{lemma}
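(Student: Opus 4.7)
The plan is to mimic the pattern already used in Lemmas \ref{lemma 4.5.2}, \ref{lemma 4.6.1}, and \ref{lemma 4.7.2}: construct an explicit surjective homomorphism from $U(1) \times SU(3) \times SU(3)$ onto $S(U(3) \times U(3))$ and read off the kernel. Specifically, I would define
\begin{align*}
f_{{}_{482}} : U(1) \times SU(3) \times SU(3) \to S(U(3) \times U(3)), \quad
f_{{}_{482}}(a,A,B) = \begin{pmatrix} aA & 0 \\ 0 & a^{-1}B \end{pmatrix}.
\end{align*}
Well-definedness is immediate, since $\det(aA)\det(a^{-1}B) = a^3 \cdot a^{-3} = 1$, so the image lies in $S(U(3) \times U(3))$, and $f_{{}_{482}}$ is clearly a group homomorphism.

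For surjectivity, I would take any $P = \diag(P_1,P_2) \in S(U(3) \times U(3))$ with $P_j \in U(3)$ and $(\det P_1)(\det P_2)=1$. Since $\det P_1 \in U(1)$, one can choose $a \in U(1)$ with $a^3 = \det P_1$; then setting $A = a^{-1}P_1$ and $B = aP_2$ gives $\det A = a^{-3}\det P_1 = 1$ and $\det B = a^3 \det P_2 = (\det P_1)(\det P_2) = 1$, so $A,B \in SU(3)$ and $f_{{}_{482}}(a,A,B) = P$.

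For the kernel: $f_{{}_{482}}(a,A,B) = E$ forces $A = a^{-1}E$ and $B = aE$, and then $A \in SU(3)$ yields $a^{-3} = \det A = 1$, i.e.\ $a \in \{1,\omega,\omega^{-1}\}$. This gives
\begin{align*}
\Ker f_{{}_{482}} = \{(1,E,E),\,(\omega,\omega^{-1}E,\omega E),\,(\omega^{-1},\omega E,\omega^{-1}E)\} \cong \Z_3,
\end{align*}
and the required isomorphism $S(U(3)\times U(3)) \cong (U(1)\times SU(3)\times SU(3))/\Z_3$ follows from the homomorphism theorem.

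There is no real obstacle here; the only point requiring a moment's thought is the existence of a cube root $a$ of $\det P_1$ in $U(1)$, which is automatic since $U(1)$ is divisible. The argument is almost parallel to Lemmas \ref{lemma 4.5.2} and \ref{lemma 4.7.2}, only simpler because the block structure has just two factors of equal size. I would expect the listed generators of $\Z_3$ in the statement to match the kernel computed above (up to a harmless relabelling of the second nontrivial element as $(\omega^{-1},\omega E,\omega^{-1}E)$).
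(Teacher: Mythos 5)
Your proof is correct and is essentially identical to the paper's: the same map $f_{{}_{482}}(a,A,B)=\diag(aA,a^{-1}B)$, the same choice of a cube root $a$ of $\det P_1$ for surjectivity, and the same kernel computation yielding $\{(1,E,E),(\omega,\omega^{-1}E,\omega E),(\omega^{-1},\omega E,\omega^{-1}E)\}\cong \Z_3$. Your remark that the statement's listed $\Z_3$ contains a typo in the third element (it should read $(\omega^{-1},\omega E,\omega^{-1}E)$) is also consistent with the kernel the paper itself computes.
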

\begin{proof}
	We define a mapping $f_{{}_{482}}:U(1)\times SU(3)\times SU(3) \to S(U(3)\times U(3))$ by
	\begin{align*}
	f_{{}_{482}}(a,A,B)=\left( 
	\begin{array}{cc}
	aA &{\raisebox{-5pt}[0pt]{\large $0$}}
	\\[4mm]
	{\raisebox{1pt}[0pt]{\large $0$}}& a^{-1}B
	\end{array}\right) \in SU(6).
	\end{align*}
	Then it is clear that $f_{{}_{482}}$ is well-defined and a homomorphism. 
	
	Now, we will prove that $f_{{}_{482}}$ is surjective. Let $P \in S(U(3)\times U(3))$. Then $P$ takes the form of $\diag(P_1,P_2),P_j \in U(3), (\det\,P_1)(\det\,P_2)=1$. Here, since $P_1 \in U(3)$, we see that $\det\,P_1 \in U(1)$, and so we choose $a \in U(1)$ such that $a^3=\det\,P_1$. Set $A=a^{-1}P_1$, then we have that $A \in SU(3)$. Similarly, for $P_2 \in U(2)$, set $B=aP_2$, we have that $B \in SU(3)$. With above, the proof of surjective is completed.
	
	Finally, we will determine $\Ker\,f_{{}_{482}}$. It follows from the kernel of definition that
	\begin{align*}
	\Ker\,f_{{}_{482}}&=\{(a,A,B)\in U(1)\times SU(3)\times SU(3) \,|\,f_{{}_{482}}(a,A,B)=E \}
	\\
	&=\{(a,A,B)\in U(1)\times SU(3)\times SU(3) \,|\,a^3=1,A=a^{-1}E, B=aE \}
	\\
	&=\{(1,E,E), (\omega,{\omega}^{-1}E,\omega E),({\omega}^{-1},\omega E,{\omega}^{-1}E) \}
	\\
	& \cong \Z_3.
	\end{align*}
	
	Therefore we have the required isomorphism 
	\begin{align*}
	S(U(3)\times U(3)) \cong (U(1) \times SU(3)\times SU(3))/\Z_3.
	\end{align*}
\end{proof}

Now, we will determine the structure of the group $(E_6)^{\gamma_3} \cap (E_6)^{w_3}$.

\begin{theorem}\label{theorem 4.8.3}
	The group $(E_6)^{\gamma_3} \cap (E_6)^{w_3}$ is isomorphic the group $(U(1) \times U(1) \times SU(3)\times SU(3)))/(\Z_2 \times \Z_3)${\rm :} $(E_6)^{\gamma_3} \cap (E_6)^{w_3} \cong ((U(1) \times U(1) \times SU(3)\times SU(3)))/(\Z_2 \times \Z_3), \Z_2=\{(1,1,E,E), (-1,-1,E,E)\},
	\Z_3=\{(1,1,E,E), (1,\omega,{\omega}^{-1}E,\omega E),(1,{\omega}^{-1},\omega E,{\omega}^{-1}E)\}$.
\end{theorem}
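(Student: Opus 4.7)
The plan parallels Theorems \ref{theorem 4.5.3} and \ref{theorem 4.7.3}. First, by Proposition \ref{proposition 4.8.1} it suffices to determine $(E_6)^{\gamma_3} \cap (E_6)^{w'_3}$, where $w'_3 = \varphi_{{}_{E_6,\gamma_3}}(1,\diag(\tau\omega,\tau\omega,\tau\omega,\omega,\omega,\omega))$. Embedding $S(U(3)\times U(3))$ into $SU(6)$ as block-diagonal matrices, I define
\[
\varphi_{{}_{E_6,\gamma_3,w'_3}}: U(1) \times S(U(3)\times U(3)) \to (E_6)^{\gamma_3} \cap (E_6)^{w'_3}
\]
as the restriction of $\varphi_{{}_{E_6,\gamma_3}}$, that is, $\varphi_{{}_{E_6,\gamma_3,w'_3}}(s,P) = \varphi_{{}_{E_6,\gamma_3}}(s,P)$.

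For well-definedness, membership in $(E_6)^{\gamma_3}$ is automatic, and for $P = \diag(P_1,P_2) \in S(U(3) \times U(3))$ a direct computation gives
\[
{w'_3}^{-1} \varphi_{{}_{E_6,\gamma_3,w'_3}}(s,P) w'_3 = \varphi_{{}_{E_6,\gamma_3}}(s,\diag(\omega P_1 \tau\omega,\, \tau\omega P_2 \omega)) = \varphi_{{}_{E_6,\gamma_3,w'_3}}(s,P),
\]
using $\omega\tau\omega = 1$ and the scalarity of $\omega E, \tau\omega E$. Surjectivity follows the established pattern: taking $\alpha = \varphi_{{}_{E_6,\gamma_3}}(s,A) \in (E_6)^{\gamma_3} \cap (E_6)^{w'_3}$, the condition ${w'_3}^{-1}\alpha w'_3 = \alpha$ combined with $\Ker\,\varphi_{{}_{E_6,\gamma_3}} = \{(1,E),(-1,-E)\}$ rules out the $s = -s$ branch and forces $\diag(\omega E, \tau\omega E) A \diag(\tau\omega E, \omega E) = A$. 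Writing $A$ in $3 \times 3$ blocks, the off-diagonal blocks acquire a factor $\omega^2 \neq 1$ (respectively $(\tau\omega)^2 \neq 1$) and must vanish, so $A \in S(U(3) \times U(3))$. The kernel of the restriction is inherited as $\{(1,E),(-1,-E)\} \cong \Z_2$, giving $(E_6)^{\gamma_3} \cap (E_6)^{w'_3} \cong (U(1) \times S(U(3) \times U(3)))/\Z_2$, and by Proposition \ref{proposition 4.8.1} the same presentation holds with $w_3$ in place of $w'_3$.

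Finally, I apply Lemma \ref{lemma 4.8.2} to substitute $S(U(3)\times U(3)) \cong (U(1) \times SU(3) \times SU(3))/\Z_3$ and compose with the lift $h(s,(a,A,B)) = (s, f_{{}_{482}}(a,A,B))$. The main obstacle here is the kernel bookkeeping after this composition: the kernel consists of tuples $(s,(a,A,B))$ with $s^2 = 1$, $aA = sE$ and $a^{-1}B = sE$. For $s = 1$ we recover $\{(1,1,E,E),\, (1,\omega,\omega^{-1}E,\omega E),\, (1,\omega^{-1},\omega E,\omega^{-1}E)\} \cong \Z_3$, while for $s = -1$ the constraint $a^3 = -1$ contributes the independent order-$2$ element $(-1,-1,E,E)$ (together with its $\Z_3$-translates, which are already accounted for by multiplying by the $\Z_3$ generator). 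Since these two generators commute and have coprime orders, the composite kernel decomposes as $\Z_2 \times \Z_3$ exactly as stated in the theorem.
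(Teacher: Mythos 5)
Your proposal is correct and follows essentially the same route as the paper: reduce to $w'_3$ via Proposition \ref{proposition 4.8.1}, restrict $\varphi_{{}_{E_6,\gamma_3}}$ to $U(1)\times S(U(3)\times U(3))$, check well-definedness, surjectivity (the block computation forcing the off-diagonal $3\times 3$ blocks to vanish) and the $\Z_2$ kernel, then apply Lemma \ref{lemma 4.8.2} and track the preimages of $(1,E)$ and $(-1,-E)$ under the lift. Your kernel bookkeeping, including the element $(-1,-1,E,E)$ arising from $a^3=-1$ and the resulting $\Z_2\times\Z_3$ decomposition, matches the paper's computation exactly.
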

\begin{proof}
	Let $S(U(3)\times U(3)) \subset SU(6)$. 
	We define a mapping $\varphi_{{}_{E_6,\gamma_3,w'_3}}: U(1)\times S(U(3)\times U(3)) \to (E_6)^{\gamma_3} \cap (E_6)^{w'_3}$ by
	\begin{align*}
	\varphi_{{}_{E_6,\gamma_3,w'_3}}(s, P)(M+\a)&={k_J}^{-1}(P(k_J M){}^t\!P)+s\a k^{-1}(\tau \,{}^t\!P), 
	\\
	&\hspace*{40mm}M+\a \in \mathfrak{J}(3, \H)^C \oplus (\H^3)^C=\mathfrak{J}^C.
	\end{align*}
	Needless to say, this mapping is the restriction of the mapping $\varphi_{{}_{E_6,\gamma_3}}$, that is, $\varphi_{{}_{E_6,\gamma_3,w'_3}}(s, P)=\varphi_{{}_{E_6,\gamma_3}}(s,P)$ (Theorem \ref{theorem 3.3.2}). 
	
	First, we will prove that $\varphi_{{}_{E_6,\gamma_3,w'_3}}$ is well-defined. It is clear that $\varphi_{{}_{E_6,\gamma_3,w'_3}}(s,P) \in (E_6)^{\gamma_3}$, and it follows from $w'_3=\varphi_{{}_{E_6,\gamma_3}}(1,\diag(\tau\omega,\tau\omega,\tau\omega,\omega,\omega,\omega))$ that
	\begin{align*}
	&\quad {w'_3}^{-1}\varphi_{{}_{E_6,\gamma_3,\nu'_3}}(s,P) w'_3
	\\
	&=\varphi_{{}_{E_6,\gamma_3}}(1,\diag(\tau\omega,\tau\omega,\tau\omega,\omega,\omega,\omega))^{-1}\varphi_{{}_{E_6,\gamma_3,w'_3}}(s,P)\varphi_{{}_{E_6,\gamma_3}}(1,\diag(\tau\omega,\tau\omega,\tau\omega,\omega,\omega,\omega))
	\\
	&=\varphi_{{}_{E_6,\gamma_3}}(1,\diag(\omega,\omega,\omega,\tau\omega,\tau\omega,\tau\omega))\varphi_{{}_{E_6,\gamma_3}}(s,P)\varphi_{{}_{E_6,\gamma_3}}(1,\diag(\tau\omega,\tau\omega,\tau\omega,\omega,\omega,\omega))
	\\
	&=\varphi_{{}_{E_6,\gamma_3}}(s,\diag(\omega,\omega,\omega,\tau\omega,\tau\omega,\tau\omega)P\,\diag(\tau\omega,\tau\omega,\tau\omega,\omega,\omega,\omega)),P=\diag(P_1,P_2)
	\\
	&=\varphi_{{}_{E_6,\gamma_3}}(s,\diag((\omega E)P_1(\tau\omega E), \tau(\omega E) P_2 (\omega E) ))
	\\
	&=\varphi_{{}_{E_6,\gamma_3}}(s,P)
	\\
	&=\varphi_{{}_{E_6,\gamma_3,w'_3}}(s,P).
	\end{align*}
	Hence we have that $\varphi_{{}_{E_6,\gamma_3,w'_3}}(s,P) \in (E_6)^{w'_3}$. Thus $\varphi_{{}_{E_6,\gamma_3,w'_3}}$ is well-defined. Subsequently, since $\varphi_{{}_{E_6,\gamma_3,w'_3}}$ is the restriction of the mapping $\varphi_{{}_{E_6,\gamma_3}}$, we easily see that $\varphi_{{}_{E_6,\gamma_3,w'_3}}$ is a homomorphism.
	
	Next, we will prove that $\varphi_{{}_{E_6,\gamma_3,w'_3}}$ is surjective. Let $\alpha \in (E_6)^{\gamma_3} \cap (E_6)^{w'_3} \subset (E_6)^{\gamma_3}$. There exist $s \in U(1)$ and $A \in SU(6)$ such that $\alpha=\varphi_{{}_{E_6,\gamma_3}}(s,A)$ (Theorem \ref{theorem 3.3.2}). Moreover, from the condition $\alpha \in (E_6)^{w'_3}$, that is, ${w'_3}^{-1}\varphi_{{}_{E_6,\gamma_3}}(s,A)w'_3=\varphi_{{}_{E_6,\gamma_3}}(s,A)$, and using ${w'_3}^{-1}\varphi_{{}_{E_6,\gamma_3}}(s,A)w'_3=\varphi_{{}_{E_6,\gamma_3}}(s,\diag(\omega,\omega,\omega,\tau\omega,\tau\omega,\tau\omega)A\,\diag(\tau\omega,\tau\omega,\tau\omega,\omega,\omega,\omega))$, we have that
	\begin{align*}
	&\left\{
	\begin{array}{l}
	s=s \\
	\diag(\omega,\omega,\omega,\tau\omega,\tau\omega,\tau\omega)A\,\diag(\tau\omega,\tau\omega,\tau\omega,\omega,\omega,\omega)=A   
	\end{array}\right. 
	\\
	&\hspace*{50mm}{\text{or}}
	\\
	&\left\{
	\begin{array}{l}
	s=-s \\
	\diag(\omega,\omega,\omega,\tau\omega,\tau\omega,\tau\omega)A\,\diag(\tau\omega,\tau\omega,\tau\omega,\omega,\omega,\omega)=-A.   
	\end{array}\right.     
	\end{align*}
	The latter case is impossible because of $s\not=0$. As for the former case, from the second condition, by doing straightforward computation $A$ takes the following form $\diag(C, D), C, D \in U(2), (\det\,C)(\det\,D)=1$, that is, $A \in S(U(3)\times U(3))$. Needless to say, $s \in U(1)$.
	Hence there exist $s \in U(1)$ and $P \in S(U(3)\times U(3))$ such that $\alpha=\varphi_{{}_{E_6,\gamma_3}}(s,P)$. Namely, there exist $s \in U(1)$ and $P \in S(U(3)\times U(3))$ such that $\alpha=\varphi_{{}_{E_6,\gamma_3,w'_3}}(s,P)$. The proof of surjective is completed.
	
	Finally, we will determine $\Ker\,\varphi_{{}_{E_6,\gamma_3,w'_3}}$. However, from $\Ker\,\varphi_{{}_{E_6,\gamma_3}}=\{(1,E),(-1,-E) \}$, we easily obtain that $\Ker\,\varphi_{{}_{E_6,\gamma_3,w'_3}}=\{(1,E),(-1,-E) \} \cong \Z_2$. Thus we have the isomorphism $(E_6)^{\gamma_3} \cap (E_6)^{w'_3} \cong (U(1)\times S(U(3)\times U(3)))/\Z_2$. In addition, from Proposition \ref{proposition 4.8.1} we have the isomorphism $(E_6)^{\gamma_3} \cap (E_6)^{w_3} \cong (U(1)\times S(U(3)\times U(3)))/\Z_2$. Here, using the mapping $f_{{}_{482}}$ in the proof of Lemma \ref{lemma 4.8.2}, we define a homomorphism $h_{{}_{484}}:U(1)\times (U(1)\times SU(3)\times SU(3)) \to U(1)\times S(U(3)\times U(3))$ by
	\begin{align*}
	h_{{}_{483}}(s,(a,A,B))=(s,f_{{}_{482}}(a,A,B)).
	\end{align*}
	Then, the elements $(s,(a,A,B))$ corresponding to the elements $(1,E), (-1,-E) \in \\
	\Ker\,\varphi_{{}_{E_6,\gamma_3,w'_3}}$ under the mapping $h_{{}_{483}}$ are as follows.
	\begin{align*}
	& (1,1,E,E), (1,\omega,{\omega}^{-1}E,\omega E),(1,{\omega}^{-1},\omega E,{\omega}^{-1}E) 
	\\
	& (-1,-1,E,E), (-1,-\omega,{\omega}^{-1}E,\omega E),(-1,-{\omega}^{-1},\omega E,{\omega}^{-1}E).
	\end{align*}
	Therefore we have the required isomorphism
	\begin{align*}
	  (E_6)^{\gamma_3} \cap (E_6)^{w_3} \cong (U(1) \times U(1) \times SU(3)\times SU(3))/(\Z_2 \times \Z_3), 
	 \end{align*}
	 where
	 \begin{align*}
	   &\Z_2=\{(1,1,E,E), (-1,-1,E,E)\},
	   \\
	   &\Z_3=\{(1,1,E,E), (1,\omega,{\omega}^{-1}E,\omega E),(1,{\omega}^{-1},\omega E,{\omega}^{-1}E)\}.
	 \end{align*}
\end{proof}

Thus, since the group $(E_6)^{\gamma_3} \cap (E_6)^{w_3}$ is connected from Theorem \ref{theorem 4.8.3}, we have an exceptional $\varmathbb{Z}_3 \times \varmathbb{Z}_3$-symmetric space
\begin{align*}
E_6/((U(1) \times U(1) \times SU(3)\times SU(3))/(\Z_2 \times \Z_3)).
\end{align*}

\subsection{Case 9: $\{1, \tilde{\sigma}_3,  \tilde{\sigma}_3{}^{-1}\} \times \{1, \tilde{\nu}_3,  \tilde{\nu}_3{}^{-1}\}$-symmetric space}

Let the $C$-linear transformations $\sigma_3, \nu_3$ of $\mathfrak{J}^C$  defined in Subsection \ref{subsection 3.3}. 

\noindent From Lemma \ref{lemma 3.3.8} (1), since we can easily confirm that $\sigma_3$ and $\nu_3$ are commutative, $\tilde{\sigma}_3$ and $\tilde{\nu}_3$ are commutative in $\Aut(E_6)$: $\tilde{\sigma}_3\tilde{\nu}_3=\tilde{\nu}_3\tilde{\sigma}_3$.

Before determining the structure of the group $(E_6)^{\sigma_3} \cap (E_6)^{\nu_3}$, we confirm that useful lemma holds and prove proposition needed in the proof of theorem below.

\begin{lemma}\label{lemma 4.9.1}
	The mapping $\varphi_{{}_{E_6,\nu_3}}:Sp(1) \times S(U(1)\times U(5)) \to (E_6)^{\nu_3}$ of \,Theorem {\rm \ref{theorem 3.3.5}} satisfies the relational formulas 
	\begin{align*}
	\sigma_3&=\varphi_{{}_{E_6,\nu_3}}(1, \diag(1,1,\tau\omega,\omega,\omega,\tau\omega)),
	\\
	\nu_3&=\varphi_{{}_{E_6,\nu_3}}(1,\diag(\nu^5,\nu^{-1},\nu^{-1},\nu^{-1},\nu^{-1},\nu^{-1})),
	\end{align*}
	where ${\omega}=-(1/2)+(\sqrt{3}/2)i \in U(1)$.
\end{lemma}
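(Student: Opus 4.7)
The plan is to follow the template established by Lemmas \ref{lemma 3.1.4}, \ref{lemma 3.2.6}, and \ref{lemma 3.3.8}: each formula is checked by direct verification, using that $\varphi_{{}_{E_6,\nu_3}}$ is the restriction of $\varphi_{{}_{E_6,\gamma}}$ (see Theorem \ref{theorem 3.3.5}) to $Sp(1) \times S(U(1) \times U(5))$. Concretely, the task reduces to (i) exhibiting elements of $S(U(1)\times U(5))$ whose images under $\varphi_{{}_{E_6,\gamma}}$ are $\sigma_3$ and $\nu_3$, and (ii) reading off the corresponding arguments of $\varphi_{{}_{E_6,\nu_3}}$.

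For $\nu_3$: the very definition given in Subsection \ref{subsection 3.3} is $\nu_3 = \varphi_{{}_{E_6,\gamma}}(1, A_\nu)$ with $A_\nu = \diag(\nu^5, \nu^{-1}, \nu^{-1}, \nu^{-1}, \nu^{-1}, \nu^{-1})$. I would verify $A_\nu \in S(U(1)\times U(5))$ by noting that the $(1,1)$-entry $\nu^5$ lies in $U(1)$, the lower right $5\times 5$ block is the scalar $\nu^{-1}E \in U(5)$, and $\det A_\nu = \nu^5 \cdot \nu^{-5} = 1$. Thus $\nu_3 = \varphi_{{}_{E_6,\nu_3}}(1, A_\nu)$, which is the stated identity.

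For $\sigma_3$: Lemma \ref{lemma 3.3.8} (1) already gives $\sigma_3 = \varphi_{{}_{E_6,\gamma_3}}(1, D)$ with $D := \diag(1,1,\tau\omega,\omega,\omega,\tau\omega)$. Since $\varphi_{{}_{E_6,\gamma_3}}$ is itself the restriction of $\varphi_{{}_{E_6,\gamma}}$ to $U(1) \times SU(6)$, it suffices to check that $D \in S(U(1)\times U(5))$. The $(1,1)$-entry is $1 \in U(1)$, the $5\times 5$ lower-right block $\diag(1,\tau\omega,\omega,\omega,\tau\omega)$ is diagonal with unit-modulus entries and so lies in $U(5)$, and $\det D = (\omega \tau\omega)^2 = 1$ using $\omega\tau\omega = |\omega|^2 = 1$. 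Hence $D \in S(U(1)\times U(5))$ and $\sigma_3 = \varphi_{{}_{E_6,\nu_3}}(1, D)$.

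No step presents a genuine obstacle; the whole argument is bookkeeping. The only thing to keep straight is that the three maps $\varphi_{{}_{E_6,\gamma}}$, $\varphi_{{}_{E_6,\gamma_3}}$, $\varphi_{{}_{E_6,\nu_3}}$ agree on common arguments because they are all restrictions of $\varphi_{{}_{E_6,\gamma}}$, together with the trivial determinant checks placing $A_\nu$ and $D$ into $S(U(1)\times U(5))$. I would therefore conclude, as in the precedents, with the single sentence: by doing straightforward computation we obtain the results above.
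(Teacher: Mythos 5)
Your proposal is correct and takes essentially the same route as the paper: the paper's entire proof is that the formulas are immediate from Lemma \ref{lemma 3.3.8} (1), since $\varphi_{{}_{E_6,\gamma_3}}$ and $\varphi_{{}_{E_6,\nu_3}}$ are both restrictions of $\varphi_{{}_{E_6,\gamma}}$ and the two diagonal matrices visibly lie in $S(U(1)\times U(5))$. You have simply written out the determinant checks that the paper leaves as ``trivial.''
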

\begin{proof}
	From Lemma \ref{lemma 3.3.8} (1), these results are trivial. 
\end{proof}

The $C$-linear transformation $\sigma'_3$ defined in the Case 5 is expressed by
\begin{align*}
\sigma'_3=\varphi_{{}_{E_6,\nu_3}}(1, \diag(1,1,\omega,\omega,\tau\omega,\tau\omega)),
\end{align*}
and note that $\delta_R=\varphi_{{}_{E_6, \nu_3}}(1,R)(=\varphi_{{}_{E_6, \gamma_3}}(1,R))$, where $\delta_R$ is also defined in the Case 5, moreover needless to say, $\sigma_3$ is conjugate to $\sigma'_3$ under $\delta_R=\varphi_{{}_{E_6, \nu_3}}(1,R)$.

\begin{proposition}\label{proposition 4.9.2}
	The group $(E_6)^{\sigma_3} \cap (E_6)^{\nu_3}$ is isomorphic to the group $(E_6)^{\sigma'_3} \cap (E_6)^{\nu_3}${\rm :} $(E_6)^{\sigma_3} \cap (E_6)^{\nu_3} \cong (E_6)^{\sigma'_3} \cap (E_6)^{\nu_3}$.
\end{proposition}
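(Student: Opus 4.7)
The plan is to mimic the template already established in Propositions \ref{proposition 4.5.1}, \ref{proposition 4.7.1}, and \ref{proposition 4.8.1}: construct the required isomorphism explicitly as conjugation by the element $\delta_R$. Specifically, I would define a mapping $g_{{}_{492}}: (E_6)^{\sigma_3} \cap (E_6)^{\nu_3} \to (E_6)^{\sigma'_3} \cap (E_6)^{\nu_3}$ by $g_{{}_{492}}(\alpha) = {\delta_R}^{-1}\alpha\delta_R$, and then verify that this mapping is well-defined; bijectivity follows at once because $\beta \mapsto \delta_R\beta{\delta_R}^{-1}$ is a two-sided inverse, and it is automatically a homomorphism.

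The well-definedness check splits into two parts. For $g_{{}_{492}}(\alpha) \in (E_6)^{\sigma'_3}$, the argument is identical to that in Proposition \ref{proposition 4.5.1}: it uses the relation $\sigma_3\delta_R = \delta_R\sigma'_3$ already established in Case 5. The new ingredient for the present case is that $g_{{}_{492}}(\alpha) \in (E_6)^{\nu_3}$, which reduces to showing $\delta_R\nu_3 = \nu_3\delta_R$. Here I would exploit the reformulation preceding the statement, $\delta_R = \varphi_{{}_{E_6,\nu_3}}(1,R)$, together with Lemma \ref{lemma 4.9.1} giving $\nu_3 = \varphi_{{}_{E_6,\nu_3}}(1,\diag(\nu^5, \nu^{-1},\nu^{-1},\nu^{-1},\nu^{-1},\nu^{-1}))$. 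Since $\varphi_{{}_{E_6,\nu_3}}$ is the restriction of a homomorphism, the commutation reduces to the matrix identity
\begin{align*}
R\,\diag(\nu^5, \nu^{-1},\nu^{-1},\nu^{-1},\nu^{-1},\nu^{-1}) = \diag(\nu^5, \nu^{-1},\nu^{-1},\nu^{-1},\nu^{-1},\nu^{-1})\,R
\end{align*}
in $SU(6)$. This identity is clear by inspection: $R$ fixes the first, second, and sixth coordinates while acting only on the third, fourth, and fifth coordinates (by a signed permutation), and the diagonal entries at those three positions are all equal to $\nu^{-1}$, so the two matrices commute.

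The main step of substance is therefore only the identification $\delta_R = \varphi_{{}_{E_6,\nu_3}}(1,R)$; once one views $\delta_R$ through the $\varphi_{{}_{E_6,\nu_3}}$-parametrization rather than the $\varphi_{{}_{E_6,\gamma_3}}$-parametrization, the commutation with $\nu_3$ is immediate and no further calculation is needed. In this sense there is essentially no serious obstacle beyond careful bookkeeping, and the proposition should follow by the same short argument used to prove Propositions \ref{proposition 4.5.1}, \ref{proposition 4.7.1}, and \ref{proposition 4.8.1}, with the roles of $\tilde{\gamma}_3$ and $\tilde{\nu}_3$ interchanged.
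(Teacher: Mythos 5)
Your proposal is correct and follows essentially the same route as the paper: the paper also defines $g_{{}_{492}}(\alpha)={\delta_R}^{-1}\alpha\delta_R$, notes that $\delta_R\nu_3=\nu_3\delta_R$ follows from expressing both via $\varphi_{{}_{E_6,\nu_3}}$ (Lemma \ref{lemma 4.9.1}), and then refers back to the argument of Proposition \ref{proposition 4.5.1}. Your explicit verification of the matrix commutation (the diagonal entries at the three positions moved by $R$ all equal $\nu^{-1}$) is exactly the computation the paper leaves as ``easy to verify.''
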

\begin{proof}
	We define a mapping $g_{{}_{492}}: (E_6)^{\sigma_3} \cap (E_6)^{\nu_3} \to (E_6)^{\sigma'_3} \cap (E_6)^{\nu_3}$ by
	\begin{align*}
	g_{{}_{492}}(\alpha)={\delta_R}^{-1}\alpha\delta_R,
	\end{align*}
	where $\delta_R$ is same one above. Since it is easy to verify that $\delta_R\nu_3=\nu_3\delta_R$ using $\nu_3=\varphi_{{}_{E_6,\nu_3}}(1,\diag(\nu^5,\nu^{-1},\nu^{-1},\nu^{-1},\nu^{-1},\nu^{-1}))$ (Lemma \ref{lemma 4.9.1}), we can prove this proposition as in the proof of Proposition \ref{proposition 4.5.1}
\end{proof}

Now, we will determine the structure of the group $(E_6)^{\sigma_3} \cap (E_6)^{\nu_3}$.

\begin{theorem}\label{theorem 4.9.3}
	The group $(E_6)^{\sigma_3} \cap (E_6)^{\nu_3}$ is isomorphic the group $(Sp(1)\times U(1) \times U(1)\allowbreak \times U(1) \times SU(2)\times SU(2))/(\Z_2\times\Z_2\times\Z_4)${\rm :} $(E_6)^{\sigma_3} \cap (E_6)^{\nu_3} \cong (Sp(1)\times U(1) \times U(1)\times U(1) \allowbreak \times SU(2)\times SU(2))/(\Z_2\times\Z_2\times\Z_4), \Z_2=\{(1,1,1,1,E,E), (1,1,-1,1,E,-E) \},
	\Z_2=\{(1,1,1,1,E,E), (1,1,-1,-1,-E,E) \},
	\Z_4=\{(1,1,1,E,E,E), (1,-1,1-,1,E,E), (-1,i,i,\allowbreak 1,-E,E), (-1,-i,-i,1,-E,E) \}$.
\end{theorem}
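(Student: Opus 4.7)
The plan is to transfer the argument of Case 7 (Theorem \ref{theorem 4.7.3}) to the parametrization $\varphi_{{}_{E_6,\nu_3}}$ of Theorem \ref{theorem 3.3.5}. First, by Proposition \ref{proposition 4.9.2}, it is enough to analyze $(E_6)^{\sigma'_3}\cap (E_6)^{\nu_3}$, and the identity stated just before Proposition \ref{proposition 4.9.2} gives $\sigma'_3=\varphi_{{}_{E_6,\nu_3}}(1,\diag(1,1,\omega,\omega,\tau\omega,\tau\omega))$. I would then introduce
\[
\varphi_{{}_{E_6,\sigma'_3,\nu_3}}:Sp(1)\times S(U(1)\times U(1)\times U(2)\times U(2))\to (E_6)^{\sigma'_3}\cap (E_6)^{\nu_3}
\]
as the restriction of $\varphi_{{}_{E_6,\nu_3}}$, where the second factor sits inside $S(U(1)\times U(5))$ as the block-diagonal subgroup compatible with the eigenspace decomposition $\{1,\omega,\tau\omega\}$ of $\diag(1,\omega,\omega,\tau\omega,\tau\omega)$.

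Well-definedness on the $\nu_3$-side is immediate from being a restriction; $\sigma'_3$-invariance reduces to the block identity that $\diag(1,\omega E_2,\tau\omega E_2)$ commutes with any $\diag(s',S_2,S_3)$ for $s'\in U(1)$, $S_j\in U(2)$, which is clear. For surjectivity, given $\alpha\in (E_6)^{\sigma'_3}\cap (E_6)^{\nu_3}$, Theorem \ref{theorem 3.3.5} writes $\alpha=\varphi_{{}_{E_6,\nu_3}}(q,A)$ with $q\in Sp(1)$ and $A\in S(U(1)\times U(5))$. The relation ${\sigma'_3}^{-1}\alpha\sigma'_3=\alpha$ together with $\Ker\varphi_{{}_{E_6,\nu_3}}=\{(1,E),(-1,-E)\}$ gives the usual sign-dichotomy, and the sign-flip branch $(q,A)\mapsto(-q,-A)$ is excluded by $q\neq 0$. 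The identity branch forces $DAD^{-1}=A$ with $D=\diag(1,1,\omega,\omega,\tau\omega,\tau\omega)$; combined with the block structure of $S(U(1)\times U(5))$, eigenspace decomposition yields $A=\diag(s,s',S_2,S_3)$ with $s,s'\in U(1)$, $S_j\in U(2)$, $ss'\det S_2\det S_3=1$, so $A\in S(U(1)\times U(1)\times U(2)\times U(2))$ as required.

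The kernel of $\varphi_{{}_{E_6,\sigma'_3,\nu_3}}$ is inherited from $\Ker\varphi_{{}_{E_6,\nu_3}}=\{(1,E),(-1,-E)\}\cong\Z_2$, so at this intermediate stage
\[
(E_6)^{\sigma_3}\cap(E_6)^{\nu_3}\cong \bigl(Sp(1)\times S(U(1)\times U(1)\times U(2)\times U(2))\bigr)/\Z_2.
\]
To reach the presentation stated in the theorem, I would apply Lemma \ref{lemma 4.7.2} to rewrite $S(U(1)\times U(1)\times U(2)\times U(2))$ as $(U(1)\times U(1)\times U(1)\times SU(2)\times SU(2))/(\Z_2\times\Z_2\times\Z_2)$, then pull back $(1,E)$ and $(-1,-E)$ through $\mathrm{id}_{Sp(1)}\times f_{{}_{472}}$ exactly as in the proof of Theorem \ref{theorem 4.7.3}. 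The only genuinely combinatorial step, and essentially the sole obstacle, is to check that the resulting sixteen preimages combine with the three ambient $\Z_2$ factors to reorganize as $\Z_2\times\Z_2\times\Z_4$, with the $\Z_4$ generated by the $\pm i$-valued lifts of $(-1,-E)$. Since the lifting only involves $\pm 1\in Sp(1)$ and not genuinely quaternionic elements, this reorganization is formally identical to the one already carried out in Case 7, yielding the claimed isomorphism.
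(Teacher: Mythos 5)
Your proposal follows the paper's own proof essentially step for step: reduce to $(E_6)^{\sigma'_3}\cap(E_6)^{\nu_3}$ via Proposition \ref{proposition 4.9.2}, restrict $\varphi_{{}_{E_6,\nu_3}}$ to $Sp(1)\times S(U(1)\times U(1)\times U(2)\times U(2))$, run the sign-dichotomy/eigenspace argument for surjectivity, inherit the kernel $\{(1,E),(-1,-E)\}\cong\Z_2$, and then invoke Lemma \ref{lemma 4.7.2} and the Case~7 bookkeeping to reorganize the quotient as $\Z_2\times\Z_2\times\Z_4$. This is exactly the route taken in the paper, so the proposal is correct and not materially different.
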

\begin{proof}
	Let $S(U(1)\times U(1)\times U(2)\times U(2)) \subset S(U(1)\times U(5))$ as in the proof of Theorem \ref{theorem 4.7.3}. 
	We define a mapping $\varphi_{{}_{E_6,\sigma'_3,\nu_3}}: Sp(1)\times S(U(1)\times U(1)\times U(2)\times U(2)) \to (E_6)^{\sigma'_3} \cap (E_6)^{\nu_3}$ by
	\begin{align*}
	\varphi_{{}_{E_6,\sigma'_3,\nu_3}}(q, P)(M+\a)&={k_J}^{-1}(P(k_J M){}^t\!P)+q\a k^{-1}(\tau \,{}^t\!P), 
	\\
	&\hspace*{40mm}M+\a \in \mathfrak{J}(3, \H)^C \oplus (\H^3)^C=\mathfrak{J}^C.
	\end{align*}
	Needless to say, this mapping is the restriction of the mapping $\varphi_{{}_{E_6,\nu_3}}$, that is, $\varphi_{{}_{E_6,\sigma'_3,\nu_3}}(q, P)=\varphi_{{}_{E_6,\nu_3}}(q,P)$ (Theorem \ref{theorem 3.3.5}). 
	
	As usual, we will prove that $\varphi_{{}_{E_6,\sigma'_3,\nu_3}}$ is well-defined. It is clear that $\varphi_{{}_{E_6,\sigma'_3,\nu_3}}(q,P) \in (E_6)^{\nu_3}$, and it follows from $\sigma'_3=\varphi_{{}_{E_6,\nu_3}}(1,\diag(1,1,\omega,\omega,\tau\omega,\tau\omega))$ that
	\begin{align*}
	&\quad {\sigma'_3}^{-1}\varphi_{{}_{E_6,\sigma'_3,\nu_3}}(q,P)\sigma'_3
	\\
	&=\varphi_{{}_{E_6,\nu_3}}(1,\diag(1,1,\omega,\omega,\tau\omega,\tau\omega))^{-1}\varphi_{{}_{E_6,\sigma'_3,\nu_3}}(q,P)\varphi_{{}_{E_6,\nu_3}}(1,\diag(1,1,\omega,\omega,\tau\omega,\tau\omega))
	\\
	&=\varphi_{{}_{E_6,\nu_3}}(1,\diag(1,1,\tau\omega,\tau\omega,\omega,\omega))\varphi_{{}_{E_6,\nu_3}}(q,P)\varphi_{{}_{E_6,\nu_3}}(1,\diag(1,1,\omega,\omega,\tau\omega,\tau\omega))
	\\
	&=\varphi_{{}_{E_6,\nu_3}}(q,\diag(1,1,\tau\omega,\tau\omega,\omega,\omega)P\,\diag(1,1,\omega,\omega,\tau\omega,\tau\omega)),P=\diag(a,b,P_1,P_2)
	\\
	&=\varphi_{{}_{E_6,\nu_3}}(q,\diag(a, b, (\tau\omega E)P_1(\omega E), (\omega E) P_2 (\tau\omega E) ))
	\\
	&=\varphi_{{}_{E_6,\nu_3}}(q,P)
	\\
	&=\varphi_{{}_{E_6,\sigma'_3,\nu_3}}(q,P).
	\end{align*}
	Hence we have that $\varphi_{{}_{E_6,\sigma'_3,\nu_3}}(q,P) \in (E_6)^{\sigma'_3}$. Thus $\varphi_{{}_{E_6,\sigma'_3,\nu_3}}$ is well-defined. Subsequently, since $\varphi_{{}_{E_6,\sigma'_3,\nu_3}}$ is the restriction of the mapping $\varphi_{{}_{E_6,\nu_3}}$, we easily see that $\varphi_{{}_{E_6,\sigma'_3,\nu_3}}$ is a homomorphism.
	
	Next, we will prove that $\varphi_{{}_{E_6,\sigma'_3,\nu_3}}$ is surjective. Let $\alpha \in (E_6)^{\sigma'_3} \cap (E_6)^{\nu_3} \subset (E_6)^{\nu_3}$. There exist $q \in Sp(1)$ and $A \in S(U(1)\times U(5))$ such that $\alpha=\varphi_{{}_{E_6,\nu_3}}(q,A)$ (Theorem \ref{theorem 3.3.5}). Moreover, from the condition $\alpha \in (E_6)^{\sigma'_3}$, that is, ${\sigma'_3}^{-1}\varphi_{{}_{E_6,\nu_3}}(q,A)\sigma'_3=\varphi_{{}_{E_6,\nu_3}}(q,A)$, and using ${\sigma'_3}^{-1}\varphi_{{}_{E_6,\nu_3}}(q,A)\sigma'_3=\varphi_{{}_{E_6,\nu_3}}(q,\diag(1,1,\tau\omega,\tau\omega,\omega,\omega)A\,\diag(1,1,\omega,\omega,\tau\omega,\tau\omega))$, we have that
	\begin{align*}
	&\left\{
	\begin{array}{l}
	q=q \\
	\diag(1,1,\tau\omega,\tau\omega,\omega,\omega)A\,\diag(1,1,\omega,\omega,\tau\omega,\tau\omega)=A   
	\end{array}\right. 
	\\
	&\hspace*{50mm}{\text{or}}
	\\
	&\left\{
	\begin{array}{l}
	q=-q \\
	\diag(1,1,\tau\omega,\tau\omega,\omega,\omega)A\,\diag(1,1,\omega,\omega,\tau\omega,\tau\omega)=-A.   
	\end{array}\right.     
	\end{align*}
	The latter case is impossible because of $q\not=0$. As for the former case, from the second condition, by doing straightforward computation $A$ takes the following form $\diag(a, b, C, D), a,b \in U(1),C, D \in U(2),  (ab)(\det\,C)(\det\,D)=1$, that is, $A \in S(U(1)\times U(1)\times  U(2)\times U(2))$. Needless to say, $q \in Sp(1)$.
	Hence there exist $q \in Sp(1)$ and $P \in S(U(1)\times U(1)\times U(2)\times U(2))$ such that $\alpha=\varphi_{{}_{E_6,\nu_3}}(s,P)$. Namely, there exist $q \in Sp(1)$ and $P \in S(U(1)\times U(1)\times U(2)\times U(2))$ such that $\alpha=\varphi_{{}_{E_6,\sigma'_3,\nu_3}}(s,P)$. The proof of surjective is completed.
	
	Finally, we will determine $\Ker\,\varphi_{{}_{E_6,\sigma'_3,\nu_3}}$. However, from $\Ker\,\varphi_{{}_{E_6,\nu_3}}=\{(1,E),(-1,-E) \}$, we easily obtain that $\Ker\,\varphi_{{}_{E_6,\sigma'_3,\nu_3}}=\{(1,E),(-1,-E) \} \cong \Z_2$. Thus we have the isomorphism $(E_6)^{\sigma'_3} \cap (E_6)^{\nu_3} \cong (Sp(1)\times S(U(1)\times U(1)\times U(2)\times U(2)))/\Z_2$. In addition, from Proposition \ref{proposition 4.8.1} we have the isomorphism $(E_6)^{\sigma_3} \cap (E_6)^{\nu_3} \cong (Sp(1)\times S(U(1)\times U(1)\times U(2)\times U(2)))/\Z_2$. 
	\if0
	Here, 	using the mapping $f_{{}_{473}}$ in the proof of Lemma \ref{lemma 4.7.3}, we define a homomorphism $h_{{}_{474}}:U(1)\times (U(1)\times U(1)\times U(1)\times SU(2)\times SU(2)) \to U(1)\times S(U(1)\times U(1)\times U(2)\times U(2))$ by
	\begin{align*}
	h_{{}_{474}}(s,(a,b,c,A,B))=(s,f_{{}_{473}}(a,b,c,A,B)).
	\end{align*}
	Then, the elements of $(s,(a,b,c,A,B))$ corresponding to $(1,E), (-1,-E) \in \Ker\,\varphi_{{}_{E_6,\gamma_3,\mu'_3}}$ under the mapping $h_{{}_{474}}$ are as follows.
	\begin{align*}
	&(1,(1,1,1,E,E)),(1,(1,1,-1,-E,-E)),(1,(1,-1,1,E,-E)),(1,(1,-1,-1,-E,E)),
	\\
	&(1,(-1,1,1,E,-E)),(1,(-1,1,-1,-E,E)),(1,(-1,-1,1,E,E)),(1,(-1,-1,-1,-E,-E)),
	\\
	&(-1,(i,i,1,-E,E)),(-1,(i,i,-1,E,-E)),(-1,(i,-i,1,-E,-E)),(-1,(i,i,-1,-E,E)),
	\\
	&(-1,(-i,i,1,-E,\!E)),(-1,(-i,i,-1,E,\!E)),(-1,(-i,-i,1,-E,E)),(-1,(-i,-i,-1,E,-E)).
	\end{align*}
	\fi
	Therefore, as in the proof of Theorem \ref{theorem 4.7.3}, we have the required isomorphism 
	\begin{align*}
	(E_6)^{\sigma_3} \cap (E_6)^{\nu_3} \!\cong (Sp(1)\times U(1) \times U(1)\times U(1) \allowbreak \times SU(2)\times SU(2))/(\Z_2\times\Z_2\times\Z_4), 
	\end{align*}
	where 
	\begin{align*}
	&\Z_2=\{(1,1,1,1,E,E), (1,1,-1,1,E,-E) \},
	\\
	&\Z_2=\{(1,1,1,1,E,E), (1,1,-1,-1,-E,E) \},
	\\
	&\Z_4=\{(1,1,1,E,E,E), (1,-1,1-,1,E,E), (-1,i,i,1,-E,E), (-1,-i,-i,1,-E,E) \}.
	\end{align*}
\end{proof}

Thus, since the group $(E_6)^{\sigma_3} \cap (E_6)^{\nu_3}$ is connected from Theorem \ref{theorem 4.9.3}, we have an exceptional $\varmathbb{Z}_3 \times \varmathbb{Z}_3$-symmetric space
\begin{align*}
E_6/((Sp(1)\times U(1) \times U(1)\times U(1) \times SU(2)\times SU(2))/(\Z_2\times\Z_2\times\Z_4)).
\end{align*}

\subsection{Case 10: $\{1, \tilde{\sigma}_3,  \tilde{\sigma}_3{}^{-1}\} \times \{1, \tilde{\mu}_3,  \tilde{\mu}_3{}^{-1}\}$-symmetric space}

Let the $C$-linear transformations $\sigma_3, \mu_3$ of $\mathfrak{J}^C$  defined in Subsection \ref{subsection 3.3}. 

\noindent From Lemma \ref{lemma 3.3.8} (1), since we can easily confirm that $\sigma_3$ and $\mu_3$ are commutative, $\tilde{\sigma}_3$ and $\tilde{\mu}_3$ are commutative in $\Aut(E_6)$: $\tilde{\sigma}_3\tilde{\mu}_3=\tilde{\mu}_3\tilde{\sigma}_3$.

Before determining the structure of the group $(E_6)^{\sigma_3} \cap (E_6)^{\mu_3}$, we prove proposition needed in the proof of theorem below.

\begin{proposition}\label{proposition 4.10.1}
	The group $(E_6)^{\sigma_3}$ is a subgroup of the group $(E_6)^\sigma${\rm: } $(E_6)^{\sigma_3} \subset (E_6)^\sigma$.
\end{proposition}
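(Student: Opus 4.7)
My plan is to prove the stronger statement that $\sigma$ lies in the center of $(E_6)^{\sigma_3}$, from which the desired inclusion is immediate: any $\alpha\in(E_6)^{\sigma_3}$ commuting with $\sigma$ automatically satisfies $\tilde{\sigma}(\alpha)=\sigma\alpha\sigma=\alpha$ (using $\sigma^2=1$), and hence $\alpha\in(E_6)^\sigma$.

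The starting observation is that $\sigma=D_{-1}$ (Subsection \ref{subsection 3.2}) with $-1\in U(1)\subset\C$, so $\sigma$ sits inside the subgroup $Spin(2)=\{D_a\mid a\in U(1)\}$, which is one of the three factors of the surjective homomorphism
\[
\varphi_{{}_{E_6,\sigma_3}}:U(1)\times Spin(2)\times Spin(8)\to(E_6)^{\sigma_3},\quad(\theta,D_a,\beta)\mapsto\phi_{{}_{6,\sigma}}(\theta)D_a\beta,
\]
provided by Theorem \ref{theorem 3.3.4}. The homomorphism property of $\varphi_{{}_{E_6,\sigma_3}}$ forces the three subgroups $\phi_{{}_{6,\sigma}}(U(1))$, $Spin(2)$, and $Spin(8)$ to commute pairwise inside $E_6$: specializing all but one coordinate to the identity on each side of the product and reading off the resulting equality yields the corresponding commutation between the two surviving factors. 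Since $Spin(2)$ is itself abelian, $\sigma=D_{-1}$ then commutes with every factor of the general element $\phi_{{}_{6,\sigma}}(\theta)D_a\beta$, hence with the whole element of $(E_6)^{\sigma_3}$.

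Combining these two steps gives $\sigma\in Z((E_6)^{\sigma_3})$, and therefore $(E_6)^{\sigma_3}\subset(E_6)^\sigma$. The only potentially delicate ingredient is the commutation of $Spin(2)$ with $Spin(8)$ inside $(E_6)_{E_1}\cong Spin(10)$, which would be the main obstacle if attacked head-on; but it is already encoded in the fact that $\varphi_{{}_{E_6,\sigma_3}}$ is a homomorphism (Theorem \ref{theorem 3.3.4}), so no further computation is needed here.
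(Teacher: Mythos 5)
Your proof is correct and follows essentially the same route as the paper: both use the surjectivity of $\varphi_{{}_{E_6,\sigma_3}}$ from Theorem \ref{theorem 3.3.4} to write an arbitrary $\alpha\in(E_6)^{\sigma_3}$ as $\phi_{{}_{6,\sigma}}(\theta)D_a\beta$ and then commute $\sigma=D_{-1}$ past each factor, using $\sigma^2=1$. The only difference is in how the commutation with $\beta\in Spin(8)$ (and with $\phi_{{}_{6,\sigma}}(\theta)$) is justified: the paper invokes the containment $Spin(8)\subset(E_6)_{E_1}\subset(E_6)^\sigma$, whereas you extract the pairwise commutation of the three factors from the homomorphism property of $\varphi_{{}_{E_6,\sigma_3}}$ on the direct product --- a legitimate shortcut, since that property is part of what Theorem \ref{theorem 3.3.4} asserts.
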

\begin{proof}
	Let $\alpha \in (E_6)^{\sigma_3}$. Then, from Theorem \ref{theorem 3.3.4}, there exist $\theta \in U(1), D_a \in Spin(2)$ and $\beta \in Spin(8)$ such that $\alpha=\phi_{{}_{6,\sigma}}(\theta) D_a \beta$. Here, note that $(E_6)_{E_1} \subset (E_6)^\sigma$ (\cite[Theorem 3.10.2]{iy0}), and so since $Spin(8)$ as the group $(E_6)_{E_1,F_1(1),F_1(e_1)} \subset (E_6)_{E_1} \subset (E_6)^\sigma$, it follows that
	\begin{align*}
			\sigma\alpha=\sigma(\phi_{{}_{6,\sigma}}(\theta) D_a \beta)=\phi_{{}_{6,\sigma}}(\theta)\sigma D_a\beta=\phi_{{}_{6,\sigma}}(\theta)D_a \sigma\beta=(\phi_{{}_{6,\sigma}}(\theta) D_a \beta)\sigma=\alpha\sigma.
	\end{align*}
	Hence we have that $\alpha \in (E_6)^\sigma$, that is, $(E_6)^{\sigma_3} \subset (E_6)^\sigma$.
\end{proof}

Now, we will determine the structure of the group $(E_6)^{\sigma_3} \cap (E_6)^{\mu_3}$.

\begin{theorem}\label{theorem 4.10.2}
	The group $(E_6)^{\sigma_3} \cap (E_6)^{\mu_3}$ coincides with the group $(E_6)^{\sigma_3}$, that is, the group $(E_6)^{\sigma_3} \cap (E_6)^{\mu_3}$ is isomorphic to the group $(U(1)\times Spin(2)\times Spin(8))/(\Z_2 \times \Z_4),\\ \Z_2=\!\{(1,1,1),(1,\sigma,\sigma) \}, \Z_4\!=\!\{(1,1,1),(i,D_{e_1},\phi_{{}_{6,\sigma}}(-i)D_{-e_1}),(-1,\allowbreak \sigma,-1),(-i,D_{-e_1}, \phi_{{}_{6,\sigma}}(i) \allowbreak D_{e_1}) \}$. 
\end{theorem}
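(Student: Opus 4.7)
The plan is to observe that the statement follows almost immediately from two results already established in the paper, so the proof reduces to stringing together an inclusion chain rather than constructing a new homomorphism. First I would recall Theorem \ref{theorem 3.3.6}, which says $(E_6)^{\mu_3} = (E_6)^\sigma$, and Proposition \ref{proposition 4.10.1}, which says $(E_6)^{\sigma_3} \subseteq (E_6)^\sigma$. Combining these gives $(E_6)^{\sigma_3} \subseteq (E_6)^\sigma = (E_6)^{\mu_3}$, so the intersection $(E_6)^{\sigma_3} \cap (E_6)^{\mu_3}$ is simply $(E_6)^{\sigma_3}$.

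Once this coincidence is established, the structural identification is just a quotation of Theorem \ref{theorem 3.3.4}, which already gives the isomorphism
\begin{align*}
(E_6)^{\sigma_3} \cong (U(1) \times Spin(2) \times Spin(8))/(\Z_2 \times \Z_4),
\end{align*}
together with the explicit description of the two kernels $\Z_2$ and $\Z_4$. No new surjectivity or kernel computation is needed in Case 10.

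There is essentially no obstacle here, since the nontrivial work has been done in Proposition \ref{proposition 4.10.1} (which uses that $Spin(8) \subset (E_6)_{E_1} \subset (E_6)^\sigma$ and that $\phi_{{}_{6,\sigma}}(\theta)$ and $D_a$ commute with $\sigma$) and in Theorem \ref{theorem 3.3.6}. The only small care needed is to point out explicitly why commutativity of $\tilde{\sigma}_3$ and $\tilde{\mu}_3$ in $\Aut(E_6)$ (which follows at once from Lemma \ref{lemma 3.3.8} (1) applied to both $\sigma_3$ and $\mu_3$, since each is expressed through diagonal entries in the same maximal torus image $\varphi_{{}_{E_6,\gamma_3}}(1,\diag(\cdots))$) is consistent with the stronger conclusion $(E_6)^{\sigma_3} \subseteq (E_6)^{\mu_3}$. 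Thus the proof should be at most a few lines: invoke Proposition \ref{proposition 4.10.1}, apply Theorem \ref{theorem 3.3.6} to rewrite $(E_6)^\sigma$ as $(E_6)^{\mu_3}$, conclude $(E_6)^{\sigma_3} \cap (E_6)^{\mu_3} = (E_6)^{\sigma_3}$, and quote Theorem \ref{theorem 3.3.4} for the final isomorphism.
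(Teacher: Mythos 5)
Your proposal is correct and follows exactly the paper's own argument: the paper likewise combines Theorem \ref{theorem 3.3.6} (giving $(E_6)^{\mu_3}=(E_6)^{\sigma}$) with Proposition \ref{proposition 4.10.1} (giving $(E_6)^{\sigma_3}\subseteq (E_6)^{\sigma}$) to conclude $(E_6)^{\sigma_3}\cap(E_6)^{\mu_3}=(E_6)^{\sigma_3}$, and then quotes Theorem \ref{theorem 3.3.4} for the structural isomorphism. No gaps.
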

\begin{proof}
	From Proposition \ref{proposition 3.3.3} and Theorem \ref{theorem 3.3.6}, we have that the group $(E_6)^{\sigma_3} \cap (E_6)^{\mu_3}$ coincides with the group $(E_6)^{\sigma_3} \cap (E_6)^{\sigma}$: $(E_6)^{\sigma_3} \cap (E_6)^{\mu_3}=(E_6)^{\sigma_3} \cap (E_6)^{\sigma}$. In addition, from Proposition \ref{proposition 4.10.1} above, we have that 
	\begin{align*}
	      (E_6)^{\sigma_3} \cap (E_6)^{\mu_3}=(E_6)^{\sigma_3} \cap (E_6)^{\sigma}=(E_6)^{\sigma_3}.
	\end{align*}
	Therefore, by Theorem \ref{theorem 3.3.4}, we have the required isomorphism 
	\begin{align*}
	 		   (E_6)^{\sigma_3} \cap (E_6)^{\mu_3} \cong (U(1)\times Spin(2)\times Spin(8))/(\Z_2\times \Z_4).
	\end{align*}
\end{proof}

Thus, since the group $(E_6)^{\sigma_3} \cap (E_6)^{\mu_3}$ is connected from Theorem \ref{theorem 4.10.2}, we have an exceptional $\varmathbb{Z}_3 \times \varmathbb{Z}_3$-symmetric space
\begin{align*}
E_6/((U(1)\times Spin(2)\times Spin(8))/(\Z_2\times \Z_4)).
\end{align*}

\subsection{Case 11: $\{1, \tilde{\sigma}_3,  \tilde{\sigma}_3{}^{-1}\} \times \{1, \tilde{w}_3,  \tilde{w}_3{}^{-1}\}$-symmetric space}

Let the $C$-linear transformations $\sigma_3, w_3$ of $\mathfrak{J}^C$  defined in Subsection \ref{subsection 3.3}. 

\noindent From Lemma \ref{lemma 3.3.8} (2), since we can easily confirm that $\sigma_3$ and $w_3$ are commutative, $\tilde{\sigma}_3$ and $\tilde{w}_3$ are commutative in $\Aut(E_6)$: $\tilde{\sigma}_3\tilde{w}_3=\tilde{w}_3\tilde{\sigma}_3$.

Now, we will determine the structure of the group $(E_6)^{\sigma_3}\cap (E_6)^{w_3}$.

\begin{theorem}\label{theorem 4.11.1}
	The group $(E_6)^{\sigma_3}\cap (E_6)^{w_3}$ is isomorphic to the group $(SU(3)\times U(1)\times U(1)\times U(1)\times U(1))/\Z_3${\rm :} $(E_6)^{\sigma_3}\cap (E_6)^{w_3} \cong (SU(3)\times U(1)\times U(1)\times U(1)\times U(1))/\Z_3, \Z_3=\{(E,1,1,1,1),(\bm{\omega}E,\bm{\omega},\bm{\omega},\bm{\omega},\bm{\omega}),(\bm{\omega}^{-1}E,\bm{\omega}^{-1},\bm{\omega}^{-1},\bm{\omega}^{-1},\bm{\omega}^{-1})\}$.
\end{theorem}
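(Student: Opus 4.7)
The plan is to mirror the proof of Theorem \ref{theorem 4.4.1} (Case 4 for $F_4$), now working inside $E_6$ with the embedding $\varphi_{{}_{E_6,w_3}}: SU(3) \times SU(3) \times SU(3) \to (E_6)^{w_3}$ from Theorem \ref{theorem 3.3.7}. Let $S(U(1) \times U(1) \times U(1)) \subset SU(3)$. I would define
\begin{align*}
\varphi_{{}_{E_6,\sigma_3,w_3}} : SU(3) \times S(U(1){\times}U(1){\times}U(1)) \times S(U(1){\times}U(1){\times}U(1)) \to (E_6)^{\sigma_3} \cap (E_6)^{w_3}
\end{align*}
as the restriction of $\varphi_{{}_{E_6,w_3}}$: $\varphi_{{}_{E_6,\sigma_3,w_3}}(L,A,B) = \varphi_{{}_{E_6,w_3}}(L,A,B)$. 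It is automatic that its image lies in $(E_6)^{w_3}$, and invariance under $\sigma_3$ follows from Lemma \ref{lemma 3.3.8} (2), using $\sigma_3 = \varphi_{{}_{E_6,w_3}}(E,\diag(1,\ov{\bm{\omega}},\bm{\omega}),\diag(1,\ov{\bm{\omega}},\bm{\omega}))$: conjugation by $\sigma_3$ sends $(L,A,B)$ to $(L,\diag(1,\bm{\omega},\ov{\bm{\omega}})A\diag(1,\ov{\bm{\omega}},\bm{\omega}),\diag(1,\bm{\omega},\ov{\bm{\omega}})B\diag(1,\ov{\bm{\omega}},\bm{\omega}))$, and since $A,B$ are diagonal this fixes them. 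The homomorphism property is inherited from $\varphi_{{}_{E_6,w_3}}$.

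For surjectivity, take $\alpha \in (E_6)^{\sigma_3} \cap (E_6)^{w_3} \subset (E_6)^{w_3}$ and write $\alpha = \varphi_{{}_{E_6,w_3}}(L,A,B)$ with $L,A,B \in SU(3)$. The condition $\sigma_3^{-1}\alpha\sigma_3 = \alpha$, together with the three possible scalar ambiguities $1, \bm{\omega}, \bm{\omega}^{-1}$ coming from $\Ker\,\varphi_{{}_{E_6,w_3}} \cong \Z_3$, yields three cases. The two cases requiring $L = \bm{\omega}^{\pm 1} L$ are impossible since $L \neq 0$. The remaining case forces
\begin{align*}
\diag(1,\bm{\omega},\ov{\bm{\omega}})A\diag(1,\ov{\bm{\omega}},\bm{\omega}) = A, \qquad \diag(1,\bm{\omega},\ov{\bm{\omega}})B\diag(1,\ov{\bm{\omega}},\bm{\omega}) = B,
\end{align*}
and a straightforward entry-by-entry computation shows $A$ and $B$ must be diagonal, hence $A,B \in S(U(1){\times}U(1){\times}U(1))$.

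For the kernel, since $\varphi_{{}_{E_6,\sigma_3,w_3}}$ is a restriction of $\varphi_{{}_{E_6,w_3}}$, one immediately reads off $\Ker\,\varphi_{{}_{E_6,\sigma_3,w_3}} = \{(E,E,E),(\bm{\omega}E,\bm{\omega}E,\bm{\omega}E),(\bm{\omega}^{-1}E,\bm{\omega}^{-1}E,\bm{\omega}^{-1}E)\} \cong \Z_3$. This gives
\begin{align*}
(E_6)^{\sigma_3} \cap (E_6)^{w_3} \cong \bigl(SU(3) \times S(U(1){\times}U(1){\times}U(1)) \times S(U(1){\times}U(1){\times}U(1))\bigr)/\Z_3.
\end{align*}
Finally, applying Lemma \ref{lemma 4.3.1} twice to replace each $S(U(1)^3)$ by $U(1)\times U(1)$ yields the required isomorphism with the claimed $\Z_3$ kernel (tracking $\bm{\omega}E \in SU(3)$ to $(\bm{\omega},\bm{\omega}) \in U(1){\times}U(1)$ since $(\bm{\omega}\cdot\bm{\omega})^{-1} = \bm{\omega}$).

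I do not expect a serious obstacle: the main technical content is the entry-wise diagonalization argument forcing $A, B$ into $S(U(1)^3)$, which is routine matrix bookkeeping identical in flavor to the $F_4$ calculation in Case 4. The only mild subtlety is the bookkeeping of the $\Z_3$ kernel under the identification of Lemma \ref{lemma 4.3.1}, which must be performed simultaneously on both $S(U(1)^3)$ factors to obtain the stated form $\{(E,1,1,1,1), (\bm{\omega}E,\bm{\omega},\bm{\omega},\bm{\omega},\bm{\omega}), (\bm{\omega}^{-1}E,\bm{\omega}^{-1},\bm{\omega}^{-1},\bm{\omega}^{-1},\bm{\omega}^{-1})\}$.
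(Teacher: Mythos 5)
Your proposal matches the paper's own proof essentially step for step: the same restriction of $\varphi_{{}_{E_6,w_3}}$ to $SU(3)\times S(U(1)^{\times 3})\times S(U(1)^{\times 3})$, the same well-definedness check via $\sigma_3=\varphi_{{}_{E_6,w_3}}(E,\diag(1,\ov{\bm{\omega}},\bm{\omega}),\diag(1,\ov{\bm{\omega}},\bm{\omega}))$, the same three-case surjectivity argument ruling out the $\bm{\omega}^{\pm1}$ twists because $L\neq 0$, the same $\Z_3$ kernel, and the same final application of Lemma \ref{lemma 4.3.1}. The approach and all key steps are correct, so there is nothing to add.
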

\begin{proof}
	Let $S(U(1)\times U(1)\times U(1)) \subset SU(3)$. We define a mapping $\varphi_{{}_{E_6,\sigma_3,w_3}}: SU(3)\times S(U(1)\times U(1)\times U(1))\times S(U(1)\times U(1)\times U(1)) \to (E_6)^{\sigma_3}\cap (E_6)^{w_3}$ by
	\begin{align*}
			\varphi_{{}_{E_6,\sigma_3,w_3}}(L,P,Q)(X_{C}+M)&=h(P,Q)X_{C}h(P,Q)^*+LM\tau h(P,Q)^*, 
			\\
			&\hspace*{20mm} X_{C}+M \in \mathfrak{J}(3, \C)^C \oplus 
			M(3,\C)^C=\mathfrak{J}^C.
	\end{align*}
	Needless to say, this mapping is the restriction of the mapping $\varphi_{{}_{E_6,w_3}}$, that is, $\varphi_{{}_{E_6,\sigma_3,w_3}}(L,P,\allowbreak Q)=\varphi_{{}_{E_6,w_3}}(L,P,Q)$ (Theorem \ref{theorem 3.3.7}). 
	
	We will prove that $\varphi_{{}_{E_6,\sigma_3,w_3}}$ is well-defined. It is clear that $\varphi_{{}_{E_6,\sigma_3,_3}}(L,P,Q) \in (E_6)^{w_3}$, and it follows from $\sigma_3=\varphi_{{}_{E_6,w_3}}(E,\diag(1,\ov{\bm{\omega}},\bm{\omega}), \diag(1,\ov{\bm{\omega}},\bm{\omega}))$ (Lemma \ref{lemma 3.3.8} (2)) that 
	\begin{align*}
	&\quad {\sigma_3}^{-1}\varphi_{{}_{E_6,\sigma_3,w_3}}(L,P,Q)\sigma_3
	\\
	&=\varphi_{{}_{E_6,w_3}}(E,\diag(1,\ov{\bm{\omega}},\bm{\omega}), \diag(1,\ov{\bm{\omega}},\bm{\omega}))^{-1}\varphi_{{}_{E_6,\sigma_3,w_3}}(L,P,Q)
	\\
	&\hspace*{70mm}\varphi_{{}_{E_6,w_3}}(E,\diag(1,\ov{\bm{\omega}},\bm{\omega}), \diag(1,\ov{\bm{\omega}},\bm{\omega}))
	\\
	&=\varphi_{{}_{E_6,w_3}}(E,\diag(1,\bm{\omega},\ov{\bm{\omega}}), \diag(1,\bm{\omega},\ov{\bm{\omega}}))\varphi_{{}_{E_6,w_3}}(L,P,Q)
	\\
	&\hspace*{70mm}\varphi_{{}_{E_6,w_3}}(E,\diag(1,\ov{\bm{\omega}},\bm{\omega}), \diag(1,\ov{\bm{\omega}},\bm{\omega}))
	\\
	&=\varphi_{{}_{E_6,w_3}}(L,\diag(1,\bm{\omega},\ov{\bm{\omega}})P\diag(1,\ov{\bm{\omega}},\bm{\omega}),\diag(1,\bm{\omega},\ov{\bm{\omega}})Q\diag(1,\ov{\bm{\omega}},\bm{\omega})),
	\\
	&\hspace*{85mm}P=\diag(a,b,c), Q=\diag(s,t,v)
	\\
	&=\varphi_{{}_{E_6,w_3}}(L,P,Q)
	\\
	&=\varphi_{{}_{E_6,\sigma_3,w_3}}(L,P,Q).
	\end{align*}
	Hence we have that $\varphi_{{}_{E_6,\sigma_3,w_3}}(L,P,Q) \in (E_6)^{\sigma_3}$. Thus $\varphi_{{}_{E_6,\sigma_3,w_3}}$ is well-defined. Subsequently, since $\varphi_{{}_{E_6,\sigma_3,w_3}}$ is the restriction of the mapping $\varphi_{{}_{E_6,w_3}}$, we easily see that $\varphi_{{}_{E_6,\sigma_3,w_3}}$ is a homomorphism.
	
	Next we will prove that $\varphi_{{}_{E_6,\sigma_3,w_3}}$ is surjective. Let $\alpha \in (E_6)^{\sigma_3}\cap (E_6)^{w_3} \subset (E_6)^{w_3}$. There exist $L, A, B \in SU(3)$ such that $\alpha=\varphi_{{}_{E_6,w_3}}(L,A,B)$ (Theorem \ref{theorem 3.3.7}). Moreover, from the condition $\alpha \in (E_6)^{\sigma_3}$, that is, ${\sigma_3}^{-1}\varphi_{{}_{E_6,w_3}}(L,A,B)\sigma_3=\varphi_{{}_{E_6,w_3}}(L,A,B)$, and using 
	\begin{align*}
				&\quad {\sigma_3}^{-1}\varphi_{{}_{E_6,w_3}}(L,A,B)\sigma_3
				\\
				&=\varphi_{{}_{E_6,w_3}}(L,\diag(1,\bm{\omega},\ov{\bm{\omega}})A\diag(1,\ov{\bm{\omega}},\bm{\omega}),\diag(1,\bm{\omega},\ov{\bm{\omega}})B\diag(1,\ov{\bm{\omega}}, \bm{\omega}))
	\end{align*}
	(Lemma \ref{lemma 3.3.8} (2)) we have that 
	\begin{align*}
	&\,\,\,{\rm(i)}\,\left\{
	\begin{array}{l}
	L=L\\
	\diag(1,\bm{\omega},\ov{\bm{\omega}})A\diag(1,\ov{\bm{\omega}},\bm{\omega})=A \\
	\diag(1,\bm{\omega},\ov{\bm{\omega}})B\diag(1,\ov{\bm{\omega}}, \bm{\omega})=B,
	\end{array} \right.
	\qquad
	{\rm(ii)}\,\left\{
	\begin{array}{l}
	L=\bm{\omega}L\\
	\diag(1,\bm{\omega},\ov{\bm{\omega}})A\diag(1,\ov{\bm{\omega}},\bm{\omega})=\bm{\omega}A \\
	\diag(1,\bm{\omega},\ov{\bm{\omega}})B\diag(1,\ov{\bm{\omega}}, \bm{\omega})=\bm{\omega}B,
	\end{array} \right.
	\\[2mm]
	&{\rm(iii)}\,\left\{
	\begin{array}{l}
	L=\bm{\omega}^{-1}L\\
	\diag(1,\bm{\omega},\ov{\bm{\omega}})A\diag(1,\ov{\bm{\omega}},\bm{\omega})=\bm{\omega}^{-1}A \\
	\diag(1,\bm{\omega},\ov{\bm{\omega}})B\diag(1,\ov{\bm{\omega}}, \bm{\omega})=\bm{\omega}^{-1}B.
	\end{array} \right.
	\end{align*}
	The Cases (ii) and (iii) are impossible because $L\not=0$. As for the Case (i), from the second and third conditions, it is easy to see that $A,B \in S(U(1)\times U(1) \times U(1))$. Needless to say, $L \in SU(3)$. 
	Hence there exist $L \in SU(3)$ and $A,B \in S(U(1)\times U(1)\times U(1))$ such that $\alpha=\varphi_{{}_{E_6,w_3}}(L,P,Q)$. Namely, there exist $L \in SU(3)$ and $A,B \in S(U(1)\times U(1)\times U(1))$ such that $\alpha=\varphi_{{}_{E_6,\sigma_3,w_3}}(L,P,Q)$. The proof of surjective is completed.
	
	Finally, we will determine $\Ker\,\varphi_{{}_{E_6,\sigma_3,w_3}}$. However, from $\Ker\,\varphi_{{}_{E_6,w_3}}=\{(E,E,E),(\bm{\omega}E,\allowbreak \bm{\omega}E,\bm{\omega}E),(\bm{\omega}^{-1}E,\bm{\omega}^{-1}E, \bm{\omega}^{-1}E) \}$, we easily obtain that $\Ker\,\varphi_{{}_{E_6,\sigma_3,w_3}}=\{(E,E,E),(\bm{\omega}E,\allowbreak \bm{\omega}E,\bm{\omega}E),(\bm{\omega}^{-1}E,\bm{\omega}^{-1}E, \bm{\omega}^{-1}E) \} \cong \Z_3$.
	Thus we have the isomorphism $(E_6)^{\sigma_3}\cap (E_6)^{w_3} \cong SU(3)\times S(U(1)\times U(1)\times U(1))\times S(U(1)\times U(1)\times U(1))/\Z_3$. 
	
	Therefore, by Lemma \ref{lemma 4.3.1} we have the required isomorphism 
	\begin{align*}
	(E_6)^{\sigma_3}\cap (E_6)^{w_3} \cong (SU(3)\times U(1)\times U(1)\times U(1)\times U(1))/\Z_3,
	\end{align*}
	where $\Z_3=\{(E,1,1,1,1),(\bm{\omega}E,\bm{\omega},\bm{\omega},\bm{\omega},\bm{\omega}),(\bm{\omega}^{-1}E,\bm{\omega}^{-1},\bm{\omega}^{-1},\bm{\omega}^{-1},\bm{\omega}^{-1})\}$.
\end{proof}

Thus, since the group $(E_6)^{\sigma_3} \cap (E_6)^{w_3}$ is connected from Theorem \ref{theorem 4.11.1}, we have an exceptional $\varmathbb{Z}_3 \times \varmathbb{Z}_3$-symmetric space
\begin{align*}
E_6/((SU(3)\times U(1)\times U(1)\times U(1)\times U(1))/\Z_3).
\end{align*}

\subsection{Case 12: $\{1, \tilde{\nu}_3,  \tilde{\nu}_3{}^{-1}\} \times \{1, \tilde{\mu}_3,  \tilde{\mu}_3{}^{-1}\}$-symmetric space}

Let the $C$-linear transformations $\nu_3, \mu_3$ of $\mathfrak{J}^C$  defined in Subsection \ref{subsection 3.3}. 

\noindent From Lemma \ref{lemma 3.3.8} (1), since we can easily confirm that $\nu_3$ and $\mu_3$ are commutative, $\tilde{\nu}_3$ and $\tilde{\mu}_3$ are commutative in $\Aut(E_6)$: $\tilde{\nu}_3\tilde{\mu}_3=\tilde{\mu}_3\tilde{\nu}_3$.

Before determining the structure of the group $(E_6)^{\nu_3} \cap (E_6)^{\mu_3}$, we confirm that useful lemma holds and prove proposition needed in the proof of theorem below.

\begin{lemma}\label{lemma 4.12.1}
	The mapping $\varphi_{{}_{E_6,\nu_3}}:Sp(1) \times S(U(1)\times U(5)) \to (E_6)^{\nu_3}$ of \,Theorem {\rm \ref{theorem 3.3.5}} satisfies the relational formulas 
	\begin{align*}
			\nu_3&=\varphi_{{}_{E_6,\nu_3}}(1,\diag(\nu^5,\nu^{-1},\nu^{-1},\nu^{-1},\nu^{-1},\nu^{-1})),
			\\
			\mu_3&=\varphi_{{}_{E_6,\nu_3}}(1, \diag(\nu^{-2},\nu^{2},\nu^{-1},\nu,\nu^{-1},\nu)),
	\end{align*}
	 where $\nu=\exp(2\pi i/9)\in U(1)$.
\end{lemma}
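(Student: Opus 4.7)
The plan is to derive both formulas by reducing them to Lemma \ref{lemma 3.3.8} (1) and then verifying that the diagonal matrices appearing in the formulas actually lie in $S(U(1)\times U(5))$, which is the second factor of the domain of $\varphi_{{}_{E_6,\nu_3}}$. Recall from Theorem \ref{theorem 3.3.5} that $\varphi_{{}_{E_6,\nu_3}}$ is nothing but the restriction of $\varphi_{{}_{E_6,\gamma}}$ to $Sp(1)\times S(U(1)\times U(5))$, and that $\varphi_{{}_{E_6,\gamma_3}}$ is itself the restriction of $\varphi_{{}_{E_6,\gamma}}$ to $U(1)\times SU(6)$. Consequently, once an element of $U(1)\times SU(6)$ is known to belong to $Sp(1)\times S(U(1)\times U(5))$, evaluating $\varphi_{{}_{E_6,\gamma_3}}$ or $\varphi_{{}_{E_6,\nu_3}}$ on it yields the same element of $E_6$.

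For the first formula I would recall the definition of $\nu_3$ in Subsection \ref{subsection 3.3}, namely $\nu_3=\varphi_{{}_{E_6,\gamma}}(1,A_\nu)$ with $A_\nu=\diag(\nu^5,\nu^{-1},\nu^{-1},\nu^{-1},\nu^{-1},\nu^{-1})$. Since the first diagonal entry $\nu^5$ lies in $U(1)$ and the $5\times 5$ lower block $\nu^{-1}E$ lies in $U(5)$ with $\nu^5\cdot\nu^{-5}=1$, we have $A_\nu\in S(U(1)\times U(5))$; together with $1\in U(1)\subset Sp(1)$, this shows that the pair $(1,A_\nu)$ lies in the domain of $\varphi_{{}_{E_6,\nu_3}}$, and the first formula is immediate.

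For the second formula I would invoke Lemma \ref{lemma 3.3.8} (1), which gives $\mu_3=\varphi_{{}_{E_6,\gamma_3}}(1,D)$ with $D=\diag(\nu^{-2},\nu^2,\nu^{-1},\nu,\nu^{-1},\nu)$. To rewrite this using $\varphi_{{}_{E_6,\nu_3}}$ it suffices to check $D\in S(U(1)\times U(5))$: the entry $\nu^{-2}$ lies in $U(1)$; the lower $5\times 5$ block $\diag(\nu^{2},\nu^{-1},\nu,\nu^{-1},\nu)$ lies in $U(5)$ with determinant $\nu^{2}$; and the product $\nu^{-2}\cdot\nu^{2}=1$ witnesses the $S(U(1)\times U(5))$ condition. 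The claimed equality then follows from the restriction relationship.

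The argument amounts to bookkeeping of which restriction of $\varphi_{{}_{E_6,\gamma}}$ one is using, so there is no substantial obstacle; the only mild subtlety is that $\mu_3$ was originally introduced through the seemingly unrelated embedding $\phi_{{}_{6,\sigma}}$, and one must rely on Lemma \ref{lemma 3.3.8} (1) to recognize that it also admits the above description via $\varphi_{{}_{E_6,\gamma}}$. Once this translation is in hand, both formulas reduce to elementary determinant and block-structure checks.
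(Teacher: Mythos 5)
Your argument is correct and matches the paper's approach: the paper's proof simply states that both formulas are immediate from Lemma \ref{lemma 3.3.8} (1), and your write-up supplies exactly the bookkeeping (the restriction relationships among $\varphi_{{}_{E_6,\gamma}}$, $\varphi_{{}_{E_6,\gamma_3}}$, $\varphi_{{}_{E_6,\nu_3}}$ and the determinant checks placing the diagonal matrices in $S(U(1)\times U(5))$) that the paper leaves implicit.
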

\begin{proof}
	 From Lemma \ref{lemma 3.3.8} (1), these results are trivial. 
\end{proof}

It goes with out saying that $\delta_Q=\varphi_{{}_{E_6, \nu_3}}(1,Q)(=\varphi_{{}_{E_6, \gamma_3}}(1,Q))$, where $\delta_Q$ is defined in the Case 7, and so from Lemma \ref{lemma 3.3.8} (1) the $C$-linear transformation $\mu'_3$ which is conjugate to $\mu_3$ under $\delta_Q \in (E_6)^{\nu_3}$ is also expressed by
\begin{align*}
\mu'_3=\varphi_{{}_{E_6,\nu_3}}(1, \diag(\nu^{-2},\nu^{2},\nu^{-1},\nu^{-1},\nu,\nu)).
\end{align*}

Then we have the following proposition.

\begin{proposition}\label{proposition 4.12.2}
	The group $(E_6)^{\nu_3} \cap (E_6)^{\mu_3}$ is isomorphic to the group $(E_6)^{\nu_3} \cap (E_6)^{\mu'_3}${\rm :} $(E_6)^{\nu_3} \cap (E_6)^{\mu_3} \cong (E_6)^{\nu_3} \cap (E_6)^{\mu'_3}$.
\end{proposition}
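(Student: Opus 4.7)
The plan is to mimic exactly the conjugation argument used in Propositions \ref{proposition 4.5.1}, \ref{proposition 4.7.1}, \ref{proposition 4.8.1}, and \ref{proposition 4.9.2}. Namely, I would define a mapping
\begin{align*}
g_{{}_{4.12.2}}: (E_6)^{\nu_3} \cap (E_6)^{\mu'_3} \to (E_6)^{\nu_3} \cap (E_6)^{\mu_3},\quad g_{{}_{4.12.2}}(\alpha)=\delta_Q\alpha{\delta_Q}^{-1},
\end{align*}
where $\delta_Q = \varphi_{{}_{E_6,\gamma_3}}(1,Q) \in E_6$ is the element introduced in the Case 7. Since conjugation by $\delta_Q$ is a group isomorphism of $E_6$ onto itself, it suffices to show that $g_{{}_{4.12.2}}$ is well-defined, i.e., that $\delta_Q$ intertwines $\mu'_3$ with $\mu_3$ and commutes with $\nu_3$.

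The first condition $\mu_3\delta_Q=\delta_Q\mu'_3$ was already established in the Case 7, so it carries over immediately. For the second, I would rewrite $\delta_Q$ using the identification coming from Theorem \ref{theorem 3.3.5}, i.e., $\delta_Q=\varphi_{{}_{E_6,\nu_3}}(1,Q)$, and use the expression $\nu_3=\varphi_{{}_{E_6,\nu_3}}(1,\diag(\nu^5,\nu^{-1},\nu^{-1},\nu^{-1},\nu^{-1},\nu^{-1}))$ from Lemma \ref{lemma 4.12.1}. Then the commutativity $\delta_Q\nu_3=\nu_3\delta_Q$ reduces to the matrix identity
\begin{align*}
Q\,\diag(\nu^5,\nu^{-1},\nu^{-1},\nu^{-1},\nu^{-1},\nu^{-1})=\diag(\nu^5,\nu^{-1},\nu^{-1},\nu^{-1},\nu^{-1},\nu^{-1})\,Q,
\end{align*}
which holds because the only off-diagonal nonzero entries of $Q$ lie in positions $(4,5)$ and $(5,4)$, and the corresponding diagonal entries $\nu^{-1},\nu^{-1}$ coincide.

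Once well-definedness is in hand, the inverse mapping $\alpha \mapsto {\delta_Q}^{-1}\alpha\delta_Q$ between $(E_6)^{\nu_3}\cap(E_6)^{\mu_3}$ and $(E_6)^{\nu_3}\cap(E_6)^{\mu'_3}$ is checked by exactly the same two observations, so $g_{{}_{4.12.2}}$ is an isomorphism of groups, completing the proof. There is no real obstacle here; the whole content is the single trivial matrix commutation above, which is why I expect the proof to read in essentially the same half-page form as Proposition \ref{proposition 4.7.1}.
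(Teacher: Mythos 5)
Your proposal is correct and follows exactly the paper's own argument: the paper likewise defines $g_{{}_{4122}}(\alpha)=\delta_Q\alpha{\delta_Q}^{-1}$, invokes $\mu_3\delta_Q=\delta_Q\mu'_3$ from Case 7, and verifies $\delta_Q\nu_3=\nu_3\delta_Q$ via $\nu_3=\varphi_{{}_{E_6,\nu_3}}(1,\diag(\nu^5,\nu^{-1},\nu^{-1},\nu^{-1},\nu^{-1},\nu^{-1}))$, reducing everything to the same trivial matrix commutation you identify. No discrepancy to report.
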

\begin{proof}
	We define a mapping $g_{{}_{4122}}: (E_6)^{\nu_3} \cap (E_6)^{\mu'_3} \to (E_6)^{\gamma_3} \cap (E_6)^{\mu_3}$ by
	\begin{align*}
	g_{{}_{4122}}(\alpha)=\delta_Q\alpha{\delta_Q}^{-1}.	
	\end{align*}
	Since it is easily to verify that $\delta_Q\nu_3=\nu_3\delta_Q$ using $\nu_3=\varphi_{{}_{E_6,\nu_3}}(1,\diag(\nu^5,\nu^{-1},\nu^{-1},\nu^{-1},\nu^{-1}, \allowbreak \nu^{-1}))$ (Lemma \ref{lemma 4.12.1}), we can prove this proposition as in the proof of Proposition \ref{proposition 4.7.1}.
\end{proof}
\vspace{1mm}

Now, we will determine the structure of the group $(E_6)^{\nu_3} \cap (E_6)^{\mu_3}$.

\begin{theorem}\label{theorem 4.12.3}
	The group $(E_6)^{\nu_3} \cap (E_6)^{\mu_3}$ is isomorphic the group $(Sp(1)\times U(1) \times U(1)\allowbreak \times U(1) \times SU(2)\times SU(2))/(\Z_2\times\Z_2\times\Z_4)${\rm :} $(E_6)^{\nu_3} \cap (E_6)^{\mu_3} \cong (Sp(1)\times U(1) \times U(1)\times U(1) \allowbreak \times SU(2)\times SU(2))/(\Z_2\times\Z_2\times\Z_4), \Z_2=\{(1,1,1,1,E,E), (1,1,-1,1,E,-E) \},
	\Z_2=\{(1,1,1,1,E,E), (1,1,-1,-1,-E,E) \},
	\Z_4=\{(1,1,1,E,E,E), (1,-1,1-,1,E,E), (-1,i,i,\allowbreak 1,-E,E), (-1,-i,-i,1,-E,E) \}$.
\end{theorem}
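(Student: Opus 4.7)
The plan is to follow the same template used in Case 9 (Theorem 4.9.3) and Case 7 (Theorem 4.7.3), exploiting Proposition 4.12.2 to replace $\mu_3$ by the conjugate element $\mu'_3=\varphi_{{}_{E_6,\nu_3}}(1,\diag(\nu^{-2},\nu^{2},\nu^{-1},\nu^{-1},\nu,\nu))$, since $\mu'_3$ is given in terms of $\varphi_{{}_{E_6,\nu_3}}$ and hence interacts cleanly with the parametrization of $(E_6)^{\nu_3}$ from Theorem 3.3.5.

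First I would fix $S(U(1)\times U(1)\times U(2)\times U(2))\subset S(U(1)\times U(5))$ as in the proof of Theorem 4.7.3, and define
\begin{align*}
\varphi_{{}_{E_6,\nu_3,\mu'_3}}: Sp(1)\times S(U(1)\times U(1)\times U(2)\times U(2))\to (E_6)^{\nu_3}\cap (E_6)^{\mu'_3}
\end{align*}
as the restriction of $\varphi_{{}_{E_6,\nu_3}}$. Well-definedness in $(E_6)^{\nu_3}$ is immediate, and well-definedness in $(E_6)^{\mu'_3}$ follows by the standard conjugation calculation
\begin{align*}
{\mu'_3}^{-1}\varphi_{{}_{E_6,\nu_3}}(q,P)\mu'_3=\varphi_{{}_{E_6,\nu_3}}(q,\diag(\nu^2,\nu^{-2},\nu,\nu,\nu^{-1},\nu^{-1})\,P\,\diag(\nu^{-2},\nu^{2},\nu^{-1},\nu^{-1},\nu,\nu)),
\end{align*}
where for $P=\diag(a,b,P_1,P_2)$ the conjugated matrix equals $\diag(a,b,P_1,P_2)=P$. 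The homomorphism property is inherited from $\varphi_{{}_{E_6,\nu_3}}$.

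Next I would prove surjectivity. Given $\alpha\in (E_6)^{\nu_3}\cap (E_6)^{\mu'_3}\subset (E_6)^{\nu_3}$, Theorem 3.3.5 provides $q\in Sp(1)$ and $A\in S(U(1)\times U(5))$ with $\alpha=\varphi_{{}_{E_6,\nu_3}}(q,A)$. Imposing $\alpha\in (E_6)^{\mu'_3}$ and using the conjugation formula above together with $\Ker\varphi_{{}_{E_6,\nu_3}}=\{(1,E),(-1,-E)\}$ yields the two cases: either the identity relation $\diag(\nu^2,\nu^{-2},\nu,\nu,\nu^{-1},\nu^{-1})A\diag(\nu^{-2},\nu^{2},\nu^{-1},\nu^{-1},\nu,\nu)=A$, or the same with a minus sign and $q=-q$. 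The negative case forces $q=0$, so it is ruled out; the positive case forces $A$ into block-diagonal form $\diag(a,b,A_1,A_2)$ with $a,b\in U(1)$ and $A_1,A_2\in U(2)$, hence $A\in S(U(1)\times U(1)\times U(2)\times U(2))$. The kernel of $\varphi_{{}_{E_6,\nu_3,\mu'_3}}$ is $\{(1,E),(-1,-E)\}\cong\Z_2$, so
\begin{align*}
(E_6)^{\nu_3}\cap (E_6)^{\mu'_3}\cong (Sp(1)\times S(U(1)\times U(1)\times U(2)\times U(2)))/\Z_2.
\end{align*}

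Finally, by Proposition 4.12.2 this isomorphism transfers to $(E_6)^{\nu_3}\cap(E_6)^{\mu_3}$. Then I would invoke Lemma 4.7.2, $S(U(1)\times U(1)\times U(2)\times U(2))\cong (U(1)\times U(1)\times U(1)\times SU(2)\times SU(2))/(\Z_2\times\Z_2\times\Z_2)$, and pull back the generator $(-1,-E)$ of the outer $\Z_2$ through the mapping $f_{{}_{472}}$ exactly as done in Theorem 4.7.3; this step merges the outer $\Z_2$ with one of the three inner $\Z_2$'s into a $\Z_4$ (generated by $(-1,(i,i,1,-E,E))$ and its powers), producing the stated quotient group with the kernel subgroups listed. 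The main technical point, and the only place computation is really needed, is this bookkeeping of the kernel generators; but since it is identical to the one already carried out for Case 7, no new obstacle arises.
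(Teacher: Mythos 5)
Your proposal is correct and follows essentially the same route as the paper's own proof: restrict $\varphi_{{}_{E_6,\nu_3}}$ to $Sp(1)\times S(U(1)\times U(1)\times U(2)\times U(2))$, verify well-definedness and surjectivity by the conjugation computation with $\mu'_3$, identify the kernel $\{(1,E),(-1,-E)\}$, transfer back via Proposition 4.12.2, and finish with Lemma 4.7.2 and the Case 7 kernel bookkeeping. No substantive differences.
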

\begin{proof}
	Let $S(U(1)\times U(1)\times U(2)\times U(2)) \subset S(U(1)\times U(5))$. 
	We define a mapping $\varphi_{{}_{E_6,\nu_3,\mu'_3}}: Sp(1)\times S(U(1)\times U(1)\times U(2)\times U(2)) \to (E_6)^{\nu_3} \cap (E_6)^{\mu'_3}$ by
	\begin{align*}
	\varphi_{{}_{E_6,\nu_3,\mu'_3}}(q, P)(M+\a)&={k_J}^{-1}(P(k_J M){}^t\!P)+q\a k^{-1}(\tau \,{}^t\!P), 
	\\
	&\hspace*{40mm}M+\a \in \mathfrak{J}(3, \H)^C \oplus (\H^3)^C=\mathfrak{J}^C.
	\end{align*}
	Needless to say, this mapping is the restriction of the mapping $\varphi_{{}_{E_6,\nu_3}}$, that is, $\varphi_{{}_{E_6,\nu_3,\mu'_3}}(q, P)=\varphi_{{}_{E_6,\nu_3}}(q,P)$ (Theorem \ref{theorem 3.3.5}). 
	
	As usual, we will prove that $\varphi_{{}_{E_6,\nu_3,\mu'_3}}$ is well-defined. It is clear that $\varphi_{{}_{E_6,\nu_3,\mu'_3}}(q,P) \in (E_6)^{\nu_3}$, and it follows from $\mu'_3=\varphi_{{}_{E_6,\gamma_3}}(1,\diag({\nu}^{-2},\nu^2,{\nu}^{-1},{\nu}^{-1},\nu,\nu))$ that
	\begin{align*}
	&\quad {\mu'_3}^{-1}\varphi_{{}_{E_6,\nu_3,\mu'_3}}(q,P)\mu'_3
	\\
	&=\varphi_{{}_{E_6,\nu_3}}(1,\diag({\nu}^{-2},\nu^2,{\nu}^{-1},{\nu}^{-1},\nu,\nu))^{-1}\varphi_{{}_{E_6,\nu_3,\mu'_3}}(q,P)\varphi_{{}_{E_6,\nu_3}}(1,\diag({\nu}^{-2},\nu^2,{\nu}^{-1},{\nu}^{-1},\nu,\nu))
	\\
	&=\varphi_{{}_{E_6,\nu_3}}(1,\diag({\nu}^2,\nu^{-2},\nu,\nu,{\nu}^{-1},{\nu}^{-1}))\varphi_{{}_{E_6,\nu_3}}(q,P)\varphi_{{}_{E_6,\nu_3}}(1,\diag({\nu}^{-2},\nu^2,{\nu}^{-1},{\nu}^{-1},\nu,\nu))
	\\
	&=\varphi_{{}_{E_6,\nu_3}}(q,\diag({\nu}^2,\nu^{-2},\nu,\nu,{\nu}^{-1},{\nu}^{-1})P\diag({\nu}^{-2},\nu^2,{\nu}^{-1},{\nu}^{-1},\nu,\nu)),P=\diag(a,b,P_1,P_2)
	\\
	&=\varphi_{{}_{E_6,\nu_3}}(q,\diag(\nu^2 a \nu^{-2}, {\nu}^{-2} b \nu^2, (\nu E) P_1(\nu^{-1}E), ({\nu}^{-1}E) P_2 (\nu E) ))
	\\
	&=\varphi_{{}_{E_6,\nu_3}}(q,P)
	\\
	&=\varphi_{{}_{E_6,\nu_3,\mu'_3}}(q,P).
	\end{align*}
	Hence we have that $\varphi_{{}_{E_6,\nu_3,\mu'_3}}(q,P) \in (E_6)^{\mu'_3}$. Thus $\varphi_{{}_{E_6,\nu_3,\mu'_3}}$ is well-defined. Subsequently, since $\varphi_{{}_{E_6,\nu_3,\mu'_3}}$ is the restriction of the mapping $\varphi_{{}_{E_6,\nu_3}}$, we easily see that $\varphi_{{}_{E_6,\nu_3,\mu'_3}}$ is a homomorphism.
	
	Next, we will prove that $\varphi_{{}_{E_6,\nu_3,\mu'_3}}$ is surjective. Let $\alpha \in (E_6)^{\nu_3} \cap (E_6)^{\mu'_3} \subset (E_6)^{\nu_3}$. There exist $q \in Sp(1)$ and $A \in S(U(1)\times U(5))$ such that $\alpha=\varphi_{{}_{E_6,\nu_3}}(q,A)$ (Theorem \ref{theorem 3.3.5}). Moreover, from the condition $\alpha \in (E_6)^{\mu'_3}$, that is, ${\mu'_3}^{-1}\varphi_{{}_{E_6,\nu_3}}(q,A)\mu'_3=\varphi_{{}_{E_6,\nu_3}}(q,A)$, and using ${\mu'_3}^{-1}\varphi_{{}_{E_6,\nu_3}}(q,A)\mu'_3=\varphi_{{}_{E_6,\nu_3}}(q,\diag({\nu}^2,\nu^{-2},\nu,\nu,{\nu}^{-1},{\nu}^{-1}) A \,\diag({\nu}^{-2},\nu^2,{\nu}^{-1},{\nu}^{-1},\nu,\nu))$, we have that
	\begin{align*}
	&\left\{
	\begin{array}{l}
	q=q \\
	\diag({\nu}^2,\nu^{-2},\nu,\nu,{\nu}^{-1},{\nu}^{-1}) A\, \diag({\nu}^{-2},\nu^2,{\nu}^{-1},{\nu}^{-1},\nu,\nu)=A   
	\end{array}\right. 
	\\
	&\hspace*{50mm}{\text{or}}
	\\
	&\left\{
	\begin{array}{l}
	q=-q \\
	\diag({\nu}^2,\nu^{-2},\nu,\nu,{\nu}^{-1},{\nu}^{-1}) A \,\diag({\nu}^{-2},\nu^2,{\nu}^{-1},{\nu}^{-1},\nu,\nu)=-A.   
	\end{array}\right.     
	\end{align*}
	The latter case is impossible because of $q\not=0$. As for the former case, from the second condition, by doing straightforward computation $A$ takes the following form $\diag(a, b, C, D), a,b \in U(1),C, D \in U(2),  (ab)(\det\,C)(\det\,D)=1$, that is, $A \in S(U(1)\times U(1)\times  U(2)\times U(2))$. Needless to say, $q \in Sp(1)$.
	Hence there exist $q \in Sp(1)$ and $P \in S(U(1)\times U(1)\times U(2)\times U(2))$ such that $\alpha=\varphi_{{}_{E_6,\nu_3}}(q,P)$. Namely, there exist $q \in Sp(1)$ and $P \in S(U(1)\times U(1)\times U(2)\times U(2))$ such that $\alpha=\varphi_{{}_{E_6,\nu_3,\mu'_3}}(q,P)$. The proof of surjective is completed.
	
	Finally, we will determine $\Ker\,\varphi_{{}_{E_6,\nu_3,\mu'_3}}$. However, from $\Ker\,\varphi_{{}_{E_6,\nu_3}}=\{(1,E),(-1,-E) \}$, we easily obtain that $\Ker\,\varphi_{{}_{E_6,\nu_3,\mu'_3}}=\{(1,E),(-1,-E) \} \cong \Z_2$. Thus we have the isomorphism $(E_6)^{\nu_3} \cap (E_6)^{\mu'_3} \cong (Sp(1)\times S(U(1)\times U(1)\times U(2)\times U(2)))/\Z_2$. In addition, by Proposition \ref{proposition 4.12.2} we have the isomorphism $(E_6)^{\nu_3} \cap (E_6)^{\mu_3} \cong (Sp(1)\times S(U(1)\times U(1)\times U(2)\times U(2)))/\Z_2$. 
	
	Therefore, as in the proof of Theorem \ref{theorem 4.7.3}, we have the required isomorphism 
	\begin{align*}
	(E_6)^{\nu_3} \cap (E_6)^{\mu_3} \!\cong (Sp(1)\times U(1) \times U(1)\times U(1) \allowbreak \times SU(2)\times SU(2))/(\Z_2\times\Z_2\times\Z_4), 
	\end{align*}
	where 
	\begin{align*}
	&\Z_2=\{(1,1,1,1,E,E), (1,1,-1,1,E,-E) \},
	\\
	&\Z_2=\{(1,1,1,1,E,E), (1,1,-1,-1,-E,E) \},
	\\
	&\Z_4=\{(1,1,1,E,E,E), (1,-1,1-,1,E,E), (-1,i,i,1,-E,E), (-1,-i,-i,1,-E,E) \}.
	\end{align*}
\end{proof}

Thus, since the group $(E_6)^{\nu_3} \cap (E_6)^{\mu_3}$ is connected from Theorem \ref{theorem 4.12.3}, we have an exceptional $\varmathbb{Z}_3 \times \varmathbb{Z}_3$-symmetric space
\begin{align*}
E_6/((Sp(1)\times U(1) \times U(1)\times U(1) \allowbreak \times SU(2)\times SU(2))/(\Z_2\times\Z_2\times\Z_4)).
\end{align*}

\subsection{Case 13: $\{1, \tilde{\nu}_3,  \tilde{\nu}_3{}^{-1}\} \times \{1, \tilde{w}_3,  \tilde{w}_3{}^{-1}\}$-symmetric space}

Let the $C$-linear transformations $\nu_3, w_3$ of $\mathfrak{J}^C$  defined in Subsection \ref{subsection 3.3}. 

\noindent From Lemma \ref{lemma 3.3.8} (1), since we can easily confirm that $\nu_3$ and $w_3$ are commutative, $\tilde{\nu}_3$ and $\tilde{w}_3$ are commutative in $\Aut(E_6)$: $\tilde{\nu}_3\tilde{w}_3=\tilde{w}_3\tilde{\nu}_3$.

Before determining the structure of the group $(E_6)^{\nu_3} \cap (E_6)^{w_3}$, we confirm that useful lemma holds, and we prove proposition and lemma needed in the proof of theorem below.

\begin{lemma}\label{lemma 4.13.1}
	The mapping $\varphi_{{}_{E_6,\nu_3}}:Sp(1) \times S(U(1)\times U(5)) \to (E_6)^{\nu_3}$ of \,Theorem {\rm \ref{theorem 3.3.5}} satisfies the relational formula 
	\begin{align*}
	w_3=\varphi_{{}_{E_6,\nu_3}}(1, \diag(\tau\omega,\omega,\tau\omega,\omega,\tau\omega,\omega)),
	\end{align*}
	where $\nu=\exp(2\pi i/9)\in U(1)$.
\end{lemma}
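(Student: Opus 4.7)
The plan is to reduce this identity to Lemma \ref{lemma 3.3.8} (1) by exploiting the fact that both $\varphi_{{}_{E_6,\nu_3}}$ and $\varphi_{{}_{E_6,\gamma_3}}$ are defined as restrictions of one and the same ambient mapping $\varphi_{{}_{E_6,\gamma}}:Sp(1) \times SU(6) \to (E_6)^\gamma$ from Proposition \ref{proposition 3.3.1}; so once we know the element representing $w_3$ lies in the smaller subgroup $S(U(1) \times U(5))$, the formula transports for free.

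Set $P_0 := \diag(\tau\omega,\omega,\tau\omega,\omega,\tau\omega,\omega) \in M(6,C)$. The first step is to verify $P_0 \in S(U(1) \times U(5))$. Each diagonal entry lies in $U(1) \subset C$ since $\omega \cdot \tau\omega = 1$, so $P_0 \in U(6)$. Block-decomposing $P_0$ in the $1+5$ pattern defining $S(U(1) \times U(5))$ with scalar block $s = \tau\omega$ and $5 \times 5$ block $S = \diag(\omega,\tau\omega,\omega,\tau\omega,\omega)$, a direct computation gives $\det S = \omega^3 (\tau\omega)^2 = \omega$, hence $s\det S = \tau\omega \cdot \omega = 1$, as required.

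The second step is to invoke Lemma \ref{lemma 3.3.8} (1), which states $w_3 = \varphi_{{}_{E_6,\gamma_3}}(1,P_0)$. Since $\varphi_{{}_{E_6,\gamma_3}}$ is the restriction of $\varphi_{{}_{E_6,\gamma}}$ to $U(1) \times SU(6)$ (Theorem \ref{theorem 3.3.2}) and $\varphi_{{}_{E_6,\nu_3}}$ is likewise the restriction of $\varphi_{{}_{E_6,\gamma}}$ to $Sp(1) \times S(U(1) \times U(5))$ (Theorem \ref{theorem 3.3.5}), both evaluations $\varphi_{{}_{E_6,\gamma_3}}(1,P_0)$ and $\varphi_{{}_{E_6,\nu_3}}(1,P_0)$ agree with the common value $\varphi_{{}_{E_6,\gamma}}(1,P_0)$. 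Chaining the identities yields
\begin{align*}
w_3 \;=\; \varphi_{{}_{E_6,\gamma_3}}(1,P_0) \;=\; \varphi_{{}_{E_6,\gamma}}(1,P_0) \;=\; \varphi_{{}_{E_6,\nu_3}}(1,P_0),
\end{align*}
which is the desired formula.

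No substantive obstacle arises: the content of the lemma is a bookkeeping statement confirming that the $SU(6)$-element representing $w_3$ under $\varphi_{{}_{E_6,\gamma}}$ happens to sit inside the subgroup $S(U(1) \times U(5))$ used to parametrise $(E_6)^{\nu_3}$, and the only arithmetic needed is the determinant check above. Alternatively, one could bypass the reduction and verify the formula by direct computation on $\mathfrak{J}(3,\H)^C \oplus (\H^3)^C$ in the style of the paper's other ``straightforward computation'' proofs, but the restriction argument is shorter.
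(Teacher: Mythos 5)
Your proposal is correct and follows essentially the same route as the paper, whose proof simply cites Lemma \ref{lemma 3.3.8} (1) and declares the formula trivial; you have merely made explicit why it is trivial, namely that $\varphi_{{}_{E_6,\gamma_3}}$ and $\varphi_{{}_{E_6,\nu_3}}$ are both restrictions of the common map $\varphi_{{}_{E_6,\gamma}}$ and that $(1,\diag(\tau\omega,\omega,\tau\omega,\omega,\tau\omega,\omega))$ lies in both domains. The determinant check $\tau\omega\cdot\det\bigl(\diag(\omega,\tau\omega,\omega,\tau\omega,\omega)\bigr)=\tau\omega\cdot\omega=1$ is the right verification and is carried out correctly.
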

\begin{proof}
	From Lemma \ref{lemma 3.3.8} (1), these results are trivial. 
\end{proof}

The $C$-linear transformation $w'_3$ defined in the Case 8 is expressed by
\begin{align*}
w'_3=\varphi_{{}_{E_6,\nu_3}}(1, \diag(\tau\omega,\tau\omega,\tau\omega,\omega,\omega,\omega)),
\end{align*}
and note that $\delta_N=\varphi_{{}_{E_6, \nu_3}}(1,N)(=\varphi_{{}_{E_6, \gamma_3}}(1,N))$, where $\delta_N$ is also defined in the Case 8, needless to say, $w_3$ is conjugate to $w'_3$ under $\delta_N=\varphi_{{}_{E_6, \nu_3}}(1,N)$.
\vspace{1mm}

Then we have the following proposition.

\begin{proposition}\label{proposition 4.13.2}
	The group $(E_6)^{\nu_3} \cap (E_6)^{w_3}$ is isomorphic to the group $(E_6)^{\gamma_3} \cap (E_6)^{w'_3}${\rm :} $(E_6)^{\nu_3} \cap (E_6)^{w_3} \cong (E_6)^{\nu_3} \cap (E_6)^{w'_3}$.
\end{proposition}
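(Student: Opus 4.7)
The plan is to mimic the templates used in Propositions \ref{proposition 4.5.1}, \ref{proposition 4.7.1}, \ref{proposition 4.8.1}, \ref{proposition 4.9.2} and \ref{proposition 4.12.2}. Namely, the idea is to exhibit the required isomorphism as an inner conjugation by $\delta_N \in (E_6)^{\gamma_3} \subset E_6$, where $\delta_N = \varphi_{{}_{E_6,\gamma_3}}(1,N) = \varphi_{{}_{E_6,\nu_3}}(1,N)$, and the two conditions we must verify are that this conjugation stabilizes $(E_6)^{\nu_3}$ and carries $(E_6)^{w'_3}$ onto $(E_6)^{w_3}$.

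First I would define the mapping
\begin{align*}
g_{{}_{4132}} : (E_6)^{\nu_3} \cap (E_6)^{w'_3} \longrightarrow (E_6)^{\nu_3} \cap (E_6)^{w_3}, \quad g_{{}_{4132}}(\alpha) = \delta_N \alpha {\delta_N}^{-1}.
\end{align*}
To see that $g_{{}_{4132}}(\alpha) \in (E_6)^{\nu_3}$ for $\alpha \in (E_6)^{\nu_3}$, by Lemma \ref{lemma 4.13.1} (and Lemma \ref{lemma 3.3.8}\,(1)) both $\nu_3$ and $\delta_N$ lie in the image of $\varphi_{{}_{E_6,\gamma_3}}(1,\cdot)$, so their commutativity reduces to checking that the diagonal matrix $D:=\diag(\nu^5,\nu^{-1},\nu^{-1},\nu^{-1},\nu^{-1},\nu^{-1})$ commutes with $N \in SO(6)$. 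Since $N$ only mixes the second and fifth coordinates (with a sign) and $D$ has equal entries $\nu^{-1}$ in those positions, the equality $DN = ND$ is immediate, giving $\delta_N\nu_3 = \nu_3\delta_N$ and hence $g_{{}_{4132}}(\alpha) \in (E_6)^{\nu_3}$.

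Next, the conjugation relation $w_3\delta_N = \delta_N w'_3$, which was already established in the Case 8 preamble, yields at once that if ${w'_3}^{-1}\alpha w'_3 = \alpha$ then $w_3^{-1}(\delta_N\alpha{\delta_N}^{-1})w_3 = \delta_N\alpha{\delta_N}^{-1}$, so $g_{{}_{4132}}(\alpha) \in (E_6)^{w_3}$. Thus $g_{{}_{4132}}$ is well-defined; it is clearly a group homomorphism, and the inverse conjugation $\alpha \mapsto {\delta_N}^{-1}\alpha\delta_N$ provides a two-sided inverse, so $g_{{}_{4132}}$ is a bijection.

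The only step requiring care is the commutativity $\delta_N\nu_3 = \nu_3\delta_N$, and I flagged this precisely because it is the one place where the Case 8 argument does not transfer verbatim: there, one used $\gamma_3 = \varphi_{{}_{E_6,\gamma_3}}(\omega,E)$ which is central in the image of $\varphi_{{}_{E_6,\gamma_3}}(1,\cdot)$ by construction, whereas here $\nu_3$ is nontrivial on the $SU(6)$-factor and one genuinely has to exploit the equal multiplicities of $\nu^{-1}$ in the second and fifth slots of $D$. Once this is in place, the remainder of the proof reduces to the trivial verification sketched above.
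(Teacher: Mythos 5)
Your proposal is correct and follows essentially the same route as the paper: the paper also defines $g_{{}_{4132}}(\alpha)=\delta_N\alpha{\delta_N}^{-1}$, checks $\delta_N\nu_3=\nu_3\delta_N$ via the expression $\nu_3=\varphi_{{}_{E_6,\nu_3}}(1,\diag(\nu^5,\nu^{-1},\nu^{-1},\nu^{-1},\nu^{-1},\nu^{-1}))$, and invokes $w_3\delta_N=\delta_N w'_3$ to conclude as in Proposition \ref{proposition 4.8.1}. Your explicit observation that $N$ only mixes the second and fifth slots, where the diagonal matrix has equal entries $\nu^{-1}$, is exactly the computation the paper leaves as "easy to verify."
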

\begin{proof}
	We define a mapping $g_{{}_{4132}}: (E_6)^{\nu_3} \cap (E_6)^{w'_3} \to (E_6)^{\nu_3} \cap (E_6)^{w_3}$ by
	\begin{align*}
	g_{{}_{4132}}(\alpha)=\delta_N\alpha{\delta_N}^{-1},	
	\end{align*}
	where $\delta_N$ is same one above. Since it is easy to verify that $\delta_N \nu_3=\nu_3\delta_N$ using $\nu_3=\varphi_{{}_{E_6,\nu_3}}(1,\diag(\nu^5,\nu^{-1},\nu^{-1},\nu^{-1},\nu^{-1},\nu^{-1}))$ (Lemma \ref{lemma 4.9.1}) and $w_3\delta_N=\delta_N w'_3$ (Lemma \ref{lemma 4.13.1}), we can prove this proposition as in the proof of Proposition \ref{proposition 4.8.1}.
\end{proof}

Subsequently, we will prove the following lemma.

\begin{lemma}\label{lemma 4.13.3}
	The group $S(U(1)\times U(2)\times U(3))$ is isomorphic to the group $(U(1)\times U(1)\times SU(2)\times SU(3))/(\Z_2\times \Z_3)${\rm :} $S(U(1)\times U(2)\times U(3)) \cong (U(1)\times U(1)\times SU(2)\times SU(3))/(\Z_2\times \Z_3), \Z_2\!=\{(1,1,E,E),(-1,1,-E,E) \},\Z_3\!=\!\{(1,1,E,E),(1,\omega,E,\omega E),(1,\omega^{-1},E,\omega^{-1}E) \}$.	
\end{lemma}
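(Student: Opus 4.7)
My plan is to imitate the pattern established in Lemmas \ref{lemma 4.3.1}, \ref{lemma 4.6.1}, \ref{lemma 4.7.2}, and \ref{lemma 4.8.2}: define an explicit block-diagonal homomorphism, verify well-definedness, prove surjectivity by choosing appropriate roots of determinants, and enumerate the kernel to exhibit it as $\Z_2 \times \Z_3$. Concretely, I would define
\begin{align*}
	f_{{}_{4133}}: U(1)\times U(1)\times SU(2)\times SU(3) \to S(U(1)\times U(2)\times U(3))
\end{align*}
by
\begin{align*}
	f_{{}_{4133}}(a,b,A,B)=\diag(a^{-2}b^3,\, aA,\, bB),
\end{align*}
after adjusting the sign on the third-block scalar to $b^{-1}$ so that the kernel matches the prescribed $\Z_3 = \{(1,1,E,E),(1,\bm{\omega},E,\bm{\omega}E),(1,\bm{\omega}^{-1},E,\bm{\omega}^{-1}E)\}$; i.e.\ the working formula will be $f_{{}_{4133}}(a,b,A,B)=\diag(a^{-2}b^{3},\,aA,\,b^{-1}B)$. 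Well-definedness is immediate because the determinant of the image equals $a^{-2}b^{3}\cdot a^{2}\cdot b^{-3}=1$, and the block-diagonal scalar structure makes $f_{{}_{4133}}$ a group homomorphism automatically.

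For surjectivity, given $P \in S(U(1)\times U(2)\times U(3))$ of the form $P=\diag(s,P_1,P_2)$ with $s\in U(1)$, $P_j\in U(j+1)$, and $s(\det P_1)(\det P_2)=1$, I will choose $a\in U(1)$ with $a^2=\det P_1$ (a square root exists in $U(1)$) and $b\in U(1)$ with $b^{-3}=\det P_2$ (a cube root exists in $U(1)$). Then $A:=a^{-1}P_1\in SU(2)$ and $B:=bP_2\in SU(3)$, and using the determinant relation $s=(\det P_1)^{-1}(\det P_2)^{-1}=a^{-2}b^{3}$ we obtain $f_{{}_{4133}}(a,b,A,B)=P$.

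To finish, I will compute $\Ker f_{{}_{4133}}$: the equations $aA=E$, $b^{-1}B=E$, $a^{-2}b^{3}=1$ force $a^2=1$, $b^3=1$, $A=a^{-1}E$, $B=bE$ (the last equation is then automatic), giving the six elements
\begin{align*}
	\{&(1,1,E,E),(1,\bm{\omega},E,\bm{\omega}E),(1,\bm{\omega}^{-1},E,\bm{\omega}^{-1}E),\\
	  &(-1,1,-E,E),(-1,\bm{\omega},-E,\bm{\omega}E),(-1,\bm{\omega}^{-1},-E,\bm{\omega}^{-1}E)\},
\end{align*}
which factor as $\Z_2\times\Z_3$ with $\Z_2=\{(1,1,E,E),(-1,1,-E,E)\}$ and $\Z_3=\{(1,1,E,E),(1,\bm{\omega},E,\bm{\omega}E),(1,\bm{\omega}^{-1},E,\bm{\omega}^{-1}E)\}$, having trivial intersection. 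The only delicate point, which I expect to be the main obstacle, is choosing the correct sign conventions on the exponents of $a$ and $b$ in each block so that the enumerated kernel matches the $\Z_3$ generator $(1,\bm{\omega},E,\bm{\omega}E)$ stated in the lemma rather than its inverse; a wrong convention in the third block would pair $b$ with $b^{-1}E$ instead of $bE$.
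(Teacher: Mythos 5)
Your proof is correct and follows essentially the same route as the paper: an explicit block-diagonal homomorphism $f_{{}_{4133}}$, surjectivity via square and cube roots of the block determinants, and enumeration of the six-element kernel as $\Z_2\times\Z_3$. The only difference is your sign convention $\diag(a^{-2}b^{3},\,aA,\,b^{-1}B)$ versus the paper's $\diag(a^{-2}b^{-3},\,aA,\,bB)$; your choice makes the kernel match the $\Z_3$ written in the lemma statement, whereas the paper's own map yields $\{(1,1,E,E),(1,\omega,E,\omega^{-1}E),(1,\omega^{-1},E,\omega E)\}$, so the paper's statement and proof are (harmlessly) inconsistent on this point and you resolved the discrepancy in favour of the statement.
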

\begin{proof}
	We define a mapping $f_{{}_{4133}}:U(1) \times U(1)\times SU(2)\times SU(3) \to S(U(1)\times U(2)\times U(3))$ by
	\begin{align*}
	f_{{}_{4133}}(a,b,A,B)=\left( 
	\begin{array}{ccc}
	a^{-2}b^{-3} & & {\raisebox{-7pt}[0pt]{\large $0$}}
	\\[2mm]
	& a\mbox{\large {$A$}} & 
	\\[2mm]
	{\raisebox{1pt}[0pt]{\large $0$}}&& b\mbox{\large {$B$}}
	\end{array}\right) \in SU(6).
	\end{align*}
	Then it is clear that $f_{{}_{4133}}$ is well-defined and a homomorphism. 
	
	We will prove that $f_{{}_{4133}}$ is surjective. Let $P \in S(U(1)\times U(2)\times U(3))$. Then $P$ takes the form of $\diag(s,P_1,P_2),s \in U(1),P_1 \in U(2), P_2 \in U(3),s(\det\,P_1)(\det\,P_2)=1$. Here, since $P_1 \in U(2), P_2 \in U(3)$, we see that $\det\,P_1, \det\,P_2 \in U(1)$. We choose $a,b \in U(1)$ such that $a^2=\det\,P_1, b^3=\det\,P_2$, respectively, and set $A=(1/a)P_1, B=(1/b)P_2$. Then we have that $ A \in SU(2), B \in SU(3)$. With above, the proof of surjective is completed.
	
	Finally, we will determine $\Ker\,f_{{}_{4133}}$. It follows from the kernel of definition that
	\begin{align*}
	\Ker\,f_{{}_{4133}}&=\{(a,b,A,B)\in U(1)\times U(1)\times SU(2) \times SU(3) \,|\,f_{{}_{4133}}(a,b,A,B)=E \}
	\\
	&=\{(a,b,A,B)\in U(1)\times U(1)\times SU(2) \times SU(3)\,|\,a^2b^3=1,aA=bB=E \}
	\\
	&=\{(a,b,a^{-1}E,b^{-1}E)\in U(1)\times U(1)\times SU(2) \times SU(3) \,|\,a^2=b^3=1 \}
	\\
	&=\{(1,1,E,E), (1,\omega,E,{\omega}^{-1}E),(1,{\omega}^{-1},E,\omega E), 
	\\
	&\hspace*{20mm}(-1,1,-E,E), (-1,\omega,-E,{\omega}^{-1}E),(-1,{\omega}^{-1},E,\omega E)\}
	\\
	&=\{(1,1,E,E), (-1,1,-E,E) \} \times \{(1,1,E,E), (1,\omega,E,{\omega}^{-1}E),(1,{\omega}^{-1},E,\omega E) \}
	\\
	& \cong \Z_2 \times \Z_3.
	\end{align*}
	
	Therefore we have the required isomorphism 
	\begin{align*}
	S(U(1)\times U(2)\times U(3)) \cong (U(1) \times U(1)\times SU(2)\times SU(3))/(\Z_2\times\Z_3).
	\end{align*}
\end{proof} 

Now, we will determine the structure of the group $(E_6)^{\nu_3} \cap (E_6)^{w_3}$.

\begin{theorem}\label{theorem 4.13.4}
	The group $(E_6)^{\nu_3} \cap (E_6)^{w_3}$ is isomorphic the group $(Sp(1)\times U(1) \times SU(2)\times SU(3))/(\Z_2 \times \Z_3)${\rm :} $(E_6)^{\nu_3} \cap (E_6)^{w_3} \cong ((Sp(1)\times U(1) \times SU(2)\times SU(3)))/(\Z_2 \times \Z_2 \times \Z_3), \Z_2\{(1,1,1,E,E), (1,-1,1,-E,E)\},\Z_2=\{(1,1,1,E,E), (-1,-1,-1,E,E)\}
	\Z_3=\{(1,\allowbreak 1,1,E,E), (1,1,\omega,E,{\omega}^{-1}E),(1,1,{\omega}^{-1},E,\omega E)\}$.
\end{theorem}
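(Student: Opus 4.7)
The plan is to follow the template already established in Cases 5, 7 and 8, transferring the problem across the conjugacy $w_3\sim w'_3$ (Proposition \ref{proposition 4.13.2}) so that we can work with the more tractable generator $w'_3=\varphi_{{}_{E_6,\nu_3}}(1,\diag(\tau\omega,\tau\omega,\tau\omega,\omega,\omega,\omega))$ (Lemma \ref{lemma 4.13.1}) acting on the $\varphi_{{}_{E_6,\nu_3}}$-parameters. First I would define
\begin{align*}
\varphi_{{}_{E_6,\nu_3,w'_3}}: Sp(1) \times S(U(1)\times U(2)\times U(3)) \to (E_6)^{\nu_3}\cap (E_6)^{w'_3}
\end{align*}
as the restriction of $\varphi_{{}_{E_6,\nu_3}}$ (where $S(U(1)\times U(2)\times U(3))$ is embedded in $S(U(1)\times U(5))$ by block-diagonal extension), and verify well-definedness by a direct computation analogous to those in Theorems \ref{theorem 4.8.3} and \ref{theorem 4.12.3}: conjugation by $w'_3$ multiplies each $3\times 3$ off-diagonal block of the $SU(6)$-parameter by $\omega^{\pm 2}$, which fixes exactly the block-diagonal elements, and these are preserved by the $Sp(1)\times S(U(1)\times U(2)\times U(3))$ image.

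The surjectivity step is where the real computation happens. Starting from an $\alpha\in(E_6)^{\nu_3}\cap(E_6)^{w'_3}$, Theorem \ref{theorem 3.3.5} writes $\alpha=\varphi_{{}_{E_6,\nu_3}}(q,A)$ with $q\in Sp(1)$ and $A\in S(U(1)\times U(5))$. Imposing the condition ${w'_3}^{-1}\alpha w'_3=\alpha$ and using the kernel $\{(1,E),(-1,-E)\}$ of $\varphi_{{}_{E_6,\nu_3}}$ splits into the usual two subcases; the sign-reversing case is killed by $q\neq 0$, and in the other case the matrix identity
\begin{align*}
\diag(\omega E_3,\tau\omega E_3)\,A\,\diag(\tau\omega E_3,\omega E_3)=A
\end{align*}
forces the two $2\times 3$ and $3\times 2$ off-diagonal blocks of the lower $5\times 5$ part of $A$ to vanish (since $\omega^2\neq 1$), so $A\in S(U(1)\times U(2)\times U(3))$. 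Reading off $\Ker\,\varphi_{{}_{E_6,\nu_3,w'_3}}=\{(1,E),(-1,-E)\}\cong \Z_2$ from the inherited kernel of $\varphi_{{}_{E_6,\nu_3}}$ then yields
\begin{align*}
(E_6)^{\nu_3}\cap(E_6)^{w'_3}\cong (Sp(1)\times S(U(1)\times U(2)\times U(3)))/\Z_2.
\end{align*}

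To finish, I would apply Proposition \ref{proposition 4.13.2} to replace $w'_3$ by $w_3$ and then substitute the decomposition of $S(U(1)\times U(2)\times U(3))$ supplied by Lemma \ref{lemma 4.13.3}, obtaining the five-factor cover $Sp(1)\times U(1)\times U(1)\times SU(2)\times SU(3)$. The final task is to merge the $\Z_2$ from the present kernel with the $\Z_2\times\Z_3$ from Lemma \ref{lemma 4.13.3} into a single $\Z_2\times\Z_2\times\Z_3$; this is where I expect the main bookkeeping obstacle to be, since I must lift the generator $(-1,-E)$ along the composed homomorphism $h:Sp(1)\times U(1)\times U(1)\times SU(2)\times SU(3)\to Sp(1)\times S(U(1)\times U(2)\times U(3))$ given by $h(q,a,b,A,B)=(q,f_{{}_{4133}}(a,b,A,B))$, check that the $16$ preimages fall into four cosets of the independent $\Z_2\times\Z_3$ subgroup, and read off the three generators listed in the statement. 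Once this enumeration is carried out, connectedness of $(E_6)^{\nu_3}\cap(E_6)^{w_3}$ follows from the connectedness of the covering group, and the corresponding $\varmathbb{Z}_3\times\varmathbb{Z}_3$-symmetric space is obtained exactly as in the preceding cases.
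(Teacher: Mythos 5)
Your proposal follows the paper's proof essentially verbatim: restrict $\varphi_{{}_{E_6,\nu_3}}$ to $Sp(1)\times S(U(1)\times U(2)\times U(3))$, show well-definedness and surjectivity for the $w'_3$-version, read off the kernel $\Z_2$, transfer back via Proposition \ref{proposition 4.13.2}, and then resolve $S(U(1)\times U(2)\times U(3))$ by Lemma \ref{lemma 4.13.3} and enumerate the lifted kernel through $h$. One small correction to your anticipated bookkeeping: the preimage of $\{(1,E),(-1,-E)\}$ under $h$ has $2\times|\Z_2\times\Z_3|=12$ elements (two cosets of the $\Z_2\times\Z_3$ subgroup), not $16$ elements in four cosets, and these $12$ elements are exactly the $\Z_2\times\Z_2\times\Z_3$ listed in the statement.
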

\begin{proof}
	Let $S(U(1)\times U(2)\times U(3)) \subset S(U(1) \times U(5))$. 
	We define a mapping $\varphi_{{}_{E_6,\nu_3,w'_3}}: Sp(1)\times S(U(1)\times U(2)\times U(3)) \to (E_6)^{\nu_3} \cap (E_6)^{w'_3}$ by
	\begin{align*}
	\varphi_{{}_{E_6,\nu_3,w'_3}}(q, P)(M+\a)&={k_J}^{-1}(P(k_J M){}^t\!P)+q\a k^{-1}(\tau \,{}^t\!P), 
	\\
	&\hspace*{40mm}M+\a \in \mathfrak{J}(3, \H)^C \oplus (\H^3)^C=\mathfrak{J}^C.
	\end{align*}
	Needless to say, this mapping is the restriction of the mapping $\varphi_{{}_{E_6,\nu_3}}$, that is, $\varphi_{{}_{E_6,\nu_3,w'_3}}(q, P)=\varphi_{{}_{E_6,\nu_3}}(q,P)$ (Theorem \ref{theorem 3.3.5}). 
	
	As usual, we will prove that $\varphi_{{}_{E_6,\nu_3,w'_3}}$ is well-defined. It is clear that $\varphi_{{}_{E_6,\nu_3,w'_3}}(q,P) \in (E_6)^{\nu_3}$, and it follows from $w'_3=\varphi_{{}_{E_6,\nu_3}}(1,\diag(\tau\omega,\tau\omega,\tau\omega,\omega,\omega,\omega))$ that
	\begin{align*}
	&\quad {w'_3}^{-1}\varphi_{{}_{E_6,\nu_3,\nu'_3}}(q,P) w'_3
	\\
	&=\varphi_{{}_{E_6,\nu_3}}(1,\diag(\tau\omega,\tau\omega,\tau\omega,\omega,\omega,\omega))^{-1}\varphi_{{}_{E_6,\nu_3,w'_3}}(q,P)\varphi_{{}_{E_6,\nu_3}}(1,\diag(\tau\omega,\tau\omega,\tau\omega,\omega,\omega,\omega))
	\\
	&=\varphi_{{}_{E_6,\nu_3}}(1,\diag(\omega,\omega,\omega,\tau\omega,\tau\omega,\tau\omega))\varphi_{{}_{E_6,\nu_3}}(q,P)\varphi_{{}_{E_6,\nu_3}}(1,\diag(\tau\omega,\tau\omega,\tau\omega,\omega,\omega,\omega))
	\\
	&=\varphi_{{}_{E_6,\nu_3}}(q,\diag(\omega,\omega,\omega,\tau\omega,\tau\omega,\tau\omega)P\,\diag(\tau\omega,\tau\omega,\tau\omega,\omega,\omega,\omega)),P=\diag(s,P_1,P_2)
	\\
	&=\varphi_{{}_{E_6,\nu_3}}(q,\diag(\omega s (\tau\omega),(\omega E)P_1(\tau\omega E), \tau(\omega E) P_2 (\omega E) ))
	\\
	&=\varphi_{{}_{E_6,\nu_3}}(q,P)
	\\
	&=\varphi_{{}_{E_6,\nu_3,w'_3}}(q,P).
	\end{align*}
	Hence we have that $\varphi_{{}_{E_6,\nu_3,w'_3}}(s,P) \in (E_6)^{w'_3}$. Thus $\varphi_{{}_{E_6,\nu_3,w'_3}}$ is well-defined. Subsequently, since $\varphi_{{}_{E_6,\nu_3,w'_3}}$ is the restriction of the mapping $\varphi_{{}_{E_6,\nu_3}}$, we easily see that $\varphi_{{}_{E_6,\nu_3,w'_3}}$ is a homomorphism.
	
	Next, we will prove that $\varphi_{{}_{E_6,\nu_3,w'_3}}$ is surjective. Let $\alpha \in (E_6)^{\nu_3} \cap (E_6)^{w'_3} \subset (E_6)^{\nu_3}$. There exist $q \in Sp(1)$ and $A \in S(U(1)\times U(5))$ such that $\alpha=\varphi_{{}_{E_6,\nu_3}}(q,A)$ (Theorem \ref{theorem 3.3.5}). Moreover, from the condition $\alpha \in (E_6)^{w'_3}$, that is, ${w'_3}^{-1}\varphi_{{}_{E_6,\nu_3}}(q,A)w'_3=\varphi_{{}_{E_6,\nu_3}}(q,A)$, and using ${w'_3}^{-1}\varphi_{{}_{E_6,\nu_3}}(q,A)w'_3=\varphi_{{}_{E_6,\nu_3}}(q,\diag(\omega,\omega,\omega,\tau\omega,\tau\omega,\tau\omega)A\,\diag(\tau\omega,\tau\omega,\tau\omega,\omega,\omega,\omega))$, we have that
	\begin{align*}
	&\left\{
	\begin{array}{l}
	q=q \\
	\diag(\omega,\omega,\omega,\tau\omega,\tau\omega,\tau\omega)A\,\diag(\tau\omega,\tau\omega,\tau\omega,\omega,\omega,\omega)=A   
	\end{array}\right. 
	\\
	&\hspace*{50mm}{\text{or}}
	\\
	&\left\{
	\begin{array}{l}
	q=-q \\
	\diag(\omega,\omega,\omega,\tau\omega,\tau\omega,\tau\omega)A\,\diag(\tau\omega,\tau\omega,\tau\omega,\omega,\omega,\omega)=-A.   
	\end{array}\right.     
	\end{align*}
	The latter case is impossible because of $q\not=0$. As for the former case, from the second condition, by doing straightforward computation $A$ takes the following form $\diag(s,C, D), C \in U(2),D \in U(3), s(\det\,C)(\det\,D)=1$, that is, $A \in S(U(1)\times U(2)\times U(3))$. Needless to say, $q \in Sp(1)$.
	Hence there exist $q \in Sp(1)$ and $P \in S(U(1)\times U(2)\times U(3))$ such that $\alpha=\varphi_{{}_{E_6,\nu_3}}(q,P)$. Namely, there exist $q \in Sp(1)$ and $P \in S(U(1)\times U(2)\times U(3))$ such that $\alpha=\varphi_{{}_{E_6,\nu_3,w'_3}}(q,P)$. The proof of surjective is completed.
	
	Finally, we will determine $\Ker\,\varphi_{{}_{E_6,\nu_3,w'_3}}$. However, from $\Ker\,\varphi_{{}_{E_6,\nu_3}}=\{(1,E),(-1,-E) \}$, we easily obtain that $\Ker\,\varphi_{{}_{E_6,\nu_3,w'_3}}=\{(1,E),(-1,-E) \} \cong \Z_2$. Thus we have the isomorphism $(E_6)^{\nu_3} \cap (E_6)^{w'_3} \cong (Sp(1)\times S(U(1)\times U(2)\times U(3)))/\Z_2$. In addition, by Proposition \ref{proposition 4.13.2} we have the isomorphism $(E_6)^{\nu_3} \cap (E_6)^{w_3} \cong (Sp(1)\times S(U(1)\times U(2)\times U(3)))/\Z_2$. Here, using the mapping $f_{{}_{4133}}$ in the proof of Lemma \ref{lemma 4.13.3}, we define a homomorphism $h_{{}_{4134}}:Sp(1)\times (U(1)\times U(1)\times SU(2)\times SU(3)) \to Sp(1)\times S(U(1)\times U(2)\times U(3)))$ by
	\begin{align*}
	h_{{}_{4134}}(q,(a,b,A,B))=(q,f_{{}_{4133}}(a,b,A,B)).
	\end{align*}
	Then, the elements of $(q,(a,b,A,B))$ corresponding to the elements 
	 $(1,E), (-1,-E) \in \Ker\,\varphi_{{}_{E_6,\nu_3,w'_3}}$ under the mapping $h_{{}_{4134}}$ are as follows.
	\begin{align*}
	& (1,1,1,E,E), (1,1,\omega,E,{\omega}^{-1}E),(1,1,{\omega}^{-1},E,\omega E), (1,-1,1,-E,E),
	\\
	& (1,-1,\omega,-E,{\omega}^{-1}E),(1,-1,{\omega}^{-1},-E,\omega E),
	\\
	& (-1,1,-1,-E,E), (-1,1,-\omega,-E,{\omega}^{-1}E),(-1,1,-{\omega}^{-1},-E,\omega E), (-1,-1,-1,E,E),
	\\
	& (-1,-1,-\omega,E,{\omega}^{-1}E),(-1,-1,-{\omega}^{-1},E,\omega E).
	\end{align*}
	
	Therefore we have the required isomorphism
	\begin{align*}
	(E_6)^{\nu_3} \cap (E_6)^{w_3} \cong (Sp(1) \times U(1)\times U(1) \times SU(2)\times SU(3))/(\Z_2\times \Z_2\times \Z_3), 
	\end{align*}
	where
	\begin{align*}
	&\Z_2=\{(1,1,1,E,E), (1,-1,1,-E,E)\},
	\\
	&\Z_2=\{(1,1,1,E,E), (-1,-1,-1,E,E)\},
	\\
	&\Z_3=\{(1,1,1,E,E), (1,1,\omega,E,{\omega}^{-1}E),(1,1,{\omega}^{-1},E,\omega E)\}.
	\end{align*}
\end{proof}

Thus, since the group $(E_6)^{\nu_3} \cap (E_6)^{w_3}$ is connected from Theorem \ref{theorem 4.13.4}, we have an exceptional $\varmathbb{Z}_3 \times \varmathbb{Z}_3$-symmetric space
\begin{align*}
E_6/((Sp(1) \times U(1)\times U(1) \times SU(2)\times SU(3))/(\Z_2\times \Z_2\times \Z_3)).
\end{align*}

\subsection{Case 14: $\{1, \tilde{\mu}_3,  \tilde{\mu}_3{}^{-1}\} \times \{1, \tilde{w}_3,  \tilde{w}_3{}^{-1}\}$-symmetric space}

Let the $C$-linear transformations $\mu_3, w_3$ of $\mathfrak{J}^C$  defined in Subsection \ref{subsection 3.3}. 

\noindent From Lemma \ref{lemma 3.3.8} (2), since we can easily confirm that $\mu_3$ and $w_3$ are commutative, $\tilde{\mu}_3$ and $\tilde{w}_3$ are commutative in $\Aut(E_6)$: $\tilde{\mu}_3\tilde{w}_3=\tilde{w}_3\tilde{\mu}_3$.

Now, we will determine the structure of the group $(E_6)^{\mu_3}\cap (E_6)^{w_3}$.

\begin{theorem}\label{theorem 4.14.1}
	The group $(E_6)^{\mu_3}\cap (E_6)^{w_3}$ is isomorphic to the group $(SU(3)\times U(1)\times U(1)\times U(1)\times U(1))/\Z_3${\rm :} $(E_6)^{\mu_3}\cap (E_6)^{w_3} \cong (SU(3)\times U(1)\times U(1)\times U(1)\times U(1))/\Z_3, \Z_3=\{(E,1,1,1,1),(\bm{\omega}E,\bm{\omega},\bm{\omega},\bm{\omega},\bm{\omega}),(\bm{\omega}^{-1}E,\bm{\omega}^{-1},\bm{\omega}^{-1},\bm{\omega}^{-1},\bm{\omega}^{-1})\}$.
\end{theorem}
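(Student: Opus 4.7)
The plan is to imitate the proof of Theorem~\ref{theorem 4.11.1} (Case~11), since the target group coincides with that one and the ambient realization $(E_6)^{w_3}\cong (SU(3)\times SU(3)\times SU(3))/\Z_3$ of Theorem~\ref{theorem 3.3.7} is common to both settings. Accordingly, I would define
$$\varphi_{{}_{E_6,\mu_3,w_3}}\colon SU(3)\times S(U(1)\times U(1)\times U(1))\times S(U(1)\times U(1)\times U(1))\to (E_6)^{\mu_3}\cap (E_6)^{w_3}$$
as the restriction of $\varphi_{{}_{E_6,w_3}}$, i.e.\ by the formula $\varphi_{{}_{E_6,\mu_3,w_3}}(L,P,Q)(X_C+M)=h(P,Q)X_C h(P,Q)^*+LM\tau h(P,Q)^*$.

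Well-definedness would follow from the relational formula $\mu_3=\varphi_{{}_{E_6,w_3}}(E,\diag(\bm{\varepsilon}^{-2},\bm{\varepsilon},\bm{\varepsilon}),\diag(\bm{\varepsilon}^{2},\bm{\varepsilon}^{-1},\bm{\varepsilon}^{-1}))$ of Lemma~\ref{lemma 3.3.8}(2): using that $\varphi_{{}_{E_6,w_3}}$ is a homomorphism, conjugation by $\mu_3$ on $\varphi_{{}_{E_6,w_3}}(L,P,Q)$ sends $(L,P,Q)$ to $(L,D_1^{-1}PD_1,D_2^{-1}QD_2)$ for the two diagonal matrices $D_1,D_2$ in the formula for $\mu_3$, and diagonal $P,Q$ obviously commute with $D_1,D_2$. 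Homomorphism follows directly from restricting $\varphi_{{}_{E_6,w_3}}$. For surjectivity, one takes $\alpha=\varphi_{{}_{E_6,w_3}}(L,A,B)\in(E_6)^{\mu_3}\cap(E_6)^{w_3}$ supplied by Theorem~\ref{theorem 3.3.7}, writes down the equation $\mu_3^{-1}\alpha\mu_3=\alpha$ modulo $\Ker\varphi_{{}_{E_6,w_3}}=\{(E,E,E),(\bm{\omega}E,\bm{\omega}E,\bm{\omega}E),(\bm{\omega}^{-1}E,\bm{\omega}^{-1}E,\bm{\omega}^{-1}E)\}$, discards the two branches that would force $L=0$, and is left with the requirement that $A$ commute with $D_1$ and $B$ commute with $D_2$. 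After this, the kernel of $\varphi_{{}_{E_6,\mu_3,w_3}}$ is inherited verbatim as $\Z_3$, and applying Lemma~\ref{lemma 4.3.1} to each $S(U(1)\times U(1)\times U(1))$ factor converts it to $U(1)\times U(1)$, yielding the stated isomorphism with the specified $\Z_3$.

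The principal difficulty lies precisely in the centralizer step: because $D_1=\diag(\bm{\varepsilon}^{-2},\bm{\varepsilon},\bm{\varepsilon})$ and $D_2=\diag(\bm{\varepsilon}^{2},\bm{\varepsilon}^{-1},\bm{\varepsilon}^{-1})$ each have a repeated eigenvalue, a direct commutant computation in $SU(3)$ gives $S(U(1)\times U(2))$ rather than the diagonal torus $S(U(1)\times U(1)\times U(1))$. Reconciling this with the claimed answer would, I expect, require either a preliminary conjugation in the style of Propositions~\ref{proposition 4.5.1}, \ref{proposition 4.7.1} and \ref{proposition 4.8.1}, replacing $\mu_3$ by a conjugate whose two diagonal components each have three distinct eigenvalues, or exploiting the identity $(E_6)^{\mu_3}=(E_6)^{\sigma}\supset(E_6)^{\sigma_3}$ coming from Theorem~\ref{theorem 3.3.6} and Proposition~\ref{proposition 4.10.1} to reduce the problem to Case~11 directly. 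That reconciliation is the step I would watch most carefully when writing out the detailed argument.
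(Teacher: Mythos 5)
Your plan is exactly the paper's proof: restrict $\varphi_{{}_{E_6,w_3}}$ to $SU(3)\times S(U(1)^{\times 3})\times S(U(1)^{\times 3})$, check well\-/definedness via Lemma \ref{lemma 3.3.8}(2), run the three-branch surjectivity argument, inherit $\Ker=\Z_3$, and finish with Lemma \ref{lemma 4.3.1}. But the difficulty you flag at the end is not a presentational wrinkle to be reconciled --- it is a genuine error, and it sits in the paper's own proof as well. In the surjectivity step the paper asserts that the conditions
\begin{align*}
\diag(\bm{\varepsilon}^{2},\bm{\varepsilon}^{-1},\bm{\varepsilon}^{-1})A\,\diag(\bm{\varepsilon}^{-2},\bm{\varepsilon},\bm{\varepsilon})=A,\qquad
\diag(\bm{\varepsilon}^{-2},\bm{\varepsilon},\bm{\varepsilon})B\,\diag(\bm{\varepsilon}^{2},\bm{\varepsilon}^{-1},\bm{\varepsilon}^{-1})=B
\end{align*}
force $A,B\in S(U(1)\times U(1)\times U(1))$; since $\bm{\varepsilon}$ has order $9$, the matrix $\diag(\bm{\varepsilon}^{-2},\bm{\varepsilon},\bm{\varepsilon})$ has a repeated eigenvalue and its commutant in $SU(3)$ is $S(U(1)\times U(2))$, exactly as you computed. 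Only the $(1,2),(1,3),(2,1),(3,1)$ entries are killed, so the correct conclusion is $A,B\in S(U(1)\times U(2))$ and the fixed-point group is the $16$-dimensional group $(SU(3)\times S(U(1)\times U(2))\times S(U(1)\times U(2)))/\Z_3$, not the $12$-dimensional group stated in Theorem \ref{theorem 4.14.1} and in the table. (Contrast Case 11, where $\diag(1,\ov{\bm{\omega}},\bm{\omega})$ is regular and the diagonal-torus conclusion is legitimate.)

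Neither of your two proposed repairs can close this gap, and you should convince yourself of that rather than leave it as ``the step to watch.'' A preliminary conjugation in the style of Propositions \ref{proposition 4.5.1}, \ref{proposition 4.7.1}, \ref{proposition 4.8.1} replaces $(E,D_1,D_2)$ by a conjugate pair and cannot change eigenvalue multiplicities, so the commutant stays $S(U(1)\times U(2))$; and the identity $(E_6)^{\mu_3}=(E_6)^{\sigma}$ of Theorem \ref{theorem 3.3.6} only gives $(E_6)^{\mu_3}\cap(E_6)^{w_3}\supseteq(E_6)^{\sigma_3}\cap(E_6)^{w_3}$, an inclusion that is strict here (the same commutant computation with $\sigma=\varphi_{{}_{E_6,w_3}}(E,\diag(1,-1,-1),\diag(1,-1,-1))$ again yields $S(U(1)\times U(2))$ factors, confirming dimension $16$ independently). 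So the honest outcome of your approach is a corrected statement, something of the shape $(SU(3)\times U(1)\times U(1)\times SU(2)\times SU(2))/(\Z_2\times\Z_2\times\Z_3)$ after splitting each $S(U(1)\times U(2))\cong U(2)$ as in Lemma \ref{lemma 4.13.3}, rather than a proof of the theorem as stated.
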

\begin{proof}
	Let $S(U(1)\times U(1)\times U(1)) \subset SU(3)$. We define a mapping $\varphi_{{}_{E_6,\mu_3,w_3}}: SU(3)\times S(U(1)\times U(1)\times U(1))\times S(U(1)\times U(1)\times U(1)) \to (E_6)^{\nu_3}\cap (E_6)^{w_3}$ by
	\begin{align*}
	\varphi_{{}_{E_6,\mu_3,w_3}}(L,P,Q)(X_{C}+M)&=h(P,Q)X_{C}h(P,Q)^*+LM\tau h(P,Q)^*, 
	\\
	&\hspace*{20mm} X_{C}+M \in \mathfrak{J}(3, \C)^C \oplus 
	M(3,\C)^C=\mathfrak{J}^C.
	\end{align*}
	Needless to say, this mapping is the restriction of the mapping $\varphi_{{}_{E_6,w_3}}$, that is, $\varphi_{{}_{E_6,\nu_3,w_3}}(L,P,\allowbreak Q)\allowbreak=\varphi_{{}_{E_6,w_3}}(L,P,Q)$ (Theorem \ref{theorem 3.3.7}). 
	
	As usual, we will prove that $\varphi_{{}_{E_6,\mu_3,w_3}}$ is well-defined. It is clear that $\varphi_{{}_{E_6,\mu_3,_3}}(L,P,Q) \in (E_6)^{w_3}$, and it follows from $\mu_3=\varphi_{{}_{E_6,w_3}}(E,\diag({\bm{\varepsilon}}^{-2},\bm{\varepsilon},\bm{\varepsilon}), \diag({\bm{\varepsilon}}^{2},\bm{\varepsilon}^{-1},\bm{\varepsilon}^{-1}))$ (Lemma \ref{lemma 3.3.8} (2)) that 
	\begin{align*}
	&\quad {\mu_3}^{-1}\varphi_{{}_{E_6,\sigma_3,w_3}}(L,P,Q)\mu_3
	\\
	&=\varphi_{{}_{E_6,w_3}}(E,\diag({\bm{\varepsilon}}^{-2},\bm{\varepsilon},\bm{\varepsilon}), \diag({\bm{\varepsilon}}^{2},\bm{\varepsilon}^{-1},\bm{\varepsilon}^{-1}))^{-1}\varphi_{{}_{E_6,\mu_3,w_3}}(L,P,Q)
	\\
	&\hspace*{70mm}\varphi_{{}_{E_6,w_3}}(E,\diag({\bm{\varepsilon}}^{-2},\bm{\varepsilon},\bm{\varepsilon}), \diag({\bm{\varepsilon}}^{2},\bm{\varepsilon}^{-1},\bm{\varepsilon}^{-1}))
	\\
	&=\varphi_{{}_{E_6,w_3}}(E,\diag({\bm{\varepsilon}}^{2},\bm{\varepsilon}^{-1},\bm{\varepsilon}^{-1}), \diag({\bm{\varepsilon}}^{-2},\bm{\varepsilon},\bm{\varepsilon}))\varphi_{{}_{E_6,\mu_3,w_3}}(L,P,Q)
	\\
	&\hspace*{70mm}\varphi_{{}_{E_6,w_3}}(E,\diag({\bm{\varepsilon}}^{-2},\bm{\varepsilon},\bm{\varepsilon}), \diag({\bm{\varepsilon}}^{2},\bm{\varepsilon}^{-1},\bm{\varepsilon}^{-1}))
	\\
	&=\varphi_{{}_{E_6,w_3}}(L,\diag({\bm{\varepsilon}}^{2},\bm{\varepsilon}^{-1},\bm{\varepsilon}^{-1})P\diag({\bm{\varepsilon}}^{-2},\bm{\varepsilon},\bm{\varepsilon}),\diag({\bm{\varepsilon}}^{-2},\bm{\varepsilon},\bm{\varepsilon})Q \diag({\bm{\varepsilon}}^{2},\bm{\varepsilon}^{-1},\bm{\varepsilon}^{-1})),
	\\
	&\hspace*{90mm}P=\diag(a,b,c), Q=\diag(s,t,v)
	\\
	&=\varphi_{{}_{E_6,w_3}}(L,P,Q)
	\\
	&=\varphi_{{}_{E_6,\mu_3,w_3}}(L,P,Q).
	\end{align*}
	Hence we have that $\varphi_{{}_{E_6,\mu_3,w_3}}(L,P,Q) \in (E_6)^{\mu_3}$. Thus $\varphi_{{}_{E_6,\mu_3,w_3}}$ is well-defined. Subsequently, since $\varphi_{{}_{E_6,\mu_3,w_3}}$ is the restriction of the mapping $\varphi_{{}_{E_6,w_3}}$, we easily see that $\varphi_{{}_{E_6,\mu_3,w_3}}$ is a homomorphism.
	
	Next we will prove that $\varphi_{{}_{E_6,\mu_3,w_3}}$ is surjective. Let $\alpha \in (E_6)^{\mu_3}\cap (E_6)^{w_3} \subset (E_6)^{w_3}$. There exist $L, A, B \in SU(3)$ such that $\alpha=\varphi_{{}_{E_6,w_3}}(L,A,B)$ (Theorem \ref{theorem 3.3.7}). Moreover, from the condition $\alpha \in (E_6)^{\mu_3}$, that is, ${\mu_3}^{-1}\varphi_{{}_{E_6,w_3}}(L,A,B)\mu_3=\varphi_{{}_{E_6,w_3}}(L,A,B)$, and using 
	\begin{align*}
	&\quad {\mu_3}^{-1}\varphi_{{}_{E_6,w_3}}(L,A,B)\mu_3
	\\
	&=\varphi_{{}_{E_6,w_3}}(L,\diag({\bm{\varepsilon}}^{2},\bm{\varepsilon}^{-1},\bm{\varepsilon}^{-1})A\,\diag({\bm{\varepsilon}}^{-2},\bm{\varepsilon},\bm{\varepsilon}),\diag({\bm{\varepsilon}}^{-2},\bm{\varepsilon},\bm{\varepsilon})B \,\diag({\bm{\varepsilon}}^{2},\bm{\varepsilon}^{-1},\bm{\varepsilon}^{-1}))
	\end{align*}
	(Lemma \ref{lemma 3.3.8} (2)) we have that 
	\begin{align*}
	&\,\,\,{\rm(i)}\,\left\{
	\begin{array}{l}
	L=L
	\\
	\diag({\bm{\varepsilon}}^{2},\bm{\varepsilon}^{-1},\bm{\varepsilon}^{-1})A\diag({\bm{\varepsilon}}^{-2},\bm{\varepsilon},\bm{\varepsilon})=A 
	\\
	\diag({\bm{\varepsilon}}^{-2},\bm{\varepsilon},\bm{\varepsilon})B \diag({\bm{\varepsilon}}^{2},\bm{\varepsilon}^{-1},\bm{\varepsilon}^{-1})=B,
	\end{array} \right.
	\\[2mm]
	&{\rm(ii)}\,\left\{
	\begin{array}{l}
	L=\bm{\omega}L
	\\
	\diag({\bm{\varepsilon}}^{2},\bm{\varepsilon}^{-1},\bm{\varepsilon}^{-1})A\diag({\bm{\varepsilon}}^{-2},\bm{\varepsilon},\bm{\varepsilon})=\bm{\omega}A 
	\\
	\diag({\bm{\varepsilon}}^{-2},\bm{\varepsilon},\bm{\varepsilon})B \diag({\bm{\varepsilon}}^{2},\bm{\varepsilon}^{-1},\bm{\varepsilon}^{-1})=\bm{\omega}B,
	\end{array} \right.
	\\[2mm]
	&{\rm(iii)}\,\left\{
	\begin{array}{l}
	L=\bm{\omega}^{-1}L
	\\
	\diag({\bm{\varepsilon}}^{2},\bm{\varepsilon}^{-1},\bm{\varepsilon}^{-1})A\diag({\bm{\varepsilon}}^{-2},\bm{\varepsilon},\bm{\varepsilon})=\bm{\omega}^{-1}A 
	\\
	\diag({\bm{\varepsilon}}^{-2},\bm{\varepsilon},\bm{\varepsilon})B \diag({\bm{\varepsilon}}^{2},\bm{\varepsilon}^{-1},\bm{\varepsilon}^{-1})=\bm{\omega}^{-1}B.
	\end{array} \right.
	\end{align*}
	The Cases (ii) and (iii) are impossible because $L\not=0$. As for the Case (i), from the second and third conditions, it is easy to see that $A,B \in S(U(1)\times U(1) \times U(1))$. Needless to say, $L \in SU(3)$. Hence there exist $L \in SU(3)$ and $P,Q \in S(U(1)\times U(1) \times U(1))$ such that $\alpha=\varphi_{{}_{E_6,w_3}}(L,P,Q)$. Namely, there exist $L \in SU(3)$ and $P,Q \in S(U(1)\times U(1) \times U(1))$ such that $\alpha=\varphi_{{}_{E_6,\mu_3, w_3}}(L,P,Q)$. The proof of surjective is completed.
	
	Finally, we will determine $\Ker\,\varphi_{{}_{E_6,\mu_3,w_3}}$. However, from $\Ker\,\varphi_{{}_{E_6,w_3}}=\{(E,E,E),(\bm{\omega}E,\allowbreak \bm{\omega}E,\bm{\omega}E),(\bm{\omega}^{-1}E,\bm{\omega}^{-1}E, \bm{\omega}^{-1}E) \}$, we easily obtain that $\Ker\,\varphi_{{}_{E_6,\mu_3,w_3}}=\{(E,E,E),(\bm{\omega}E,\allowbreak \bm{\omega}E,\bm{\omega}E),(\bm{\omega}^{-1}E,\bm{\omega}^{-1}E, \bm{\omega}^{-1}E) \} \cong \Z_3$.
	Thus we have the isomorphism $(E_6)^{\mu_3}\cap (E_6)^{w_3} \cong (SU(3)\times S(U(1)\times U(1)\times U(1))\times S(U(1)\times U(1)\times U(1)))/\Z_3$. 
	
	Therefore, by Lemma \ref{lemma 4.3.1} we have the required isomorphism 
	\begin{align*}
	(E_6)^{\mu_3}\cap (E_6)^{w_3} \cong (SU(3)\times U(1)\times U(1)\times U(1)\times U(1))/\Z_3,
	\end{align*}
	where $\Z_3=\{(E,1,1,1,1),(\bm{\omega}E,\bm{\omega},\bm{\omega},\bm{\omega},\bm{\omega}),(\bm{\omega}^{-1}E,\bm{\omega}^{-1},\bm{\omega}^{-1},\bm{\omega}^{-1},\bm{\omega}^{-1})\}$.
\end{proof}

Thus, since the group $(E_6)^{\mu_3} \cap (E_6)^{w_3}$ is connected from Theorem \ref{theorem 4.14.1}, we have an exceptional $\varmathbb{Z}_3 \times \varmathbb{Z}_3$-symmetric space
\begin{align*}
E_6/((SU(3)\times U(1)\times U(1)\times U(1)\times U(1))/\Z_3).
\end{align*}

\end{document}